\documentclass{scrartcl}
\usepackage[numbers]{natbib}

\usepackage{hyperref}

\usepackage{amsmath}
\usepackage{amsthm}
\usepackage{amssymb}

\usepackage{tikz} 
\usepackage{tikz-cd}

\theoremstyle{plain}
\newtheorem{theorem}{Theorem}[section]
\newtheorem{lemma}[theorem]{Lemma}
\newtheorem{proposition}[theorem]{Proposition}
\newtheorem{corollary}[theorem]{Corollary}

\theoremstyle{definition}
\newtheorem{definition}[theorem]{Definition}

\newtheorem{remark}[theorem]{Remark}

\makeatletter
\def\blfootnote{\gdef\@thefnmark{}\@footnotetext}
\makeatother

\newcommand{\A}{\mathbb{A}}

\newcommand{\F}{\mathbb{F}}
\newcommand{\G}{\mathbb{G}}
\newcommand{\Q}{\mathbb{Q}}
\newcommand{\Z}{\mathbb{Z}}

\newcommand{\fraka}{\mathfrak{a}}
\newcommand{\frakm}{\mathfrak{m}}
\newcommand{\frakA}{\mathfrak{A}}

\newcommand{\frakU}{\mathfrak{U}}
\newcommand{\frakV}{\mathfrak{V}}
\newcommand{\frakW}{\mathfrak{W}}
\newcommand{\frakX}{\mathfrak{X}}
\newcommand{\frakY}{\mathfrak{Y}}
\newcommand{\frakZ}{\mathfrak{Z}}

\newcommand{\calA}{\mathcal{A}}
\newcommand{\calC}{\mathcal{C}}

\newcommand{\calE}{\mathcal{E}}
\newcommand{\calF}{\mathcal{F}}
\newcommand{\calG}{\mathcal{G}}
\newcommand{\calH}{\mathcal{H}}
\newcommand{\calI}{\mathcal{I}}
\newcommand{\calJ}{\mathcal{J}}
\newcommand{\calM}{\mathcal{M}}
\newcommand{\calO}{\mathcal{O}}

\newcommand{\calU}{\mathcal{U}}
\newcommand{\calV}{\mathcal{V}}
\newcommand{\calW}{\mathcal{W}}
\newcommand{\calX}{\mathcal{X}}
\newcommand{\calY}{\mathcal{Y}}

\newcommand{\FS}{\mathrm{FS}}
\newcommand{\Fin}{\mathrm{Fin}}
\newcommand{\Rig}{\mathrm{Rig}}

\newcommand{\rig}{\mathrm{rig}}

\newcommand{\Mod}{\mathrm{Mod}}
\newcommand{\Coh}{\mathrm{Coh}}
\newcommand{\Cat}{\mathrm{Cat}}
\newcommand{\Set}{\mathrm{Set}}

\newcommand{\Ad}{\mathrm{Ad}}
\newcommand{\ad}{\mathrm{ad}}
\newcommand{\Zar}{\mathrm{Zar}}
\newcommand{\zar}{\mathrm{zar}}
\newcommand{\Tot}{\mathrm{Tot}}
\newcommand{\tot}{\mathrm{tot}}
\newcommand{\Bl}{\mathrm{Bl}}
\newcommand{\op}{\mathrm{op}}

\newcommand{\Spec}{\mathrm{Spec}}
\newcommand{\Proj}{\mathrm{Proj}}
\newcommand{\Spf}{\mathrm{Spf}}
\newcommand{\Spwf}{\mathrm{Spwf}}

\newcommand{\Frob}{\mathrm{Frob}}
\newcommand{\GL}{\mathrm{GL}}
\newcommand{\Hom}{\mathrm{Hom}}

\newcommand{\Tr}{\mathrm{Tr}}
\newcommand{\tr}{\mathrm{tr}}
\newcommand{\N}{\mathrm{N}}
\newcommand{\Gal}{\mathrm{Gal}}

\newcommand{\et}{\mathrm{\acute{e}t}}
\newcommand{\spe}{\mathrm{sp}}
\newcommand{\id}{\mathrm{id}}
\newcommand{\wt}{\mathrm{wt}}

\title{Weak Analytic Geometry and a Trace Formula for Families of $p$-adic Representations}
\author{James Upton\\Department of Mathematics\\University of California, Irvine, CA 92697-3875\\jtupton@uci.edu}
\date{\today}

\begin{document}

	\maketitle
	
	\begin{abstract}
		The \emph{eigencurve} is a powerful tool introduced by Coleman and Mazur to study $p$-adic families of overconvergent modular forms. In this article, we introduce an analogous set of tools for understanding families of ``overconvergent'' $p$-adic representations of $\pi_1(X)$, where $X$ is a smooth affine variety over a finite field of characteristic $p$. Our main theorem is a trace formula relating the $L$-function of such a family to the geometry of a sequence of associated eigenvarieties. In the case of a single $p$-adic representation, our result reduces to the well known trace formula of Monsky. We apply our theory to the study of $T$-adic exponential sums attached to $\Z_p$-towers over $X$. Special cases of this theory have been applied by Davis, Wan, and Xiao to prove a spectral halo decomposition of the eigencurve attached to $\Z_p$-towers over $X=\A^1$.
	\end{abstract}
	
	\tableofcontents
	
\section{Introduction}
	
	Let $k$ be a finite field of characteristic $p$. In their seminal article \cite{Coleman}, Coleman and Mazur construct a rigid analytic curve parameterizing the finite-slope overconvergent $p$-adic eigenforms of tame level $1$. This \emph{eigencurve} and its generalizations are useful in particular for studying the Igusa tower over the ordinary locus of modular curves. The global geometry of the eigencurve is somewhat mysterious, however Coleman has conjectured a ``spectral halo'' property describing its behavior near the boundary of weight space, see e.g. \cite{LWX} for a precise formulation of this conjecture. In \cite{Davis}, Davis, Wan, and Xiao introduce an analogous eigencurve to study $\Z_p$-towers over the affine line $\A^1_k$, and moreover prove the analogue of the spectral halo property for their eigencurve. Their curve may be viewed a hypersurface in the rigid analytic space $\calW \times \G_m^\rig$, where $\calW$ is the \emph{weight space} of continuous $p$-adic characters of $\Z_p$. An essential component of their work is a variant of the Dwork trace formula, which expresses $p$-adic exponential sums over $k$ in terms of the $\G_m^\rig$-coordinates of the points of the eigencurve. The spectral halo property of $\calC$ then implies a strong periodicity property for the zeta functions of certain Artin-Schreier-Witt coverings of $\A^1_k$. Their methods have since been adapted to prove a spectral halo property for the eigencurve associated to automorphic forms for a definite quaternion algebra over $\Q$ \cite{LWX}.
	
	To shed some light on Coleman's conjecture, it would be interesting to know to what extent the results of Davis \emph{et al.} can be generalized. As noted in \cite{Ren}, there are essentially two paths of generalization: (1) we may replace the weight space $\calW$ by another rigid analytic space, or (2) we may replace the base variety $\A^1_k$ with another $k$-variety. Those authors consider case (1), replacing $\calW$ by the space of continuous $p$-adic characters of $\Z_p^d$ and constructing a $d$-dimensional \emph{eigenvariety} to study $\Z_p^d$-coverings of the affine line. The goal of this article is to introduce the analogous theory in case (2), replacing $\A^1_k$ by a general smooth affine variety $X/k$. We will also work relative to a fairly general weight space: throughout, we assume that $R$ is a complete Noetherian local domain with maximal ideal $\frakm$ and residue field $\F_q \subseteq k$. The \emph{weight space} $\calW$ will be the rigid analytic space associated to $R$. The study of $\Z_p^d$-towers over $X$ corresponds to the case $R=\Z_p[[T_1,...,T_d]]$, but we may apply the theory to more general $R$ to study e.g. deformations of $p$-adic Galois representations.
	
	Since $X$ is affine and smooth, it is well known that there is a unique smooth formal $R$-scheme $\frakX_\infty$ lifting $X$. Fix a geometric point $\bar{x}$ of $X$, and consider a continuous representation
	\begin{equation}\label{eq:rep}
		\rho:\pi_1(X,\bar{x})	\to	\GL_n(R).
	\end{equation}
	If $P$ is a point in our weight space, then specializing any such representation at $P$ yields an ordinary $p$-adic representation of $\pi_1(X,\bar{x})$. Thus we regard $\rho$ as a continuous family of $p$-adic representations parameterized by $\calW$. In Section \ref{s:smod}, we identify the category all such representations with a full subcategory of the category of \emph{$\sigma$-modules} over $X$. Informally, these are pairs $(M,\phi)$, where $M$ is a locally free $\calO_\frakX$-module and $\phi$ is a ``Frobenius structure'' on $M$. To each $\sigma$-module $(M,\phi)$, we attach an $L$-function $L(\phi,s) \in R[[s]]$. If the expression of $\phi$ in coordinates is overconvergent with respect to the $\frakm$-adic topology, then we say that $(M,\phi)$ is an \emph{overconvergent} $\sigma$-module. Our main theorem (Theorem \ref{t:merom} below) states that the $L$-function of an overconvergent $\sigma$-module is a meromorphic function on the \emph{relative} rigid affine line $\A^{1,\rig}_\calW$. Essentially, this means that for any point $P$ in the weight space, the specialization of $L(\phi,s)$ at $P$ is a $p$-adic meromorphic function in the usual sense.
	
	In addition to this result, a major motivation in writing this article was to develop the analogy with the theory of Coleman-Mazur. In our setting, the role of their overconvergent modular forms is played by coherent modules on a \emph{weak rigid analytic space} over $R$. The first two parts of this paper are devoted to developing the theory of weak analytic geometry relative to $R$. In the classical case that $R$ is a discrete valuation ring, this reduces to the theory of dagger spaces over the field of fractions $Q(R)$, as introduced in \cite{Grosse}. Langer and Muralidharan have shown that these spaces arise as the generic fibers of weak formal $R$-schemes \cite{Langer}. Our construction follows theirs closely: Let $\FS_R^\dagger$ denote the category of weak formal schemes over $R$. The category $\Rig_R^\dagger$ of weak rigid analytic spaces over $R$ can be constructed by localizing this category at a suitable class of morphisms known as \emph{admissible blow-ups}. We may interpret such a space as a continuous family of dagger spaces, parameterized by the points in the weight space $\calW$. We equip each weak rigid analytic space with the structure of a locally ringed topos. If $\frakX$ is an object of $\FS_R^\dagger$ and $\calX$ is its ``generic fiber,'' then we attach to each coherent sheaf $\calF$ of $\calO_\frakX$-modules a coherent sheaf $\calF^\rig$ of $\calO_\calX$-modules referred to as the \emph{analytification} of $\calF$. Passing to completions, we recover the classical generic fiber functor as developed by Raynaud \cite{Raynaud}, and Bosch and L\"{u}tkebohmert \cite{Bosch} for more general $R$.
	
	The connection between our theory and the theory of weak analytic geometry over $R$ is given as follows: Let $(M,\phi)$ be an overconvergent $\sigma$-module and let $M^\vee$ denote the dual module. There is a natural duality $\phi \mapsto \theta_\bullet(\phi)$ which assigns to $\phi$ a sequence of \emph{Dwork operators} on the de Rham complex $\Omega^\bullet M^\vee$. The essential feature of these operators is that the action of $\theta_i$ on $(\Omega^i M)^\rig$ is in some sense \emph{nuclear}. In particular, each $\theta_i(\phi)$ has a well defined \emph{Fredholm determinant} $C(\theta_i(\phi),s)$ which we may regard as an analytic function on $\A^{1,\rig}_\calW$. The full form of our main theorem is the following ``trace formula:''
	
	\begin{theorem}\label{t:merom}
		Let $(M,\phi)$ be an overconvergent $\sigma$-module over $X$. Then
		\begin{equation*}
			L(\phi,s)	=	\prod_{i=0}^n	C(\theta_{n-i}(\phi),s)^{(-1)^{i-1}}.
		\end{equation*}
	\end{theorem}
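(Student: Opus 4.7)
The plan is an $R$-linear adaptation of Monsky's classical proof of the trace formula for a single overconvergent $p$-adic representation. First I would reduce Theorem \ref{t:merom} to a pointwise trace identity via logarithmic derivatives. Since $C(\theta, s)$ is a Fredholm determinant, taking the formal logarithm of both sides and matching coefficients of $s^m$ reduces the theorem to the power-trace identity
$$
S_m(\phi) \;=\; \sum_{i=0}^{n} (-1)^{i} \tr\!\bigl(\theta_{n-i}(\phi)^m\bigr) \quad \text{in } R, \qquad m \geq 1,
$$
where $S_m(\phi)$ is the $R$-valued character sum of $\phi^m$ over the $\F_{q^m}$-points of $X$, obtained from the logarithmic expansion of $L(\phi, s)$.

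To prove the pointwise identity I would use a \v{C}ech argument on a finite affine cover $\{\frakU_\alpha = \Spwf A_\alpha^\dagger\}$ of $\frakX_\infty$ on which $M$ trivializes and a compatible lift of the absolute Frobenius $\sigma$ exists. On each patch, the analytification of the de Rham complex $(\Omega^\bullet M^\vee)^\rig$ is tractable as an $R$-topological module, and the operators $\theta_\bullet(\phi)$ act nuclearly on its pieces by the overconvergence hypothesis. A double-complex computation expresses $\sum_i (-1)^i \tr(\theta_{n-i}^m)$ as an alternating sum of local nuclear traces on the patches and their intersections. On each weak polydisc, Dwork duality between $\phi$ and $\theta(\phi)$ identifies the nuclear trace of $\theta^m$ with a residue sum over the $\sigma^m$-fixed points of $\frakU_\alpha$, and these fixed points correspond bijectively to the $\F_{q^m}$-rational points of $\frakU_\alpha \times_R k$, contributing the individual terms of $S_m(\phi)$. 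Inclusion-exclusion across the cover then collects each rational point exactly once and produces $S_m(\phi)$.

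The main technical hurdle is the families version of the Serre trace formula for nuclear operators. In Monsky's setting this is classical Fredholm theory over a $p$-adic field; in our context one must verify that each $\theta_i(\phi)$ is nuclear as an $R$-linear endomorphism of $(\Omega^i M^\vee)^\rig$, that its trace and Fredholm determinant lie in $R$ and behave functorially under base change along morphisms $R \to R'$, and that the local residue computation at an $R$-rational fixed point extends verbatim with coefficients in $R$. The weak rigid analytic framework developed in the earlier sections is meant to provide precisely this foundation; once the nuclear trace formula is established relatively over $R$, the alternating signs should emerge formally from the Koszul structure of the de Rham complex as in the classical proof. As a sanity check, specialization at any closed point $P \in \calW$ should recover Monsky's theorem for the specialized $p$-adic representation, which would at minimum verify the identity on the $\frakm$-adically separated part of $R$ before being promoted to the full ring via the analytic structure on $\calW$.
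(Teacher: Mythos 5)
Your main proposal — rebuilding the trace formula from scratch in the relative setting via a \v{C}ech cover, relative Dwork duality, and a fixed-point residue computation — is a plausible outline, but it is a much harder route than necessary, and you have left its hardest pieces (relative Dwork duality over $R$, the relative residue computation at a Teichm\"uller fixed point, and the identification of nuclear traces with these residues uniformly in the weight parameter) entirely unaddressed. More importantly, the last paragraph of your proposal, which you dismiss as a ``sanity check,'' is in fact the paper's entire proof, and you should have recognized that the framework set up in earlier sections promotes that check to a complete argument. The mechanism is Corollary \ref{c:conserv}: a section of an $\calO_\calX$-module over the weight space vanishes if and only if its fiber at every rigid point $P \in \langle\calW\rangle$ vanishes. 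Since Theorem \ref{t:nuc} realizes each $C(\theta_{n-i}(\phi),s)$ as an analytic function on $\G_{m,\calW}$ with coefficients in $R$, and since the $L$-function is likewise defined over $R$, the difference of the two sides of Theorem \ref{t:merom} is a section to which Corollary \ref{c:conserv} applies. At a rigid point $P\colon R \to \Omega$ one base-changes to $\Omega$, then extends further to the integral closure $\overline{\Omega}$ (a discrete valuation ring by Lemma \ref{l:rpoints}), at which point both sides specialize to the objects in Monsky's classical trace formula, which supplies the equality. That is the whole proof.

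Your worry about ``verifying the identity on the $\frakm$-adically separated part of $R$ before being promoted to the full ring'' misdiagnoses the issue: $R$ is Noetherian local and hence already $\frakm$-adically separated. The real question is whether the collection of rigid points is conservative, and that is exactly Theorem \ref{t:conserv}, which you should invoke directly rather than gesture at. One further point you would need to address even in the reduction argument: Monsky's trace formula is stated over a discrete valuation ring, whereas an $R$-point $\Omega$ need not be integrally closed; the passage to $\overline{\Omega}$ and the observation that the traces of $\theta_\bullet(\phi)$ on $\Omega_\bullet\overline{A}_P$ are computed on $\Omega_\bullet A_P \otimes_\Omega k(P)$ closes this gap and should not be omitted.
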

	
	Our trace formula reduces the study of the $L$-function $L(\phi,s)$ to understanding the spectral theory of nuclear operators over $\calW$, which we discuss in \ref{s:eig}. Given a nuclear operator $\psi$, its spectral theory is best described in terms of its \emph{eigenvariety} $\calE(\psi)$, which is defined to be the rigid analytic hypersurface cut out by the Fredholm determinant $C(\psi,s)$. If $P$ is a point in the weight space $\calW$, then the fiber $\calE(\psi)_P$ is naturally identified with the set of non-zero eigenvalues of $\psi$ acting on a vector space over the residue field $k(P)$. It is natural to ask how this set of eigenvalues varies $p$-adically in the parameter $P$. In \cite{Davis}, the spectral halo property of their eigencurve answers this question completely: for each $P$, the $p$-adic valuations of the points in $\calE(\psi)_P$ form finite (fixed) number of arithmetic progressions, each of whose increment is a scalar multiple of the $p$-adic valuation of their weight parameter. Via the trace formula, this gives a striking uniformity in the distribution of zeros for Artin-Schreier-Witt coverings of the affine line.
	
	As an application, in Section \ref{s:hodge} we apply Theorem \ref{t:merom} to the study Artin-Schreier-Witt coverings of our smooth affine variety $X$. We attach to each such tower a family of exponential sums taking values in the ring $R=\Z_p[[T]]$, generalizing the \emph{$T$-adic exponential sums} of Liu and Wan \cite{Liu} to general smooth affine varieties. We define the \emph{$T$-adic $L$-function} $L(T,s) \in R[[s]]$ to be the generating function of these exponential sums. We show that $L(T,s)$ agrees with the $L$-function of a certain rank-$1$ character $\pi_1(X,\bar{x}) \to R^\times$. When the corresponding $\sigma$-module $(M,\phi)$ over $\frakX_\infty$ is overconvergent, Theorem \ref{t:merom} gives an explicit trace formula for our $T$-adic exponential sums, and consequently for the zeta functions of Artin-Schreier-Witt coverings of $X$. We do not know whether the spectral halo property of \cite{Davis} holds in this general setting. It is likely that an answer to this question requires some understanding of the local monodromy of overconvergent $\sigma$-modules over $R$, a topic which we hope to address in a future article.
	
\section{Weak Formal Geometry}
	
	\subsection{Formal Schemes and Rigid Analytic Spaces}\label{s:formal}
	
		In this preliminary section we recall some basic notions of formal schemes and their associated rigid analytic spaces. As is common in formal geometry, we will restrict our attention to a small class of well behaved formal schemes. Let $\FS^+$ denote the category of locally Noetherian formal schemes which are complete and separated with respect to an ideal of definition. Recall that a morphism $f:\frakY \to \frakX$ in $\FS^+$ is \emph{adic} if $\frakX$ admits an ideal of definition $\calI$ such that $f^* \calI \calO_\frakY$ is an ideal of definition of $\frakY$. Since $\frakX$ is locally Noetherian, every ideal of definition is finitely generated, and it follows that $\calI_\frakY = f^* \calI \calO_\frakY$ is always ideal of definition of $\frakY$. Moreover, there is a cartesian diagram of ringed spaces (\cite{Abbes}, 2.2.9):
		\begin{equation*}
			\begin{tikzcd}
				(\frakY,\calO_\frakY/\calI_\frakY)	\arrow[d]	\arrow[r,"f'"]	&	(\frakX,\calO_\frakX/\calI)	\arrow[d]	\\
				(\frakY,\calO_\frakY)	\arrow[r,"f"]	&	(\frakX,\calO_\frakX)
			\end{tikzcd}
		\end{equation*}
		We say that $f$ is \emph{locally of finite presentation} if $f'$ is is locally of finite presentation (as a morphism of schemes). As usual, we say that $f$ is of \emph{finite presentation} if $f$ is locally of finite presentation and quasi-compact. We denote by $\FS$ the subcategory of $\FS^+$ with the same objects, but whose morphisms are adic morphisms of finite presentation.
		
		It is often convenient to think of an object of $\FS^+$ as (locally) the formal completion of an ordinary scheme. Let $X$ be a locally Noetherian scheme, and $\calI$ a sheaf of ideals on $X$. Then the $\calI$-adic completion $X_\infty$ is an object of $\FS^+$. Let $\iota:X_\infty \to X$ denote the corresponding map of ringed spaces. The $\calI$-adic completion of modules defines a functor
		\begin{equation*}
			\Mod(\calO_X)	\to	\Mod(\calO_{X_\infty}).
		\end{equation*}
		which we denote by $\calF \mapsto \calF_\infty$. We will mainly be interested in the case that $\calF$ is a finite $\calO_X$-module, in which case we have $\calF_\infty = \iota^*\calF$. If $X=\Spec(A)$ is affine and $M$ is an $A$-module, we write $\tilde{M}$ for the sheaf of $\calO_X$-modules associated to $M$.
		
		\begin{proposition}\label{p:coh}
			Let $\frakX$ be an object of $\FS^+$, and $\calI$ an ideal of definition of $\frakX$. The following are equivalent for a sheaf $\calF$ of $\calO_\frakX$-modules:
			\begin{enumerate}
				\item	$\calF$ is coherent
				\item	$\calF$ is of finite type
				\item	For every affine open $\frakU=\Spf(A)$ of $\frakX$, there is a finite $A$-module such that
					\begin{equation*}
						\calF|_\frakU	=	\tilde{M}_\infty.
					\end{equation*}
			\end{enumerate}
		\end{proposition}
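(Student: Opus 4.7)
The plan is to establish the cycle of implications $(1) \Rightarrow (2) \Rightarrow (3) \Rightarrow (1)$. The implication $(1) \Rightarrow (2)$ is immediate, since being of finite type is part of the definition of a coherent sheaf.

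For $(3) \Rightarrow (1)$, I would first verify that $\calO_\frakX$ is itself coherent. Working affine-locally on $\frakU = \Spf(A)$ with $A$ Noetherian and $\calI$-adically complete, this reduces to showing that the kernel of any homomorphism $\calO_\frakU^n \to \calO_\frakU$ over a basic affine sub-open is of finite type; this in turn follows because the rings of sections on basic opens of $\Spf(A)$ are again Noetherian complete rings, so the kernel of the corresponding $A$-linear map on modules is finitely generated. Granted coherence of $\calO_\frakX$, any sheaf locally of the form $\tilde{M}_\infty$ with $M$ a finite $A$-module admits a presentation $\calO_\frakU^m \to \calO_\frakU^n \to \calF|_\frakU \to 0$ arising from a finite presentation of $M$, and hence is coherent as the cokernel of a map between coherent sheaves.

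For the main implication $(2) \Rightarrow (3)$, restrict to an affine open $\frakU = \Spf(A)$. For each $n \geq 0$, set $\calI_n = \calI^{n+1}$ and consider the Noetherian scheme $\frakU_n = (\frakU,\calO_\frakU/\calI_n)$. The reduction $\calF_n := \calF|_\frakU / \calI_n\calF|_\frakU$ is a quasi-coherent $\calO_{\frakU_n}$-module of finite type, so by Serre's theorem for Noetherian affine schemes it has the form $\widetilde{M_n}$ for a finitely generated $A/\calI_n$-module $M_n$. Set $M = \varprojlim_n M_n$. Using that the transition maps $M_{n+1} \to M_n$ are surjective (a consequence of the finite type hypothesis combined with the $\calI$-adic completeness of $\calO_\frakU$), I would lift a finite system of generators of $M_0$ to elements of $M$ and deduce, using the Noetherianness and completeness of $A$, that $M$ is finitely generated and that the natural maps $M \otimes_A A/\calI_n \to M_n$ are isomorphisms. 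To identify $\tilde{M}_\infty$ with $\calF|_\frakU$, I would check the natural map on basic affine opens by taking inverse limits of the isomorphisms on each $\frakU_n$ and invoking the $\calI$-adic completeness of both sheaves.

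The main obstacle will be the passage $(2) \Rightarrow (3)$, and specifically the step that the finite type hypothesis propagates through the inverse system $\{M_n\}$ to yield a single finitely generated $A$-module whose associated sheaf recovers $\calF|_\frakU$. The essential input is the interaction of Noetherianness with $\calI$-adic completeness of $A$, which allows compatible generators modulo $\calI^n$ to be lifted to honest generators of $M$, and the compatible isomorphisms of reductions to be assembled into a genuine isomorphism of complete sheaves of $\calO_\frakU$-modules.
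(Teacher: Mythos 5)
The paper itself does not prove this statement; it simply cites (\cite{Abbes}, 2.7.2 and 2.8.2), so your blind proposal is an attempted reconstruction rather than a parallel to an explicit argument. Your $(1)\Rightarrow(2)$ is trivial, and your $(3)\Rightarrow(1)$ is essentially right: Noetherianness gives coherence of $\calO_\frakX$ (the section rings $A_{\{f\}}$ on basic opens are complete Noetherian), and exactness of $M\mapsto\tilde M_\infty$ (i.e.\ flatness of $A\to A_{\{f\}}$) turns a finite presentation of $M$ into a finite presentation of $\tilde M_\infty$, which is then coherent over the coherent $\calO_\frakU$.

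However, there is a genuine gap in $(2)\Rightarrow(3)$. You assert that the reduction $\calF_n = \calF/\calI^{n+1}\calF$ is quasi-coherent on the Noetherian scheme $\frakU_n$ and then invoke Serre's equivalence to get a finite $A/\calI^{n+1}$-module $M_n$. But ``of finite type'' for a sheaf of $\calO$-modules on a ringed space means only that it is locally a quotient of a free sheaf $\calO^m$; it does not imply quasi-coherence, and a quotient of $\calO^m_{\frakU_n}$ by an arbitrary $\calO_{\frakU_n}$-submodule need not be quasi-coherent even on a Noetherian affine scheme. Likewise, at the final step you invoke ``the $\calI$-adic completeness of both sheaves'' to identify $\tilde M_\infty$ with $\calF|_\frakU$, but $\calI$-adic completeness of $\calF$ (the map $\calF\to\varprojlim_n\calF/\calI^{n+1}\calF$ being an isomorphism) is not a formal consequence of $\calF$ being of finite type — it is essentially the content of what you are trying to prove. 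These two unargued points are where the real work in $(2)\Rightarrow(3)$ sits; the Mittag-Leffler/Artin–Rees assembly of the tower $\{M_n\}$, which you single out as the main obstacle, is comparatively routine once quasi-coherence of the reductions and completeness of $\calF$ are known. Any complete argument must establish one of these (or, equivalently, prove directly that a finite-type $\calO_\frakU$-module is automatically finitely presented) using the $\calI$-adic structure of $\calO_\frakU$, which is precisely what the cited results in Abbes supply.
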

		\begin{proof}
			See (\cite{Abbes}, 2.7.2 and 2.8.2).
		\end{proof}
		
		In this article we will typically work relative to a base ring $R$, which we assume to be a complete Noetherian local domain with maximal ideal $\frakm$. Let $\FS_R$ denote the category of morphisms $\frakX \to \Spf(R)$ in $\FS$. All objects of $\FS_R$ carry the $\frakm$-adic topology. Given an object $\frakX$ of $\FS_R$, we may write $\frakX_n$ for the reduction of $\frakX$ mod $\frakm^{n+1}$, so that $\frakX = \varinjlim_n \frakX_n$. If $\Fin_R$ denotes the category of $R$-schemes locally of finite presentation, then $\frakm$-adic completion induces a functor
		\begin{equation*}
			\Fin_R	\to	\FS_R.
		\end{equation*}
		Suppose that $h:R \to S$ is a map of Noetherian local domains. To every object $\frakX$ of $\FS_R$, we define the \emph{formal base change}
		\begin{equation*}
			\frakX_h	=	\varinjlim_n	(\frakX \times_R S)_n.
		\end{equation*}
		Then $\frakX_h$ is an object of $\FS_S$, and there is an induced morphism $\frakX_h \to \frakX$ in $\FS^+$. The assignment $\frakX \to \frakX_h$ gives a formal base change functor
		\begin{equation*}
			\FS_R	\to	\FS_S.
		\end{equation*}
		
		Our notion of rigid analytic spaces differs from the usual notion, in particular we do not assume that a rigid analytic space is defined relative to a field. Rather, we may construct a category $\Rig$ of quasi-compact, quasi-separated rigid analytic spaces by localizing the category $\FS$ at a suitable class of morphisms called \emph{admissible blow-ups}. Let $R$ be as above, and $\calW$ the rigid analytic space associated to $\Spf(R)$. The category $\Rig_R$ is defined to be the category of morphisms $\calX \to \calW$ in $\Rig$. When $R$ is a discrete valuation ring, the localization functor $\FS_R \to \Rig_R$ coincides with Raynaud's generic fiber functor \cite{Raynaud}.
		
		For the study of eigenvarieties, we will need to enlarge the category $\Rig$ to include rigid analytic spaces which are not necessarily quasi-compact. Recall that an \emph{ind-object} in $\Rig$ is a functor $I \to \Rig$, where $I$ is a small filtered category. We let $\underline{\Rig}$ denote the category of ind-objects $I \to \Rig$ such that every morphism in $I$ maps to an open immersion in $\Rig$. If $\calX:I \to \Rig$ and $\calY:J \to \Rig$ are two objects of $\underline{\Rig}$, then we have
		\begin{equation*}
			\Hom_{\underline{\Rig}}(\calX,\calY)	=	\varprojlim_{i \in I} \varinjlim_{j \in J}	\Hom_{\Rig}(\calX_i,\calY_j).
		\end{equation*}
		The objects of $\underline{\Rig}$ may be regarded as directed unions of quasi-compact rigid analytic spaces. Our basic examples of such spaces will be the relative affine line $\A^1_\calW$ and the relative multiplicative group $\G_{m,\calW}$.
		
		For convenience of the reader, we develop the notions of weak and ordinary analytic geometry over $R$ in parallel. In proofs, we will usually restrict our attention to the weak case. Often, our proofs adapt readily to the ordinary case by replacing the notion of ``weak completion'' with $\frakm$-adic completion. Alternatively, all statements for rigid analytic spaces in this sense can be found in \cite{Abbes}.
		
	\subsection{Weak Formal Schemes}
	
		The notion of formal schemes with a weakly complete structure sheaf has been developed by Meredith \cite{Meredith}. This section is intended as a brief overview of the theory. We begin by recalling the overconvergent theory of Monsky and Washnitzer. For $r \in R$, let
		\begin{equation*}
			v_\frakm(r)	=	\sup\left\{	i:r \in \frakm^i	\right\}.
		\end{equation*}
		We say that a power series $f = \sum_u f_u X^u \in R[[X_1,...,X_n]]$ is \emph{overconvergent} if
		\begin{equation*}
			\lim_{k \to \infty}	\inf_{|u| > k}	\frac{v_\frakm(f_u)}{|u|}	>	0.
		\end{equation*}
		The overconvergent power series in $n$ variables form an $R$-algebra which we denote by $R[X_1,...,X_n]^\dagger$. It is well known that this ring is Noetherian \cite{Fulton}. For any $R$-algebra $A$, we write $A_\infty$ for its $\frakm$-adic completion. The \emph{weak completion} of $A$ is the $R$-subalgebra $A^\dagger$ of $A_\infty$ consisting of elements of the form $f(a_1,...,a_n)$, where $f \in R[X_1,...,X_n]^\dagger$ for some $n$, and $a_1,...,a_n \in R$. The canonical map $A \to A^\dagger$ is flat provided that $A$ is Noetherian (\cite{Meredith}, 1.3). We say that $A$ is \emph{weakly complete} if this map is an isomorphism. We will frequently make use of the following result of Monsky and Washnitzer (\cite{MW}, 1.6):
		
		\begin{lemma}\label{l:ff}
			If $A$ is weakly complete, then $A$ is a Zariski ring. In particular, the $\frakm$-adic completion $A \to A_\infty$ is faithfully flat.
		\end{lemma}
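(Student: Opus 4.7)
My plan is to reduce the lemma to the statement that $\frakm A$ lies in the Jacobson radical of $A$, and to prove that statement by an explicit overconvergent geometric series. The faithful flatness of $A \to A_\infty$ then follows from classical Zariski ring theory, once Noetherian-ness of $A$ has been observed. Since $A$ is a weakly complete (and, in the context of this paper, finitely generated) $R$-algebra, it is a quotient of some $R[X_1,\ldots,X_n]^\dagger$, which is Noetherian by the cited result of Fulton \cite{Fulton}; hence so is $A$.

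The heart of the argument is to show that for every $m \in \frakm A$, the element $1-m$ is a unit in $A$. Given such $m$, write $m = r_1 a_1 + \cdots + r_k a_k$ as a finite sum with $r_i \in \frakm$ and $a_i \in A$. Consider the formal series
\begin{equation*}
    f(X_1,\ldots,X_k) \;=\; \sum_{n \geq 0} (r_1 X_1 + \cdots + r_k X_k)^n \;=\; \sum_{u \in \Z_{\geq 0}^k} c_u X^u,
\end{equation*}
where the coefficient $c_u = \binom{|u|}{u_1,\ldots,u_k}\, r_1^{u_1}\cdots r_k^{u_k}$ lies in $\frakm^{|u|}$ because each $r_i \in \frakm$. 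Thus $v_\frakm(c_u)/|u| \geq 1$, which shows $f \in R[X_1,\ldots,X_k]^\dagger$. By the definition of $A^\dagger$, the evaluation $f(a_1,\ldots,a_k)$ lies in $A^\dagger = A$, and by construction it is the geometric series $\sum_{n\geq 0} m^n$, the multiplicative inverse of $1-m$ in $A_\infty$. Since $A \hookrightarrow A_\infty$, it is also the inverse in $A$.

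Consequently $\frakm A$ is contained in the Jacobson radical of $A$, so $(A,\frakm A)$ is a Zariski ring. The faithful flatness of the $\frakm$-adic completion $A \to A_\infty$ is then a standard consequence: flatness for a Noetherian ring is furnished by the Artin--Rees lemma, and faithful flatness follows because every maximal ideal of $A$ contains $\frakm A$ and thus survives completion.

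The only non-routine step is the overconvergence estimate for $f$, but this is a direct computation once one expresses $m$ as a finite $R$-linear combination with coefficients from $\frakm$; the multinomial expansion automatically produces coefficients in $\frakm^{|u|}$, which is precisely the bound required by the definition of overconvergence.
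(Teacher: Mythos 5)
The paper does not prove this lemma; it is stated with the citation to Monsky--Washnitzer, so there is no in-text argument for you to compare against. Your proposal is a correct, self-contained reconstruction of the classical proof behind that reference, and it is the right argument. The overconvergent geometric series is exactly the standard device: your coefficient estimate $c_u = \binom{|u|}{u_1,\ldots,u_k}\,r^u \in \frakm^{|u|}$ gives $v_\frakm(c_u)/|u| \geq 1$, which is precisely the bound the paper's definition of overconvergence requires, and evaluating $f$ at $(a_1,\ldots,a_k)$ then lands in $A^\dagger = A$ by weak completeness, so $1-m$ is invertible. Two small points are worth making explicit. First, the hypothesis ``weakly complete'' does not by itself give Noetherianity, which you need both for ``Zariski ring'' to make sense and for flatness of the $\frakm$-adic completion; your appeal to Fulton's theorem via a w.c.f.g.\ presentation is the correct fix and matches the paper's standing convention, but it is a genuine addition to the stated hypothesis. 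Second, the clause ``since $A \hookrightarrow A_\infty$'' deserves a word of justification: injectivity holds because $A^\dagger$ is by construction a subring of $A_\infty$ and weak completeness says $A \to A^\dagger$ is an isomorphism, so the composite $A \to A_\infty$ is injective (this is not circular with the Jacobson-radical claim you are proving). With those remarks the proof is complete.
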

		
		\begin{definition}
			An $R$-algebra $A$ is a w.c.f.g. algebra (``weakly complete and weakly finitely generated'') if $A$ is a quotient of $R[X_1,...,X_n]^\dagger$ for some $n$.
		\end{definition}
		
		Let $A$ be a w.c.f.g. algebra, and $A_0=A/\frakm A$. For each $f \in A$, the map $A \to A_f$ is flat of finite presentation. In particular, $A \to A_f^\dagger$ is a flat map of w.c.f.g. algebras. If $M$ is an $A$-module of finite type, we define $M_f^\dagger = M \otimes_A A_f^\dagger$. Let $A_{f,0} = A_f/\frakm A_f$. By (\cite{Meredith}, 2.3), if $\Spec(A_{f,0}) \supseteq \Spec(A_{g,0})$, then the map $A_{f,0} \to A_{g,0}$ lifts uniquely to a map $A_f^\dagger \to A_g^\dagger$ of $A$-algebras. In particular, we can associate to $M$ a presheaf $\tilde{M}^\dagger$ on the distinguished open sets of $A_0$ by setting
		\begin{equation*}
			\Gamma( \Spec(A_{f,0}),\tilde{M}^\dagger)	=	M_f^\dagger.
		\end{equation*}
		The main Theorem of (\cite{Meredith}, \S 2) then states:
		
		\begin{theorem}
			(Meredith). The presheaf $\tilde{M}^\dagger$ is a sheaf.
		\end{theorem}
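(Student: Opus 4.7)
The plan is to reduce the assertion to the analogous, classical statement for the $\mathfrak{m}$-adic completion $\tilde{M}_\infty$ on the formal scheme $\Spf(A_\infty)$, exploiting the faithful flatness of $A=A^\dagger \to A_\infty$ furnished by Lemma \ref{l:ff}. Since distinguished opens form a basis of the topology on $\Spec(A_0)$, it suffices to verify the sheaf condition for arbitrary finite distinguished covers: given $\Spec(A_0) = \bigcup_{i=1}^n \Spec(A_{f_i,0})$, I want to show that the \v{C}ech complex
\begin{equation*}
    0 \to M^\dagger \to \prod_{i} M_{f_i}^\dagger \to \prod_{i,j} M_{f_i f_j}^\dagger
\end{equation*}
is exact as a sequence of $A$-modules. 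Note that the restriction maps are well defined precisely because the uniqueness result of Meredith recalled just before the theorem lifts the inclusions $\Spec(A_{fg,0}) \hookrightarrow \Spec(A_{f,0})$ to canonical $A$-algebra maps $A_f^\dagger \to A_{fg}^\dagger$.

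Next, I would establish the key compatibility with $\mathfrak{m}$-adic completion: for each $f \in A$ there is a canonical isomorphism
\begin{equation*}
    M_f^\dagger \otimes_{A^\dagger_f} (A^\dagger_f)_\infty \;\cong\; M_{f,\infty},
\end{equation*}
since $(A^\dagger_f)_\infty = A_{f,\infty}$ (both are the $\mathfrak{m}$-adic completion of $A_f$) and $M$ is finite over $A$. Applying the exact base-change functor $- \otimes_{A} A_\infty$ to the \v{C}ech complex above therefore yields the analogous \v{C}ech complex for the $\mathfrak{m}$-adically completed sheaf $\tilde{M}_\infty$ on the affine formal scheme $\Spf(A_\infty)$. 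This latter complex is exact by Proposition \ref{p:coh}, since a coherent sheaf on an affine Noetherian formal scheme is the formal-completion of the sheaf associated to its finite module of global sections and hence trivially satisfies the \v{C}ech sheaf condition.

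Finally, I invoke Lemma \ref{l:ff}: the completion map $A \to A_\infty$ is faithfully flat, so a complex of $A$-modules is exact if and only if it becomes exact after base change to $A_\infty$. Combined with the previous step, this gives exactness of the original \v{C}ech complex, which is the desired sheaf property.

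I expect the main obstacle to be the compatibility in the middle paragraph, namely checking that the \v{C}ech differentials for $\tilde{M}^\dagger$ are intertwined with those for $\tilde{M}_\infty$ under the completion. This is not deep, but it requires the uniqueness of Meredith's lifts $A_f^\dagger \to A_g^\dagger$ so that these maps agree, after $\mathfrak{m}$-adic completion, with the usual localization-completion maps $A_{f,\infty} \to A_{g,\infty}$ used in the classical formal scheme theory; once this identification is in place, the transfer via faithful flatness is formal.
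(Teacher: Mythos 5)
The paper does not prove this result; it cites it directly from Meredith, so there is no in-text argument to compare against. Your proposal, however, has a genuine gap at the pivotal ``therefore.'' The compatibility you establish, $M_f^\dagger \otimes_{A_f^\dagger} (A_f^\dagger)_\infty \cong M_{f,\infty}$, is correct because $M_f^\dagger$ is finite over the Noetherian ring $A_f^\dagger$, but it is a base change over the \emph{varying} ring $A_f^\dagger$, not over $A$. It therefore does not justify the statement you actually invoke, namely that the single functor $-\otimes_A A_\infty$ carries your \v{C}ech complex to the one for $\tilde{M}_\infty$. That would require $M_f^\dagger \otimes_A A_\infty \cong M_{f,\infty}$, equivalently $A_f^\dagger \otimes_A A_\infty \cong A_{f,\infty}$, and this fails in general because the left-hand tensor product is not $\frakm$-adically complete. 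Concretely, take $R=\Z_p$, $A=\Z_p[X]^\dagger$, and $f=X$. Then $A_{f,\infty}=\Z_p\langle X,X^{-1}\rangle$, while any finite sum $\sum_j a_j b_j$ with $a_j\in A_f^\dagger$ an overconvergent Laurent series and $b_j\in\Z_p\langle X\rangle$ has negative-degree coefficients whose $p$-adic valuations grow at least linearly in the degree; thus an element such as $\sum_{n>0}p^{\lfloor\sqrt{n}\rfloor}X^{-n}$ lies in $A_{f,\infty}$ but not in the image of $A_f^\dagger\otimes_A A_\infty$.

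Since each term $M_{f_i}^\dagger$ is finite over its own ring $A_{f_i}^\dagger$ rather than over $A$, there is no single faithfully flat base change over $A$ converting your complex into the exact formal one, and the argument cannot be carried out ``by tensoring the whole complex.'' Termwise faithful flatness of $A_{f_i}^\dagger\to(A_{f_i}^\dagger)_\infty$ does give an injection of your \v{C}ech complex into the exact one for $\tilde{M}_\infty$, but an injection into an exact complex does not transmit exactness backwards. The real content of Meredith's theorem is precisely the gluing step your reduction elides: if $s\in M_\infty$ has each restriction $s|_{f_i}$ landing in the overconvergent submodule $M_{f_i}^\dagger\subseteq M_{f_i,\infty}$, then $s$ must already lie in $M$. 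Establishing this uniform overconvergence across a finite distinguished cover is an honest approximation argument, not a formal consequence of Lemma \ref{l:ff}.
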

		
		Consequently, $\tilde{M}^\dagger$ prolongs uniquely to a Zariski sheaf on $\Spec(A_0)$. We define the \emph{weak formal spectrum} of $A$ to be the locally ringed space $\Spwf(A) = (\Spec(A_0),\tilde{A}^\dagger)$.
		
		\begin{definition}
			A locally ringed space is an \emph{affine weak formal scheme} over $R$ if it is isomorphic to $\Spwf(A)$ for some w.c.f.g algebra $A$ over $R$. A \emph{weak formal scheme} is a locally ringed space which admits a covering by affine weak formal schemes.
		\end{definition}
		
		It follows from the definition that every weak formal scheme over $R$ is locally Noetherian. We remark that the ringed spaces $\frakW = \Spf(R)$ and $\Spwf(R)$ are isomorphic, and it will be convenient to regard $\frakW$ as both a weak and ordinary formal scheme. We denote by $\FS_R^\dagger$ the category of quasi-compact weak formal schemes over $R$. If $\frakX$ is an object of $\FS_R^\dagger$, then the $\frakm$-adic completion $\frakX_\infty$ is an object of $\FS_R$. By Lemma \ref{l:ff}, the induced functor
		\begin{equation*}
			\Mod(\calO_\frakX)	\to	\Mod(\calO_{\frakX_\infty})
		\end{equation*}
		is faithfully exact. Our next goal is to show that the $\frakm$-adic completion functor $\Fin_R \to \FS_R$ factors through $\FS_R^\dagger$ by defining the \emph{weak completion} of an $R$-scheme locally of finite presentation. First, we state an easy ``gluing lemma'' which will be useful for globalizing constructions of w.c.f.g. algebras (\cite{Langer}, 2.7):
		
		\begin{lemma}\label{l:glue}
			Let $X$ be a topological space and $\calG$ a sheaf on $X$. Let $\{X_i\}$ be an open covering of $X$, and $\calF_i$ a subsheaf of $\calG|_{X_i}$ satisfying $\calF_i|_{X_i \cap X_j} = \calF_j|_{X_i \cap X_j}$. Then there is a unique minimal subsheaf $\calF$ such that $\calF|_{X_i} = \calF_i$ for all $i$.
		\end{lemma}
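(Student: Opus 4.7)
The plan is to produce $\calF$ by an explicit construction, verify that it is a sheaf whose restriction to each $X_i$ recovers $\calF_i$, and then show it is the unique such subsheaf (so that the ``minimal'' qualifier is trivially satisfied). For each open $U \subseteq X$ I would set
\begin{equation*}
\calF(U) := \{\, s \in \calG(U) : s|_{U \cap X_i} \in \calF_i(U \cap X_i) \text{ for every } i \,\},
\end{equation*}
with restriction maps inherited from $\calG$. This is visibly a subpresheaf of $\calG$, and the separation axiom is inherited from $\calG$.

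For the gluing axiom I would take an open cover $\{U_\alpha\}$ of an open set $U$ together with compatible sections $s_\alpha \in \calF(U_\alpha)$, let $s \in \calG(U)$ be the unique section to which they glue, and verify $s|_{U \cap X_i} \in \calF_i(U \cap X_i)$ by applying the sheaf property of $\calF_i$ to the cover $\{U_\alpha \cap X_i\}$ of $U \cap X_i$ together with the sections $s_\alpha|_{U_\alpha \cap X_i} \in \calF_i(U_\alpha \cap X_i)$. Once $\calF$ is a sheaf, the identification $\calF|_{X_i} = \calF_i$ is verified on opens $U \subseteq X_i$: the inclusion $\calF(U) \subseteq \calF_i(U)$ comes from specializing the defining condition to the index $i$, while the reverse inclusion is the single step in which the overlap compatibility $\calF_i|_{X_i \cap X_j} = \calF_j|_{X_i \cap X_j}$ is actually used, to convert a section of $\calF_i$ restricted to $U \cap X_j \subseteq X_i \cap X_j$ into a section of $\calF_j$ there.

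For uniqueness, let $\calF'$ be any subsheaf of $\calG$ with $\calF'|_{X_i} = \calF_i$. Any $s \in \calF'(U)$ restricts into $\calF'(U \cap X_i) = \calF_i(U \cap X_i)$, so $s \in \calF(U)$ and $\calF' \subseteq \calF$. Conversely, given $s \in \calF(U)$, its restrictions $s|_{U \cap X_i} \in \calF_i(U \cap X_i) = \calF'(U \cap X_i)$ reglue in $\calF'$ by its sheaf property, and the resulting section is forced to equal $s$ inside $\calG(U)$. Hence $\calF' = \calF$, so $\calF$ is unique and a fortiori minimal. The argument is essentially a formal bookkeeping exercise; there is no genuine obstacle, and the only point at which the overlap hypothesis is truly exploited is the verification of $\calF_i \subseteq \calF|_{X_i}$ indicated above.
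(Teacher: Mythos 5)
Your construction of $\calF$ is exactly the one the paper uses, and your verification of the sheaf axiom, of $\calF|_{X_i}=\calF_i$, and of minimality follows the same route. The only point worth flagging is that you establish the stronger conclusion that $\calF$ is the \emph{unique} subsheaf restricting to the $\calF_i$ (not merely the unique minimal one), whereas the paper contents itself with showing $\calF \subseteq \calF'$ for any competitor $\calF'$; both are correct, and your extra half-paragraph makes the ``minimal'' qualifier in the statement vacuous.
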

		\begin{proof}
			For each open subset $U \subseteq X$, we let
			\begin{equation*}
				\calF(U)	=	\{s \in \calG(U): s|_{X_i \cap U} \in \calG(X_i \cap U)\text{ for all }i\}.
			\end{equation*}
			It is not difficult to see that $\calF$ is a sheaf on $X$. If $\calF'$ is a sheaf satisfying $\calF'|_{X_i} = \calF_i$ for all $i$, then by the sheaf axiom there is an inclusion $\calF(U) \to \calF'(U)$ for all open $U \subseteq X$.
		\end{proof}
		
		\begin{proposition}\label{p:wcomp}
			The $\frakm$-adic completion functor $\Fin_R \to \FS_R$ factors uniquely through a functor $\Fin_R \to \FS_R^\dagger$ 
		\end{proposition}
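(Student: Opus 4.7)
The plan is to construct, for each $X \in \Fin_R$, a weak formal scheme $X^\dagger \in \FS_R^\dagger$ whose $\frakm$-adic completion is canonically isomorphic to $X_\infty$, functorially in $X$. We proceed first in the affine case and then glue. For $X = \Spec(A)$ with $A$ a finitely presented $R$-algebra, choose a presentation $A \cong R[T_1,\dots,T_n]/I$ and put
\[
A^\dagger := R[T_1,\dots,T_n]^\dagger / I \cdot R[T_1,\dots,T_n]^\dagger, \qquad X^\dagger := \Spwf(A^\dagger).
\]
This is w.c.f.g.\ by definition, and its $\frakm$-adic completion is $A_\infty$. Independence of the presentation and functoriality for morphisms between affine objects of $\Fin_R$ both follow from the universal property of $R[T_1,\dots,T_n]^\dagger$: an $R$-algebra homomorphism out of this ring into any w.c.f.g.\ algebra is determined by the images of the $T_i$.

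To globalize, choose an affine open cover $\{U_i = \Spec(A_i)\}$ of a general $X \in \Fin_R$. The underlying space of the prospective $X^\dagger$ is $|X_\infty|$, on which the $|U_{i,\infty}|$ form an open cover. By Lemma \ref{l:ff} each structure sheaf $\calO_{U_i^\dagger}$ embeds into $\calO_{U_{i,\infty}}$, so Lemma \ref{l:glue} produces a unique sheaf of rings $\calO_{X^\dagger}$ on $|X_\infty|$, contained in $\calO_{X_\infty}$, whose restriction to $|U_{i,\infty}|$ is $\calO_{U_i^\dagger}$ for every $i$. The resulting locally ringed space $X^\dagger$ is then a weak formal $R$-scheme with $(X^\dagger)_\infty = X_\infty$.

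The main technical step is verifying the hypothesis of Lemma \ref{l:glue}: on the overlap $|U_{i,\infty}| \cap |U_{j,\infty}|$, the restrictions of $\calO_{U_i^\dagger}$ and $\calO_{U_j^\dagger}$ should coincide as subsheaves of $\calO_{X_\infty}$. By refining to a cover by common distinguished opens $\Spec(A_{i,f_i,0}) = \Spec(A_{j,f_j,0})$ of $U_i$ and $U_j$, this reduces to the identity $A_{i,f_i}^\dagger = A_{j,f_j}^\dagger$ as subrings of their shared $\frakm$-adic completion, which follows from the uniqueness of weak lifts in (\cite{Meredith}, 2.3). Functoriality of the global assignment follows from the affine case by working with compatible affine covers. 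Uniqueness of the factorization is automatic: for any weak formal $R$-scheme $\frakX$ with $\frakX_\infty = X_\infty$, Lemma \ref{l:ff} identifies $\calO_\frakX$ with a subsheaf of $\calO_{X_\infty}$, which is locally determined as the weak completion of the corresponding affine chart.
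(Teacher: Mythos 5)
Your proof follows essentially the same strategy as the paper: construct $X^\dagger$ affine-locally as the weak completion of the coordinate ring, take the underlying space to be $|X_\infty|$, glue the structure sheaves using Lemma \ref{l:glue}, and get functoriality from the uniqueness of lifts of $R$-algebra maps to weak completions. The one place where you depart in detail is the verification of the gluing hypothesis. The paper's proof passes to $\frakm$-adic completions and invokes Lemma \ref{l:ff}, somewhat tersely; you instead refine to common distinguished opens and appeal to Meredith's uniqueness of lifts (\cite{Meredith}, 2.3), which makes the key point---that $A_{i,f_i}$ and $A_{j,f_j}$ are the \emph{same} $R$-algebra (coordinate ring of the same open of $X$), hence have identical weak completions inside $\calO_{X_\infty}$---a bit more explicit. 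You also sketch an argument for the uniqueness of the factorization (which the paper's proof does not explicitly address, despite the statement). One small caveat there: to conclude that any functor $F: \Fin_R \to \FS_R^\dagger$ with $(F(X))_\infty \cong X_\infty$ agrees with yours, you need to use not just that $\calO_{F(X)}$ sits inside $\calO_{X_\infty}$ but that the map $A \to \calO_{F(\Spec A)}(X)$ obtained from functoriality forces $\calO_{F(\Spec A)}(X)$ to be exactly the weak completion of $A$ and not some larger w.c.f.g.\ subring of $A_\infty$; this deserves a line of justification, e.g.\ via the universal property together with surjectivity of $A^\dagger\to\calO_{F(\Spec A)}(X)$, but the overall picture is right.
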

		\begin{proof}
			Let $X$ be a scheme locally of finite presentation over $R$. We construct an object $X^\dagger$ of $\FS_R^\dagger$ as follows: the underlying topological space of $X^\dagger$ is that of the $\frakm$-adic completion $X_\infty$. To define the structure sheaf, let $\{X_i \to X\}_i$ be an open affine cover with $X_i = \Spec(A_i)$.  Then $\{(X_i)_\infty \to X_\infty \}_i$ is an open cover of $X_\infty$. Let $\calF_i = \tilde{A}_i^\dagger$, which is a sheaf of w.c.f.g. algebras on $(X_i)_\infty$. To see that these $\calF_i$ satisfy the conditions of Lemma \ref{l:glue}, note that the $\frakm$-adic completions $(\calF_i)_\infty$ agree on double intersections, and so the claim follows from Lemma \ref{l:ff}. We define $\calO_X^\dagger$ to be resulting sheaf of w.c.f.g. algebras on $X^\dagger$. The ringed space $(X^\dagger,\calO_X^\dagger)$ is an object of $\FS_R^\dagger$.
			
			If $f:Y \to X$ is a morphism in $\Fin_R$, for each $i$ choose an affine open cover $\{f_{i,j}:Y_{i,j} \to f^{-1}(X_i)\}_j$ with $Y_{i,j} = \Spec(B_{i,j})$. By (\cite{MW}, 1.5), there are unique maps of w.c.f.g. algebras $A_i^\dagger \to B_{i,j}^\dagger$ extending the maps $A_i \to B_{i,j}$. Let $f_{i,j}^\dagger$ be the corresponding map of weak formal schemes. To see that the morphisms $Y_{i,j}^\dagger \to X_i^\dagger \to X^\dagger$ agree on double intersections, we may simply note that the statement is true on the level of $\frakm$-adic completions, and again apply Lemma \ref{l:ff}.
		\end{proof}
		
		We refer to the functor of Proposition \ref{p:wcomp} as \emph{weak completion}. Given an object $X$ of $\Fin_R$, let $\iota^\dagger:X^\dagger \to X$ denote the corresponding map of ringed spaces. The assignment $\calF \mapsto \calF^\dagger = (\iota^\dagger)^* \calF$ defines a functor
		\begin{equation*}
			\Mod(\calO_X)	\to	\Mod(\calO_{X^\dagger}).
		\end{equation*}
		which is \emph{faithfully} exact, by Lemma \ref{l:ff}. When $\calF$ is a finite $\calO_X$-module, $\calF^\dagger$ may be regarded as the \emph{weak completion} of $\calF$. We have the following characterization of coherent modules on a weak formal scheme:
		
		\begin{proposition}\label{p:wcoh}
			Let $\frakX$ be an object of $\FS_R^\dagger$. The following are equivalent for a sheaf $\calF$ of $\calO_\frakX$-modules:
			\begin{enumerate}
				\item	$\calF$ is coherent
				\item	$\calF$ is of finite type
				\item	For every affine open $\frakU=\Spwf(A)$ of $\frakX$, there is a finite $A$-module such that
					\begin{equation*}
						\calF|_\frakU	=	\tilde{M}^\dagger.
					\end{equation*}
			\end{enumerate}
		\end{proposition}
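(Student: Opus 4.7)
The plan is to follow the proof of the analogous statement for ordinary formal schemes (Proposition \ref{p:coh}), reducing to it via the faithfully exact pullback functor $\iota^*\colon \Mod(\calO_\frakX) \to \Mod(\calO_{\frakX_\infty})$ associated to the $\frakm$-adic completion $\iota\colon \frakX_\infty \to \frakX$, whose exactness is a consequence of Lemma \ref{l:ff}.

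The implication (1) $\Rightarrow$ (2) is immediate. For (3) $\Rightarrow$ (1), suppose $\calF|_\frakU = \tilde{M}^\dagger$ for a finite module $M$ over the Noetherian w.c.f.g. algebra $A$. Then $M$ admits a finite presentation $A^m \to A^n \to M \to 0$, and applying the functor $\tilde{(-)}^\dagger$, which is exact because each localization $A \to A_f^\dagger$ is flat, yields a finite presentation of $\calF|_\frakU$. For coherence, any map $\calO^p \to \calF$ on a basic affine $\Spwf(A_f^\dagger)$ corresponds to a map of $A_f^\dagger$-modules with finitely generated kernel, since $A_f^\dagger$ is itself a Noetherian w.c.f.g. algebra.

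The substance is (2) $\Rightarrow$ (3). Given $\calF$ of finite type on $\frakU = \Spwf(A)$, the pullback $\calF_\infty = \iota^*\calF$ is of finite type on $\frakU_\infty = \Spf(A_\infty)$, hence coherent by Proposition \ref{p:coh}; thus $\calF_\infty = \tilde{N}_\infty$ for the finite $A_\infty$-module $N = \Gamma(\frakU_\infty, \calF_\infty)$. Setting $M = \Gamma(\frakU, \calF)$, it suffices to show that $M$ is finite over $A$ with $M \otimes_A A_\infty \cong N$. Granting this, both $\tilde{M}^\dagger$ and $\calF$ pull back under $\iota^*$ to $\tilde{N}_\infty$ (using compatibility of weak completion with localization), and faithful exactness of $\iota^*$ promotes the canonical map $\tilde{M}^\dagger \to \calF$ to an isomorphism.

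The main obstacle is this descent: producing finite generators of $N$ that come from $M$. By quasi-compactness of $\Spec(A_0)$ and the finite type hypothesis, cover $\frakU$ by finitely many basic affines $\Spwf(A_{f_i}^\dagger)$ on which $\calF$ is generated by local sections. Using the identification $\calO_\frakU/\frakm^{k+1} \cong \calO_{\frakU_\infty}/\frakm^{k+1}$ on each reduction $\frakU_k$, the sheaves $\calF/\frakm^{k+1}\calF$ and $\calF_\infty/\frakm^{k+1}\calF_\infty$ coincide as coherent modules on the ordinary Noetherian scheme $\frakU_k$, reducing the matching of local and global data to classical coherent sheaf theory. A Cech-type gluing then produces global sections $m_1, \ldots, m_r \in M$ whose images in $N$ generate it over $A_\infty$ --- Nakayama applies because $\frakm A_\infty$ lies in the Jacobson radical of $A_\infty$. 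Faithful flatness of $A \to A_\infty$ (Lemma \ref{l:ff}) descends finite generation to $M$, and a second Nakayama argument on the finite $A_\infty$-module $M \otimes_A A_\infty$ shows the kernel of $M \otimes_A A_\infty \to N$ vanishes.
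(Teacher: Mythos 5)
Your proposal follows the same strategy as the paper's proof: set $M=\Gamma(\frakU,\calF)$ and show that the canonical map $\tilde M^\dagger\to\calF$ becomes an isomorphism after $\frakm$-adic completion, then conclude via the faithful exactness of the completion functor from Lemma~\ref{l:ff}. You close the cycle of implications slightly differently --- proving $(2)\Rightarrow(3)$ directly where the paper proves $(\textup{i})\Rightarrow(\textup{iii})$ after first noting $(\textup{iii})\Rightarrow(\textup{i})\Leftrightarrow(\textup{ii})$ --- but the mathematical core is identical.

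Where you go beyond the paper is in trying to \emph{prove}, rather than assert, that $M$ is finite over $A$ and that $M\otimes_A A_\infty \to N=\Gamma(\frakU_\infty,\calF_\infty)$ is an isomorphism; the paper just states ``which is a finite $A$-module'' without comment. This is a genuine subtlety, and you are right to flag it as the main obstacle, but your attempt to resolve it is where the gap lies. The step ``a Cech-type gluing then produces global sections $m_1,\dots,m_r\in M$ whose images in $N$ generate it'' is doing unearned work. The identification $\calF/\frakm^{k+1}\calF \cong \calF_\infty/\frakm^{k+1}\calF_\infty$ is fine, and generating sections of $\calF/\frakm\calF$ over $\frakU_0$ do lift to global sections of the $\frakm$-adically \emph{complete} sheaf $\calF_\infty$ (hence to $N$), by standard formal scheme theory. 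But lifting further to $\Gamma(\frakU,\calF)$ --- whose sections are only weakly complete --- is not a formal consequence of taking quotients mod $\frakm^{k+1}$: it requires some form of acyclicity for coherent modules on affine weak formal schemes, e.g.\ surjectivity of $\Gamma(\frakU,\calF)\to\Gamma(\frakU_0,\calF/\frakm\calF)$, which amounts to $H^1$-vanishing for $\frakm\calF$ on $\frakU$. You neither prove nor cite this. Once that input is available, your Nakayama and faithful-flatness arguments correctly finish the descent; without it the crucial surjectivity of $M\otimes_A A_\infty\to N$ is unjustified.
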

		\begin{proof}
			Since the $\frakm$-adic completion functor $\Mod(\calO_\frakX) \to \Mod(\calO_{\frakX_\infty})$ is exact, Proposition \ref{p:coh} immediately gives (iii) $\Rightarrow$ (i) $\Leftrightarrow$ (ii). To see (i) $\Rightarrow$ (iii), we may reduce to the case that $\frakX=\Spwf(A)$ is affine. Let $M = \Gamma(\frakX,\calF)$, which is a finite $A$-module. There is a canonical map $\tilde{M} \to \calF$, inducing a map $\tilde{M}^\dagger \to \calF^\dagger = \calF$. By Proposition \ref{p:coh}, this is an isomorphism on the level of $\frakm$-adic completions. The result follows from the fact that the $\frakm$-adic completion of $\calO_\frakX$-modules is faithfully exact.
		\end{proof}
		
		The weak completion construction can be used to show e.g. that $\FS^\dagger$ has finite limits and gluings. As in the case of formal schemes, we can also consider the base change of weak formal schemes:
		
		\begin{lemma}\label{l:wbchange}
			Let $h:R \to S$ be a continuous map of complete Noetherian local rings, and $A$ a w.c.f.g. algebra over $R$. Let $A \otimes_R^\dagger S$ denote the weak completion of $A \otimes_R S$ as an $S$-algebra. Then $A \otimes_R^\dagger S$ is a w.c.f.g. algebra over $S$.
		\end{lemma}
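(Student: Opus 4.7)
The plan is to exhibit $A \otimes_R^\dagger S$ explicitly as a quotient of $S[Y_1,\ldots,Y_n]^\dagger$ for a suitable $n$. Since $A$ is w.c.f.g.\ over $R$, fix a surjection of $R$-algebras $\pi:R[X_1,\ldots,X_n]^\dagger \twoheadrightarrow A$, and write $\bar X_i = \pi(X_i) \in A$. By Meredith's construction, any element of the $S$-algebra $A \otimes_R^\dagger S \subseteq (A \otimes_R S)_\infty$ is obtained by evaluating an overconvergent series $g \in S[Y_1,\ldots,Y_m]^\dagger$ at finitely many elements of $A \otimes_R S$. In particular, substituting $Y_i \mapsto \bar X_i \otimes 1$ yields a well-defined $S$-algebra homomorphism
$$\Phi : S[Y_1,\ldots,Y_n]^\dagger \longrightarrow A \otimes_R^\dagger S.$$

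Next I would verify that $\Phi$ is surjective. Any $c \in A \otimes_R^\dagger S$ has the form $c = h(d_1,\ldots,d_m)$ for some $h \in S[Z_1,\ldots,Z_m]^\dagger$ and $d_1,\ldots,d_m \in A \otimes_R S$. Writing each $d_i = \sum_j a_{ij} \otimes s_{ij}$ with $a_{ij} \in A$ and $s_{ij} \in S$, and lifting each $a_{ij}$ to some $\tilde a_{ij} \in R[X_1,\ldots,X_n]^\dagger$ via $\pi$, one obtains $d_i = q_i(\bar X_1 \otimes 1,\ldots,\bar X_n \otimes 1)$, where $q_i := \sum_j s_{ij}\tilde a_{ij}$ lies in $S[Y_1,\ldots,Y_n]^\dagger$ (since $S$-scalar multiples and finite sums of $R$-overconvergent series remain overconvergent over $S$, using that $h:R\to S$ is local). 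The composition rule for overconvergent series then produces an element $\tilde h := h(q_1,\ldots,q_m) \in S[Y_1,\ldots,Y_n]^\dagger$ satisfying $c = \tilde h(\bar X_1 \otimes 1,\ldots,\bar X_n \otimes 1) = \Phi(\tilde h)$, establishing surjectivity.

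Once surjectivity is in hand, $A \otimes_R^\dagger S$ is by construction a quotient of $S[Y_1,\ldots,Y_n]^\dagger$, which is the definition of a w.c.f.g.\ $S$-algebra.

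The one nontrivial input is the composition rule used in the second paragraph: given $h \in S[Z_1,\ldots,Z_m]^\dagger$ and $q_1,\ldots,q_m \in S[Y_1,\ldots,Y_n]^\dagger$, verifying that the formal substitution $h(q_1,\ldots,q_m)$ defines an element of $S[Y_1,\ldots,Y_n]^\dagger$ and not merely of its $\frakm$-adic completion. This reduces to a growth estimate balancing the overconvergence hypothesis $v_{\frakm_S}(h_u)/|u| \to \infty$ against the worst-case $\frakm_S$-adic growth of the coefficients of $q_1^{u_1}\cdots q_m^{u_m}$, and is the most delicate point of the argument. It is a standard piece of Monsky--Washnitzer / Meredith theory that transfers to the $S$-setting once one knows $h:R \to S$ is local, so that the $\frakm_S$-valuation of an image $h(r)$ is bounded below by the $\frakm_R$-valuation of $r$.
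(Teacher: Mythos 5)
Your argument is correct, and in substance it retraces the paper's own proof, but in a more explicit way. The paper dispatches the lemma in three lines: it picks a presentation $R[X_1,\dots,X_n]^\dagger \twoheadrightarrow A$, invokes the universal property of weak completion (Monsky--Washnitzer 1.5) -- since $A\otimes_R^\dagger S$ is weakly complete, there is a unique $S$-algebra map $S[X_1,\dots,X_n]^\dagger \to A\otimes_R^\dagger S$ sending $X_i \mapsto X_i\otimes 1$ -- and then asserts this map is ``clearly surjective.'' You instead build the map by hand from the description of weak completion, and prove surjectivity by explicitly writing a generic element $c=h(d_1,\dots,d_m)$ and pulling the $d_i$ back to $S[Y_1,\dots,Y_n]^\dagger$. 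Both routes are valid; what the paper's terseness buys is that it never has to re-open the question of convergence of substitutions.

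Two observations worth internalizing. First, the ``composition rule'' you flag as the delicate point is not an independent growth estimate: it is exactly the universal property of weak completion applied with target $B = S[Y_1,\dots,Y_n]^\dagger$ (which is weakly complete). The $S$-algebra map $S[Z_1,\dots,Z_m]\to B$ sending $Z_i\mapsto q_i$ extends uniquely to $S[Z_1,\dots,Z_m]^\dagger\to B$, and the image of $h$ under this extension \emph{is} your $\tilde h = h(q_1,\dots,q_m)$. So the statement you identify as the crux is precisely the lemma from Monsky--Washnitzer that the paper cites to construct $\Phi$ in the first place; there is no new estimate to run. Second, your surjectivity argument can be compressed: since $\mathrm{im}(\Phi)$ is a quotient of $S[Y_1,\dots,Y_n]^\dagger$, it is a w.c.f.g.\ $S$-algebra, hence weakly complete; and it contains the image of $A\otimes_R S$ (lift $a\in A$ through $\pi$, push the coefficients through $h$ -- this is where you need $h$ local, which follows from continuity since any $x\in\frakm_R$ with $h(x)$ a unit would give $h(x)^n = h(x^n)\to 0$, absurd). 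Minimality of the weak completion then forces $\mathrm{im}(\Phi)\supseteq A\otimes_R^\dagger S$, with no need to decompose a general element $c$.
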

		\begin{proof}
			Choose a presentation $A = R_1[X_1,...,X_n]/\fraka$. Since $A \otimes_R^\dagger S$ is weakly complete, there is a unique $S$-algebra map
			\begin{equation*}
				S[X_1,...,X_n]^\dagger	\to	A \otimes_R^\dagger S
			\end{equation*}
			sending $X_i \mapsto X_i \otimes 1$. This map is clearly surjective, thus giving the result.
		\end{proof}
		
		\begin{proposition}\label{p:wbchange}
			Let $h:R \to S$ as in Lemma \ref{l:wbchange}. There is a unique \emph{weak base change} functor $\FS_R^\dagger \to \FS_S^\dagger$ compatible with $\frakm$-adic completion, in the sense that the following diagram commutes:
			\begin{equation*}
				\begin{tikzcd}
					\FS_R^\dagger	\arrow[d]	\arrow[r]	&	\FS_S^\dagger	\arrow[d]	\\
					\FS_R	\arrow[r]	&	\FS_S
				\end{tikzcd}
			\end{equation*}
		\end{proposition}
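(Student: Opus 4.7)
The plan is to construct the functor affine-locally using Lemma \ref{l:wbchange} and then globalize by gluing, mimicking the construction of Proposition \ref{p:wcomp}. For the affine case, given $\frakX = \Spwf(A)$ in $\FS_R^\dagger$, I would set $\frakX_h := \Spwf(A \otimes_R^\dagger S)$, which lies in $\FS_S^\dagger$ by Lemma \ref{l:wbchange}. Functoriality is immediate: a morphism $\Spwf(B) \to \Spwf(A)$ corresponds to an $R$-algebra map $A \to B$, inducing the $S$-algebra map $A \otimes_R^\dagger S \to B \otimes_R^\dagger S$ by functoriality of tensor product and weak completion. Compatibility with $\frakm$-adic completion in the affine case reduces to the identification $(A \otimes_R^\dagger S)_\infty \cong (A \otimes_R S)_\infty$, which holds because $\frakm$-adic completion factors through weak completion.

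To globalize, given an arbitrary $\frakX \in \FS_R^\dagger$, I would take the underlying topological space of $\frakX_h$ to be that of the ordinary formal base change $(\frakX_\infty)_h$ already available from the functor $\FS_R \to \FS_S$. Choose an affine open cover $\{\frakU_i = \Spwf(A_i)\}$ of $\frakX$ and consider the sheaves $\calF_i := \widetilde{A_i \otimes_R^\dagger S}^\dagger$, each a sheaf of w.c.f.g. $S$-algebras embedded as a subsheaf of the restriction of $\calO_{(\frakX_\infty)_h}$. On a double intersection $\frakU_i \cap \frakU_j$ the two subsheaves $\calF_i$ and $\calF_j$ have identical $\frakm$-adic completions (namely the restriction of $\calO_{(\frakX_\infty)_h}$ fixed by the formal base change), so Lemma \ref{l:ff} forces $\calF_i = \calF_j$ on the overlap. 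Lemma \ref{l:glue} then yields a single sheaf $\calO_{\frakX_h}$ of w.c.f.g. $S$-algebras, producing an object of $\FS_S^\dagger$. Morphisms are handled analogously, using the unique lifting property of maps between w.c.f.g. algebras together with Lemma \ref{l:ff} to verify agreement on overlaps.

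For uniqueness, any functor making the square commute must send $\Spwf(A)$ to a weak formal $S$-scheme whose $\frakm$-adic completion is $\Spf((A \otimes_R S)_\infty)$; since $A \otimes_R^\dagger S$ is characterized among w.c.f.g. $S$-subalgebras of $(A \otimes_R S)_\infty$ as the weak completion of the image of $A \otimes_R S$, the affine case is forced, and uniqueness globally follows from the gluing. The main technical obstacle is the overlap compatibility in the gluing step: the operation $A \mapsto A \otimes_R^\dagger S$ is intrinsically tied to a chosen affine presentation, and showing that weak completions computed on overlapping charts agree on the nose (rather than merely having isomorphic $\frakm$-adic completions) is the content of invoking Lemma \ref{l:ff}. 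Once this rigidity is in hand, both the construction and its uniqueness follow essentially formally.
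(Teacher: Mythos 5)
Your proposal is correct and follows essentially the same strategy as the paper: define the base change affine-locally via Lemma \ref{l:wbchange}, then glue, using the faithful flatness result Lemma \ref{l:ff} to verify that the gluing data agree on overlaps because they agree after $\frakm$-adic completion. Your version is slightly more explicit than the paper's in two ways: you globalize via Lemma \ref{l:glue} by exhibiting the local pieces as subsheaves of the completed structure sheaf $\calO_{(\frakX_\infty)_h}$ (the paper glues the weak affine formal schemes directly and only invokes Lemma \ref{l:ff} to produce the transition isomorphisms), and you give an explicit uniqueness argument, which the paper's proof omits despite uniqueness appearing in the statement. Both points are welcome refinements and do not change the substance of the argument.
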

		\begin{proof}
			Let $\frakX$ be an object of $\FS_R^\dagger$. Choose a covering $\{\frakX_i \to \frakX\}_i$ where $\frakX_i = \Spwf(A_i)$ for some w.c.f.g. algebra $A_i$ over $R$. Let $\frakX_{i,j} = \frakX_i \cap \frakX_j$. Using Lemma \ref{l:wbchange}, we obtain a weak affine formal scheme $(\frakX_i)_h=\Spf(A_i \otimes_R^\dagger S)$ over $S$. For each $j$, there is an open immersion $(\frakX_{i,j})_h	\to (\frakX_i)_h$, and the identity map $\frakX_{i,j} \to \frakX_{j,i}$ induces an isomorphism $(\frakX_{i,j})_h \to (\frakX_{j,i})_h$ (indeed, by Lemma \ref{l:ff}, this can be checked on the level of completions). Gluing the $(\frakX_i)_h$, we obtain the desired formal $S$-scheme $\frakX_h$.
		\end{proof}
		
		In the notation of the proof, there is a natural map of ringed spaces $\frakX_h \to \frakX$. Proposition \ref{p:wcoh} immediately gives the following:
		
		\begin{proposition}\label{p:cohbchange}
			Let $h:R \to S$ as in Lemma \ref{l:wbchange}, and let $\frakX$ be an object of $\FS_R^\dagger$. For every coherent $\calO_\frakX$-module $\calF$, the $\calO_{\frakX_h}$-module
			\begin{equation*}
				\calF_h	=	\calF	\otimes_{\calO_\frakX}	\calO_{\frakX_h}
			\end{equation*}
			is coherent.
		\end{proposition}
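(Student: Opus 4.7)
The assertion is local on both $\frakX$ and $\frakX_h$, so I would first reduce to the affine case. Choose an affine open $\frakU = \Spwf(A)$ of $\frakX$; by Lemma \ref{l:wbchange}, its base change is $\frakU_h = \Spwf(B)$ with $B = A \otimes_R^\dagger S$, a w.c.f.g. algebra over $S$. By Proposition \ref{p:wcoh}(iii), $\calF|_\frakU = \tilde{M}^\dagger$ for some finite $A$-module $M$. Set $N := M \otimes_A B$, which is a finite $B$-module. It then suffices, by another application of Proposition \ref{p:wcoh}, to show that $\calF_h|_{\frakU_h}$ is isomorphic to $\tilde{N}^\dagger$.

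There is a natural $\calO_{\frakU_h}$-linear map $\tilde{N}^\dagger \to \calF_h|_{\frakU_h} = \tilde{M}^\dagger \otimes_{\calO_\frakU} \calO_{\frakU_h}$ coming from the universal property of tensor product, sending a pure tensor $m \otimes b$ on global sections to $m \otimes b$. To check this is an isomorphism of sheaves on $\frakU_h$, I would pass to $\frakm$-adic completions: by Lemma \ref{l:ff} the completion functor $\Mod(\calO_{\frakU_h}) \to \Mod(\calO_{(\frakU_h)_\infty})$ is faithfully exact, so the map is an isomorphism if and only if it is after completion. After completion, both sides identify with the pullback of the coherent sheaf $\tilde{M}_\infty$ along the morphism of ordinary formal schemes $(\frakU_h)_\infty \to \frakU_\infty$ induced by the flat $A_\infty$-algebra $B_\infty$; by Proposition \ref{p:coh} this pullback is exactly $\tilde{N}_\infty$, so the comparison map is an isomorphism.

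The main technical point is the compatibility of the three operations involved---tensor product, Zariski sheafification on $\Spec(A_0)$, and weak completion---which is not automatic but is subsumed by the faithful flatness of $\frakm$-adic completion (Lemma \ref{l:ff}) together with the already established content of Proposition \ref{p:coh} for formal schemes. Everything else (the independence of the affine cover, the gluing back to a global sheaf on $\frakX_h$) proceeds exactly as in the proof of Proposition \ref{p:wbchange}, again by reducing compatibilities on overlaps to the corresponding statement after $\frakm$-adic completion via Lemma \ref{l:ff}.
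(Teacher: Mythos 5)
Your sketch is correct in substance and matches the spirit of what the paper intends: the paper offers no written argument, merely asserting that the statement is ``immediate'' from Proposition~\ref{p:wcoh}, and what you write is exactly the reduction that makes it so---restrict to an affine $\frakU=\Spwf(A)$, write $\calF|_\frakU=\tilde M^\dagger$ with $M$ finite, and identify $\calF_h|_{\frakU_h}$ with $\tilde N^\dagger$ for $N=M\otimes_A B$, checking the comparison map after $\frakm$-adic completion via Lemma~\ref{l:ff}.

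Two small cautions, neither of which damages the argument. First, you call $B_\infty$ a \emph{flat} $A_\infty$-algebra; but $h:R\to S$ is only assumed to be a continuous map of complete Noetherian local rings, not flat, and $A_\infty\to B_\infty$ can certainly fail to be flat. Fortunately you never actually use flatness of this map: identifying the completed comparison map with a canonical isomorphism of coherent sheaves on $(\frakU_h)_\infty$ is a purely formal pullback computation, not a flatness statement, so the parenthetical ``flat'' should simply be deleted. Second, the citation to Proposition~\ref{p:coh} for the step ``this pullback is exactly $\tilde N_\infty$'' is slightly misplaced: Proposition~\ref{p:coh} characterizes coherent modules but does not by itself assert that pulling back $\tilde M_\infty$ along $\Spf(B_\infty)\to\Spf(A_\infty)$ yields $\widetilde{M\otimes_{A_\infty}B_\infty}_\infty$; that is a standard pullback identity for coherent sheaves on Noetherian formal schemes. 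Invoking it is fine, but it is a separate (if routine) fact rather than a consequence of Proposition~\ref{p:coh}. Apart from those two cosmetic points, the argument is sound.
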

		
	\subsection{Admissible Blowing-Up}
	
		Henceforth we will let $\FS_R^*$ denote either $\FS_R$ or $\FS_R^\dagger$. We will construct the corresponding category $\Rig_R^*$ of \emph{(weak) rigid analytic spaces} over $R$ by localizing $\FS_R^*$ at a class of morphisms referred to as \emph{admissible blow-ups}. We begin by recalling the definition for ordinary formal schemes. Let $\frakX$ be an object of $\FS_R$, and $\calI$ an open ideal of $\calO_\frakX$. The \emph{admissible formal blow-up} of $\frakX$ along $\calI$ is the formal $R$-scheme
		\begin{equation*}
			\frakX_\calI	=	\varinjlim_n	\Proj\left(	\bigoplus_d	\calI^d	\otimes_{\calO_\frakX}	\calO_{\frakX_n}	\right)
		\end{equation*}
		equipped with the canonical projection $\frakX_\calI \to \frakX$. More generally, a morphism $\frakX' \to \frakX$ of formal $R$-schemes is an \emph{admissible formal blow-up} if there exists an open ideal $\calI$ on $\frakX$ such that $\frakX'$ is $\frakX$-isomorphic to $\frakX_\calI$. The following lemma indicates that admissible formal blow-ups are locally the $\frakm$-adic completion of a scheme-theoretic blow up (\cite{Abbes}, 3.13):
		
		\begin{lemma}
			Let $\frakX$ be an object of $\FS_R$, and $\calI$ an open ideal of $\calO_\frakX$.
			\begin{enumerate}
				\item	For every open formal subscheme $\frakU$ of $\frakX$, $\frakX_\calI \times_\frakX \frakU \to \frakU$ is the admissible blow-up of $\frakU$ at $\calI|_\frakU$.
				\item	Suppose that $\frakX=\Spf(A)$ and $I$ is an open ideal of $A$ such that $\calI = \tilde{I}$. Let $X = \Spec(A)$. Then $\frakX_\calI$ is the $\frakm$-adic completion of the scheme-theoretic blow-up $X_I \to X$.
			\end{enumerate}
		\end{lemma}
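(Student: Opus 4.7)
The plan is to unwind both statements using the definition of $\frakX_\calI$ as a filtered colimit of schemes of the form $\Proj\bigl(\bigoplus_d \calI^d \otimes_{\calO_\frakX} \calO_{\frakX_n}\bigr)$, together with two standard properties of $\Proj$ of a graded sheaf of algebras: it is local on the base, and it commutes with arbitrary base change on the base (EGA II, 2.8.10, 3.5.3).

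For (1), the inclusion $\frakU \hookrightarrow \frakX$ is an open immersion, so the fiber product $\frakX_\calI \times_\frakX \frakU$ coincides with the preimage of $\frakU$ under the canonical projection $\frakX_\calI \to \frakX$. The $\frakm$-adic reduction preserves open immersions, i.e.\ $\frakU_n := \frakU \cap \frakX_n$ is open in $\frakX_n$ with structure sheaf $\calO_{\frakU_n} = \calO_{\frakX_n}|_{\frakU_n}$, and $(\calI \otimes_{\calO_\frakX} \calO_{\frakX_n})|_{\frakU_n} = (\calI|_\frakU) \otimes_{\calO_\frakU} \calO_{\frakU_n}$. Since $\Proj$ is local on the base, restricting to $\frakU_n$ yields a canonical isomorphism
\[
\Proj\Bigl(\bigoplus_d \calI^d \otimes_{\calO_\frakX} \calO_{\frakX_n}\Bigr)\times_{\frakX_n} \frakU_n\;\cong\;\Proj\Bigl(\bigoplus_d (\calI|_\frakU)^d \otimes_{\calO_\frakU} \calO_{\frakU_n}\Bigr).
\]
These isomorphisms are compatible with the transition maps in $n$ (a straightforward check on graded algebras), and fiber product with an open immersion commutes with filtered colimits of ringed spaces, so passing to $\varinjlim_n$ identifies $\frakX_\calI \times_\frakX \frakU$ with $\frakU_{\calI|_\frakU}$.

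For (2), writing $A_n = A/\frakm^{n+1}$ and $\calI^d \otimes_{\calO_\frakX} \calO_{\frakX_n} = \widetilde{I^d \otimes_A A_n}$, the base-change property of $\Proj$ gives
\[
\Proj\Bigl(\bigoplus_d I^d \otimes_A A_n\Bigr) \;=\; \Proj_A\Bigl(\bigoplus_d I^d\Bigr) \times_{\Spec A} \Spec A_n \;=\; X_I \times_X \Spec A_n.
\]
By definition of $\frakm$-adic completion of the finite-presentation $X$-scheme $X_I$, the filtered colimit $\varinjlim_n X_I \times_X \Spec A_n$ is exactly the completion $(X_I)_\infty$ of $X_I$ along the preimage of $\frakm$. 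Comparing with the definition of $\frakX_\calI$ yields the stated isomorphism. The only non-routine input here is the base-change formula for $\Proj$, which is the main (but classical) ingredient I would cite; the remaining verifications are compatibilities of colimits with the constructions involved.
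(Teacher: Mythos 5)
The paper does not prove this lemma at all; it is stated with a reference to [Abbes \& Gros, 3.13], so there is no internal argument to compare against. Your proof is correct and fills the gap the paper outsources. Both parts reduce to the two standard facts about $\Proj$ of a quasi-coherent graded sheaf of algebras—locality on the base and compatibility with base change—applied to the Rees algebra $\bigoplus_d \calI^d$, followed by passage to the colimit over the truncations $\frakX_n$. For (1) you correctly observe that a fiber product with an open immersion is just a preimage, that the Rees construction restricts well to opens, and that filtered colimits of ringed spaces commute with restriction to opens. For (2) the base-change formula
\[
\Proj_{A_n}\Bigl(\bigoplus_d I^d \otimes_A A_n\Bigr)\;\cong\; X_I \times_{\Spec A}\Spec A_n
\]
applies without subtlety because the Rees algebra $\bigoplus_d I^d$ is generated in degree one, which is exactly the hypothesis under which base change for $\Proj$ is an isomorphism, and the colimit over $n$ is the definition of the $\frakm$-adic completion $(X_I)_\infty$. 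This is the natural and presumably intended argument; the only thing I would tighten is to state explicitly the degree-one generation of the Rees algebra when invoking the base-change theorem, since that hypothesis is what makes the canonical map an isomorphism rather than merely a closed immersion.
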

		
		\begin{proposition}\label{p:wbl}
			Let $\frakX$ be an object of $\FS_R^\dagger$, and $\calI$ an open ideal of $\calO_\frakX$. There is a unique map of weak formal $R$-schemes $\frakX_\calI \to \frakX$ whose $\frakm$-adic completion is the admissible formal blow-up of $(\frakX_\infty)_{\calI_\infty} \to \frakX_\infty$.
		\end{proposition}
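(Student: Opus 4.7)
The plan is to imitate the proof of Proposition~\ref{p:wcomp}: construct $\frakX_\calI$ by weakly completing the scheme-theoretic blow-up on affine patches, glue via Lemma~\ref{l:glue}, and transfer uniqueness from the formal case through faithful flatness (Lemma~\ref{l:ff}).

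First I would reduce to the affine case by working on an open affine cover $\{\frakU_i = \Spwf(A_i)\}$ of $\frakX$. Since $\calI$ is an open ideal of $\calO_\frakX$, its restriction to $\frakU_i$ is of the form $\tilde I_i^{\dagger}$ for an open ideal $I_i \subseteq A_i$ (containing some power of $\frakm$, hence finitely generated since $A_i$ is Noetherian). Write $I_i = (f_0,\dots,f_r)$ and consider the scheme $X_i = \Spec(A_i)$; the scheme-theoretic blow-up $(X_i)_{I_i}$ is covered by the standard affine charts $\Spec(B_{i,j})$ with $B_{i,j} = A_i[f_0/f_j,\dots,f_r/f_j]$. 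Although $A_i$ itself is not finitely generated over $R$, it is a quotient of some $R[\underline X]^\dagger$, and $B_{i,j}$ is a quotient of $R[\underline X, Y_0,\dots,Y_r]^\dagger$ after adjoining the generators $Y_k = f_k/f_j$ and imposing the blow-up relations. Hence the weak completion $B_{i,j}^\dagger$ (as an $R$-algebra) is a w.c.f.g. algebra over $R$, and $\Spwf(B_{i,j}^\dagger)$ makes sense as an object of $\FS_R^\dagger$.

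Next I would glue the $\Spwf(B_{i,j}^\dagger)$ to form a weak formal $R$-scheme $(\frakU_i)_\calI$ over $\frakU_i$. The gluing data on double intersections exists because it exists on the level of the schemes $\Spec(B_{i,j})$, and, after applying weak completion, compatibility of the resulting transition maps can be checked after $\frakm$-adic completion by Lemma~\ref{l:ff} where it reduces to the gluing data already present for the formal blow-up $(\frakU_{i,\infty})_{\calI_\infty}$. The same reasoning glues the pieces $(\frakU_i)_\calI$ into a global weak formal scheme $\frakX_\calI$ with a natural structure map to $\frakX$. By construction, on each affine patch the $\frakm$-adic completion of $B_{i,j}^\dagger$ is the corresponding affine chart $B_{i,j}^{\wedge}$ of the admissible formal blow-up (\cite{Abbes}, 3.13), so $(\frakX_\calI)_\infty \cong (\frakX_\infty)_{\calI_\infty}$ over $\frakX_\infty$, as required.

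For uniqueness, suppose $\frakY \to \frakX$ is another object of $\FS_R^\dagger$ whose $\frakm$-adic completion is, as an object over $\frakX_\infty$, the admissible formal blow-up. The underlying topological space of $\frakY$ coincides with that of $\frakY_\infty$, hence with that of $\frakX_\calI$. To identify structure sheaves, work affine locally: both $\calO_{\frakY}$ and $\calO_{\frakX_\calI}$ embed into the common completion $\calO_{(\frakX_\infty)_{\calI_\infty}}$ by Lemma~\ref{l:ff}, and in each case they are characterized as the w.c.f.g. $A_i$-subalgebra generated by the blow-up coordinates $f_k/f_j$. The canonical map $\calO_{\frakX_\calI} \to \calO_{\frakY}$ obtained from the isomorphism of completions preserves the generators, and Lemma~\ref{l:ff} applied to both sides shows that this map is an isomorphism and that the resulting isomorphism $\frakX_\calI \cong \frakY$ is the unique one lifting the identity on completions. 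The main technical obstacle is precisely this last point, namely the verification that the w.c.f.g.\ structure on the blow-up charts is rigid inside the $\frakm$-adic completion; once faithful flatness is in hand it reduces to checking that the universal property picking out the overconvergent subalgebra is preserved by the gluing.
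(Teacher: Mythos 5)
Your construction matches the paper's: pass to an affine cover $\{\Spwf(A_i)\}$ on which $\calI$ is the weak completion of an ideal $I_i$, form the weak completion of the scheme-theoretic blow-up $(X_i)_{I_i}\to X_i$ on each piece, glue via Lemma~\ref{l:glue}, and transfer all compatibility checks to $\frakm$-adic completions using Lemma~\ref{l:ff}. The explicit blow-up charts $B_{i,j}$ you write down are exactly the content of Proposition~\ref{p:coords} (with the caveat that one should quotient by $f_j$-torsion, as the paper does, to get the genuine blow-up charts rather than the Rees-algebra charts); otherwise your description agrees with the paper's.

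Where you go further than the paper is in the uniqueness discussion, and the obstacle you flag at the end is a real one. The forward containment $B_{i,j}^\dagger\subseteq\calO_\frakY$ can be arranged: by Lemma~\ref{l:ff} one can descend invertibility of $\calI_\infty\calO_{\frakY_\infty}$ to invertibility of $\calI\calO_\frakY$, which forces the ratios $f_k/f_j$ to lie in $\calO_\frakY$, and weak completeness then gives $B_{i,j}^\dagger\subseteq\calO_\frakY$. But the reverse containment does not follow from faithful flatness alone: the map $B_{i,j}^\dagger\hookrightarrow\calO_\frakY$ is not an isomorphism merely because its completion is, since the cokernel need not be a finitely generated $A_i$-module and faithful flatness cannot kill it. The paper avoids this issue because its proof never attempts an intrinsic characterization of $\frakX_\calI$; the ``uniqueness'' it delivers is that of the minimal glued subsheaf produced by Lemma~\ref{l:glue} for the fixed collection $\{\calF_i\}$, not the stronger claim that any weak formal $\frakX$-scheme completing to the formal blow-up coincides with $\frakX_\calI$. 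If you wish to keep the uniqueness paragraph, you should either restrict the claim to that weaker sense or supply a separate argument that a w.c.f.g.\ $A_i$-algebra sandwiched between $B_{i,j}^\dagger$ and $(B_{i,j})_\infty$ must equal $B_{i,j}^\dagger$.
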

		\begin{proof}
			The underlying map of topological spaces $\frakX_\calI \to \frakX$ is simply that of the admissible formal blow-up $(\frakX_\infty)_{\calI_\infty} \to \frakX_\infty$. To construct the structure sheaf, we use Lemma \ref{l:glue}. Let $\calG$ be the structure sheaf of $(\frakX_\infty)_{\calI_\infty}$. Choose an open cover $\{\frakX_i\}$ of $\frakX$ such that $\frakX_i=\Spwf(A_i)$, where $A_i$ is a w.c.f.g algebra, and $\calI|_{\frakX_i} = \tilde{I}_i^\dagger$ for some open ideal $I_i$ of $A_i$. Write $X_i = \Spec(A_i)$. We define the admissible weak blow-up of $\frakX_i$ to be the weak completion of the ordinary blow-up $(X_i)_{I_i} \to X_i$. Denote this map by $\frakU_i \to \frakX_i$. The $\frakU_i$ form an open cover of $\frakX_\calI$, and the structure sheaf $\calF_i$ of $U_i$ is a subsheaf of $\calG|_{\frakU_i}$. Again by Lemma \ref{l:ff}, it we see easily that the conditions of Lemma \ref{l:glue} are satisfied, and we let $\calO_{\frakX_\calI}$ be the corresponding sheaf on $\frakX_\calI$. Then $\frakX_\calI$ is a weak formal scheme by construction, and we have a natural map of weak formal $R$-schemes $\frakX_\calI \to \frakX$.
		\end{proof}
		
		We refer to the map $\frakX_\calI \to \frakX$ of Proposition \ref{p:wbl} as the \emph{admissible weak blow-up} of $\frakX$ at $\calI$. It is not immediately clear that this map is a morphism in $\FS_R^\dagger$. However, this can be seen from the following description of admissible weak blow-ups in coordinates:
		
		\begin{proposition}\label{p:coords}
			Let $\frakX=\Spwf(A)$ for some w.c.f.g. algebra $A$, and $I=(a_1,...,a_n)$ an open ideal of $A$. Write $\calI = \tilde{I}^\dagger$. For each $i$, define
			\begin{equation*}
				A_i	=	A\left[	\frac{a_j}{a_i}:j \neq i	\right].
			\end{equation*}
			Then the weak affine formal schemes $X_i = \Spwf(A_i^\dagger/(\text{$a_i$-tor}))$ form an affine open cover of $\frakX_\calI$. If $\frakX$ is $\frakm$-torsion free, then $X_i = \Spwf(A_i^\dagger/(\text{$\frakm$-tor}))$ and $\frakX'$ is $\frakm$-torsion free as well.
		\end{proposition}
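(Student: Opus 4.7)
\medskip

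\noindent\emph{Proof proposal.} The plan is to reduce the statement to the corresponding coordinate description for scheme-theoretic blow-ups, and then transport the result through the weak completion functor of Proposition~\ref{p:wcomp}. Set $X=\Spec(A)$ and let $X_I \to X$ denote the scheme-theoretic blow-up of $X$ along $I$. By the classical local description of blow-ups, $X_I$ is covered by the affine opens
\begin{equation*}
	U_i = \Spec\bigl(A_i / (\text{$a_i$-tor})\bigr), \qquad A_i = A[a_j/a_i:j\neq i].
\end{equation*}
In the proof of Proposition~\ref{p:wbl}, the admissible weak blow-up $\frakX_\calI$ was \emph{defined} (in the affine case) to be the weak completion of $X_I$, so by Proposition~\ref{p:wcomp} the open cover $\{U_i\}$ of $X_I$ prolongs to an open cover $\{U_i^\dagger\}$ of $\frakX_\calI$ by the weak affine formal schemes $U_i^\dagger = \Spwf\bigl((A_i/(\text{$a_i$-tor}))^\dagger\bigr)$.

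The core technical point is to identify $(A_i/(\text{$a_i$-tor}))^\dagger$ with $A_i^\dagger/(\text{$a_i$-tor}(A_i^\dagger))$. Writing $T = \text{$a_i$-tor}(A_i)$, the weak completion of the finitely generated $A_i$-module $A_i/T$ is
\begin{equation*}
	(A_i/T)^\dagger \;=\; (A_i/T) \otimes_{A_i} A_i^\dagger \;=\; A_i^\dagger / (T\cdot A_i^\dagger).
\end{equation*}
Since $A_i$ is Noetherian, $T = \mathrm{Ann}_{A_i}(a_i^N)$ for some $N$; applying the flat functor $-\otimes_{A_i} A_i^\dagger$ to the left-exact sequence $0 \to T \to A_i \xrightarrow{a_i^N} A_i$ gives $T\cdot A_i^\dagger = \mathrm{Ann}_{A_i^\dagger}(a_i^N) = \text{$a_i$-tor}(A_i^\dagger)$, as desired.

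For the second assertion, we need to show that when $A$ is $\frakm$-torsion free, the $a_i$-torsion and $\frakm$-torsion submodules of $A_i^\dagger$ coincide. In $A_i$ we have $a_j = a_i \cdot (a_j/a_i)$ for all $j$, hence $I A_i = a_i A_i$, and since $I$ is open there exists $k$ with $\frakm^k \subseteq I$, giving $\frakm^{kM} \subseteq a_i^M A_i^\dagger$ for every $M \geq 1$. This yields the inclusion $\text{$a_i$-tor}(A_i^\dagger) \subseteq \text{$\frakm$-tor}(A_i^\dagger)$. For the reverse inclusion it suffices to check that $B := A_i^\dagger / \text{$a_i$-tor}(A_i^\dagger)$ is $\frakm$-torsion free. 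By the flatness argument of the previous paragraph, the $\frakm$-adic completion of $B$ equals $(A_i)_\infty / \text{$a_i$-tor}((A_i)_\infty)$, which is $\frakm$-torsion free by the classical coordinate description of admissible formal blow-ups in the $\frakm$-torsion free case (cf.~\cite{Abbes}). Then Lemma~\ref{l:ff}, which says that $B \to B_\infty$ is faithfully flat, forces $B$ itself to be $\frakm$-torsion free. The second equality $X_i = \Spwf(A_i^\dagger/(\text{$\frakm$-tor}))$ and the $\frakm$-torsion freeness of $\frakX'$ follow immediately.

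The main obstacle is the final paragraph: the delicate part is the reverse inclusion of torsion submodules, which cannot be read off directly from $\frakm^k \subseteq a_i A_i^\dagger$ and instead requires one to lift the classical $\frakm$-torsion free-ness of the ordinary admissible formal blow-up across the faithful flatness of Lemma~\ref{l:ff}.
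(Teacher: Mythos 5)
Your proof is correct, and the first half (the coordinate description and the identification $(A_i/(\text{$a_i$-tor}))^\dagger = A_i^\dagger/(\text{$a_i$-tor})$ via Noetherianity and flatness) is essentially the paper's argument, with the Noetherian stabilization of annihilators usefully spelled out. For the $\frakm$-torsion statement, however, you and the paper genuinely part ways. You establish the reverse inclusion $\text{$\frakm$-tor} \subseteq \text{$a_i$-tor}$ by passing to the $\frakm$-adic completion, citing the analogous $\frakm$-torsion-freeness result for ordinary admissible formal blow-ups in \cite{Abbes}, and then descending along the faithfully flat map $B \to B_\infty$ of Lemma~\ref{l:ff}. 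The paper instead argues entirely on the algebraic side: since $A$ is $\frakm$-torsion free, so is the Rees algebra $\bigoplus_d I^d$ (a subring of $A[t]$) and hence its localization at $a_i$; the ring $A_i/(\text{$a_i$-tor})$ is the degree-zero piece of that localization, so it too is $\frakm$-torsion free, giving the equality of torsion ideals already in $A_i$, which then transfers to $A_i^\dagger$ by the same flatness used in the first half. Both routes are sound, but the Rees-algebra argument is shorter and self-contained (it does not presuppose the corresponding fact for $\frakm$-adically complete formal blow-ups), so your closing remark that the detour through $\frakm$-adic completion and faithful flatness is forced is not accurate --- it is a valid alternative, not a necessity.
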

		\begin{proof}
			Let $X = \Spec(A)$. Then from the theory of ordinary blow-ups, $X_I$ is covered by open affine subsets of the form $\Spec(A_i/(\text{$a_i$-tor}))$. Passing to weak completions, we see that $\frakX_\calI$ is covered by weak affine formal schemes of the form $\Spwf( (A_i/(\text{$a_i$-tor}))^\dagger)$. Since $A_i \to A_i^\dagger$ is flat, $(A_i/(\text{$a_i$-tor}))^\dagger = A_i^\dagger/(\text{$a_i$-tor})$ as desired. To see the second statement, note that $I A_i=(a_i)$ is an open ideal, so that $(\text{$a_i$-tor}) \subseteq (\text{$\frakm$-tor})$ in $A_i$. But if $A$ is $\frakm$-torsion free, so is the graded ring $\bigoplus_d I^d$ and its localizations. It follows that this containment is an equality. The result follows again by flatness of $A_i \to A_i^\dagger$.
		\end{proof}
		
		\begin{proposition}\label{p:univ}
			Let $\frakX$ be an object of $\FS_R^*$ and $\calI$ an open ideal of $\calO_\frakX$. The admissible blow-up satisfies the following universal property: every morphism $\pi:\frakY \to \frakX$ for which $\pi^{-1}\calI \calO_\frakY$ is an invertible sheaf factors uniquely through $\frakX_\calI \to \frakX$. If $\pi$ is a morphism in $\FS_R$, then so is the map $\frakY \to \frakX_\calI$.
		\end{proposition}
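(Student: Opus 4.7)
The plan is to reduce to the affine setting, where the explicit coordinate description of $\frakX_\calI$ from Proposition \ref{p:coords} makes the factorization tractable, and to transfer uniqueness between the weak and ordinary cases via the faithful flatness of Lemma \ref{l:ff}. The result in the ordinary setting $\FS_R$ is essentially classical (cf.\ \cite{Abbes}, 3.1.2): using the definition $\frakX_\calI = \varinjlim_n \Proj(\bigoplus_d \calI^d \otimes \calO_{\frakX_n})$, the statement reduces modulo $\frakm^{n+1}$ to the scheme-theoretic universal property of $\Proj$ applied to each $\frakX_n$, followed by passage to the direct limit.

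For the weak case, I would reduce to $\frakX = \Spwf(A)$ with $\calI = \tilde{I}^\dagger$ for an open ideal $I = (a_1, \ldots, a_n)$ of $A$. Given $\pi:\frakY \to \frakX$ with $\pi^{-1}\calI\calO_\frakY$ invertible, cover $\frakY$ by affine opens $\frakY_i = \Spwf(B_i)$ on which $\pi^*(a_i)$ generates the pullback. Invertibility ensures that $\pi^*(a_i)$ is a non-zero-divisor in $B_i$, so for each $j \neq i$ there is a unique $c_{ji} \in B_i$ with $\pi^*(a_j) = c_{ji}\pi^*(a_i)$. The assignment $a_j/a_i \mapsto c_{ji}$ defines an $A$-algebra map from the localization $A_i$ of Proposition \ref{p:coords} to $B_i$, which lifts uniquely to $A_i^\dagger \to B_i$ by the universal property of weak completion (\cite{MW}, 1.5) and factors through $A_i^\dagger/(\text{$a_i$-tor})$ since $B_i$ is $a_i$-torsion free. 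This produces the local factoring morphism $\frakY_i \to \Spwf(A_i^\dagger/(\text{$a_i$-tor}))$ into the $i$-th chart of $\frakX_\calI$.

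The main technical point is gluing and uniqueness. Compatibility of the $c_{ji}$ on overlaps $\frakY_i \cap \frakY_{i'}$ (arising from their uniqueness) allows the local maps to glue to a global morphism $\frakY \to \frakX_\calI$. I would establish global uniqueness by passing to $\frakm$-adic completions: any two factoring morphisms $\frakY \rightrightarrows \frakX_\calI$ inducing the same morphism on completions must coincide by Lemma \ref{l:ff}, reducing uniqueness to the ordinary case already handled.

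For the final assertion, adicness of $\frakY \to \frakX_\calI$ follows from adicness of both $\pi$ and the structural map $\frakX_\calI \to \frakX$ (the latter being adic by direct inspection via Proposition \ref{p:coords}). Finite presentation can be read off the coordinate description: locally the factoring map is $A_i^\dagger/(\text{$a_i$-tor}) \to B_i$, and since $B_i$ is already weakly finitely presented over $A$ while $A_i^\dagger/(\text{$a_i$-tor})$ is of weak finite type over $A$, a standard relative argument shows $B_i$ is weakly finitely presented over $A_i^\dagger/(\text{$a_i$-tor})$, yielding the desired morphism in $\FS_R^*$.
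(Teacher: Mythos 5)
Your proposal is correct and follows essentially the same route as the paper: reduce to $\frakX = \Spwf(A)$ affine, use the chart description of $\frakX_\calI$ from Proposition~\ref{p:coords}, construct the local maps $A_i^\dagger/(\text{$a_i$-tor}) \to B_i$ by lifting the classical blow-up factorization through weak completion via \cite{MW}~1.5, and glue. The only genuine divergence is in the uniqueness step: the paper argues directly on the charts (any factoring map whose pullback ideal is generated by $a_i$ must land in $\frakX_i$, where there is a unique $A$-algebra map $A_i^\dagger/(\text{$a_i$-tor}) \to B$), whereas you pass to $\frakm$-adic completions and invoke Lemma~\ref{l:ff} to reduce to the ordinary case—both are sound, and yours has the slight advantage of offloading the verification to the well-documented $\FS_R$ theory. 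You are also more explicit than the paper about the final assertion that $\frakY \to \frakX_\calI$ remains a morphism in $\FS_R^*$, which the paper leaves implicit in the chart description.
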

		\begin{proof}
			The problem is local on $\frakX$ and $\frakY$, so assume that $\frakX=\Spwf(A)$, $\calI = \tilde{I}^\dagger$, and $\frakY=\Spwf(B)$. Choose a generating set $I=(a_1,...,a_n)$, and let $A_i$, $\frakX_i$ be as in the proof of Proposition \ref{p:coords}. The ideal $IB$ is invertible, say $IB = (a_i)B$. There is a unique map
			\begin{equation*}
				A_i^\dagger/(\text{$a_i$-tor})	\to	B
			\end{equation*}
			extending the given map $A \to B$. Gluing these maps gives the desired map $\frakY \to \frakX_\calI$. To see uniqueness, note that for every map $\frakY \to \frakX_\calI$ with $\pi^{-1}\calI \calO_\frakY$ generated by $a_i$, the image of $\frakY$ must lie in $\frakX_i$. But we have just seen that there is a unique such map.
		\end{proof}
		
		At this point, we may define the category $\Rig_R^*$ to be the localization of $\FS_R^*$ at the class of admissible blow-ups. We may also define the absolute category $\Rig$ to be the localization of $\FS$ at the class of admissible formal blow-ups, however there is no analogue of this category in the weakly complete case. We will denote the localization functor by $\frakX \mapsto \frakX^\rig$ and similarly for morphisms. A \emph{model} of an object $\calX$ in $\Rig_R^*$ is defined to be an object $\frakX$ in $\FS_R^*$ such that $\frakX^\rig \cong \calX$. A model of a morphism $\calX \to \calY$ in $\Rig_R^*$ is a morphism $\frakX \to \frakY$ in $\FS_R^*$ fitting into a commutative diagram
		\begin{equation*}
			\begin{tikzcd}
				\frakX^\rig	\arrow[d]	\arrow[r]	&	\frakY^\rig	\arrow[d]	\\
				\calX	\arrow[r]	&	\calY
			\end{tikzcd}
		\end{equation*}
		where the vertical arrows are isomorphisms. We now consider some good categorical properties of the admissible blow-ups in $\FS_R^*$:
		
		\begin{lemma}\label{l:comp}
			Let $\frakX'' \to \frakX'$ and $\frakX' \to \frakX$ be two admissible blow-ups in $\FS_R^*$. Then the composition $\frakX'' \to \frakX$ is an admissible blow-up.
		\end{lemma}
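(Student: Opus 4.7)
The plan is to realize the composite $\frakX'' \to \frakX$ as an admissible blow-up at an ideal of $\calO_\frakX$ constructed by clearing denominators of local generators of $\calI'$. Write $\frakX' = \frakX_\calI$ and $\frakX'' = \frakX'_{\calI'}$. Since the formation of admissible blow-ups is local on the base (Proposition \ref{p:wbl} together with its ordinary analogue), I would first reduce to the case $\frakX = \Spwf(A)$ with $\calI = \tilde{I}^\dagger$ and $I = (a_1,\ldots,a_n)$. By Proposition \ref{p:coords}, the blow-up $\frakX'$ is then covered by the affines $\frakU_i = \Spwf(A_i^\dagger/(a_i\text{-tor}))$ with $A_i = A[a_j/a_i : j \neq i]$, and by Proposition \ref{p:wcoh} the restriction $\calI'|_{\frakU_i}$ is the sheafification of a finitely generated open ideal $J_i$ of $A_i^\dagger/(a_i\text{-tor})$.

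Next I would clear denominators: for $N$ sufficiently large (uniformly in $i$), every chosen generator $b$ of $J_i$ admits a lift $c \in A$ with $\bar{c} = a_i^N b$ in $A_i^\dagger/(a_i\text{-tor})$, and since the $J_i$ are finitely generated and only finitely many indices $i$ appear, such an $N$ exists. Let $K \subseteq A$ be the ideal generated by $I^N$ together with all such lifts $c$, set $\calK = \tilde{K}^\dagger$, and note $\calK$ is open since $K \supseteq I^N$. To verify that the admissible blow-up $\frakX_\calK \to \frakX$ is $\frakX$-isomorphic to $\frakX'' \to \frakX$, I would appeal to the universal property of Proposition \ref{p:univ}. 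On $\frakX''$, both $\calI \calO_{\frakX''}$ and $\calI' \calO_{\frakX''}$ are invertible, and a chart-wise computation identifies $\calK \calO_{\frakX''}$ with the product $(\calI \calO_{\frakX''})^N \cdot \calI' \calO_{\frakX''}$, hence it is invertible and yields a unique morphism $\frakX'' \to \frakX_\calK$ over $\frakX$. Conversely, on $\frakX_\calK$ one checks chart-by-chart that $\calI \calO_{\frakX_\calK}$ becomes invertible (yielding a factorization $\frakX_\calK \to \frakX'$) and then that $\calI' \calO_{\frakX_\calK}$ does too (yielding $\frakX_\calK \to \frakX''$); uniqueness in both universal properties makes these morphisms mutually inverse.

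Finally, although the ideal $K$ depends on choices of generators and their lifts, the resulting blow-up $\frakX_\calK$ is canonical by the universal property, so the construction glues over an affine cover of $\frakX$ to give a global admissible blow-up realizing $\frakX'' \to \frakX$. The principal obstacle is the bookkeeping in the denominator-clearing step — in particular, securing a single $N$ that works across all finitely many charts and chosen generators, and verifying the equality $\calK \calO_{\frakX''} = (\calI \calO_{\frakX''})^N \cdot \calI' \calO_{\frakX''}$ despite the quotients by $(a_i\text{-tor})$ appearing in Proposition \ref{p:coords}. These difficulties are controlled respectively by the quasi-compactness of $\frakX$ (so that only finitely many charts and generators are involved) and by the faithful flatness of $\frakm$-adic completion on weakly complete algebras provided by Lemma \ref{l:ff}.
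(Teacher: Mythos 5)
Your argument takes a genuinely different route from the paper's. The paper descends the ideal $\calJ$ on $\frakX' = \frakX_\calI$ to an ordinary ideal sheaf $\calJ'$ on the scheme $X_I$ (via the intersection $J'_j = J_j \cap A_j$ and the gluing Lemma \ref{l:glue}), invokes the classical fact that a composition of scheme-theoretic blow-ups is a blow-up of some ideal $I'$, and then weakly completes; the universal property of Proposition \ref{p:univ} is used only at the very end to identify $\frakX''$ with the weak blow-up of $\frakX$ at $(I')^\dagger$. You instead re-prove the composition property directly in the weak setting, explicitly constructing the ideal $K \subseteq A$ by clearing denominators on the charts of Proposition \ref{p:coords}, and then running the universal property in both directions. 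Your route is more self-contained, but it exposes a subtlety that the paper's descent elides by delegating to the scheme theory.

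That subtlety is a genuine gap in your denominator-clearing step. You assert that a generator $b$ of $J_i \subseteq A_i^\dagger/(a_i\text{-tor})$ satisfies $a_i^N b \in \mathrm{im}(A)$ for $N \gg 0$. This is false for a general $b$: the weak completion $A_i^\dagger$ contains elements that do not lie in $A_i = A[a_j/a_i : j \neq i]$, and multiplication by a power of $a_i$ does not bring such an element into the image of $A$. To repair this you must first show that $J_i$ can be generated by elements coming from $A_i/(a_i\text{-tor})$: since $J_i$ is open it contains $\frakm^M$ for some $M$, and since $(A_i^\dagger/(a_i\text{-tor}))/\frakm^M = (A_i/(a_i\text{-tor}))/\frakm^M$ you may lift finitely many generators of $J_i \bmod \frakm^M$ to $A_i/(a_i\text{-tor})$ and adjoin generators of $\frakm^M \subseteq R$; by openness these generate $J_i$. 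Only for such generators is your clearing of denominators valid, and once this is inserted the construction of $K$ and the two-sided universal-property argument can proceed, subject to a careful chart-by-chart verification of the identity $\calK \calO_{\frakX''} = (\calI\calO_{\frakX''})^N \cdot \calI' \calO_{\frakX''}$, which you flag as the main difficulty but do not actually carry out. The paper's $J'_j = J_j \cap A_j$ accomplishes exactly this descent of generators in one stroke, which is what allows it to cite the scheme-theoretic composition result directly.
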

		\begin{proof}
			Suppose first that $\frakX=\Spwf(A)$ is affine, and that $\frakX' \to \frakX$ is the admissible blow-up of $\frakX$ at $\tilde{I}^\dagger$ for some open ideal $I$ of $A$. Let $\calJ$ be an open sheaf of ideals on $\frakX'$ for which $\frakX'' \to \frakX'$ is the weak admissible blow-up of $\frakX'$ at $\calJ$. Let $X=\Spec(A)$, so that $\frakX' \to \frakX$ is the weak completion of the ordinary blow-up $X_I \to X$. We claim $\calJ$ is the weak completion of an open sheaf of ideals $\calJ'$ on $X_I$. To see this, we will apply Lemma \ref{l:glue}. Choose a covering $\{V_j\}$ of $X_I$ with $V_j = \Spec(A_j')$. Then $\{V_j^\dagger\}$ is an open covering of $\frakX'$ with $V_j^\dagger = \Spwf(A_j^\dagger)$, and $\calJ|_{V_j^\dagger} = \tilde{J}_j^\dagger$ for some open ideal $J_j$ of $A_j^\dagger$. Let $J_j' = J_j \cap A_j$, and $\calJ_j' = \tilde{J}_j'$ the associated sheaf of ideals on $V_j$. Then by passing to $\frakm$-adic completions and using Lemma \ref{l:ff}, it follows that the collection $\calJ_j'$ satisfy the conditions of Lemma \ref{l:glue}, giving the desired sheaf $\calJ'$ on $X_I$. The composition
			\begin{equation*}
				(X_I)_{\calJ'}	\to	X_I	\to	X
			\end{equation*}
			is a composition of (scheme-theoretic) blow-ups and therefore is the blow-up of $X$ at some open ideal $I'$. To see the general case, recall that the blow-up of schemes commutes with flat base change, and therefore so does the construction of the ideal $I'$. Using Lemma \ref{l:glue}, we may cover $\frakX$ by weak affine formal schemes and glue the resulting $(I')^\dagger$ to obtain an open ideal $\calI$ of $\calO_\frakX$. We may check locally that the inverse image of $\calI$ in $\frakX''$ is an invertible sheaf, and therefore we have an $\frakX$-morphism $\frakX'' \to \frakX_\calI$. By the preceding local construction, this map is in fact an isomorphism.
		\end{proof}
		
		\begin{lemma}\label{l:bchange}
			Let $f:\frakX \to \frakY$ be a morphism in $\FS_R^*$ and $\frakY' \to \frakY$ an admissible blow-up. Then there exists a diagram in $\FS_R^*$
			\begin{equation*}
				\begin{tikzcd}
					\frakX'	\arrow[bend right,ddr,"\varphi"]	\arrow[bend left,drr]	\arrow[dr,"\psi"]	&	&	\\
						&	\frakX \times_\frakY \frakY'	\arrow[d,"\pi"]	\arrow[r]	&	\frakY'	\arrow[d]	\\
						&	\frakX	\arrow[r,"f"]	&	\frakY
				\end{tikzcd}
			\end{equation*}
			where $\varphi$ and $\psi$ are admissible blow-ups.
		\end{lemma}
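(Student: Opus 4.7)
The approach is to construct $\frakX'$ explicitly as an admissible blow-up of $\frakX$, derive $\psi$ from the universal properties of blow-ups and fiber products, and then recognize $\psi$ itself as an admissible blow-up of $\frakX \times_\frakY \frakY'$.

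First I would fix an open ideal $\calJ \subseteq \calO_\frakY$ with $\frakY' \cong \frakY_\calJ$. Because $f$ is adic, the pullback $\calI := f^{-1}\calJ \cdot \calO_\frakX$ is again open (a power of an ideal of definition of $\frakY$ contained in $\calJ$ pulls back to a power of an ideal of definition of $\frakX$), so I would set $\frakX' := \frakX_\calI$ with projection $\varphi : \frakX' \to \frakX$, an admissible blow-up by construction. Under the composite $\frakX' \to \frakX \to \frakY$ the pullback of $\calJ$ equals the invertible ideal $\calI \cdot \calO_{\frakX'}$, so Proposition \ref{p:univ} supplies a unique lift $\frakX' \to \frakY'$; combined with $\varphi$, the universal property of the fiber product then produces $\psi : \frakX' \to \frakX \times_\frakY \frakY'$, and commutativity of all the arrows in the stated diagram is automatic.

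The substantive claim is that $\psi$ is itself an admissible blow-up. My plan is to prove that $\psi$ is the admissible blow-up of $\frakX \times_\frakY \frakY'$ at the open ideal $\mathcal{K} := \calI \cdot \calO_{\frakX \times_\frakY \frakY'}$. Since the statement is local on $\frakX$ and $\frakY$, I would reduce to $\frakX = \Spwf(A)$, $\frakY = \Spwf(B)$, $\calJ = \tilde{J}^\dagger$ with $J = (b_1,\ldots,b_n)$, and write $a_i := f^*(b_i)$. Proposition \ref{p:coords} gives explicit affine charts indexed by $i$ for both $\frakY'$ and $\frakX'$. On the $i$-th chart of $\frakX \times_\frakY \frakY'$ the ideal $\mathcal{K}$ becomes the principal ideal $(a_i)$, so applying Proposition \ref{p:coords} once more to this single-generator blow-up reduces matters to checking that killing the $a_i$-torsion in the fiber-product coordinate ring recovers the $i$-th chart of $\frakX'$.

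The hard part will be this final local comparison: identifying $(A \otimes_B B[b_j/b_i]/(b_i\text{-tor}))^\dagger$ modulo its $a_i$-torsion with $(A[a_j/a_i]/(a_i\text{-tor}))^\dagger$. Intuitively the image of the $b_i$-torsion is absorbed by the $a_i$-torsion, so the identification ought to be essentially formal, but to control the weak completion cleanly my plan is to verify the analogous identity first after $\frakm$-adic completion (where the standard theory of strict transforms for admissible formal schemes applies, cf.\ \cite{Abbes}) and then descend the conclusion back to the weakly complete rings via the faithful flatness of Lemma \ref{l:ff}.
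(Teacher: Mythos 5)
Your construction of $\frakX'$, $\varphi$, and $\psi$ matches the paper exactly: set $\calI := f^{-1}\calJ\,\calO_\frakX$, take $\frakX' := \frakX_\calI$, lift $\frakX' \to \frakY'$ via Proposition~\ref{p:univ}, and use the universal property of the fiber product to obtain $\psi$. You also identify the right target for the final step, namely that $\psi$ is the admissible blow-up of $\frakX \times_\frakY \frakY'$ at $\pi^{-1}\calI\,\calO_{\frakX \times_\frakY \frakY'}$.

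Where you diverge is in \emph{how} you establish that last claim. The paper's proof is a pure universal-property argument and is quite terse: it observes that the blow-up $\frakX'' := (\frakX \times_\frakY \frakY')_{\pi^{-1}\calI}$ fits into the same diagram that $\frakX'$ does. Unwinding this, one uses Proposition~\ref{p:univ} twice in opposite directions --- $\frakX'' \to \frakX$ makes $\calI$ invertible so it factors through $\frakX_\calI = \frakX'$, while $\psi : \frakX' \to \frakX \times_\frakY \frakY'$ makes $\pi^{-1}\calI$ invertible so it factors through $\frakX''$ --- and then uniqueness in either universal property shows these factorizations are mutually inverse, identifying $\psi$ with the admissible blow-up $\frakX'' \to \frakX \times_\frakY \frakY'$. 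By contrast, you propose to verify the identification concretely via the charts of Proposition~\ref{p:coords}, reducing to the algebraic identity that $(A \otimes_B B[b_j/b_i]/(b_i\text{-tor}))^\dagger$ modulo its $a_i$-torsion agrees with $A_i^\dagger/(a_i\text{-tor})$, and controlling the weak completions by completing $\frakm$-adically and descending along Lemma~\ref{l:ff}. That plan is sound --- it is the same completion-and-descent device used repeatedly in the paper, and the underlying ring identity is correct since $A \otimes_B B_i = A_i$ and the $b_i$-torsion maps into $a_i$-torsion --- but it is more work than the universal-property route. You might also note, as a sanity check, that on the $i$-th chart of $\frakX'$ the ideal $\calI$ is generated by $a_i$, so the lift $\frakX' \to \frakY'$ carries this chart into the $i$-th chart of $\frakY'$; this is what ensures your chart-by-chart comparison lines up.

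In short: the first part of your proof coincides with the paper's; the last step is correct but replaces an efficient universal-property argument with an explicit coordinate computation, gaining concreteness at the cost of the torsion/weak-completion bookkeeping you flag.
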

		\begin{proof}
			Say $\frakY' \to \frakY$ is the admissible blow-up of $\frakY$ at the open ideal $\calJ$. Let $\calI = f^{-1}\calI \calO_\frakX$, which is an open ideal of $\calO_\frakX$. Let $\frakX' = \frakX_\calI$. The inverse image of $\calJ$ under the composition $\frakX' \to \frakX \to \frakY$ generates an invertible ideal of $\frakX'$, therefore by Proposition \ref{p:univ}, we have a map $\frakX' \to \frakY'$. By the universal property of the fiber product, there is a unique map $\psi:\frakX' \to \frakX \times_\frakY \frakY'$ making the diagram commute. Note however that the admissible blow-up of $\frakX \times_\frakY \frakY'$ at $\pi^{-1}\calI \calO_{\frakX \times_\frakY \frakY'}$ also satisfies this property.
		\end{proof}
		
		\begin{lemma}\label{l:coeq}
			Suppose that
			\begin{equation*}
				\begin{tikzcd}
					\frakX	\arrow[r,shift left] \arrow[r,shift right] &	\frakY'	\arrow[r,"\psi"]	&	\frakY
				\end{tikzcd}
			\end{equation*}
			is a commutative diagram in $\FS_R^*$, where $\psi$ is an admissible blow-up. Then there is an admissible blow-up $\varphi:\frakX' \to \frakX$ making the diagram
			\begin{equation*}
				\begin{tikzcd}
					\frakX'	\arrow[r,"\varphi"]	&	\frakX	\arrow[r,shift left] \arrow[r,shift right] &	\frakY'
				\end{tikzcd}
			\end{equation*}
			commute.
		\end{lemma}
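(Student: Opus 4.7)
The plan is to coequalize the two parallel arrows by admissibly blowing up $\frakX$ along the pullback of the ideal being blown up in $\psi$, and then invoking the universal property of admissible blow-ups from Proposition \ref{p:univ}.

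Call the two parallel arrows $f, g \colon \frakX \to \frakY'$, and let $h = \psi \circ f = \psi \circ g \colon \frakX \to \frakY$. Choose an open ideal $\calJ \subset \calO_\frakY$ such that $\psi \colon \frakY' \to \frakY$ is the admissible blow-up of $\frakY$ at $\calJ$; by definition of admissible blow-up, $\psi^{-1}\calJ \calO_{\frakY'}$ is invertible. Define $\calI = h^{-1}\calJ \calO_\frakX$. I would first verify that $\calI$ is an open ideal of $\calO_\frakX$: since $h$ is adic and of finite presentation (being a morphism in $\FS_R^*$) and $\calJ$ contains some power of an ideal of definition of $\frakY$, its pullback $\calI$ contains a corresponding power of an ideal of definition of $\frakX$. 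Let $\varphi \colon \frakX' = \frakX_\calI \to \frakX$ be the admissible (weak) blow-up of $\frakX$ along $\calI$, which exists by Proposition \ref{p:wbl} in the weak setting (or the analogue for $\FS_R$).

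Now I would apply Proposition \ref{p:univ} to the morphism $h \circ \varphi \colon \frakX' \to \frakY$. By construction, $(h \varphi)^{-1}\calJ \calO_{\frakX'} = \varphi^{-1}\calI \calO_{\frakX'}$, which is invertible since $\varphi$ is the blow-up at $\calI$. Hence $h \circ \varphi$ factors uniquely through $\psi \colon \frakY' \to \frakY$. On the other hand, both $f \circ \varphi$ and $g \circ \varphi$ are morphisms $\frakX' \to \frakY'$ satisfying $\psi \circ (f \circ \varphi) = h \circ \varphi = \psi \circ (g \circ \varphi)$, so both of them qualify as such a factorization. The uniqueness clause of Proposition \ref{p:univ} therefore forces $f \circ \varphi = g \circ \varphi$, as required.

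The construction is quite formal once one has the universal property of admissible blow-ups in hand, so I do not expect a serious obstacle. The only point that requires any care is the verification that $\calI$ is genuinely an open ideal, so that blowing up along it produces an admissible blow-up in the sense of the paper; this is where the hypothesis that the morphisms in $\FS_R^*$ are adic of finite presentation is used. Everything else reduces to a direct double application of Proposition \ref{p:univ}, once to produce a factorization through $\psi$ and once (via its uniqueness) to identify $f \circ \varphi$ and $g \circ \varphi$ with that factorization.
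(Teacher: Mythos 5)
Your proof is correct and follows essentially the same route as the paper's: pull $\calJ$ back along the common composite $\frakX\to\frakY$ to get $\calI$, blow up $\frakX$ at $\calI$, and then use the uniqueness clause of Proposition~\ref{p:univ} to identify $f\circ\varphi$ with $g\circ\varphi$. If anything, your final step is stated more cleanly than the paper's, which compresses the appeal to uniqueness into a somewhat terse remark.
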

		\begin{proof}
			Say $\psi$ is the admissible blow-up of $\frakY$ at an open ideal $\calJ$. Let $\calI$ be the open ideal of $\calO_\frakX$ generated by the inverse image of $\calJ$ along $\frakX \to \frakY$. Then we can define define $\varphi$ to be the admissible blow-up of $\frakX$ at $\calI$. Both arrows $\frakX \to \frakY'$ are the admissible blow-up of $\frakY'$ at $\psi^{-1} \calJ \calO_{\frakY'}$.
		\end{proof}
		
		Lemmas \ref{l:comp}-\ref{l:coeq} together imply that the class of admissible blow-ups in $\FS_R^*$ constitutes a calculus of left fractions. In particular, a morphism $\frakX^\rig \to \frakY^\rig$ may be represented by a diagram $\frakX \xleftarrow{\varphi} \frakX' \to \frakY$, where $\varphi$ is an admissible blow-up if $\FS_R$. These diagrams are subject to the usual equivalence relation which allows one to define their composition. We can use the universal property of localization to show that the construction of $\Rig_R^*$ is well behaved with respect to $\frakm$-adic completion and base change:
		
		\begin{proposition}\label{p:rigcomp}
			There is a unique $\frakm$-adic completion functor $Rig_R^\dagger \to \Rig_R$ compatible with the $\frakm$-adic completion of weak formal schemes, in the sense that the following diagram commutes:
			\begin{equation*}
				\begin{tikzcd}
					\FS_R^\dagger	\arrow[d]	\arrow[r]	&	\FS_R	\arrow[d]	\\
					\Rig_R^\dagger	\arrow[r]	&	\Rig_R
				\end{tikzcd}
			\end{equation*}
		\end{proposition}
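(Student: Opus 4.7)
The plan is to apply the universal property of the localization $\FS_R^\dagger \to \Rig_R^\dagger$. Since $\Rig_R^\dagger$ is by definition obtained from $\FS_R^\dagger$ by formally inverting admissible weak blow-ups, to produce the desired functor $\Rig_R^\dagger \to \Rig_R$ factoring the composite $\FS_R^\dagger \to \FS_R \to \Rig_R$, it suffices to check that this composite sends every admissible weak blow-up to an isomorphism in $\Rig_R$.

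The essential step is thus the following verification: let $\pi\colon \frakX_\calI \to \frakX$ be an admissible weak blow-up in $\FS_R^\dagger$, with $\calI$ an open ideal of $\calO_\frakX$. By the defining property of Proposition \ref{p:wbl}, the $\frakm$-adic completion of $\pi$ is canonically identified with the admissible formal blow-up $(\frakX_\infty)_{\calI_\infty} \to \frakX_\infty$ of ordinary formal schemes. By definition of $\Rig_R$, this morphism is inverted, so the required condition on the composite $\FS_R^\dagger \to \FS_R \to \Rig_R$ holds.

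Given this, the universal property of the localization $\FS_R^\dagger \to \Rig_R^\dagger$ yields a unique functor $\Rig_R^\dagger \to \Rig_R$ making the square commute. Concretely one can describe it via the calculus of left fractions established in Lemmas \ref{l:comp}--\ref{l:coeq}: a morphism in $\Rig_R^\dagger$ is represented by a rooftop $\frakX \xleftarrow{\varphi} \frakX' \to \frakY$ with $\varphi$ an admissible weak blow-up, and one sends it to the rooftop of $\frakm$-adic completions $\frakX_\infty \xleftarrow{\varphi_\infty} \frakX'_\infty \to \frakY_\infty$, in which $\varphi_\infty$ is again an admissible formal blow-up by the same appeal to Proposition \ref{p:wbl}. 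Well-definedness on equivalence classes of rooftops and functoriality follow from the parallel rooftop calculus in $\FS_R$, together with compatibility of $\frakm$-adic completion with compositions of admissible blow-ups.

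The main technical content is thus concentrated in Proposition \ref{p:wbl}; once the identification of completions is in hand, the rest is purely formal, being a direct invocation of the universal property of localization and the rooftop description of morphisms in the two localized categories. I expect no real obstacle beyond ensuring uniqueness, which is automatic from the universal property since the image of $\FS_R^\dagger$ generates $\Rig_R^\dagger$ in the relevant sense.
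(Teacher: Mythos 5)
Your proposal is correct and follows essentially the same argument as the paper: observe (via Proposition \ref{p:wbl}) that $\frakm$-adic completion carries admissible weak blow-ups to admissible formal blow-ups, hence the composite $\FS_R^\dagger \to \FS_R \to \Rig_R$ inverts admissible weak blow-ups, and then apply the universal property of localization. The extra remarks about the rooftop calculus are a harmless elaboration not needed for the proof itself.
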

		\begin{proof}
			The $\frakm$-adic completion of an admissible weak blow-up is an admissible formal blow-up. It follows that the composition $\FS_R^\dagger \to \FS_R \to \Rig_R$ sends admissible weak blow-ups to isomorphisms. The universal property of localization gives the desired functor $\Rig_R^\dagger \to \Rig_R$.
		\end{proof}
		
		\begin{proposition}\label{p:rigbchange}
			Let $h:R \to S$ be a continuous map of complete Noetherian local rings. There is a unique \emph{weak base change} functor $\Rig_R^* \to \Rig_S^*$ compatible with (weak) formal base change in the sense that the following diagram commutes:
			\begin{equation*}
				\begin{tikzcd}
					\FS_R^*	\arrow[d]	\arrow[r]	&	\FS_S^*	\arrow[d]	\\
					\Rig_R^*	\arrow[r]	&	\Rig_S^*
				\end{tikzcd}
			\end{equation*}
		\end{proposition}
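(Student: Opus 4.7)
The plan is to apply the universal property of the localization $\FS_R^* \to \Rig_R^*$, mirroring the argument in Proposition \ref{p:rigcomp}. Concretely, I would form the composition
\begin{equation*}
\FS_R^*	\longrightarrow	\FS_S^*	\longrightarrow	\Rig_S^*,
\end{equation*}
where the first arrow is the (weak) formal base change functor of Proposition \ref{p:wbchange}, and show that it sends admissible blow-ups in $\FS_R^*$ to isomorphisms in $\Rig_S^*$. This reduces the proposition to a single compatibility check; uniqueness of the induced functor is then automatic from the universal property, and the stated square commutes by construction.

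The heart of the argument is the claim that if $\frakX_\calI \to \frakX$ is an admissible (weak) blow-up at an open ideal $\calI$ of $\calO_\frakX$, then after base change along $h:R \to S$ we obtain, up to canonical isomorphism in $\FS_S^*$, the admissible blow-up $(\frakX_h)_{\calI_h} \to \frakX_h$, where $\calI_h = \calI\calO_{\frakX_h}$ is again an open ideal. Since this is evidently still a morphism represented by an admissible blow-up, its image in $\Rig_S^*$ is an isomorphism. The assertion is local on $\frakX$, so I would reduce to the affine case $\frakX = \Spwf(A)$ with $\calI = \tilde{I}^\dagger$ for an open ideal $I = (a_1,\dots,a_n) \subseteq A$. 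Using the explicit chart description of Proposition \ref{p:coords}, the weak blow-up is covered by the $\Spwf(A_i^\dagger/(\text{$a_i$-tor}))$. After applying $-\otimes_R^\dagger S$ (which is well defined by Lemma \ref{l:wbchange}) and using that weak completion commutes with localization and with passage to quotients by ideals generated by $a_i$-torsion (since $A_i \to A_i^\dagger$ is flat, as in the proof of Proposition \ref{p:coords}), these charts agree with the analogous charts for the blow-up of $\frakX_h = \Spwf(A \otimes_R^\dagger S)$ at the image of $I$. The case of ordinary formal schemes is analogous, using $\frakm$-adic completion in place of weak completion.

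The main obstacle I anticipate is verifying that the formation of the blow-up charts genuinely commutes with the weak base change $-\otimes_R^\dagger S$, since weak completion is subtler than $\frakm$-adic completion. To handle this cleanly, I would first pass to $\frakm$-adic completions, where compatibility of admissible formal blow-ups with base change is standard (\cite{Abbes}), and then descend by the faithful flatness of $A \to A_\infty$ guaranteed by Lemma \ref{l:ff}, together with the gluing device of Lemma \ref{l:glue}, to conclude that the identifications of charts agree on overlaps.

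Having established this local compatibility, the global statement follows by covering $\frakX$ with affines, base-changing each chart, and using Lemma \ref{l:glue} (exactly as in Proposition \ref{p:wbchange}) to glue; the resulting morphism $(\frakX_h)_{\calI_h} \to \frakX_h$ is the weak base change of $\frakX_\calI \to \frakX$ and is an admissible blow-up by Proposition \ref{p:wbl}. This supplies the required property, and the functor $\Rig_R^* \to \Rig_S^*$ is produced by localization.
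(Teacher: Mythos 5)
Your overall strategy is correct and matches the paper's: use the universal property of the localization $\FS_R^* \to \Rig_R^*$ and reduce to showing that the composition $\FS_R^* \to \FS_S^* \to \Rig_S^*$ sends admissible blow-ups to isomorphisms. However, the central claim in your reduction --- that the weak base change of the admissible blow-up $\frakX_\calI \to \frakX$ is, up to canonical isomorphism, the admissible blow-up $(\frakX_h)_{\calI_h} \to \frakX_h$ --- is not true in general. Blowing up commutes with \emph{flat} base change, but $h : R \to S$ is only assumed to be a continuous map of complete Noetherian local rings, not flat. In the affine charts of Proposition \ref{p:coords} the issue appears concretely: the chart for $\frakX_\calI$ is $\Spwf(A_i^\dagger/(\text{$a_i$-tor}))$, and after base change you would need
\begin{equation*}
\bigl(A_i^\dagger/(\text{$a_i$-tor})\bigr) \otimes_R^\dagger S \;\cong\; (A_i \otimes_R S)^\dagger / (\text{$a_i$-tor}),
\end{equation*}
which fails when $h$ is not flat (the left side can have extra torsion coming from $\mathrm{Tor}_1^R$ terms). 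Passing to $\frakm$-adic completions and using Lemma \ref{l:ff} does not repair this, since the obstruction is the non-flatness of $R \to S$, not the difference between weak and ordinary completion.

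The paper avoids this by not claiming $(\frakX_\calI)_h$ is a blow-up at all. Instead, following the pattern of Lemma \ref{l:bchange}, it forms the genuine admissible blow-up $\frakX_h' \to \frakX_h$ of $\frakX_h$ at $\calI_h$ (this does live in $\FS_S^*$), and observes that by the universal property of blowing up there is a map $\psi : \frakX_h' \to \frakY$, where $\frakY$ is the base change $(\frakX_\calI)_h$; moreover $\psi$ is itself an admissible blow-up. Since both $\psi$ and the composite $\varphi = \pi \circ \psi : \frakX_h' \to \frakX_h$ are admissible blow-ups, both become isomorphisms in $\Rig_S^*$, and hence so does $\pi : (\frakX_\calI)_h \to \frakX_h$. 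So the fix is to replace your direct identification with this ``sandwich'' argument: you do not need $(\frakX_\calI)_h$ to \emph{be} a blow-up of $\frakX_h$, only for it to be squeezed between two admissible blow-ups, and this is exactly what Lemma \ref{l:bchange} is designed to deliver.
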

		\begin{proof}
			We need only show that the composition $\FS_R^* \to \FS_S^* \to \Rig_S^*$ sends admissible blow-ups to isomorphisms. Let $\calI$ be an open ideal of $\calO_\frakX$. Following the proof of Lemma \ref{l:bchange}, there is a diagram of ringed spaces
			\begin{equation*}
				\begin{tikzcd}
					\frakX_h'	\arrow[bend right,ddr,"\varphi"]	\arrow[bend left,drr]	\arrow[dr,"\psi"]	&	&	\\
						&	\frakY	\arrow[d,"\pi"]	\arrow[r]	&	\frakX_\calI	\arrow[d]	\\
						&	\frakX_h	\arrow[r,"f"]	&	\frakX
				\end{tikzcd}
			\end{equation*}
			where $\frakY$ is the completion of $\frakX_h \times_\frakX \frakX_\calI$ and $\varphi$ and $\psi$ are admissible blow-ups in $\FS_S^*$. It follows that $\pi^\rig$ is an isomorphism in $\Rig_S^*$.
		\end{proof}
		
\section{Weak Analytic Geometry}
		
	\subsection{Weak Rigid Analytic Spaces}
	
		Having constructed the category $\Rig_R^*$, our next goal is to attach to each object $\calX$ a locally ringed topos $(\calX_\ad,\calO_\calX)$ which we will refer to as a \emph{weak rigid analytic space} over $R$. In this section we construct the underlying topos $\calX_\ad$ as a ``limit'' of Zariski topoi along the admissible blow-ups $\frakX' \to \frakX$ of some model $\frakX$ of $\calX$. We defer the construction of the structure sheaf to \ref{s:an}. As is typical when dealing with ringed topoi, we will denote a geometric morphism by $f=(f^{-1},f_*)$, and reserve the notation $f^*$ for the pullback of modules.
		
		We say that a morphism $\calU \to \calX$ in $\Rig_R^*$ is an \emph{open immersion} if it admits a model $\frakU \to \frakX$ which is an open immersion in $\FS_R^*$. It is straightforward to see that a map $\frakU \to \frakX$ in $\FS_R^*$ becomes an open immersion in $\Rig_R^*$ if and only if there is a commutative diagram
		\begin{equation*}
			\begin{tikzcd}
				\frakU'	\arrow[d]	\arrow[r,"f"]	&	\frakX'	\arrow[d]	\\
				\frakU	\arrow[r]	&	\frakX
			\end{tikzcd}
		\end{equation*}
		where $f$ is an open immersion and the vertical arrows are admissible blow-ups. Consequently, for every finite family $\{\calX_i \to \calX\}_i$ of open immersions in $\Rig_R^*$, there is a model $\frakX$ of $\calX$ and models $\frakX_i \to \frakX$ of $\calX_i \to \calX$ such that $\{\frakX_i \to \frakX\}_i$ is a family of Zariski open immersions.
		
		For an object $\calX$ of $\Rig_R^*$, define the category $\Ad_\calX$ to be the category of all open immersions $\calU \to \calX$. If $\frakX$ is an object of $\FS_R^*$, we let $\Zar_\frakX$ denote the Zariski site of $\frakX$. For clarity of notation, we will identify $\frakX$ with its Zariski topos. The \emph{admissible topology} on $\Ad_\calX$ is defined to be the coarsest topology such that for every model $\frakX$ of $\calX$, the canonical map $\Zar_\frakX \to \Ad_\calX$ is continuous. We say that a family of open immersions $\{\calX_i \to \calX\}_i$ is an \emph{admissible covering} if it is a covering in the admissible topology on $\calX$. Let $\calX_\ad$ be the topos of sheaves on $\Ad_\calX$.
		
		\begin{proposition}\label{p:spe}
			The map $\Zar_\frakX \to \Ad_\calX$ preserves finite limits. In particular, there is a geometric morphism $\spe:\calX_\ad \to \frakX$ which we refer to as the \emph{specialization map}.
		\end{proposition}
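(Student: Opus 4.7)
The plan is to verify that the functor $u:\Zar_\frakX \to \Ad_\calX$, sending a Zariski open $\frakU \hookrightarrow \frakX$ to its generic fibre $\frakU^\rig \hookrightarrow \calX$, preserves the terminal object and binary fiber products; these are the only finite limits one needs to check, since both source and target are essentially posets of open subobjects. The terminal case is immediate: $\id_\frakX$ maps to $\id_\calX$. The content lies in the binary products: for Zariski opens $\frakU, \frakV \subseteq \frakX$ with generic fibres $\calU, \calV$, the claim is that $(\frakU \cap \frakV)^\rig$ represents the fiber product $\calU \times_\calX \calV$ in $\Rig_R^*$.

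To establish this I will lean on the calculus of left fractions supplied by Lemmas \ref{l:comp}--\ref{l:coeq}: every morphism in $\Rig_R^*$ is representable by a roof $\frakY \xleftarrow{\alpha} \frakY' \to \frakZ$ with $\alpha$ an admissible blow-up, and two such roofs are equivalent if dominated by a common roof. Given a test object $\calY$ with morphisms to $\calU$ and $\calV$ agreeing over $\calX$, I fix a model $\frakY$ and represent both morphisms by roofs $\frakY \xleftarrow{\alpha_1} \frakY_1 \to \frakU$ and $\frakY \xleftarrow{\alpha_2} \frakY_2 \to \frakV$. By Lemma \ref{l:comp}, blowing up $\frakY$ successively along the ideals underlying $\alpha_1$ and $\alpha_2$ produces a single admissible blow-up $\frakY'' \to \frakY$ dominating both roofs, and hence carrying compatible morphisms $\frakY'' \to \frakU$ and $\frakY'' \to \frakV$ in $\FS_R^*$. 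The two resulting compositions $\frakY'' \rightrightarrows \frakX$ agree in $\Rig_R^*$ by hypothesis; Lemma \ref{l:coeq} then provides a further admissible blow-up after which these compositions coincide strictly in $\FS_R^*$. At that point the universal property of the Zariski intersection $\frakU \cap \frakV = \frakU \times_\frakX \frakV$ furnishes a unique factorization $\frakY''' \to \frakU \cap \frakV$, which descends to the required morphism $\calY \to (\frakU \cap \frakV)^\rig$; its uniqueness in $\Rig_R^*$ follows by one more application of Lemma \ref{l:coeq} to any pair of candidate factorizations.

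With finite-limit preservation in hand, the specialization morphism comes for free. The admissible topology is by definition the coarsest topology on $\Ad_\calX$ for which $u$ is continuous, so $u$ is a continuous, left-exact functor of sites; the standard topos-theoretic recipe then yields a geometric morphism in the opposite direction on sheaves, with direct image given by restriction along $u$ and inverse image by the sheafification of its left Kan extension. This produces the desired $\spe:\calX_\ad \to \frakX$. The principal obstacle is the binary-products step: conceptually it merely says that Zariski intersections survive the localization, but the execution is a bookkeeping exercise requiring repeated use of Lemmas \ref{l:comp}--\ref{l:coeq} to funnel all the data onto a single admissible blow-up over which every morphism is scheme-theoretically defined and mutually compatible.
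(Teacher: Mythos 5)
Your proof is correct and follows the same route the paper intends: the paper dispenses with the argument in one line (``immediate from Lemmas \ref{l:comp}--\ref{l:coeq}''), and what you have written is simply an honest unwinding of that calculus-of-fractions fact for the special case of Zariski intersections. You correctly reduce to terminal objects and binary pullbacks, dominate the two roofs by a single admissible blow-up, equalize, and factor through $\frakU\cap\frakV$ in $\FS_R^*$; the passage from a left-exact continuous functor of sites to a geometric morphism on sheaf topoi is the standard SGA4 recipe the paper is implicitly invoking.

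Two small points of imprecision, neither fatal. First, the step ``two morphisms $\frakY''\rightrightarrows\frakX$ agreeing in $\Rig_R^*$ can be equalized by a further admissible blow-up'' is not literally Lemma \ref{l:coeq}; it is the description of the equivalence relation on roofs in the localized category, which requires the full package of Lemmas \ref{l:comp}--\ref{l:coeq} (and in particular Lemma \ref{l:bchange} to compare the two composite roofs). Similarly, for uniqueness of the factorization you need to know the open immersion $(\frakU\cap\frakV)^\rig\to\calU$ is a monomorphism in $\Rig_R^*$, which again follows from the calculus of fractions together with the fact that open immersions are monomorphisms in $\FS_R^*$, rather than from a single application of Lemma \ref{l:coeq}. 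Second, the claim that $\Ad_\calX$ is a poset is not a priori obvious (it is equivalent to open immersions in $\Rig_R^*$ being monomorphisms); fortunately your argument does not actually use it, since you test the universal property against arbitrary objects of $\Rig_R^*$ rather than only against objects of $\Ad_\calX$.
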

		\begin{proof}
			This is immediate from Lemmas \ref{l:comp}-\ref{l:coeq}.
		\end{proof}
		
		\begin{lemma}\label{l:open}
			The base change of an open immersion in $\Rig_R^*$ is an open immersion.
		\end{lemma}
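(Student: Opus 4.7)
The plan is to construct a model for the base change $\calU \times_\calX \calY \to \calY$ that is manifestly an open immersion in $\FS_R^*$, using the calculus of left fractions established by Lemmas \ref{l:comp}--\ref{l:coeq}. By definition, the open immersion $j:\calU \to \calX$ admits a model $\frakj:\frakU \to \frakX$ in $\FS_R^*$ which is itself an open immersion, and any morphism $f:\calY \to \calX$ can be represented by a span $\frakY \xleftarrow{\varphi} \frakY' \xrightarrow{\tilde f} \frakX$ with $\varphi$ an admissible blow-up. Since $\varphi^\rig$ is an isomorphism in $\Rig_R^*$, the weak formal scheme $\frakY'$ serves equally well as a model of $\calY$.

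Next, I would form the fibered product $\frakV' := \frakU \times_\frakX \frakY'$ in $\FS_R^*$. Because $\frakj$ is an open immersion, $\frakV'$ may be identified with the open sub-formal-scheme $\tilde f^{-1}(\frakU) \subseteq \frakY'$, and so the projection $p:\frakV' \to \frakY'$ is again an open immersion. Its localization $p^\rig:\frakV'^\rig \to \calY$ is therefore an open immersion by definition, and is the natural candidate for the base change.

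The remaining, and main, obstacle is to check that $\calV := \frakV'^\rig$ really represents the pullback $\calU \times_\calX \calY$ in $\Rig_R^*$. Given an auxiliary object $\calZ$ with morphisms $a:\calZ \to \calU$ and $b:\calZ \to \calY$ satisfying $j \circ a = f \circ b$, I would pick a model $\frakZ$, represent $a$ and $b$ by spans, and iterate Lemmas \ref{l:comp}--\ref{l:coeq} to arrive at a common admissible blow-up $\frakZ'' \to \frakZ$ carrying honest $\FS_R^*$-morphisms $\tilde a:\frakZ'' \to \frakU$ and $\tilde b:\frakZ'' \to \frakY'$. Here Lemma \ref{l:bchange} is needed to lift the target of $b$ from $\frakY$ through $\varphi$ to $\frakY'$, while Lemma \ref{l:coeq} upgrades the compatibility $\frakj \circ \tilde a = \tilde f \circ \tilde b$ from an equation in $\Rig_R^*$ to a strict equation in $\FS_R^*$ after a further blow-up. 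The universal property of the fibered product $\frakV'$ then produces a unique map $\frakZ'' \to \frakV'$, which localizes to the desired factorization $\calZ \to \calV$; uniqueness in $\Rig_R^*$ follows by the same blow-up manipulations.
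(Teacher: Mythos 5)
Your proof is correct and follows essentially the same route as the paper's: choose a common model via admissible blow-ups, form the Zariski fiber product there (which is automatically a Zariski open immersion since it is a preimage of an open), and localize; where the paper concludes by citing Proposition \ref{p:spe} to identify the localized pullback with the base change in $\Rig_R^*$, you unwind that step explicitly by chasing the universal property through the calculus-of-left-fractions Lemmas \ref{l:comp}--\ref{l:coeq}, which is precisely what Proposition \ref{p:spe} encapsulates. You also save a little bookkeeping by representing $f$ as a span $\frakY \leftarrow \frakY' \to \frakX$ terminating in the very model $\frakX$ carrying the chosen Zariski model of the open immersion, whereas the paper introduces a separate model $\frakZ$ of the target and reconciles $\frakZ$ with $\frakY$ via a further common blow-up before forming its nested fiber products; neither variation is substantive.
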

		\begin{proof}
			Let $\calV \to \calY$ be an open immersion in $\Rig_R^*$, and choose a model $\frakV \to \frakY$ in $\FS_R^*$. Let $f:\calX \to \calY$ be any morphism and $\frakX \to \frakZ$ a model. Since $\frakY^\rig \cong \frakZ^\rig$, there is a diagram $\frakZ \leftarrow \frakY' \to \frakY$, where both arrows are admissible blow-ups. Consider the diagram
			\begin{equation}\label{d:open}
				\begin{tikzcd}
					\frakU	\arrow[d]	\arrow[r]	&	\frakY' \times_\frakY \frakV	\arrow[d]	\arrow[r]	&	\frakV	\arrow[d]	\\
					\frakX \times_\frakZ \frakY'	\arrow[d]	\arrow[r]	&	\frakY'	\arrow[d]	\arrow[r]	&	\frakY	\\
					\frakX	\arrow[r]	&	\frakZ	&
				\end{tikzcd}
			\end{equation}
			where the top left square is cartesian. Then $(\frakU \to \frakX)^\rig$ is an open immersion, and by Proposition \ref{p:spe} this map agrees with the base change of $\calV \to \calY$ along $\calX \to \calY$.
		\end{proof}
		
		If $\{\frakU_i \to \frakX\}_i$ is a Zariski covering of some model $\frakX$ of $\calX$, then by Proposition \ref{p:spe} $\{(\frakU_i \to \frakX)^\rig\}_i$ is a covering in $\Ad_\calX$. Using Lemma \ref{l:open}, we can verify that the collection of all such coverings on $\Ad_\calX$ constitutes a Grothendieck pretopology. The corresponding topology is necessarily coarser than the admissible topology, however, the functors $\Zar_\frakX \to \Ad_\calX$ are clearly continuous with respect to this topology as well. It follows that this topology coincides with the admissible topology on $\Ad_\calX$.
		
		\begin{proposition}\label{p:admor}
			Let $f:\calX \to \calY$ be a morphism in $\Rig_R^*$. The base change $\Ad_\calY \to \Ad_\calX$ is a morphism of sites, and therefore induces a geometric morphism $f:\calX_\ad \to \calY_\ad$.
		\end{proposition}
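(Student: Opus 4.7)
The plan is to verify that the base-change functor
\[
u:\Ad_\calY \to \Ad_\calX, \qquad (\calV \hookrightarrow \calY) \mapsto (\calV \times_\calY \calX \hookrightarrow \calX)
\]
is well-defined, preserves finite limits, and is continuous for the admissible topology. These three properties together show that $u$ is a morphism of sites, and therefore induces a geometric morphism $f:\calX_\ad \to \calY_\ad$, with $f^{-1}$ given by the sheafification of the presheaf pullback along $u$.

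Well-definedness on objects and morphisms follows immediately from Lemma \ref{l:open}: for each open immersion $\calV \hookrightarrow \calY$, the fiber product $\calV \times_\calY \calX$ exists in $\Rig_R^*$ and projects to $\calX$ via an open immersion; functoriality is given by the universal property. Preservation of finite limits is then essentially formal: $u$ sends the terminal object $\id_\calY$ to $\id_\calX$, and fiber products in $\Ad_\calY$ (which agree with those computed in the slice category $\Rig_R^* / \calY$) are preserved by the functor $(-) \times_\calY \calX$ by the usual universal property argument.

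The main work is to establish continuity. By the remark following Lemma \ref{l:open}, the admissible topology on $\Ad_\calY$ is generated by covers of the form $\{(\frakV_i \to \frakV)^\rig\}_i$, where $\frakV$ is a model of some $\calV \in \Ad_\calY$ and $\{\frakV_i \to \frakV\}$ is a Zariski cover. It therefore suffices to show that the $u$-image of such a generating cover is an admissible cover of $\calV \times_\calY \calX$. By the calculus of left fractions provided by Lemmas \ref{l:comp}--\ref{l:coeq}, after replacing $\frakX$ by a suitable admissible blow-up we may represent $f$ by an honest morphism $\frakX \to \frakY$ in $\FS_R^*$; applying Lemma \ref{l:bchange} to $\frakV \to \frakY$ and passing to a further admissible blow-up, we may then assume that the formal fiber product $\frakX \times_\frakY \frakV$ exists in $\FS_R^*$ and is a model of $\calX \times_\calY \calV$. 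The Zariski cover $\{\frakV_i \to \frakV\}$ base-changes to a Zariski cover $\{\frakX \times_\frakY \frakV_i \to \frakX \times_\frakY \frakV\}$, whose image in $\Ad_{\calX \times_\calY \calV}$ under Proposition \ref{p:spe} is exactly $u$ applied to the original cover, and is thus admissible by definition.

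I expect the main obstacle to be the bookkeeping in the continuity step: realizing both the morphism $f$ and the fiber product $\calV \times_\calY \calX$ at the level of weak formal schemes requires choosing compatible models and applying a sequence of admissible blow-ups, so that the rigid base-change can actually be recognized as the image under $(-)^\rig$ of a genuine formal fiber product. All of the necessary manipulations are, however, packaged in Lemmas \ref{l:comp}--\ref{l:coeq} and Lemma \ref{l:bchange}, so once these are invoked the remainder of the argument reduces to the elementary fact that Zariski covers are stable under pullback.
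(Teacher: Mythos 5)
Your proposal is correct and follows essentially the same route as the paper: preserve the terminal object, preserve fiber products (both you and the paper ultimately lean on Proposition~\ref{p:spe} / the universal property in the slice category), and for continuity reduce to Zariski covers of a formal model and observe that these pull back to Zariski covers. One small inaccuracy: you invoke Lemma~\ref{l:bchange} ``for $\frakV \to \frakY$,'' but that lemma concerns pulling back an \emph{admissible blow-up}, not an open immersion (for which the formal fiber product exists outright); the lemma is really needed, as in the paper's proof of Lemma~\ref{l:open}, to reconcile the possibly distinct models of $\calY$ that underlie the honest morphism $\frakX \to \frakZ$ and the open immersion $\frakV \to \frakY$, and that reconciliation produces the $\frakX \times_\frakZ \frakY'$ the paper works with.
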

		\begin{proof}
			Clearly $\calY = \calY \times_\calX \calX$, so $\Ad_\calY \to \Ad_\calX$ preserves terminal objects. Moreover, this functor preserves fiber products by Proposition \ref{p:spe}. We need only show that the base change along $f$ of an admissible covering of $\calY$ is an admissible covering of $\calX$. By the preceding discussion, it suffices to show the statement for a covering of the form $\{(\frakV_i \to \frakY)^\rig\}_i$, where $\frakY$ is a model of $\calY$ and $\{\frakV_i \to \frakY\}_i$ is a Zariski covering. We assign to each $\frakV_i \to \frakY$ an open immersion $\frakU_i \to \frakX \times_\frakZ \frakY'$ as in the proof of Lemma \ref{l:open}. These maps constitute a Zariski covering of $\frakX \times_\frakZ \frakY'$, and their image in $\Rig_R^*$ is an admissible covering of $(\frakX \times_\frakZ \frakY')^\rig \cong \calX$.
		\end{proof}
		
		Fix a model $\frakX$ of $\calX$. It will be convenient to assemble all of the admissible blow-ups of $\frakX$ and their Zariski sites into a single object. Define the category $\Tot_\frakX$ as follows: the objects of $\Tot_\frakX$ are pairs $(\varphi,\frakU)$, where $\varphi:\frakX_\varphi \to \frakX$ is an admissible blow-up and $\frakU$ is an open (weak) formal subscheme of $\frakX_\varphi$. A morphism $(\varphi,\frakU) \to (\psi,\frakV)$ is an $\frakX$-morphism $f:\frakX_\varphi \to \frakX_\psi$ satisfying $f(\frakU) \subseteq \frakV$. Let $\Bl_\frakX$ denote the category of admissible blow-ups $\varphi:\frakX_\varphi \to \frakX$. There is a forgetful functor
		\begin{equation*}
			\Tot_\frakX	\to	\Bl_\frakX.
		\end{equation*}
		We may equip $\Tot_\frakX$ with the structure of a fibered category over $\Bl_\frakX$ by declaring that a morphism $f:(\varphi,\frakU) \to (\psi,\frakV)$ is cartesian whenever $f^{-1}(\frakV) = \frakU$. The fiber over $\varphi:\frakX_\varphi \to \frakX$ is canonically isomorphic to the Zariski site $\Zar_{\frakX_\varphi}$. We denote by
		\begin{equation*}
			\alpha_{\varphi !}:\Zar_{\frakX_\varphi} \to \Tot_\frakX
		\end{equation*}
		the inclusion of the fiber. The \emph{total topology} on $\Tot_\frakX$ is the coarsest topology for which the maps $\alpha_{\varphi !}$ are continuous. The total topology is also the finest topology for which the maps $\alpha_{\varphi !}$ are cocontinuous (\cite{SGA4}, VI. 7.4.3). The \emph{total topos} $\frakX_\tot$ of $\frakX$ is defined to be the topos of sheaves on $\Tot_\frakX$.
		
		From the construction of the category $\Rig_R^*$, the functor
		\begin{equation}\label{eq:pi}
			\Tot_\frakX	\to	\Ad_\calX
		\end{equation}
		sending $(\varphi,\frakU)$ to the open immersion $(\frakU \to \frakX)^\rig$ is precisely the localization of $\Tot_\frakX$ at the class of cartesian morphisms. In the terminology of (\cite{SGA4}, VI. 6.3), $\Ad_\calX$ is the \emph{inductive limit} of $\Tot_\frakX$ over $\Bl_\frakX^\op$. The terminology means the following: the choice of cleavage of the fibration $\Tot_\frakX \to \Bl_\frakX$ uniquely determines a pseudo-functor
		\begin{equation}\label{eq:pseudo}
			\Bl_\frakX^\op	\to	\Cat
		\end{equation}
		sending $\varphi:\frakX_\varphi \to \frakX$ to $\Zar_{\frakX_\varphi}$ and $f:\frakX_\varphi \to \frakX_\psi$ to the functor $f^{-1}:\Zar_{\frakX_\psi} \to \Zar_{\frakX_\varphi}$. The category $\Ad_\calX$ is the inductive limit of this pseudo-functor.
		
		The functor (\ref{eq:pi}) is evidently a morphism of sites, and therefore induces a geometric morphism
		\begin{equation*}
			\pi:\calX_\ad	\to	\frakX_\tot
		\end{equation*}
		As the functors $\alpha_{\varphi !}$ are both continuous and cocontinuous, they give rise to an adjoint triple (\cite{SGA4}, VI. 7.4.3):
		\begin{align*}
			\alpha_{\varphi !}:\frakX_\varphi &\to	\frakX_\tot	&	\alpha_\varphi^{-1}:\frakX_\tot	&\to	\frakX_\varphi	&	\alpha_{\varphi *}:\frakX_\varphi	&\to	\frakX_\tot.
		\end{align*}
		Let
		\begin{align*}
			a_\varphi = (\alpha_{\varphi !},\alpha_\varphi^{-1}):	\frakX_\tot	&\to	\frakX_\varphi	\\
			\alpha_\varphi = (\alpha_\varphi^{-1},\alpha_{\varphi *}):	\frakX_\varphi	&\to	\frakX_\tot
		\end{align*}
		be the corresponding morphisms of topoi, and $\mu_\varphi = a_\varphi \circ \pi:\calX_\ad \to \frakX_\varphi$. Note that $\mu_\varphi$ coincides with the specialization map $\spe:\calX_\ad \to \frakX_\varphi$ of Proposition \ref{p:spe}.
		
		\begin{proposition}\label{p:shad}
			The following data are equivalent:
			\begin{enumerate}
				\item	A sheaf $\calF$ in $\frakX_\tot$
				\item	For each admissible blow-up $\varphi:\frakX_\varphi \to \frakX$, a sheaf $\calF_\varphi$ on $\frakX_\varphi$, and for each morphism $f:\frakX_\varphi \to \frakX_\psi$ in $\Bl_\frakX$ a morphism $\gamma_f(\calF):\calF_{\varphi_2} \to f_* \calF_{\varphi_1}$, satisfying the cocycle condition
					\begin{equation}\label{eq:cocycle}
						\gamma_{g \circ f}(\calF)	=	z_{g,f}	\circ	g_*(\gamma_f(\calF)) \circ \gamma_g(\calF).
					\end{equation}
			\end{enumerate}
		\end{proposition}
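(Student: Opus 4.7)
My plan is to recognize this as an instance of the general description of sheaves on the total site of a fibered category (\cite{SGA4}, VI.7.4): the fibration $\Tot_\frakX \to \Bl_\frakX$ has fibers canonically isomorphic to $\Zar_{\frakX_\varphi}$, and by \emph{loc. cit.} the total topology is both the coarsest making the fiber inclusions $\alpha_{\varphi !}$ continuous and the finest making them cocontinuous. The proof then reduces to translating between sheaves on $\Tot_\frakX$ and compatible collections of sheaves on the fibers.

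For the direction (i) $\Rightarrow$ (ii), given $\calF \in \frakX_\tot$ I set $\calF_\varphi = \alpha_\varphi^{-1}\calF$, which is a sheaf on $\frakX_\varphi$ by continuity of $\alpha_{\varphi !}$; concretely $\calF_\varphi(\frakU) = \calF(\varphi, \frakU)$. For any $f:\frakX_\varphi \to \frakX_\psi$ in $\Bl_\frakX$ and any open $\frakV \subseteq \frakX_\psi$, the pair $(f, f^{-1}\frakV)$ yields a cartesian morphism $(\varphi, f^{-1}\frakV) \to (\psi, \frakV)$ in $\Tot_\frakX$. Applying $\calF$ produces a map $\calF_\psi(\frakV) \to \calF_\varphi(f^{-1}\frakV) = (f_* \calF_\varphi)(\frakV)$, natural in $\frakV$, and this defines $\gamma_f(\calF)$. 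The cocycle condition is then immediate from the functoriality of $\calF$ applied to composable cartesian morphisms over $g \circ f$, with the coherence isomorphism $z_{g,f}$ recording the non-canonical cleavage used to identify $(g \circ f)^{-1}\frakV$ with $f^{-1} g^{-1} \frakV$.

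For (ii) $\Rightarrow$ (i), given descent data $(\calF_\varphi, \gamma_f)$ I define a presheaf on $\Tot_\frakX$ by $\calF(\varphi, \frakU) = \calF_\varphi(\frakU)$. An arbitrary morphism $(\varphi, \frakU) \to (\psi, \frakV)$ lying over $f \in \Bl_\frakX$ factors uniquely as the cartesian arrow $(\varphi, f^{-1}\frakV) \to (\psi, \frakV)$ followed by the Zariski inclusion $\frakU \hookrightarrow f^{-1}\frakV$ inside the fiber $\Zar_{\frakX_\varphi}$; the restriction maps along the two stages are prescribed by $\gamma_f(\calF)_\frakV$ and by $\calF_\varphi$ respectively, and the cocycle condition ensures that composing over $g \circ f$ agrees with composing in two steps, so the data assemble into a functor.

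The one substantive step, and the main place any obstacle could arise, is verifying the sheaf condition on $\Tot_\frakX$. By the characterization of the total topology recalled above, every covering of $(\varphi, \frakU)$ is refined by a family of the form $\alpha_{\varphi !}(\{\frakU_i \to \frakU\}_i)$ coming from a Zariski covering in $\frakX_\varphi$; and for such a covering the sheaf axiom for $\calF$ unwinds tautologically into the sheaf axiom for $\calF_\varphi$, which holds by hypothesis. Finally, the two constructions are mutually inverse by inspection: the round trip on objects is the identity by construction, and the round trip on the transition maps $\gamma_f$ reduces, via the factorization of morphisms in $\Tot_\frakX$ used above, to the functoriality that defined $\gamma_f(\calF)$ in the first place.
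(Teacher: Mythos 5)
Your proposal is correct and takes essentially the same route as the paper: the paper simply cites \cite{SGA4} VI.~7.4.7 and recalls the construction of $\calF_\varphi = \alpha_\varphi^{-1}\calF$ and of $\gamma_f(\calF)$ via the unique cartesian lift, whereas you additionally spell out the reverse direction, the verification of the sheaf axiom on the total topology, and the check that the two constructions are mutually inverse. One small slip in wording: a morphism $(\varphi,\frakU)\to(\psi,\frakV)$ over $f$ factors as the Zariski inclusion $\frakU\hookrightarrow f^{-1}\frakV$ in the fiber \emph{followed by} the cartesian arrow $(\varphi,f^{-1}\frakV)\to(\psi,\frakV)$, not the other way around as you wrote; your subsequent description of the restriction maps (apply $\gamma_f$ first, then $\calF_\varphi$) is nonetheless the correct contravariant decomposition, so the substance is unaffected.
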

		\begin{proof}
			This is (\cite{SGA4}, VI. 7.4.7). We briefly recall the construction for (i) $\Rightarrow$ (ii). Let $\calF$ be an object of $\frakX_\tot$. For each admissible blow-up $\varphi:\frakX_\varphi \to \frakX$, we have $\calF_\varphi = \alpha_\varphi^{-1} \calF$. Let $f:\frakX_\varphi \to \frakX_\psi$ be a morphism in $\Bl_\frakX$. For each open $\frakU \subseteq \frakX_2$, we have a unique cartesian morphism covering $f$
			\begin{equation*}
				\alpha_{\varphi_1 !} f^{-1} \frakU	\to	\alpha_{\varphi_2 !} \frakU.
			\end{equation*}
			Applying $\calF$ to this morphism, we get a map
			\begin{equation*}
				\calF_{\varphi_2}	=	\calF(\alpha_{\varphi_2 !} \frakU)	\to	\calF(\alpha_{\varphi_1 !} f^{-1} \frakU)	=	f_* \calF_{\varphi_1}.
			\end{equation*}
		\end{proof}
		
		Note in particular that the maps $\gamma_f(\calF)$ of Proposition \ref{p:shad} depend only on the choice of cleavage of $\Tot_\frakX \to \Bl_\frakX$. Consequently, a morphism $\alpha:\calF \to \calG$ in $\frakX_\tot$ is an isomorphism if and only if $\alpha_\varphi = \mu_\varphi^{-1} \alpha$ is an isomorphism for all $\varphi$. The characterization of $\Ad_\calX$ as the inductive limit of this fibration gives us a useful description of the functor $\pi^{-1}$ (\cite{SGA4}, 8.5.2):
		\begin{equation}\label{eq:pistar}
			\pi^{-1}\calF	=	\varinjlim_{\varphi \in \Bl_\frakX^\op}	\mu_\varphi^{-1} \calF_\varphi.
		\end{equation}
		Here, given a morphism $f:\frakX_\varphi \to \frakX_\psi$ in $\Bl_\frakX$, the transition map is given by the composition
		\begin{equation*}
			\mu_\psi^{-1}	\calF_\psi	\xrightarrow{\sim}	\mu_\varphi^{-1} f^{-1} \calF_\psi	\to	\mu_\varphi^{-1} \calF_\varphi,
		\end{equation*}
		where the second map $\mu_\varphi^{-1}$ applied to the adjunction of $\gamma_f(\calF)$. Finally, we have the following description of the topos $\calX_\ad$ as a subtopos of the total topos $\frakX_\tot$:
		
		\begin{theorem}\label{t:shad}
			The functor $\pi_*$ is fully faithful. Its essential image is the subcategory of sheaves $\calF$ in $\frakX_\tot$ for which the maps $\gamma_f(\calF)$ are isomorphisms for all morphisms $f$ in $\Bl_\frakX$.
		\end{theorem}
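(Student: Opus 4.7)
The plan is to deduce the theorem as a topos-theoretic consequence of the inductive-limit description of $\Ad_\calX$, in the spirit of (\cite{SGA4}, VI. 8.5.2). By the remarks following equation (\ref{eq:pseudo}), $\Ad_\calX$ is obtained from $\Tot_\frakX$ by localizing at cartesian morphisms, and the admissible topology on $\Ad_\calX$ is, by the construction after Lemma \ref{l:open}, the finest one for which each canonical functor $\Zar_{\frakX_\varphi} \to \Ad_\calX$ is continuous. This realizes $\calX_\ad$ as the inductive limit of the fibered topos $\varphi \mapsto \frakX_\varphi$, and exhibits $\pi$ as the canonical embedding into the total topos $\frakX_\tot$.

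First I would verify that $\pi_* \calG$ lies in the claimed essential image for every $\calG$ on $\calX_\ad$. For a morphism $f:\frakX_\varphi \to \frakX_\psi$ in $\Bl_\frakX$, the relation $\mu_\psi = f \circ \mu_\varphi$ of specialization maps (which holds because $f$ is an $\frakX$-morphism between the two models) gives a canonical identification
\begin{equation*}
(\pi_* \calG)_\psi = \mu_{\psi*} \calG = f_* \mu_{\varphi*} \calG = f_* (\pi_* \calG)_\varphi.
\end{equation*}
Unwinding the construction of $\gamma_f$ in Proposition \ref{p:shad} then identifies $\gamma_f(\pi_* \calG)$ with the identity under this identification, hence an isomorphism.

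Next I would prove fully faithfulness of $\pi_*$, equivalently that the counit $\pi^{-1} \pi_* \calG \to \calG$ is an isomorphism for every $\calG$. Formula (\ref{eq:pistar}) gives $\pi^{-1} \pi_* \calG = \varinjlim_\varphi \mu_\varphi^{-1} \mu_{\varphi*} \calG$, where the colimit is induced by the individual adjunction counits $\mu_\varphi^{-1} \mu_{\varphi*} \calG \to \calG$. The indexing category $\Bl_\frakX^\op$ is filtered by Lemmas \ref{l:comp}, \ref{l:bchange} and \ref{l:coeq}, so the colimit is well behaved. It then suffices to check the counit on opens of the form $\calU = (\frakU \to \frakX)^\rig$ with $\frakU$ a Zariski open of some admissible blow-up, since these generate the admissible topology. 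On such $\calU$ the identity $\mu_{\varphi*} \calG(\frakU) = \calG(\calU)$ holds directly from the definition of $\mu_\varphi$, and this factors the counit through the identity at the level of sections.

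Finally I would establish the reverse inclusion for the essential image. Given $\calF \in \frakX_\tot$ with every $\gamma_f(\calF)$ an isomorphism, set $\calG = \pi^{-1} \calF$. By Proposition \ref{p:shad}, checking that the adjunction unit $\calF \to \pi_* \calG$ is an isomorphism reduces to verifying, for each $\varphi$, that $\calF_\varphi \to \mu_{\varphi*} \pi^{-1} \calF$ is an isomorphism. Using (\ref{eq:pistar}), the right-hand side unwinds as a filtered colimit indexed by admissible blow-ups refining $\frakX_\varphi$, whose transition maps are built from the $\gamma_f(\calF)$ together with their adjoints. The hypothesis that these are all isomorphisms lets one reindex the colimit system to a constant system with value $\calF_\varphi$. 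The main obstacle will be controlling this reindexing rigorously, in particular ensuring that $\mu_{\varphi*}$ commutes with the relevant filtered colimits of sheaves; here one exploits that the opens generating $\Ad_\calX$ are quasi-compact (being inherited from models in $\FS_R^*$), so sections over them commute with filtered colimits.
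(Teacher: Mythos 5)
The paper's proof of this theorem is a single citation to (\cite{SGA4}, VI.~8.2.9--8.2.10), which is the general statement that for a topos obtained as the inductive limit of a fibered topos over a filtered index category, the direct image into the total topos is fully faithful with essential image the cartesian sheaves. Your proposal instead tries to reconstruct that SGA4 argument directly, so it is a genuinely different route in the sense that you are reproving the cited black box rather than invoking it.

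The reconstruction captures the right ideas: the identity $\mu_\psi = f \circ \mu_\varphi$ (which follows from the compatibility of the site morphisms $\Zar_{\frakX_\psi} \to \Zar_{\frakX_\varphi} \to \Ad_\calX$, since $(f^{-1}\frakU)^\rig \cong \frakU^\rig$ over $\calX$) indeed forces $\gamma_f(\pi_*\calG)$ to be an isomorphism; filteredness of $\Bl_\frakX^\op$ (your Lemmas \ref{l:comp}--\ref{l:coeq}) is the correct reason the colimit in (\ref{eq:pistar}) is tractable; and your closing observation that quasi-compactness of the generating opens makes sections commute with filtered colimits of sheaves is precisely the technical pivot on which the SGA4 argument turns. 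However, the full-faithfulness step as written is a little off-center: you evaluate the presheaf colimit $\varinjlim_\varphi \mu_\varphi^{-1}\mu_{\varphi*}\calG$ on a generating $\calU$ and observe it equals $\calG(\calU)$, but you then immediately conclude the counit is the identity without addressing the sheafification implicit in $\pi^{-1}$. The quasi-compactness remark that would justify this is placed only in your third step, where it is invoked for $\mu_{\varphi*}$ commuting with colimits rather than for identifying the sheafified sections over $\calU$. Moving that observation up and making it do double duty (the presheaf colimit over a basis of quasi-compact opens is already a sheaf, so its sections over $\calU$ are unchanged by sheafification) would close the gap and give a complete argument. As it stands the proposal is a sound outline that identifies every essential ingredient but leaves the sheafification issue informally resolved.
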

		\begin{proof}
			This is (\cite{SGA4}, VI. 8.2.9-8.2.10).
		\end{proof}
		
		Theorem \ref{t:shad} gives us a method of constructing sheaves on $\Ad_\calX$ by assembling compatible collections of sheaves on the admissible blow-ups $\frakX' \to \frakX$. In the following section, we use this method to equip each $\calX_\ad$ with the structure of a ringed topos. To conclude this section, we give some remarks on the compatibility of the admissible topology with $\frakm$-adic completion and base change:
		
		\begin{proposition}\label{p:adcomp}
			Let $\calX$ be an object of $\Rig_R^\dagger$. Then $\frakm$-adic completion induces a morphism of sites $\Ad_\calX \to \Ad_{\calX_\infty}$ and consequently a geometric morphism $(\calX_\infty)_\ad \to \calX_\ad$.
		\end{proposition}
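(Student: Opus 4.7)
The plan is to define the required functor $u:\Ad_\calX \to \Ad_{\calX_\infty}$ by applying the $\frakm$-adic completion functor of Proposition \ref{p:rigcomp} to each open immersion $\calU \to \calX$, and then to verify the two conditions for a morphism of sites: preservation of finite limits and continuity. Once these are established, the induced geometric morphism $(\calX_\infty)_\ad \to \calX_\ad$ follows by standard topos-theoretic machinery.

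First, I would establish that the assignment $(\calU \to \calX) \mapsto (\calU_\infty \to \calX_\infty)$ lands in $\Ad_{\calX_\infty}$. Given an open immersion $\calU \to \calX$ in $\Rig_R^\dagger$, choose a model $\frakU \hookrightarrow \frakX$ in $\FS_R^\dagger$ which is itself an open immersion. Since the underlying topological space of a weak affine formal scheme $\Spwf(A)$ is $\Spec(A/\frakm A)$, and this coincides with that of its $\frakm$-adic completion $\Spf(A_\infty)$, the induced map $\frakU_\infty \hookrightarrow \frakX_\infty$ is again an open immersion in $\FS_R$. Taking the generic fiber in $\Rig_R$ then yields an open immersion $\calU_\infty \to \calX_\infty$, so $u$ is well-defined on objects, and functoriality on morphisms is inherited directly from Proposition \ref{p:rigcomp}.

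Next I would check that $u$ preserves finite limits and sends covers to covers. The terminal object $\calX$ of $\Ad_\calX$ maps to the terminal object $\calX_\infty$ of $\Ad_{\calX_\infty}$. For the fiber product of two open immersions $\calU_i \to \calX$, I would first pass, via Lemma \ref{l:comp} and Lemma \ref{l:bchange}, to a common model $\frakX$ over which each $\calU_i$ admits a Zariski open model $\frakU_i \hookrightarrow \frakX$. Then $\frakU_1 \cap \frakU_2$ is a model of $\calU_1 \times_\calX \calU_2$, and the identification of underlying spaces above gives $(\frakU_1 \cap \frakU_2)_\infty = (\frakU_1)_\infty \cap (\frakU_2)_\infty$, so the fiber product is preserved. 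For continuity, every admissible cover in $\Ad_\calX$ is generated by families $\{(\frakU_i \to \frakU)^\rig\}_i$ arising from a Zariski cover $\{\frakU_i \to \frakU\}_i$ of some model $\frakU$; the same topological identification shows that $\{(\frakU_i)_\infty \to \frakU_\infty\}_i$ remains a Zariski cover in $\FS_R$, whose image in $\Ad_{\calX_\infty}$ is an admissible cover of $u(\calU) = \calU_\infty$.

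The only point requiring real care is the observation that $\frakm$-adic completion is the identity on underlying topological spaces of weak formal schemes, which is what makes open immersions, finite intersections, and Zariski coverings transport cleanly. Granted this, $u$ is a continuous functor preserving finite limits, hence a morphism of sites, and the corresponding geometric morphism $(\calX_\infty)_\ad \to \calX_\ad$ is automatic. No essential obstacle is expected; the proof is mostly a bookkeeping exercise running parallel to Proposition \ref{p:admor}, with Proposition \ref{p:rigcomp} providing the underlying functoriality.
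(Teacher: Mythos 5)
Your proof is correct and follows essentially the same route as the paper: both reduce to a model $\frakX$, use the common-model reduction (from Lemmas \ref{l:comp}--\ref{l:bchange}) to handle fiber products, and exploit the fact that $\frakm$-adic completion is a homeomorphism on underlying spaces of weak formal schemes to carry Zariski open immersions and coverings over to $\FS_R$. The paper phrases this more compactly by noting that $\Zar_\frakX \to \Zar_{\frakX_\infty} \to \Ad_{\calX_\infty}$ is a morphism of sites and that the admissible topology on $\Ad_\calX$ is generated by coverings pulled back from Zariski coverings of models, but the underlying argument is the same as yours.
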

		\begin{proof}
			Note that for any model $\frakX$ of $\calX$, the composition
			\begin{equation}\label{eq:comp}
				\Zar_\frakX \to \Zar_{\frakX_\infty}	\to	\Ad_{\calX_\infty}
			\end{equation}
			is a morphism of sites. Given a Zariski covering $\{\frakU_i \to \frakX\}_i$, its image under (\ref{eq:comp}) is an admissible covering of $\calX_\infty$. Since the admissible topology is generated by coverings of the form $\{(\frakU_i \to \frakX)^\rig\}_i$, we see that $\frakm$-adic completion sends admissible coverings to admissible coverings. It remains to show that this functor sends fiber products to fiber products. But given two open immersions $\calU_1 \to \calX$ and $\calU_2 \to \calX$, there exists a model $\frakX$ of $\calX$ and open immersions $\frakU_i \to \frakX$ such that $(\frakU_i \to \frakX)^\rig = \calU_i \to \calX$. The statement follows from the fact that both $\Zar_\frakX \to \Ad_\calX$ and (\ref{eq:comp}) preserves fiber products.
		\end{proof}
		
		\begin{proposition}\label{p:adbchange}
			Let $h:R \to S$ be a continuous map of complete Noetherian local rings. Then for any object $\calX$ of $\Rig_R^*$, the natural map $\Ad_\calX \to \Ad_{\calX_h}$ is a morphism of sites, and therefore induces a geometric morphism $(\calX_h)_\ad \to \calX_\ad$.
		\end{proposition}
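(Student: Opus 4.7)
The plan is to mirror the proof of Proposition \ref{p:adcomp}, with weak base change along $h$ playing the role of $\frakm$-adic completion. The candidate functor $\Ad_\calX \to \Ad_{\calX_h}$ sends an open immersion $\calU \to \calX$ to $\calU_h \to \calX_h$, where the latter is obtained by picking any model $\frakU \to \frakX$ that is a Zariski open immersion and taking $(\frakU_h \to \frakX_h)^\rig$ via Propositions \ref{p:wbchange} and \ref{p:rigbchange}. Independence of the choice of model is immediate from the commutativity of the square in Proposition \ref{p:rigbchange}. To check that $\calU_h \to \calX_h$ is genuinely an open immersion in $\Rig_S^*$, it suffices to show that weak base change preserves Zariski open immersions of weak formal schemes; this is a local statement on the source, so by Lemma \ref{l:wbchange} it reduces to the fact that if $A \to A_f^\dagger$ is a distinguished localization, then $A \otimes_R^\dagger S \to A_f^\dagger \otimes_R^\dagger S$ is again the corresponding distinguished localization, which one verifies by checking the universal property characterizing weak completion.

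Next I would verify the three defining axioms for a morphism of sites. Preservation of the terminal object is immediate since $\calX$ maps to $\calX_h$. For preservation of fiber products, given two open immersions $\calU_1,\calU_2 \to \calX$, I would choose a common model $\frakX$ together with Zariski open models $\frakU_i \to \frakX$; by Proposition \ref{p:spe}, the fiber product in $\Ad_\calX$ is $(\frakU_1 \cap \frakU_2 \to \frakX)^\rig$, and its image under base change is $((\frakU_1 \cap \frakU_2)_h \to \frakX_h)^\rig$. The identification $(\frakU_1 \cap \frakU_2)_h = (\frakU_1)_h \cap (\frakU_2)_h$ is again local in nature and follows from Lemma \ref{l:wbchange}, after which Proposition \ref{p:spe} applied to $\frakX_h$ identifies this with the fiber product of $(\calU_1)_h$ and $(\calU_2)_h$ in $\Ad_{\calX_h}$. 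For coverings, the admissible topology is generated by images of Zariski coverings of models, so it suffices to treat a covering $\{(\frakU_i \to \frakX)^\rig\}_i$ arising from a Zariski covering $\{\frakU_i \to \frakX\}_i$ of a model; base change yields $\{((\frakU_i)_h \to \frakX_h)^\rig\}_i$, which is the image of the Zariski covering $\{(\frakU_i)_h \to \frakX_h\}_i$ of the model $\frakX_h$ of $\calX_h$ and is therefore an admissible covering.

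With the morphism of sites established, the induced geometric morphism $(\calX_h)_\ad \to \calX_\ad$ is produced by standard topos-theoretic machinery. The main technical point is really the behavior of weak base change with respect to local constructions: preservation of open immersions, Zariski coverings, and intersections. Each of these reduces by Lemma \ref{l:glue} and the faithful flatness in Lemma \ref{l:ff} to an assertion about $\frakm$-adic completions, where the analogous statements for ordinary formal schemes are classical and appear implicitly in the construction of the weak base change functor itself.
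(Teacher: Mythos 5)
Your argument is correct and follows essentially the same route as the paper, which defines the functor via models, passes through the composition $\Zar_\frakX \to \Zar_{\frakX_h} \to \Ad_{\calX_h}$, and then invokes the fact that the admissible topology is generated by images of Zariski coverings of models and that Zariski-level fiber products and coverings are preserved. The paper simply refers back to the proof of Proposition~\ref{p:adcomp} for these verifications, whereas you spell out the local compatibilities of weak base change (with distinguished localizations, intersections, and coverings); these points are correct and are indeed implicitly handled in the construction of the weak base change functor in Propositions~\ref{p:wbchange} and~\ref{p:rigbchange}, so you have not introduced any gap.
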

		\begin{proof}
			For any model $\frakX$ of $\calX$, there is a morphism of sites
			\begin{equation*}
				\Zar_\frakX	\to	\Zar_{\frakX_h}	\to	\Ad_{\calX_h}
			\end{equation*}
			The result follows by arguing as in the proof of Proposition \ref{p:adcomp}.
		\end{proof}
		
	\subsection{Analytification}\label{s:an}
	
		Let $\frakX$ be an object of $\FS_R^*$, and $\calX = \frakX^\rig$. Our next goal is to associate to each $\calO_\frakX$-module $\calF$ a sheaf $\calF^\rig$ on $\Ad_\calX$, called the \emph{analytification} of $\calF$. Letting $\calO_\calX = \calO_\frakX^\rig$, we equip the topos $\calX_\ad$ with the structure of a (locally) ringed topos. The basic procedure is given by Theorem \ref{t:shad}: we construct for each such $\calF$ a sheaf $\hat{\calF}$ on the total site $\Tot_\frakX$ such that the morphisms $\gamma_f(\hat{\calF})$ of Proposition \ref{p:shad} are isomorphisms. In particular, the structure sheaf $\calO_\calX$ will be independent of the choice of model $\frakX$.
		
		\begin{definition}
			Let $\calF$ be an $\calO_\frakX$-module. The \emph{$\frakm$-closure} of $\calF$ is defined to be the sheaf
			\begin{equation*}
				\calH^0(\calF)	=	\varinjlim_n	\calH om_{\calO_\frakX}(\frakm^n \calO_\frakX, \calF).
			\end{equation*}
		\end{definition}
		
		The assignment $\calF \to \calH^0(\calF)$ is evidently functorial. This functor is compatible with restriction in the sense that $\calH^0(\calF|_\frakU) = \calH^0(\calF)|_\frakU$. It follows that for every point $x$ of $\frakX$,
		\begin{equation}\label{eq:h0stalk}
			\calH^0(\calF)_x	=	\varinjlim_n	\Hom_{\calO_{\frakX,x}}(\frakm^n \calO_{\frakX,x}, \calF_x).
		\end{equation}
		There is a natural pairing
		\begin{equation*}
			\calH^0(\calO_\frakX) \times \calH^0(\calF) \to \calH^0(\calF)
		\end{equation*}
		which sends $f:\frakm^m \calO_\frakX \to \calO_\frakX$ and $g:\frakm^n \calO_\frakX \to \calF$ to the composition
		\begin{equation*}
			\frakm^{m+n} \calO_\frakX	\xrightarrow{f}	\frakm^n \calO_\frakX	\xrightarrow{g}	\calF.
		\end{equation*}
		In particular, we may regard $\calH^0(\calO_\frakX)$ as a sheaf of $\calO_\frakX$-algebras, and $\calH^0$ as a functor
		\begin{equation*}
			\Mod(\calO_\frakX) \to \Mod(\calH^0(\calO_\frakX)).
		\end{equation*}
		
		To elucidate the meaning of this construction, consider the case when $\frakm \calO_\frakX = \pi \calO_\frakX$ is generated by a single global section. Suppose moreover that $\frakX$ is $\frakm$-torsion free. Then a section in $\calH om_{\calO_\frakX}(\frakm^n \calO_\frakX, \calO_\frakX)$ may be regarded as a section of the $\calO_\frakX$-module $\pi^{-n} \calO_\frakX$. Consequently we have
		\begin{equation*}
			\calH^0(\calO_\frakX)	=	\calO_\frakX[\pi^{-1}].
		\end{equation*}
		For the purposes of rigid geometry, the assumptions on $\frakX$ are not too restrictive: replacing $\frakX$ by the admissible blow-up $\frakX_\frakm$, we can usually assume that $\frakm \calO_\frakX$ is locally principal and $\frakX$ is $\frakm$-torsion free.
		
		Let $\Coh(\calO_\frakX)$ denote the full subcategory of $\Mod(\calO_\frakX)$ consisting of coherent $\calO_\frakX$-modules. The restriction of $\calH^0$ to $\Coh(\calO_\frakX)$ enjoys many good properties:
		
		\begin{lemma}\label{l:h0comp}
			Let $\frakX$ be an object of $\FS_R^\dagger$, and $\calF$ a coherent $\calO_\frakX$-module. Then $\calH^0(\calF)_\infty = \calH^0(\calF_\infty)$.
		\end{lemma}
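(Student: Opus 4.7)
The question is local on $\frakX$, so I reduce at once to $\frakX = \Spwf(A)$ affine with $\calF = \tilde{M}^\dagger$ for some finitely generated $A$-module $M$ via Proposition~\ref{p:wcoh}. My plan is to write $\calH^0(\calF)$ as a filtered colimit of coherent sheaves, verify the identification term by term in the coherent case, and then commute the completion functor past the colimit.

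For each $n \geq 0$, set $\calF_n := \calH om_{\calO_\frakX}(\frakm^n \calO_\frakX, \calF)$. Because $A$ is Noetherian by \cite{Fulton}, the ideal $\frakm^n A$ is finitely presented and $\calF_n$ is coherent, with $\Gamma(\frakX,\calF_n) = \Hom_A(\frakm^n A, M)$. By Lemma~\ref{l:ff} the map $A \to A_\infty$ is flat, so standard flat base change yields a natural isomorphism
\[
\Hom_A(\frakm^n A, M) \otimes_A A_\infty \;\xrightarrow{\,\sim\,}\; \Hom_{A_\infty}(\frakm^n A_\infty, M_\infty),
\]
which by Proposition~\ref{p:coh} upgrades to an identification of coherent $\calO_{\frakX_\infty}$-modules
\[
(\calF_n)_\infty \;\cong\; \calH om_{\calO_{\frakX_\infty}}(\frakm^n \calO_{\frakX_\infty}, \calF_\infty).
\]

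To finish, I use that the completion functor $(-)_\infty \colon \Mod(\calO_\frakX) \to \Mod(\calO_{\frakX_\infty})$ is the pullback $\iota^*$ along the ringed-space morphism $\iota\colon \frakX_\infty \to \frakX$. For coherent modules this is consistent with the discussion of Section~\ref{s:formal}, and as a left adjoint $\iota^*$ preserves filtered colimits in full generality. Combining this with the previous step,
\[
\calH^0(\calF)_\infty \;=\; \Bigl(\varinjlim_n \calF_n\Bigr)_\infty \;=\; \varinjlim_n (\calF_n)_\infty \;=\; \varinjlim_n \calH om_{\calO_{\frakX_\infty}}(\frakm^n \calO_{\frakX_\infty}, \calF_\infty) \;=\; \calH^0(\calF_\infty).
\]

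The delicate point, and my main obstacle, is the final interchange: a naive $\frakm$-adic completion computed at the level of global sections does \emph{not} commute with filtered colimits of non-finite modules, so it is essential to interpret $(-)_\infty$ as the sheaf-theoretic pullback $\iota^*$. If one wishes to avoid appealing to this identification in the non-coherent setting, the argument can be carried out directly on basic affine opens of $\frakX_\infty$: both sides are quasi-coherent in the relevant sense, with module of sections equal to $\varinjlim_n \Hom_A(\frakm^n A, M) \otimes_A A_\infty$ by the flat base change above together with the commutation of $\Hom$ from a finitely presented source with filtered colimits.
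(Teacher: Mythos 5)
Your proof is correct and follows essentially the same route as the paper: identify the $\frakm$-adic completion of each term $\calH om_{\calO_\frakX}(\frakm^n \calO_\frakX, \calF)$ with $\calH om_{\calO_{\frakX_\infty}}(\frakm^n \calO_{\frakX_\infty}, \calF_\infty)$ via flatness of $\calO_\frakX \to \calO_{\frakX_\infty}$ together with finite presentation of $\frakm^n\calO_\frakX$, and then pass to the direct limit. Where the paper verifies the term-by-term isomorphism on stalks, you verify it on global sections over an affine; both are equivalent. Your explicit remark that the final interchange of $\iota^*$ with the filtered colimit is the genuinely delicate step is a useful clarification that the paper leaves implicit in ``passing to direct limits.''
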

		\begin{proof}
			Recall that we have a morphism of ringed spaces $\iota:\frakX_\infty \to \frakX$. For each $n$, $\frakm$-adic completion induces a morphism
			\begin{equation*}
				\calH om_{\calO_\frakX}(\frakm^n \calO_\frakX, \calF)	\to	\calH om_{\calO_{\frakX_\infty}}(\frakm^n \calO_{\frakX_\infty},\calF_\infty).
			\end{equation*}
			Note that $\calH om_{\calO_\frakX}(\frakm^n \calO_\frakX, \calF)$ is coherent, so passing to $\frakm$-adic completions gives a morphism
			\begin{equation*}
				\iota^*\calH om_{\calO_\frakX}(\frakm^n \calO_\frakX, \calF)	\to	\calH om_{\calO_{\frakX_\infty}}(\frakm^n \calO_{\frakX_\infty},\calF_\infty).
			\end{equation*}
			We claim that this is an isomorphism. It suffices to check that the induced map on stalks
			\begin{equation*}
				\Hom_{\calO_{\frakX,x}}(\frakm^n \calO_{\frakX,x},\calF_x) \otimes_{\calO_{\frakX,x}} \calO_{\frakX_\infty,x}	\to	\Hom_{\calO_{\frakX_\infty,x}}(\frakm^n \calO_{\frakX_\infty,x},(\calF_\infty)_x)
			\end{equation*}
			is an isomorphism. But this follows easily from the fact that $\calO_\frakX \to \calO_{\frakX_\infty}$ is flat and $\frakm^n \calO_\frakX$ is finitely presented. Passing to direct limits, we obtain the desired isomorphism $\calH^0(\calF)_\infty \to \calH^0(\calF_\infty)$.
		\end{proof}
		
		\begin{proposition}
			Suppose that $\frakm \calO_\frakX$ is locally principal. Then $\calH^0$ restricts to an exact functor
			\begin{equation*}
				\calH^0:\Coh(\calO_\frakX) \to \Coh(\calH^0(\calO_\frakX)).
			\end{equation*}
			If $\calF$ is a coherent $\calO_\frakX$-module, then the natural map $\calF \to \calH^0(\calF)$ induces an isomorphism
			\begin{equation*}
				\calF	\otimes_{\calO_\frakX}	\calH^0(\calO_\frakX)	\to	\calH^0(\calF).
			\end{equation*}
		\end{proposition}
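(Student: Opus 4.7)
The plan is to reduce the statement to a well-known result about module localization. Both assertions are local on $\frakX$, so assume $\frakX = \Spwf(A)$ with $\frakm A = \pi A$ principal, and write $\calF = \tilde{M}^\dagger$ for a finite $A$-module $M$. The key tool is the natural identification $\Hom_A(\pi^n A, M) \cong \{m \in M : A[\pi^n] m = 0\}$ via $\varphi \mapsto \varphi(\pi^n)$, under which the transition maps in the colimit $\calH^0(\calF) = \varinjlim_n \Hom_A(\pi^n A, M)$ become multiplication by $\pi$.

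The first step is a vanishing result: if $\calG$ is a coherent $\calO_\frakX$-module with $\pi^N \calG = 0$, then $\calH^0(\calG) = 0$, because the transition map from the $n$-th term to the $(n+N)$-th term of the colimit is multiplication by $\pi^N$, hence zero. Since $A$ is Noetherian and $M$ is finitely generated, the torsion submodules $A[\pi^\infty]$ and $M[\pi^\infty]$ are killed by a common power of $\pi$, so this vanishing applies to $\calO_\frakX[\pi^\infty]$ and $\calF[\pi^\infty]$.

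The second step computes $\calH^0$ explicitly. If $\calF$ is $\pi$-torsion-free, then $\calO_\frakX[\pi^n] \cdot m = 0$ is automatic for every $m$ (since $\pi^n$ annihilates $\calO_\frakX[\pi^n] \cdot m$, which lies in a torsion-free sheaf), so $\Hom_A(\pi^n A, M) = M$, and the colimit with multiplication-by-$\pi$ transitions is the standard localization $M[\pi^{-1}]$. For general coherent $\calF$, I apply $\calH^0$ to $0 \to \calF[\pi^\infty] \to \calF \to \calF/\calF[\pi^\infty] \to 0$: left exactness of $\calH^0$ combined with Step 1 yields $\calH^0(\calF) \hookrightarrow \calH^0(\calF/\calF[\pi^\infty])$. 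Surjectivity follows from a short diagram chase: given $\varphi: \pi^n \calO_\frakX \to \calF/\calF[\pi^\infty]$ and a lift $f \in \calF$ of $\varphi(\pi^n)$, one checks that for $M$ sufficiently large (combining the torsion bounds on $\calO_\frakX$ and $\calF$) the element $\pi^M f$ is annihilated by $\calO_\frakX[\pi^{n+M}]$ and so defines a map $\pi^{n+M} \calO_\frakX \to \calF$ lifting $\varphi$ after transition. Hence $\calH^0(\calF) = (\calF/\calF[\pi^\infty])[\pi^{-1}] = \calF[\pi^{-1}]$ in general; in particular $\calH^0(\calO_\frakX) = \calO_\frakX[\pi^{-1}]$ is the sheaf associated to $A[\pi^{-1}]$.

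With this explicit description in hand, the remaining assertions are standard: the map $\calF \otimes_{\calO_\frakX} \calH^0(\calO_\frakX) \to \calH^0(\calF)$ becomes $M \otimes_A A[\pi^{-1}] \to M[\pi^{-1}]$, which is the defining isomorphism of localization; exactness of $\calH^0$ on $\Coh(\calO_\frakX)$ follows from exactness of localization at a multiplicative set; and coherence of $\calH^0(\calF)$ over $\calH^0(\calO_\frakX)$ holds because $A[\pi^{-1}]$ is Noetherian (being a localization of the Noetherian ring $A$) and $M[\pi^{-1}]$ is finitely generated over it. The main obstacle in the proof is the careful $\pi$-torsion analysis of Step 2; once $\calH^0$ has been identified with localization at $\pi$, the conclusion is formal.
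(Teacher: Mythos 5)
Your proof is correct, and it takes a genuinely different route from the paper's. The paper's argument transports the problem to the $\frakm$-adic completion $\frakX_\infty$ via the faithfully flat map $\iota:\frakX_\infty\to\frakX$, invokes Lemma~\ref{l:h0comp} ($\calH^0(\calF)_\infty = \calH^0(\calF_\infty)$), cites the exactness of the analogous functor for formal schemes from Abbes (2.10.18), and then descends exactness back along $\iota^*$; coherence and the tensor isomorphism follow formally from a finite presentation. Your argument is instead a direct, self-contained identification $\calH^0(\calF)\cong\calF[\pi^{-1}]$ as localization at $\pi$, obtained by (1) the concrete description $\Hom_A(\pi^n A,M)\cong\{m\in M:A[\pi^n]m=0\}$ with multiplication-by-$\pi$ transition maps, (2) a vanishing result for bounded $\pi$-torsion sheaves, and (3) a d\'{e}vissage through $\calF[\pi^\infty]$; once $\calH^0$ is identified with $(-)\otimes_A A[\pi^{-1}]$, exactness, coherence, and the natural isomorphism are all immediate. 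What your approach buys is independence from the Abbes reference and from Lemma~\ref{l:h0comp} (and it even yields the stronger explicit description of $\calH^0$ on all of $\Coh(\calO_\frakX)$, not just torsion-free sheaves, extending the paper's preliminary remark); what it costs is the more delicate bookkeeping of torsion bounds, where Noetherianity of $A$ and finite generation of $M$ are used to uniformly bound $A[\pi^\infty]$ and $M[\pi^\infty]$. One small point worth making explicit: the reduction to $\Gamma$-level modules requires knowing that the filtered colimit of coherent $\calH om$-sheaves over the Noetherian space $\Spwf(A)$ can be computed sectionwise, which is true but unstated in your write-up.
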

		\begin{proof}
			Recall that we have a morphism of ringed spaces $\iota:\frakX_\infty \to \frakX$. Consider an exact sequence $0 \to	\calF' \to \calF \to \calF'' \to 0$ in $\Coh(\calO_\frakX)$. Applying $\iota^* \calH^0=\calH^0 \iota^*$, we obtain a diagram
			\begin{equation*}
				0	\to	\calH^0(\calF'_\infty)	\to	\calH^0(\calF)	\to	\calH^0(\calF''_\infty)	\to	0.
			\end{equation*}
			By (\cite{Abbes}, 2.10.18), this sequence is exact. Since $\iota^*$ is faithfully exact, we see that
			\begin{equation*}
				0	\to	\calH^0(\calF')	\to	\calH^0(\calF)	\to	\calH^0(\calF'')	\to	0
			\end{equation*}
			is an exact sequence of $\calO_\frakX$-modules. We claim that $\calH^0(\calF)$ is a coherent $\calH^0(\calO_\frakX)$-module. This is a local question, so assume we have a presentation
			\begin{equation}\label{eq:pres}
				\calO_\frakX^m	\to	\calO_\frakX^n	\to	\calF	\to	0.
			\end{equation}
			The result then follows from exactness of $\calH^0$. The final statement can be seen by tensoring the sequence (\ref{eq:pres}) with $\calH^0(\calO_\frakX)$.
		\end{proof}
		
		Let $\calF$ be a coherent $\calO_\frakX$-module. We will now construct the associated sheaf $\hat{\calF}$ on $\Tot_\frakX$. For each morphism $f:\frakY \to \frakX$ in $\FS_R^*$, we have a map
		\begin{equation*}
			\beta_f(\calF):	\calH^0(\calF)	\to	f_* \calH^0(f^* \calF)),
		\end{equation*}
		defined by composition of the canonical maps
		\begin{equation*}
			\varinjlim_n	\calH om_{\calO_\frakX}(\frakm^n\calO_\frakX,\calF)	\to	\varinjlim_n	f_*\calH om_{\calO_\frakX}(f^*\frakm^n\calO_\frakX,f^*\calF)	\to	f_*	\varinjlim_n	\calH om_{\calO_\frakX}(\frakm^n\calO_\frakY,\calF).
		\end{equation*}
		By Lemma \ref{l:h0comp} and the fact that $\iota^*$ commutes with direct limits, the $\frakm$-adic completion of $\beta_f(\calF)$ is the map $\beta_{f_\infty}(\calF_\infty)$. Since $\iota^*$ is faithfully exact, by (\cite{Abbes}, 2.10.28) these maps satisfy a cocycle condition expressed by the commutativity of the following diagram:
		\begin{equation}\label{eq:cocycle2}
			\begin{tikzcd}
				\calH^0(\calF)	\arrow[d,"\beta_f(\calF)"']	\arrow[r,"\beta_{f \circ g}(\calF)"]	&	(f \circ g)_* \calH^0((f \circ g)^* \calF)	\arrow[d,"\sim"]	\\
				f_* \calH^0(f^* \calF)	\arrow[r,"f_* \beta_g(f^* \calF)"]	&	f_* g_* \calH^0(g^* f^* \calF)
			\end{tikzcd}
		\end{equation}
		For each admissible blow-up $\varphi:\frakX_\varphi \to \frakX$, we define $\hat{\calF}_\varphi$ to be the sheaf $\calH^0(\varphi^* \calF)$. For each morphism $f:\frakX_\varphi \to \frakX_\psi$, we define
		\begin{equation*}
			\gamma_f(\hat{\calF}):	\calH^0(\psi^* \calF)	\xrightarrow{\beta_f(\psi^* \calF)}	f_* \calH^0(f^* \psi^* \calF)	\xrightarrow{\sim}	f_* \calH^0(\varphi^* \calF).
		\end{equation*}
		The diagram (\ref{eq:cocycle2}) indicates that these maps satisfy the cocycle condition (\ref{eq:cocycle}) and therefore determine a unique sheaf $\hat{\calF}$ on $\Tot_\frakX$. To see that this sheaf descends to a sheaf on $\Ad_\calX$, we require the following:
		
		\begin{theorem}\label{t:acyc}
			(Tate's Acyclicity Theorem). For every coherent sheaf $\calF$ of $\calO_\frakX$-modules, the maps $\gamma_f(\hat{\calF})$ are isomorphisms. Consequently, $\hat{\calF}$ descends to a sheaf $\calF^\rig$ on $\Ad_\calX$.
		\end{theorem}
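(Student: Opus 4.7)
The strategy is to reduce the claim to the analogous statement for ordinary formal schemes, where it amounts to a form of Tate's classical acyclicity theorem (as developed in Abbes' treatment). The question whether $\gamma_f(\hat{\calF})$ is an isomorphism is local on both the source and target of $f:\frakX_\varphi \to \frakX_\psi$, so we may assume that $\frakX_\psi = \Spwf(B)$ is affine and that $f^{-1}(\frakX_\psi)$ is covered by affines $\Spwf(A_i) \subseteq \frakX_\varphi$. Since the second factor in the composition defining $\gamma_f(\hat{\calF})$ is a canonical isomorphism induced by the identity $\varphi = \psi \circ f$, it suffices to prove that
\[
  \beta_f(\psi^*\calF): \calH^0(\psi^*\calF) \;\to\; f_*\calH^0(f^*\psi^*\calF)
\]
is an isomorphism for every coherent $\calO_\frakX$-module $\calF$.

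By Lemma \ref{l:ff}, the $\frakm$-adic completion functor on modules over a w.c.f.g.\ algebra is faithfully flat, hence faithfully exact; consequently $\beta_f(\psi^*\calF)$ is an isomorphism if and only if its $\frakm$-adic completion is. Lemma \ref{l:h0comp} supplies canonical isomorphisms $\calH^0(\psi^*\calF)_\infty \cong \calH^0((\psi^*\calF)_\infty)$ and likewise for $f^*\psi^*\calF$. Since $f$ is adic and of finite presentation, completion also commutes with $f_*$ applied to coherent modules, via the cartesian diagram of ringed spaces recalled in Section \ref{s:formal}. Under these identifications, $\beta_f(\psi^*\calF)_\infty$ becomes the corresponding map for the ordinary formal scheme morphism $f_\infty:(\frakX_\varphi)_\infty \to (\frakX_\psi)_\infty$, which is itself a morphism in $\Bl_{\frakX_\infty}$. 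The fact that this latter map is an isomorphism is Tate's Acyclicity Theorem in the setting of ordinary formal schemes, proved in \cite{Abbes}.

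Once every $\gamma_f(\hat{\calF})$ is known to be an isomorphism, Theorem \ref{t:shad} places $\hat{\calF}$ in the essential image of $\pi_*$, yielding a unique sheaf $\calF^\rig$ on $\Ad_\calX$ with $\pi_* \calF^\rig = \hat{\calF}$.

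The main technical obstacle lies in the bookkeeping of the second paragraph: one must carefully verify that the directed limits defining $\calH^0$ in the weak setting commute with $f_*$ after $\frakm$-adic completion, in a manner compatible with the transition cocycle (\ref{eq:cocycle2}). This is essentially a matter of combining Lemma \ref{l:h0comp} with the flat base change of $f_*$ for adic finite-presentation morphisms and the exactness of filtered colimits of sheaves, but it requires care because the functor $\calH^0$ is not itself of finite type. No new conceptual ingredient beyond those already established in the excerpt appears to be needed.
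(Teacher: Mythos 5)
Your proposal follows exactly the paper's strategy: reduce modulo $\frakm$-adic completion (via Lemma \ref{l:ff} and Lemma \ref{l:h0comp}, just as the paper invokes them in the discussion immediately preceding the theorem) and then cite the acyclicity statement for ordinary formal schemes from \cite{Abbes}, which is Proposition 3.5.5 there. You are right to flag the commutation of $\calH^0$ and $f_*$ with completion as the delicate point; the paper handles this in the paragraph before the theorem by noting that $\iota^*$ commutes with direct limits, rather than by appealing to properties of adic finite-presentation morphisms as you suggest, but the substance of the argument is the same.
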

		\begin{proof}
			Let $f:\frakX_\varphi \to \frakX_\psi$ be a morphism in $\Bl_\frakX$. We must show that the map
			\begin{equation}\label{eq:betaf}
				\beta_f(\psi^* \calF):\calH^0(\psi^* \calF)	\to	f_* \calH^0(f^* \psi^* \calF)
			\end{equation}
			is an isomorphism. But we may check this on the level of $\frakm$-adic completions, and the result follows from (\cite{Abbes}, 3.5.5).
		\end{proof}
		
		In light of the theorem, we may define the sheaf $\calF^\rig$ to be the sheaf on $\Ad_\calX$ associated to $\hat{\calF}$, that is, $\calF^\rig = \pi^{-1}\calF$. Recall that for every admissible blow-up $\varphi:\frakX_\varphi \to \frakX$, we have a geometric morphism $\mu_\varphi:\calX_\ad \to \frakX_\varphi$. By (\ref{eq:pistar}), we see that
		\begin{equation}\label{eq:indlim}
			\calF^\rig	\cong	\varinjlim_{\varphi \in \Bl_\frakX^\op}	\mu_\varphi^{-1} \calH^0(\varphi^* \calF).
		\end{equation}
		
		We define $\calO_\calX = \calO_\frakX^\rig$ for some model $\frakX$ of $\calX$. Note that this definition does not depend on the choice of model. Together, the pair $(\calX_\ad,\calO_\calX)$ constitutes a ringed topos. We will say that a $\calO_\calX$-module is \emph{coherent} if it is of the form $\calF^\rig$, where $\calF$ is a coherent $\calO_\frakX$-module for some model $\frakX$ of $\calX$. As in (\cite{Abbes}, 4.8.18), one may show that this is equivalent to the usual definition of a coherent module, but we will not need this result. Our next goal is to explicitly describe the stalks of coherent $\calO_\calX$-modules. Recall that a \emph{point} of the topos $\calX_\ad$ is a geometric morphism $P:\Set \to \calX_\ad$. For any $\calF \in \Ad_\calX$, the \emph{stalk} of $\calF$ at $P$ is the set $P^{-1} \calG$. The composition
		\begin{equation*}
			\Set	\xrightarrow{P}	\calX_\ad	\xrightarrow{\spe}	\frakX
		\end{equation*}
		corresponds to a Zariski point of $\frakX$ which we refer to as the \emph{specialization} of $P$ in $\frakX$.
		
		\begin{theorem}
			The pair $(\calX_\ad,\calO_\calX)$ is a locally ringed topos.
		\end{theorem}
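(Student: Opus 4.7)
The plan is to verify the intrinsic criterion for $\calO_\calX$ to be a local ring object: for every $\calU$ in $\Ad_\calX$ and every $f \in \calO_\calX(\calU)$, I will exhibit an admissible covering $\{\calU_1, \calU_2 \to \calU\}$ such that $f|_{\calU_1}$ is invertible and $(1-f)|_{\calU_2}$ is invertible. This criterion implies locality of every stalk $\calO_{\calX,P}$, since any point $P$ must factor through at least one member of an admissible cover, and it is the standard characterization of a local ring object in a topos.

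Since the statement is local on $\calU$, I may pass to an affine model $\frakU = \Spwf(A)$. Replacing $\frakU$ with its admissible blow-up at $\frakm\calO_\frakU$ does not change $\calU = \frakU^\rig$, so Proposition \ref{p:coords} lets me assume in addition that $\frakm\calO_\frakU$ is locally principal and that $\frakU$ is $\frakm$-torsion free. In this setting $\calH^0(\calO_\frakU) = \calO_\frakU[\pi^{-1}]$ Zariski-locally, where $\pi$ is a local generator of $\frakm$. Using the inductive-limit description (\ref{eq:indlim}) together with Tate's Acyclicity Theorem \ref{t:acyc}, the section $f$ is represented, after passing to a suitable admissible blow-up and shrinking, by a quotient $g/\pi^n$ with $g \in A$ and $n \geq 0$; then $1 - f = (\pi^n - g)/\pi^n$.

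The key construction is the open ideal $\calI = (g,\pi^n - g) \subseteq A$, which contains $\pi^n = g + (\pi^n - g)$ and is therefore open. Let $\psi : \frakV \to \frakU$ be the admissible blow-up at $\tilde{\calI}^\dagger$. By Proposition \ref{p:coords}, $\frakV$ is covered by two affine opens $\frakV_1, \frakV_2$ on which $\calI\calO_\frakV$ is principally generated by $g$ and $\pi^n - g$, respectively. On $\frakV_1$ the relation $\pi^n - g = g h_1$ for some $h_1 \in \Gamma(\frakV_1, \calO_{\frakV_1})$ gives $g(1 + h_1) = \pi^n$, so $g$ (and hence $f = g/\pi^n$) is a unit in $\calH^0(\calO_{\frakV_1}) = \calO_{\frakV_1}[\pi^{-1}]$; symmetrically, $1 - f$ is a unit on $\frakV_2$. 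Thus $\{\frakV_1^\rig, \frakV_2^\rig\}$ is the required admissible covering of $\calU$.

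The main obstacle is not algebraic but bookkeeping: I need to check that all the manipulations descend cleanly to $\calX_\ad$, in particular that a single admissible blow-up of $\frakU$ can simultaneously support a representative $f = g/\pi^n$ and the splitting blow-up at $(g,\pi^n - g)$, and that the Zariski cover $\{\frakV_i \to \frakV\}$ indeed induces an admissible cover in $\Rig_R^*$ via Proposition \ref{p:spe}. Both points follow from the cofilteredness of $\Bl_\frakU$ (Lemmas \ref{l:comp}--\ref{l:coeq}) and the constructions of Section \ref{s:an}, but they are the only nontrivial compatibility issues in the argument.
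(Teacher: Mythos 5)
Your proposal is correct and does prove the statement, but it takes a genuinely different route from the paper. The paper argues at the level of \emph{stalks}: for an arbitrary point $P$ of $\calX_\ad$, it computes $P^{-1}\calO_\calX = A[\pi^{-1}]$ with $A = \varinjlim_\varphi \calO_{\frakX_\varphi,x(\varphi)}$, and then invokes a commutative-algebra criterion from Abbes (1.9.4) reducing locality of $A[\pi^{-1}]$ to showing that every finitely generated open ideal of $A$ is invertible, which is again proved by passing to a further admissible blow-up that principalizes the ideal. You instead verify the \emph{internal} characterization of a local ring object in a topos (the ``$\calO^\times \amalg \calO^\times \twoheadrightarrow \calO$ via $a$ and $1-a$'' criterion), producing an explicit admissible covering $\{\frakV_1^\rig,\frakV_2^\rig\}$ by blowing up at the deliberately chosen open ideal $(g,\pi^n-g) \ni \pi^n$ and exploiting the identity $g + (\pi^n-g)=\pi^n$ on each chart. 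The underlying geometric mechanism -- using an admissible blow-up to make an ideal locally principal, hence invertible after inverting $\pi$ -- is the same in both proofs, but your version is cover-theoretic rather than stalk-theoretic: it does not rely on the topos having enough points, and it makes the blow-up that witnesses locality completely explicit. What the paper's route buys in exchange is the concrete stalk formula $P^{-1}\calO_\calX = A[\pi^{-1}]$, recorded immediately afterward as (\ref{eq:rigstalk}) and used later. Two small points worth tightening in a final write-up: (a) after blowing up at $\frakm\calO_\frakU$ the resulting model need no longer be affine, so one should pass to an affine Zariski cover of the blow-up (on each member of which $\frakm$ is principal and there is no $\frakm$-torsion) before writing $f=g/\pi^n$; and (b) the covering criterion also requires the trivial check $0 \neq 1$ in $\calO_\calX$, which holds here because $R$ is a domain and the $\frakm$-torsion-free model is nonempty.
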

		\begin{proof}
			Let $\frakX$ be a model of $\calX$. As the question is local in nature, we may assume that $\frakm \calO_\frakX = \pi \calO_\frakX$ is principal. If $P:\Set \to \calX_\ad$ is a point of $\calX_\ad$, then for any admissible blow-up $\varphi:\frakX_\varphi \to \frakX$, let $x(\varphi)=\mu_\varphi \circ P$ denote the specialization of $P$ in $\frakX_\varphi$. By (\ref{eq:indlim}), we see that
			\begin{equation*}
				P^{-1} \calO_\calX	=	\varinjlim_{\varphi \in \Bl_\frakX^\op}	\calH^0(\calO_{\frakX_\varphi})_{x(\varphi)}	=	\varinjlim_{\varphi \in \Bl_\frakX^\op} \left(	\calO_{\frakX_\varphi,x(\varphi)}[\pi^{-1}]	\right)	=	A[\pi^{-1}],
			\end{equation*}
			where $A=\varinjlim_\varphi \calO_{\frakX_\varphi,x(\varphi)}$. Then $A$ is a local ring, and $\pi$ is contained in the maximal ideal of $A$. By (\cite{Abbes}, 1.9.4), to show that $A[\pi^{-1}]$ is local it suffices to show that every finitely generated open ideal $I$ of $A$ is invertible. From the definition of $A$, there necessarily exists an admissible blow-up $\varphi:\frakX_\varphi \to \frakX$, an open subset $\frakU \subseteq \frakX_\varphi$, and an open ideal $\calI$ on $\frakU$ whose image in $A$ is $I$. Without loss of generality, we may assume that $\frakU=\frakX_\varphi$. By blowing up $\frakX$ at $\frakm \calO_\frakX$, we may also assume that $\frakX$ is $\frakm$-torsion free. But if $\psi:(\frakX_\varphi)_\calI \to \frakX'$ is the admissible blow-up of $\frakX_\varphi$ at $\calI$, then $\psi^{-1} \calI$ is principal in a neighborhood of $x(\varphi \circ \psi)$. In particular, $I$ is a principal open ideal in $A$. Since $\pi A$ is invertible, it follows that $I$ is invertible as well.
		\end{proof}
		
		\begin{remark}
			If $\frakX$ is a model of $\calX$, and $\calF$ is a coherent $\calO_\frakX$-module, the in the notation of the proof we have
			\begin{equation}\label{eq:rigstalk}
				P^{-1} \calF^\rig	=	\calF_x	\otimes_{\calO_{\frakX,x}} A[\pi^{-1}].
			\end{equation}
		\end{remark}
		
		There is a morphism of ringed topoi
		\begin{equation*}
			(\calX_\ad,\calO_\calX)	\to	(\frakX_\zar,\calO_\frakX)
		\end{equation*}
		given by the specialization map $\spe:\calX_\ad \to \frakX_\zar$ and the canonical map $\calO_\frakX \to \calH^0(\calO_\frakX) = \spe_* \calO_\calX$. To conclude the section, we verify that our basic constructions for weak rigid analytic spaces correspond to morphisms of ringed topoi:
		
		\begin{proposition}
			Let $f:\calX \to \calY$ be a morphism in $\Rig_R^*$. Then $f$ induces a morphism of ringed topoi $(\calX_\ad,\calO_\calX) \to (\calY_\ad,\calO_\calY)$.
		\end{proposition}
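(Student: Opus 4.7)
The geometric morphism $f:\calX_\ad \to \calY_\ad$ is already supplied by Proposition \ref{p:admor}, so the entire content of the statement is the construction of a natural morphism of sheaves of rings $f^{-1}\calO_\calY \to \calO_\calX$ (equivalently $\calO_\calY \to f_*\calO_\calX$) that is functorial in $f$.

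My plan is to reduce to the case where $f$ is represented by a genuine morphism of models. By the calculus of left fractions supplied by Lemmas \ref{l:comp}--\ref{l:coeq}, we may write $f$ as a diagram $\frakX \xleftarrow{\varphi} \frakX'' \to \frakY$ in $\FS_R^*$ with $\varphi$ an admissible blow-up. Since $(\frakX'')^\rig \cong \calX$, we are free to replace $\frakX$ by $\frakX''$, and thus assume $f=g^\rig$ for some morphism $g:\frakX \to \frakY$ of models.

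Having fixed such a $g$, I will build the sheaf map via the total-topos description. For any admissible blow-up $\psi:\frakY_\psi \to \frakY$ of an open ideal $\calJ$, let $\calI = g^{-1}\calJ\calO_\frakX$ and let $\varphi_\psi:\frakX_{\varphi_\psi} \to \frakX$ be the admissible blow-up at $\calI$. Proposition \ref{p:univ} provides a unique lift $g_\psi:\frakX_{\varphi_\psi} \to \frakY_\psi$ of $g$. This gives the usual pullback $g_\psi^*\calO_{\frakY_\psi} \to \calO_{\frakX_{\varphi_\psi}}$, to which I apply the functor $\calH^0$ (using the naturality of the pairing $\calH^0(\calO)\times\calH^0(\calF)\to\calH^0(\calF)$, or equivalently the stalk formula \eqref{eq:h0stalk} combined with the analogous map on the Zariski level) to obtain a morphism $g_\psi^{-1}\calH^0(\calO_{\frakY_\psi}) \to \calH^0(\calO_{\frakX_{\varphi_\psi}})$. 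Applying $\mu_{\varphi_\psi}^{-1}$ and passing to the colimit \eqref{eq:indlim} assembles these into the desired sheaf map $f^{-1}\calO_\calY \to \calO_\calX$; the lifts $g_\psi$ commute strictly with morphisms in $\Bl_\frakY$ by the uniqueness clause of Proposition \ref{p:univ}, which guarantees compatibility with the transition maps in the colimit.

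The main subtlety I anticipate is Step~6: verifying that the resulting map does not depend on the choice of model $g$ of $f$. Two such choices are connected by a zig-zag of admissible blow-ups, so by Tate's Acyclicity Theorem \ref{t:acyc} the induced maps on analytifications coincide after passing through the colimit; this reduces to the statement that, for each morphism $h$ in $\Bl_\frakX$, $\gamma_h(\hat{\calO}_\frakX)$ is an isomorphism. Functoriality of $f \mapsto (f^{-1}\calO_\calY\to\calO_\calX)$ in $f$ then follows by a routine diagram chase on the total sites, using again that admissible blow-ups have unique lifts to morphisms mapping invertible sheaves to invertible sheaves. The construction is clearly compatible with the identity and with composition of morphisms in $\Rig_R^*$, so we obtain the desired morphism of ringed topoi.
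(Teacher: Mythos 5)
Your approach is genuinely different from the paper's. The paper never reduces to the case of a single model morphism $g:\frakX\to\frakY$; instead it defines the map $\calO_\calY\to f_*\calO_\calX$ directly at the level of sections over opens of the admissible site. For each open immersion $\calV\to\calY$ it picks a compatible choice of models exactly as in the proof of Lemma~\ref{l:open}, identifies $\calO_\calY(\calV)$ and $(f_*\calO_\calX)(\calV)$ as $\calH^0$ of structure sheaves evaluated on the appropriate formal schemes, and then gets the map from functoriality of $\calH^0$ (i.e.\ the map $\beta$ used to define $\hat{\calF}$). By contrast, you pass through the total-topos colimit formula~\eqref{eq:indlim}, reducing first to a representing morphism $g$ and then lifting over each blow-up $\psi\in\Bl_\frakY$ by the universal property. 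Both routes are viable, but yours carries extra bookkeeping that you only gesture at. In particular, the assignment $\psi\mapsto\varphi_\psi$ that you define is \emph{not} automatically a functor $\Bl_\frakY\to\Bl_\frakX$: given a morphism $h:\frakY_{\psi_1}\to\frakY_{\psi_2}$ in $\Bl_\frakY$, the ideal $g^{-1}\calJ_2\calO_\frakX$ need not pull back to an invertible ideal on $\frakX_{\varphi_{\psi_1}}$ (the generator might kill $\frakm$-torsion), so there is no a priori induced map $\frakX_{\varphi_{\psi_1}}\to\frakX_{\varphi_{\psi_2}}$. One must instead use that $\Bl_\frakX^\op$ is filtered, passing to a common refinement by a further blow-up (cf.\ Lemma~\ref{l:comp}); the uniqueness clause of Proposition~\ref{p:univ} then controls the compatibility. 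You should also verify the canonical identification $f^{-1}\mu_\psi^{-1}\cong\mu_{\varphi_\psi}^{-1}g_\psi^{-1}$, which you use implicitly when assembling the colimit, by tracing through the definition of the specialization morphism. Finally, your appeal to Tate's Acyclicity Theorem for the independence of the choice of $g$ is in the right spirit but conflates ``$\gamma_h$ are isomorphisms'' with the statement you actually need, which is that two zig-zag-equivalent model morphisms induce the same section-level maps after passage to the colimit; this does follow, but requires a short additional argument. None of these issues are fatal — your plan is sound — but compared with the paper's one-shot definition on sections, your route is noticeably heavier.
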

		\begin{proof}
			The underlying map of topoi is given by Proposition \ref{p:admor}. We will construct a morphism $\calO_\calY \to f_* \calO_\calX$. Let $\calV \to \calY$ be an open immersion. Choose formal models $\frakX \to \frakZ$ and $\frakV \to \frakY$ as in the proof of Lemma \ref{l:open}. Referring to the diagram (\ref{d:open}), let $\calU = \frakU^\rig$, so that $\calU \to \calX$ is an open immersion. Then we have
			\begin{equation*}
				(\calO_\calX)_*(\calV)	=	\calO_\calX(\calU)	=	\calH^0(\calO_{\frakX \times_\frakZ \frakY'})(\frakU).
			\end{equation*}
			Similarly,
			\begin{equation*}
				\calO_\calY(\calV)	=	\calH^0(\calO_{\frakY'})(\frakY' \times_\frakY \frakV).
			\end{equation*}
			The map $\frakX \times_\frakX \frakY' \to \frakY'$ induces a map $\calO_{\frakY'}(\frakY' \times_\frakY \frakV) \to \calO_{\frakX \times_\frakZ \frakY'}(\frakU)$, and the desired map $\calO_\calY(\calV) \to (\calO_\calX)_*(\calV)$ is given by functoriality of $\calH^0$.
		\end{proof}
		
		\begin{proposition}
			Let $\calX$ be an object of $\FS_R^\dagger$. Then $\frakm$-adic completion induces a morphism of ringed topoi $((\calX_\infty)_\ad, \calO_{\calX_\infty}) \to (\calX_\ad,\calO_\calX)$.
		\end{proposition}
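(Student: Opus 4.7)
The geometric morphism $c\colon (\calX_\infty)_\ad \to \calX_\ad$ underlying the claim is already supplied by Proposition \ref{p:adcomp}, so the task is to produce a compatible homomorphism of structure sheaves $\calO_\calX \to c_* \calO_{\calX_\infty}$. The plan is to construct this homomorphism by $\frakm$-adically completing, level by level, the data used to define $\calO_\calX$ via the inductive-limit formula (\ref{eq:indlim}), and then to invoke Theorem \ref{t:shad} to descend from the total topos to the admissible topos.

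First I would fix a weak formal model $\frakX$ of $\calX$ in $\FS_R^\dagger$, so that its $\frakm$-adic completion $\frakX_\infty$ is a formal model of $\calX_\infty$ in $\FS_R$. As observed in the proof of Proposition \ref{p:rigcomp}, $\frakm$-adic completion sends an admissible weak blow-up $\varphi\colon \frakX_\varphi \to \frakX$ to an admissible formal blow-up $(\frakX_\varphi)_\infty \to \frakX_\infty$, and therefore induces a functor $\Bl_\frakX \to \Bl_{\frakX_\infty}$. For each $\varphi$, write $\iota_\varphi\colon (\frakX_\varphi)_\infty \to \frakX_\varphi$ for the canonical completion map of ringed spaces; then the construction of $\beta_{\iota_\varphi}$ from Section \ref{s:an}, applied to $\calO_{\frakX_\varphi}$, yields a canonical map
\[
\calH^0(\calO_{\frakX_\varphi}) \to (\iota_\varphi)_* \calH^0(\calO_{(\frakX_\varphi)_\infty}),
\]
and the cocycle diagram (\ref{eq:cocycle2}) ensures these are compatible with all morphisms $f\colon \frakX_\varphi \to \frakX_\psi$ in $\Bl_\frakX$. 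Pulling back via the specialization morphisms $\mu_\varphi$ and $\mu_{(\frakX_\varphi)_\infty}$ and passing to the colimit over $\varphi \in \Bl_\frakX^\op$, the identity (\ref{eq:indlim}) gives the desired map $\calO_\calX \to c_* \calO_{\calX_\infty}$, modulo one verification.

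That verification, which I expect to be the main technical point, is that the colimit over $\Bl_\frakX^\op$ on the target correctly computes $c_* \calO_{\calX_\infty}$, whose natural description runs over the a priori larger category $\Bl_{\frakX_\infty}^\op$. It suffices to show that the functor $\Bl_\frakX \to \Bl_{\frakX_\infty}$ has cofinal image: given any admissible formal blow-up $\frakY \to \frakX_\infty$ along an open ideal $\calJ$, one must exhibit an open ideal $\calI \subseteq \calO_\frakX$ whose completion dominates $\calJ$. Locally on an affine $\frakU = \Spwf(A) \subseteq \frakX$, the ideal $\calJ$ corresponds to a finitely generated open ideal $J \subseteq A_\infty$, and by Lemma \ref{l:ff} (together with the Noetherianness of $A$) one can find finitely many generators of $J$ lying in $A$, which define the required open ideal of $A$ whose $\frakm$-adic completion is $J$. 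Globalizing via Lemma \ref{l:glue} produces $\calI$, and the induced admissible weak blow-up $\frakX_\calI \to \frakX$ dominates $\frakY$ after completion. Once this cofinality is in hand, the colimit map is well defined and the pair $(c, \calO_\calX \to c_* \calO_{\calX_\infty})$ is the sought morphism of ringed topoi.
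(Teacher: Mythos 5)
Your plan is sound in outline, but it is considerably more elaborate than what the paper does, and the step you flag as the ``main technical point'' --- cofinality of $\Bl_\frakX \to \Bl_{\frakX_\infty}$ --- is in fact not needed at all. The reason is Theorem~\ref{t:acyc}: the transition maps $\gamma_f$ are isomorphisms for \emph{every} $f$ in $\Bl_\frakX$ (and likewise over $\frakX_\infty$), so the inductive limit in (\ref{eq:indlim}) is stationary. Concretely, for any model $\frakU \to \frakX$ of an open immersion $\calU \to \calX$ one already has
\begin{equation*}
\calO_\calX(\calU) \;=\; \calH^0(\calO_\frakX)(\frakU), \qquad
c_*\calO_{\calX_\infty}(\calU) \;=\; \calO_{\calX_\infty}(\calU_\infty) \;=\; \calH^0(\calO_{\frakX_\infty})(\frakU_\infty),
\end{equation*}
with no colimit over blow-ups remaining, and the desired map is simply the $\frakm$-adic completion $\calO_\frakX(\frakU) \to \calO_{\frakX_\infty}(\frakU_\infty)$ combined with functoriality of $\calH^0$. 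That is the paper's two-line proof. Once you observe that the colimit degenerates, the question of whether $\Bl_\frakX$ maps onto a cofinal subcategory of $\Bl_{\frakX_\infty}$ never arises.

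As a secondary point, your justification of the cofinality itself is a bit too quick: Lemma~\ref{l:ff} by itself does not let you pick generators of a finitely generated open ideal $J \subseteq A_\infty$ that happen to lie in $A$. What one actually does is use openness, $J \supseteq \frakm^n A_\infty$ for some $n$, to descend: since $A/\frakm^n A \cong A_\infty/\frakm^n A_\infty$, let $I \subseteq A$ be the preimage of the image of $J$ in this quotient; then $I$ is a finitely generated open ideal of $A$ (Noetherianity) and $I A_\infty = J$. This is also essentially what is needed to globalize, where one wants the coherent ideal $\calI$ on $\frakX$ to be a genuine preimage of $\calJ$ rather than just a compatible family of subsheaves; invoking Lemma~\ref{l:glue} is not quite the right tool, since one must also verify that the local choices $\calI|_{\frakU_i}$ agree on overlaps, which is where the faithful flatness of Lemma~\ref{l:ff} (applied to $\calO_{\frakU_i \cap \frakU_j} \to \calO_{(\frakU_i \cap \frakU_j)_\infty}$) does the work. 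None of this is wrong in substance, but it is machinery you can avoid entirely by noticing that Tate acyclicity collapses the colimit.
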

		\begin{proof}
			The underling map of topoi $\iota$ is given by Proposition \ref{p:adcomp}. Let $\frakU \to \frakX$ be a model of an open immersion $\calU \to \calX$. Then 
			\begin{equation*}
				\iota_*\calO_{\calX_\infty}(\calU)	=	\calO_{\calX_\infty}(\calU_\infty)	=	\calH^0(\calO_{\frakX_\infty})(\frakU_\infty).
			\end{equation*}
			By definition, $\calO_\calX(\calU)	=	\calH^0(\calO_\frakX)(\frakU)$. The desired map is given by the $\frakm$-adic completion $\calO_\frakX(\frakU) \to \calO_{\frakX_\infty}(\frakU_\infty)$ and functoriality of $\calH^0$.
		\end{proof}
		
		\begin{proposition}
			Let $h:R \to S$ be a continuous map of Noetherian local domains. Then base change induces a morphism of ringed topoi $((\calX_h)_\ad,\calO_{\calX_h}) \to (\calX_\ad,\calO_\calX)$.
		\end{proposition}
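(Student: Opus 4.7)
The plan is to mirror the two preceding propositions. First, by Proposition \ref{p:adbchange} weak base change already supplies the underlying geometric morphism of topoi; call it $\tau:(\calX_h)_\ad \to \calX_\ad$. The task therefore reduces to constructing a natural morphism of sheaves of rings $\calO_\calX \to \tau_* \calO_{\calX_h}$.

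To build this map, I would fix a model $\frakX$ of $\calX$; by Proposition \ref{p:rigbchange} its base change $\frakX_h$ is a model of $\calX_h$, and Proposition \ref{p:wbchange} produces a canonical map of ringed spaces $g:\frakX_h \to \frakX$, in particular a ring homomorphism $\calO_\frakX \to g_* \calO_{\frakX_h}$. Given any open immersion $\frakU \to \frakX$, we may base change to obtain $\frakU_h \to \frakX_h$, which is a model of the associated open immersion $\calU_h \to \calX_h$. By the definitions of $\calO_\calX$ and $\calO_{\calX_h}$ as analytifications,
$$\calO_\calX(\frakU^\rig) = \calH^0(\calO_\frakX)(\frakU), \qquad \tau_*\calO_{\calX_h}(\frakU^\rig) = \calH^0(\calO_{\frakX_h})(\frakU_h),$$
and functoriality of $\calH^0$ applied to $g$ yields the desired map on sections.

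The main obstacle is showing this construction is well-defined and compatible with admissible coverings, i.e.\ independent of the chosen model. For this I would work inside the total topos formalism of Theorem \ref{t:shad}. Over each admissible blow-up $\varphi:\frakX_\varphi \to \frakX$, its base change $\varphi_h:(\frakX_\varphi)_h \to \frakX_h$ is again an admissible blow-up (precisely the content driving the proof of Proposition \ref{p:rigbchange}), and $\calH^0$ is functorial in morphisms of ringed spaces. Hence the local maps $\calH^0(\varphi^*\calO_\frakX) \to (\varphi_h)_* \calH^0(\varphi_h^*\calO_{\frakX_h})$ assemble into a morphism of sheaves on $\Tot_\frakX$ commuting with the cocycles $\gamma_f$ of Proposition \ref{p:shad}. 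By Tate's acyclicity theorem (Theorem \ref{t:acyc}), both sides descend to sheaves on $\Ad_\calX$, yielding the sought-after morphism $\calO_\calX \to \tau_*\calO_{\calX_h}$ and completing the promotion of $\tau$ to a morphism of ringed topoi.
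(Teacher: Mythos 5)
Your proposal is correct and takes essentially the same approach as the paper: invoke Proposition \ref{p:adbchange} for the underlying geometric morphism, then construct $\calO_\calX \to \tau_* \calO_{\calX_h}$ by applying the functoriality of $\calH^0$ to the canonical map of ringed spaces $\frakX_h \to \frakX$. The additional paragraph on compatibility with blow-ups via the total topos is extra care the paper leaves implicit, but it is the same underlying construction.
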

		\begin{proof}
			The underlying morphism of topoi $h$ is given by Proposition \ref{p:adbchange}. Again we construct a morphism $\calO_\calX \to h_* \calO_{\calX_h}$. Let $\frakU \to \frakX$ be a model of $\calU \to \calX$. Then
			\begin{equation*}
				h_* \calO_{\calX_h}(\calU)	=	\calH^0(\calO_{\frakX_h}) ((\frakU \times_\frakX \frakX_h)_\infty).
			\end{equation*}
			By definition, $\calO_\calX(\calU)	=	\calH^0(\calO_\frakX)(\frakU)$. Now the map of ringed spaces $\frakX_h \to \frakX$ gives a morphism $\calO_\frakX(\frakU) \to \calO_{\frakX_h} ((\frakU \times_\frakX \frakX_h)_\infty)$, and the desired map is given by functoriality of $\calH^0$.
		\end{proof}
	
	\subsection{Rigid Points}\label{s:points}
		
		Let $\calX$ be an object of $\Rig_R^*$. As in (\cite{Abbes}, 4.4.6), one may show that the topos $\calX_\ad$ is a coherent topos, and therefore has \emph{enough points}, in the sense that isomorphisms in $\calX_\ad$ can be checked on stalks. In applications, however, the collection of all points of the topos $\calX_\ad$ is somewhat inconvenient, for example: there is no evident notion of the fiber of a morphism $\calX \to \calY$ at an arbitrary point of $\calY_\ad$. In this final section we construct an underlying set $\langle \calX \rangle$ of \emph{rigid points}, which may be interpreted as certain closed immersions in the category $\Rig_R$. Our main theorem states that isomorphisms in $\calX_\ad$ can be checked at the stalks of rigid points, thus making the set $\langle \calX \rangle$ a convenient object for doing geometry.
		
		\begin{definition}
			An $R$-algebra $\Omega$ is an \emph{$R$-point} if $\Omega$ is a $1$-dimensional local domain of finite type over $R$.
		\end{definition}
		
		\begin{lemma}\label{l:rpoints}
			Let $\Omega$ be an $R$-point. Then
			\begin{enumerate}
				\item	$\Omega$ is complete
				\item	$\Omega$ is a finite $R$-module
				\item	The integral closure $\overline{\Omega}$ in $Q(\Omega)$ is a discrete valuation ring.
			\end{enumerate}
		\end{lemma}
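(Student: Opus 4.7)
My plan is to first reduce to the case $R\hookrightarrow\Omega$: setting $\mathfrak{p}_0 = \ker(R\to\Omega)$, replace $R$ by $R/\mathfrak{p}_0$, which is again a complete Noetherian local domain. I would then apply the dimension formula for finitely generated algebras over universally catenary Noetherian domains (complete Noetherian local rings are universally catenary), combined with Zariski's lemma applied to the residue field extension $k(\frakm_\Omega\cap R)\hookrightarrow\Omega/\frakm_\Omega$, to obtain
\begin{equation*}
	1 = \dim\Omega = \operatorname{height}(\frakm_\Omega\cap R) + \operatorname{tr.deg}_{Q(R)}Q(\Omega).
\end{equation*}
The case $\frakm_\Omega\cap R = 0$ is ruled out because it would force $Q(R)\subseteq\Omega$, making $\Omega$ a finitely generated local $1$-dimensional domain over a field, which is impossible by the Jacobson property of affine algebras (a local Jacobson domain is a field). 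Hence $\frakm_\Omega\cap R$ is a height-$1$ prime of $R$, $Q(\Omega)/Q(R)$ is a finite algebraic extension, and with a bit more dimension-theoretic work one concludes $\dim R = 1$, so that $R\to\Omega$ is a local homomorphism of $1$-dimensional local domains.

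The key tool will be the integral closure $\overline{R}$ of $R$ in $Q(\Omega)$. Since every complete Noetherian local ring is excellent (in particular Nagata), $\overline{R}$ is a finite $R$-module. As a finite integral extension of the complete local ring $R$, $\overline{R}$ splits via Hensel's lemma into a finite product of complete local rings; being a domain, it is itself a single complete local normal $1$-dimensional Noetherian domain, that is, a complete DVR. To conclude, I would invoke Chevalley's theorem: the local subring $\Omega\subseteq Q(\Omega)$ is dominated by some valuation ring $V$ of $Q(\Omega)$. Then $V\supseteq R$ and, being integrally closed in $Q(\Omega)$, $V\supseteq\overline{R}$. But the only overrings of the DVR $\overline{R}$ inside its fraction field $Q(\Omega)$ are $\overline{R}$ and $Q(\Omega)$; since $V$ is a proper valuation ring, $V = \overline{R}$, and hence $\Omega\subseteq\overline{R}$.

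From the inclusion $\Omega\subseteq\overline{R}$ the three conclusions follow. For (ii), $\Omega$ is an $R$-submodule of the finite $R$-module $\overline{R}$, hence itself finite since $R$ is Noetherian. For (iii), by transitivity of integrality (using that $\overline{R}$ is integral over $R$), the integral closure of $\Omega$ in $Q(\Omega)$ coincides with that of $R$, namely $\overline{R}$, a DVR. For (i), $\Omega$ is a finite module over the complete ring $R$ and is therefore $\frakm$-adically complete; since $\frakm\Omega\subseteq\frakm_\Omega$ is a nonzero ideal in the $1$-dimensional local domain $\Omega$ it is $\frakm_\Omega$-primary, so the $\frakm$-adic and $\frakm_\Omega$-adic topologies on $\Omega$ coincide and $\Omega$ is $\frakm_\Omega$-adically complete. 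The main obstacle will be the dimension-theoretic reduction ensuring $\dim R = 1$; once that is in hand, the integral-closure/domination argument proceeds cleanly.
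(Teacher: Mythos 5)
The paper's own proof is essentially a citation to the one-dimensional case in Abbes, so your from-scratch argument is a genuinely different route, and the skeleton is reasonable: reduce to $R\hookrightarrow\Omega$, show $\Omega\subseteq\overline{R}$ where $\overline{R}$ is the integral closure of $R$ in $Q(\Omega)$ (a complete DVR by the Nagata and Hensel arguments you give), and read off (i)--(iii). The final domination step via the existence of a valuation ring of $Q(\Omega)$ dominating $\Omega$ is clean, and the deduction of (i)--(iii) from $\Omega\subseteq\overline{R}$ is correct.

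However, the step you describe as ``a bit more dimension-theoretic work'' to conclude $\dim R = 1$ is in fact the crux, and dimension theory alone will not deliver it. The dimension formula together with Zariski's lemma yield only
\begin{equation*}
	1 \;=\; \operatorname{ht}(\frakm_\Omega\cap R) \;+\; \operatorname{tr.deg}_{Q(R)}Q(\Omega),
\end{equation*}
hence (after the Jacobson argument ruling out $\frakm_\Omega\cap R = 0$) that $\frakm_\Omega\cap R$ has height one and $Q(\Omega)/Q(R)$ is finite. But a height-one prime of $R$ need not be maximal --- for instance $(T)\subset\Z_p[[T]]$ --- and the dimension formula cannot distinguish. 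If $\frakm_\Omega\cap R$ were a non-maximal height-one prime, then $R\to\Omega$ would fail to be a local homomorphism, $\overline{R}$ would have Krull dimension $\dim R\geq 2$ and would not be a valuation ring, and your domination argument would collapse. What actually forces $\frakm_\Omega\cap R$ to be maximal is a constructibility input rather than a dimension count: $\Spec\Omega$ consists of exactly two points (the zero ideal and $\frakm_\Omega$), so by Chevalley's theorem the image of the finite-type morphism $\Spec\Omega\to\Spec R$ is a two-point constructible subset of $\Spec R$ containing the generic point. A constructible subset of an irreducible Noetherian scheme containing the generic point must contain a nonempty open, and if $\dim R\geq 2$ every nonempty open of $\Spec R$ contains infinitely many height-one primes --- a contradiction. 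Once $\dim R = 1$ is established this way (so that the height-one prime $\frakm_\Omega\cap R$ equals $\frakm_R$), the remainder of your argument goes through as written.
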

		\begin{proof}
			By (\cite{Abbes}, 1.11.4), the statements are true for the $\frakm$-adic completion of $\Omega$. But then $\Omega$ is a finite module over a complete Noetherian ring, and is therefore $\frakm$-adically complete.
		\end{proof}
		
		Note that the lemma implies that every $R$-point $\Omega$ is automatically a w.c.f.g. algebra over $R$. The locally ringed spaces $\Spwf(\Omega)$ and $\Spf(\Omega)$ are isomorphic, and we will generally not distinguish the two. If $\frakX$ is an object of $\FS_R^*$, we let $\langle \frakX \rangle$ denote the set of equivalence classes of immersions
		\begin{equation*}
			P:\Spwf(\Omega)	\to	\frakX,
		\end{equation*}
		where $\Omega$ is an $R$-point. We say that $\langle \frakX \rangle$ is the set of \emph{rigid points} of $\frakX$. When there is no risk of confusion, we may identify an immersion $P$ with its associated rigid point. The \emph{residue field} of $P$ is defined to be the field of fractions $k(P)=Q(\Omega)$. The field $k(P)$ is naturally equipped with a discrete valuation coming from the integral closure $\overline{\Omega}$.
		
		\begin{lemma}
			Every rigid point $P$ is a closed immersion.
		\end{lemma}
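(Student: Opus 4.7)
The plan is to verify both parts of the definition of a closed immersion: (i) that $P$ is a homeomorphism of $\Spwf(\Omega)$ onto a closed subset of $\frakX$, and (ii) that the induced structure sheaf map $\calO_\frakX \to P_*\calO_{\Spwf(\Omega)}$ is surjective. Since $P$ is already an immersion, both conditions hold \emph{locally} near the image; the real content is upgrading local closedness to global closedness of the image.

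By Lemma \ref{l:rpoints}, $\Omega$ is a finite $R$-module, so $\Omega/\frakm\Omega$ is a local Artinian ring and the underlying topological space of $\Spwf(\Omega)$ consists of a single point. Let $x \in \frakX$ denote its image. I would choose an affine open $\frakU = \Spwf(A) \subseteq \frakX$ containing $x$; then $P|_\frakU$ corresponds to a ring homomorphism $A \to \Omega$, whose reduction modulo $\frakm$ yields a prime $\frakp$ of $A/\frakm A$ with $k(x) = \mathrm{Frac}((A/\frakm A)/\frakp) \hookrightarrow \Omega/\frakn$, where $\frakn \subset \Omega$ is the maximal ideal. Since $\Omega$ is finite over $R$ with residue field $\F_q$, the quotient $\Omega/\frakn$ is a finite extension of $\F_q$, and hence so is $k(x)$.

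Now $|\frakX|$ is locally of finite type over $\F_q$, since in affine charts it is $\Spec(A/\frakm A)$ for $A$ a w.c.f.g. $R$-algebra, hence a Jacobson space. By the Nullstellensatz for Jacobson schemes, a point is closed if and only if its residue field is a finite extension of $\F_q$. The residue field computation above therefore shows that $\{x\}$ is closed in $\frakX$, so $P$ maps $\Spwf(\Omega)$ homeomorphically onto its (closed) image.

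For (ii), the fact that $P$ is an immersion provides a local factorization $\Spwf(\Omega) \hookrightarrow \frakV \hookrightarrow \frakU$ with the first arrow a closed immersion and $\frakV \subseteq \frakU$ open. The closed immersion yields a surjection $\calO_{\frakV,x} \twoheadrightarrow \Omega$ on the stalk at $x$, and since $\calO_{\frakV,x} = \calO_{\frakX,x}$, the stalk map induced by $P$ is surjective at $x$; at all other points $P_*\calO_{\Spwf(\Omega)}$ vanishes and surjectivity is automatic. The main obstacle, as anticipated, is the topological step: deducing that the single-point image is \emph{globally} closed, which I handle by combining the Jacobson property of $|\frakX|$ with the finiteness of $\Omega$ over $R$ from Lemma \ref{l:rpoints}.
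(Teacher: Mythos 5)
Your proof is correct, but it takes a genuinely different route from the paper's. The paper's argument is algebraic: after factoring $P$ through a distinguished affine open $\frakU = \Spwf(A_f^\dagger) \subseteq \frakX = \Spwf(A)$, it shows directly that the composite ring map $A \to A_f^\dagger \to \Omega$ is surjective, by passing to the $\frakm$-adic completion and invoking the faithful flatness of Lemma \ref{l:ff}. You instead take a geometric route: you observe that the special fiber $|\frakX|$ is locally of finite type over $\F_q$ (so that the Nullstellensatz applies), compute that the residue field $k(x)$ of the image point is finite over $\F_q$ by embedding it in $\Omega/\frakn$, conclude that $\{x\}$ is closed, and then obtain surjectivity of $\calO_\frakX \to P_*\calO_{\Spwf(\Omega)}$ by a stalk computation at $x$ via the local factorization of the immersion. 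Your approach avoids the faithful-flatness lemma entirely and highlights the Jacobson property of the special fiber, which is a nice structural point; the paper's is shorter and reduces everything to a single affine ring map. One step you state a bit glibly and might flesh out is that $A/\frakm A$ is a \emph{finitely generated} $\F_q$-algebra: this is what makes $|\frakX|$ Jacobson, and it follows from the observation that overconvergent power series reduce to polynomials modulo $\frakm$, so that $R[X_1,\dots,X_n]^\dagger/\frakm R[X_1,\dots,X_n]^\dagger \cong \F_q[X_1,\dots,X_n]$.
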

		\begin{proof}
			It suffices to show the statement when $\frakX=\Spwf(A)$ is affine. Factor $P$ as
			\begin{equation*}
				\Spwf(\Omega)	\to	\frakU	\to	\frakX,
			\end{equation*}
			where $\Spwf(\Omega) \to \frakU$ is a closed immersion and $\frakU \to \frakX$ is an open immersion. Without loss of generality, we may assume that $\frakU=\Spwf(A_f^\dagger)$ is a distinguished open subset of $\frakX$. We claim that the map
			\begin{equation*}
				A	\to	A_f^\dagger	\to	\Omega
			\end{equation*}
			is surjective. But this follows from Lemma \ref{l:ff} by passing to the $\frakm$-adic completion.
		\end{proof}
		
		\begin{lemma}\label{l:funct}
			Let $f:\frakX \to \frakY$ be a moprhism in $\FS_R^*$. If $P$ is a rigid point of $\frakX$, then the composition $f \circ P$ factors through a unique rigid point $f(P)$ of $\frakY$.
		\end{lemma}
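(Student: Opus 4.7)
The plan is to construct $f(P)$ as the scheme-theoretic image of $f \circ P$ in $\frakY$; the $1$-dimensionality and locality of $\Omega$, together with the fact that $R \to \Omega$ is local, will force this image to again be an $R$-point.

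Since $\Omega$ is finite over $R$, its reduction $\Omega/\frakm\Omega$ is local Artinian, so $\Spwf(\Omega)$ has a single topological point. Under $f \circ P$ this point has a unique image $y \in \frakY$; I choose an affine open neighborhood $\frakU = \Spwf(B) \subseteq \frakY$ of $y$, so that $f \circ P$ restricted to this chart corresponds to an $R$-algebra homomorphism $\psi: B \to \Omega$. Setting $\mathfrak{p} = \ker \psi$ and $\Omega' = B/\mathfrak{p}$, the map $\psi$ factors as $B \twoheadrightarrow \Omega' \hookrightarrow \Omega$, and I define $f(P)$ to be the immersion $\Spwf(\Omega') \hookrightarrow \Spwf(B) \hookrightarrow \frakY$.

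The essential task is to verify that $\Omega'$ is an $R$-point. It is a domain (as $\mathfrak{p}$ is prime), and as a sub-$R$-module of the finite $R$-module $\Omega$ it is itself a finite $R$-module by Noetherianness, hence of finite type. To see that $\Omega'$ is $1$-dimensional and local, I use that $\Omega$ is integral over $\Omega'$ (both being finite over $R$), so by going-up every prime of $\Omega'$ is contracted from a prime of $\Omega$; since $\Omega$ has exactly two primes $(0) \subsetneq \frakm_\Omega$, it follows that $\Omega'$ has at most the two primes $(0)$ and $\frakn := \Omega' \cap \frakm_\Omega$. The main obstacle is to rule out the degenerate possibility $\frakn = 0$, in which $\Omega'$ would be a field rather than an $R$-point. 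I resolve this using two inputs: first, since $\Omega$ is $\frakm$-adically complete, the map $R \to \Omega$ is local, so $\frakm \cdot \Omega \subseteq \frakm_\Omega$; second, the image of $\frakm$ in $\Omega$ is nonzero, because $\Omega$ is finite (hence integral) over $R/\ker(R \to \Omega)$, forcing $\dim R/\ker(R \to \Omega) = \dim \Omega = 1$, so $\ker(R \to \Omega)$ has height $\dim R - 1$ and cannot contain the maximal ideal $\frakm$ (observing that $R$-points preclude $R$ itself being a field, so $\dim R \geq 1$). The nonzero image of $\frakm$ in $\Omega$ lies in both $\Omega'$ and $\frakm_\Omega$, giving $\frakn \neq 0$.

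For uniqueness, let $Q: \Spwf(\Omega'') \to \frakY$ be another rigid point through which $f \circ P$ factors as $Q \circ g$. The topological image of $Q$ is again $\{y\}$, so $Q$ factors through $\Spwf(B)$, corresponding to a surjection $B \twoheadrightarrow \Omega''$ and a map $g^\ast: \Omega'' \to \Omega$ whose composition with $B \twoheadrightarrow \Omega''$ recovers $\psi$. Applying the argument of the previous paragraph to $g^\ast$, its kernel cannot equal $\frakm_{\Omega''}$ (this would embed the residue field of $\Omega''$ as a subfield of $\Omega$, contradicting the nonvanishing of $\frakm$ in $\Omega$ combined with locality), so $g^\ast$ is injective. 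Hence $\Omega''$ identifies with the image of $\psi$, which is exactly $\Omega'$, and the closed subscheme cut out by $Q$ coincides with that cut out by $f(P)$, establishing $Q \sim f(P)$.
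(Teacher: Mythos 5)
Your proof is correct and rests on the same idea as the paper's: take $\Omega'$ to be the image of $B$ in $\Omega$ and use the finiteness of $\Omega$ over $\Omega'$ to conclude that $\Omega'$ is again an $R$-point. The paper is far terser (one line, and it omits the uniqueness claim entirely), so your expansion — especially the check that $\Omega'$ is $1$-dimensional and local rather than a field, and the uniqueness argument — is a genuine fleshing-out rather than a different route. One minor streamlining: the detour through heights to see that $\frakm$ has nonzero image in $\Omega$ is unnecessary — since $\Omega$ is integral over the domain $\Omega'$ one gets $\dim\Omega' = \dim\Omega = 1$ directly, and the unique maximal ideal of $\Omega'$ lies under $\frakm_\Omega$ by lying-over and incomparability — but your version is correct.
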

		\begin{proof}
			We may assume that $\frakX=\Spwf(A)$ and $\frakY=\Spwf(B)$ are affine. Say $P:A \to \Omega$, and let $\Omega'$ the image of $B$ in $\Omega$. Since $\Omega$ is finite over $\Omega'$, $\Omega'$ is an $R$-point, so we may take $f(P):B \to \Omega'$.
		\end{proof}
		
		As a consequence of Lemma \ref{l:funct}, we see that the assignment $\frakX \mapsto \langle \frakX \rangle$ defines a functor $\FS_R^* \to \Set$. The following lemma indicates that this functor is not affected by $\frakm$-adic completion:
		
		\begin{proposition}\label{p:comppts}
			The rigid points functors $\FS_R^* \to \Set$ fit into a commutative diagram
			\begin{equation*}
				\begin{tikzcd}
					\FS_R^\dagger	\arrow[dr]	\arrow[r]	&	\FS_R	\arrow[d]	\\
						&	\Set
				\end{tikzcd}
			\end{equation*}
			where the horizontal arrow is the $\frakm$-adic completion functor.
		\end{proposition}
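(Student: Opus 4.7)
The plan is to exhibit a natural bijection $\langle\frakX\rangle \to \langle\frakX_\infty\rangle$ for each $\frakX \in \FS_R^\dagger$; this is exactly what the commutativity of the diagram asserts. The forward map is supplied by functoriality (Lemma \ref{l:funct}) applied to the canonical morphism $\frakX_\infty \to \frakX$: a rigid point $P: \Spwf(\Omega) \to \frakX$ completes to $\Spf(\Omega) \to \frakX_\infty$. By Lemma \ref{l:rpoints}, the $R$-point $\Omega$ is already $\frakm$-adically complete, so $\Spwf(\Omega) = \Spf(\Omega)$ and only the target changes.

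I would then reduce to the affine case $\frakX = \Spwf(A)$. Using the preceding lemma (rigid points are closed immersions), $\langle\Spwf(A)\rangle$ and $\langle\Spf(A_\infty)\rangle$ are respectively identified with isomorphism classes of surjections $A \twoheadrightarrow \Omega$ and $A_\infty \twoheadrightarrow \Omega$ onto $R$-points, and our map becomes $(A \twoheadrightarrow \Omega) \mapsto (A_\infty \twoheadrightarrow \Omega)$. Injectivity is then a direct consequence of the faithful flatness of $A \to A_\infty$ from Lemma \ref{l:ff}: any isomorphism $\Omega_1 \to \Omega_2$ commuting with the two completed maps automatically commutes with the originals, since $A \hookrightarrow A_\infty$ is injective.

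The essential step is surjectivity: given $\pi: A_\infty \twoheadrightarrow \Omega$, I claim the composition $\pi': A \to A_\infty \xrightarrow{\pi} \Omega$ is already surjective. Let $C = \Omega/\pi'(A)$. Since $A \to A_\infty$ induces the identity on reductions mod $\frakm$ and $\pi$ is surjective, one obtains $C = \frakm C$. By Lemma \ref{l:rpoints}(2), $\Omega$ is finite over $R$, and hence so is its quotient $C$; Nakayama's lemma applied to the local ring $R$ then forces $C = 0$. The $\frakm$-adic completion of $\pi'$ recovers $\pi$ by the universal property of completion: both $\pi$ and the completion of $\pi'$ restrict to $\pi'$ on the dense subring $A \subset A_\infty$, and continuity forces them to coincide. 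Globalizing by gluing over an affine cover of $\frakX$ and checking naturality in $\frakX$ is routine, since all constructions are local and compatible with restriction to affine opens. The main obstacle — and really the only content of the argument — is the Nakayama step, which crucially depends on the finiteness of $\Omega$ over $R$ afforded by Lemma \ref{l:rpoints}(2); without this finiteness hypothesis the cokernel $C$ could easily survive.
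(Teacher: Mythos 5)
Your proposal is correct and follows the same line of reasoning as the paper's proof. The paper reduces to an affine open $\Spwf(A)\subseteq\frakX$ whose completion contains the image of the given rigid point, observes that $A\to A_\infty\to\Omega$ has dense image, and concludes that it is surjective; your Nakayama argument via $C=\Omega/\pi'(A)$ and $C=\frakm C$ is precisely the unspoken content of that ``dense image $\Rightarrow$ surjective'' step, and making it explicit is a genuine improvement in exposition.

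One small inaccuracy in the setup: the forward map $\langle\frakX\rangle\to\langle\frakX_\infty\rangle$ does not arise from Lemma~\ref{l:funct} applied to the canonical map $\iota:\frakX_\infty\to\frakX$, since $\iota$ is only a morphism of ringed spaces and is not adic, hence not a morphism in $\FS_R^\dagger$ or $\FS_R$, which is what that lemma requires. The correct construction is simply to apply the $\frakm$-adic completion functor $\FS_R^\dagger\to\FS_R$ to the immersion $P:\Spwf(\Omega)\to\frakX$ and use $\Omega_\infty=\Omega$ (Lemma~\ref{l:rpoints}) to identify the source with $\Spf(\Omega)$. This is exactly what you then do in practice, so the issue is purely one of attribution. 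Likewise, injectivity needs only that the maps $A\to\Omega_i$ are the restrictions of $A_\infty\to\Omega_i$; injectivity (let alone faithful flatness) of $A\to A_\infty$ plays no role there.
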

		\begin{proof}
			Let $\frakX$ be an object of $\FS_R^\dagger$. It is clear that there is an injective map $\langle \frakX \rangle \to \langle \frakX_\infty \rangle$. To see that this map is surjective, let $P:\Spf(\Omega) \to \frakX_\infty$ be a rigid point of $\frakX_\infty$. Choose an affine open subset $\frakU=\Spwf(A)$ of $\frakX$ such that $\frakU_\infty = \Spf(A_\infty)$ contains the image of $P$. Then the composition
			\begin{equation*}
				A	\to	A_\infty	\to	\Omega
			\end{equation*}
			has dense image, and so it is necessarily surjective. It is clear that the associated rigid point of $\frakX$ has $\frakm$-adic completion $P$.
		\end{proof}
		
		Now let $\calX = \frakX^\rig$. We would like to speak of the set of rigid points of $\calX$, but first we must show that this is independent of the choice of model:
		
		\begin{proposition}\label{p:blpoints}
			Let $\frakX' \to \frakX$ be an admissible blow-up in $\FS_R^*$. Then the induced map $\langle \frakX' \rangle \to \langle \frakX \rangle$ is bijective.
		\end{proposition}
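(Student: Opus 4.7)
The plan is to use the universal property of admissible blow-ups (Proposition \ref{p:univ}) together with the fact that for any $R$-point $\Omega$, the integral closure $\overline{\Omega}$ in $Q(\Omega)$ is a DVR by Lemma \ref{l:rpoints}; consequently every open ideal of $\overline{\Omega}$ is principal, hence invertible. Write $\pi:\frakX' \to \frakX$ for the admissible blow-up at an open ideal $\calI \subseteq \calO_\frakX$.

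For surjectivity, given a rigid point $P:\Spwf(\Omega) \to \frakX$, the composition $\Spwf(\overline{\Omega}) \to \Spwf(\Omega) \to \frakX$ pulls $\calI$ back to an invertible open ideal of $\overline{\Omega}$. By Proposition \ref{p:univ}, it lifts uniquely to a map $Q:\Spwf(\overline{\Omega}) \to \frakX'$. Let $x' \in \frakX'$ be the image of the closed point of $Q$, choose an affine open $\frakU' = \Spwf(B)$ of $\frakX'$ containing $x'$, and let $\Omega'$ be the image of the induced ring map $B \to \overline{\Omega}$. The inclusions $\Omega \subseteq \Omega' \subseteq \overline{\Omega}$ make $\Omega'$ a finite $\Omega$-algebra (since $\overline{\Omega}$ is finite over $\Omega$), hence a finite, one-dimensional integral $R$-algebra; being a domain finite over the complete local ring $\Omega$, it is automatically local, so $\Omega'$ is an $R$-point. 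The resulting rigid point $P':\Spwf(\Omega') \hookrightarrow \frakU' \hookrightarrow \frakX'$ satisfies $\pi(P') = P$: following the recipe of Lemma \ref{l:funct}, the image of $\calO_\frakX \to \calO_{\frakU'} \to \Omega'$ on a suitable affine model of $\frakX$ coincides with $\Omega$.

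For injectivity, suppose $P_1', P_2':\Spwf(\Omega_i') \to \frakX'$ are rigid points with $\pi(P_i') = P$. Lemma \ref{l:funct} gives inclusions $\Omega \hookrightarrow \Omega_i'$, and passing to integral closures produces DVRs $\overline{\Omega_i'}$ together with maps $\Spwf(\overline{\Omega_i'}) \to \Spwf(\Omega_i') \to \frakX'$ whose composition with $\pi$ is the unique lift to $\frakX$ of the map $\Spwf(\overline{\Omega_i'}) \to \Spwf(\Omega)$ determined by $\Omega \hookrightarrow \overline{\Omega_i'}$. By the uniqueness clause of Proposition \ref{p:univ}, each such map $\Spwf(\overline{\Omega_i'}) \to \frakX'$ is determined by this data, which in turn is manufactured from the surjectivity construction applied to $P$ with $\overline{\Omega_i'}$ in place of $\overline{\Omega}$; in particular the two closed point images $x_i'$ both coincide with the image of the canonical lift $Q$. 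Working on a common affine neighborhood $\frakU'$ of this point and recovering each $\Omega_i'$ as the image of the structure ring in the corresponding $\overline{\Omega_i'}$, the $\Omega_i'$ are identified as the common $\Omega'$ of the surjectivity step, so $P_1'$ and $P_2'$ represent the same rigid point.

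The main obstacle is the injectivity step: one must carefully coordinate the a priori distinct integral closures $\overline{\Omega_i'}$ through the uniqueness in the universal property to identify the resulting rigid points in a common affine chart. The surjectivity construction, by contrast, is a clean application of the valuative criterion, and the verification that the ring $\Omega'$ extracted from the image of $B \to \overline{\Omega}$ is genuinely an $R$-point relies only on the finiteness of $\overline{\Omega}$ over $\Omega$ and the completeness of $\Omega$.
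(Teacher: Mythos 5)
Your proof follows essentially the same strategy as the paper's: use Lemma \ref{l:rpoints} to pass to the integral closure $\overline{\Omega}$, which is a DVR, so the pullback of the blow-up ideal becomes invertible, and then invoke the universal property of admissible blow-ups (Proposition \ref{p:univ}) to lift uniquely to $\frakX'$. Your surjectivity step is in substance the paper's construction, modulo the cosmetic difference that the paper stops at a finite intermediate extension $\Omega \subseteq \Omega' \subseteq \overline{\Omega}$ sufficient to invert $I$, whereas you go all the way to $\overline{\Omega}$; in both cases one then cuts the lifted ring down to the image of the structure ring in an affine chart around the lifted closed point.

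Where your write-up goes beyond the paper is injectivity: the paper's proof only announces that it constructs ``an inverse'' and then gives the one-directional construction, leaving the verification that it is a two-sided inverse implicit. Your injectivity argument is correct in substance but a bit loosely phrased; the tidy way to say it is this. Given $P_1', P_2':\Spwf(\Omega_i') \to \frakX'$ with $\pi(P_i')=P$, the inclusion $\Omega \hookrightarrow \Omega_i'$ gives $Q(\Omega) \subseteq Q(\Omega_i')$, hence $\overline{\Omega} \subseteq \overline{\Omega_i'}$. Both $\Spwf(\overline{\Omega_i'}) \to \Spwf(\Omega_i') \xrightarrow{P_i'} \frakX'$ and $\Spwf(\overline{\Omega_i'}) \to \Spwf(\overline{\Omega}) \xrightarrow{Q} \frakX'$ lift $\Spwf(\overline{\Omega_i'}) \to \frakX$, so by the uniqueness in Proposition \ref{p:univ} they coincide. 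Consequently both $P_i'$ specialize to the image of $Q$, and in an affine chart $\Spwf(B)$ around that point one has $\ker(B \to \Omega_i') = \ker(B \to \overline{\Omega_i'}) = \ker(B \to \overline{\Omega})$, independent of $i$, whence $P_1' = P_2'$. Your version, while correct, would benefit from making this factorization through $Q$ explicit rather than appealing to ``coordinating'' the two integral closures.
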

		\begin{proof}
			We will construct an inverse to the map $\langle \frakX' \rangle \to \langle \frakX \rangle$. We assume without loss of generality that $\frakX=\Spwf(A)$ is affine. Let $P:A \to \Omega$ be a rigid point of $\frakX$. Choose an ideal $I$ of $A$ such that $\frakX' \to \frakX$ is the admissible blow-up of $\frakX$ at $\tilde{I}$. Since the intergral closure $\overline{\Omega}$ is a discrete valuation ring, the ideal $I \overline{\Omega}$ is invertible. It follows that there is a finite integral extension $\Omega \subseteq \Omega'$ such that $I \Omega'$ is invertible. Note that $\Omega'$ is an $R$-point. By the universal property of admissible blow-ups, the map
			\begin{equation*}
				\Spwf(\Omega')	\to	\frakX
			\end{equation*}
			factors through a unique map $\Spwf(\Omega') \to \frakX'$. Choose an affine open subset $\frakV=\Spwf(B)$ of $\frakX'$ containing the image of $\Spwf(\Omega')$. Replacing $\Omega'$ by the image of $B$ under the map $B \to \Omega'$, we obtain the desired rigid point of $\frakX'$.
		\end{proof}
		
		By the universal property of localization, the rigid points functor $\FS_R^* \to \Set$ factors through a unique functor $\Rig_R^*$. The underlying set of \emph{rigid points} of the analytic space $\calX$ is again denoted by $\langle \calX \rangle$. We may represent such a point by a closed immersion $P:x \to \calX$, where $x$ is the object of $\Rig_R^*$ associated to an $R$-point. By (\cite{Abbes}, 4.4.4), the topos $x_\ad$ is a punctual topos, i.e. it is isomorphic to the category of sets. The induced morphism $P:x_\ad \to \calX_\ad$ is therefore a point of the topos $\calX_\ad$. The rigid-points perspective allows us to view $\calX$ as a family of classical analytic spaces in the following way: Let $P:R \to \Omega$ be a rigid point of $R$, and $\calX_P$ the base change of $\calX$ along $P$. Then $\calX_P$ is an object of $\Rig_\Omega^*$, which by Raynaud \cite{Raynaud} (resp. Langer and Muralidharan \cite{Langer}) is the category of quasi-compact and quasi-separated rigid analytic spaces (resp. dagger spaces) over the residue field $k(P)$.
		
		\begin{theorem}\label{t:conserv}
			A moprhism in $\calX_\ad$ is an isomorphism if and only if it induces an isomorphism on stalks at the rigid points of $\calX_\ad$.
		\end{theorem}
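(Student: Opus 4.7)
The forward direction is immediate: each rigid point $P$ corresponds to a topos point $x_\ad \to \calX_\ad$ (with $x_\ad$ punctual), so isomorphisms in $\calX_\ad$ descend to isomorphisms on rigid-point stalks. For the converse, let $u: \calF \to \calG$ be a morphism in $\calX_\ad$ inducing isomorphisms on every rigid-point stalk; the plan is to descend to Zariski stalks on admissible blow-ups via the fully faithful embedding $\pi_*: \calX_\ad \hookrightarrow \frakX_\tot$ of Theorem \ref{t:shad}, and then to witness those Zariski stalks by rigid points.

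Fix a model $\frakX$ of $\calX$. By full faithfulness of $\pi_*$, showing that $u$ is an isomorphism is equivalent to showing that $\pi_* u$ is an isomorphism in $\frakX_\tot$; by Proposition \ref{p:shad}, this in turn reduces to showing that for each admissible blow-up $\varphi: \frakX_\varphi \to \frakX$, the Zariski sheaf morphism $u_\varphi := \alpha_\varphi^{-1}(\pi_* u)$ is an isomorphism on $\frakX_\varphi$. Since Zariski topoi on locally Noetherian schemes have enough points, this can be verified stalkwise at each scheme-theoretic $y \in |\frakX_\varphi|$.

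The crux is thus to show that $(u_\varphi)_y$ is an isomorphism using only the rigid-point hypothesis. Recalling from equation (\ref{eq:pistar}) that rigid-point stalks admit the filtered-colimit description $P^{-1}\calF \cong \varinjlim_\psi (\pi_*\calF)_{\psi, y(\psi)}$, where $\psi$ ranges over admissible blow-ups above $\frakX$ and $y(\psi) = \spe(P) \in \frakX_\psi$, it suffices to construct, for each Zariski point $y \in \frakX_\varphi$, a further admissible blow-up $\psi \to \varphi$ and a rigid point $P$ of $\calX$ such that $y(\psi)$ lies above $y$ in the sense that $\calO_{\frakX_\psi, y(\psi)}$ dominates a localization of $\calO_{\frakX_\varphi, y}$. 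This requires two ingredients: (i) the existence of enough one-dimensional domain quotients of the w.c.f.g. local rings appearing at closed points of admissible blow-ups, which by Lemma \ref{l:rpoints} automatically furnish $R$-points; and (ii) a choice of admissible blow-ups rendering arbitrary (possibly non-closed) points of $\frakX_\varphi$ accessible as specializations of closed points on some model. The principal obstacle is the geometric step (ii)—arranging admissible blow-ups compatibly so as to target non-closed points—while (i) is routine dimension theory given Lemma \ref{l:rpoints}. Modulo these inputs, a standard filtered-colimit argument converts each isomorphism $P^{-1}u$ into the desired isomorphism $(u_\varphi)_y$, and the proof concludes formally.
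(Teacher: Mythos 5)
The high-level skeleton you set up — forward direction via topos points, converse via the fully faithful embedding $\pi_*$ and checking $\alpha_\varphi$ on Zariski stalks — matches the paper's. But you miss the structural fact that makes the converse clean, and the replacement plan you propose has a real gap.

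The key observation you omit: for a sheaf $\calH$ already in the essential image of $\pi_*$ (which $\calF$ and $\calG$ are, being sheaves on $\Ad_\calX$), Theorem \ref{t:shad} says precisely that every transition map $\gamma_f(\hat\calH)$ is an isomorphism. Hence the filtered colimit
\begin{equation*}
P^{-1}\calH \cong \varinjlim_{\varphi}\,(\calH_\varphi)_{x(\varphi)}
\end{equation*}
is constant up to isomorphism: for every $\varphi$, $(\calH_\varphi)_{x(\varphi)} \cong (\calH_\id)_x$, and more generally $P^{-1}\calH \cong (\calH_\varphi)_{x(\varphi)}$ for each single $\varphi$. This collapses the argument — if $P^{-1}\alpha$ is an isomorphism and $y=x(\varphi)$ is the specialization of $P$ in $\frakX_\varphi$, then $(\alpha_\varphi)_y$ is already an isomorphism, with no colimit argument needed. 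The only thing left is to show that every Zariski point $y \in |\frakX_\varphi|$ is the specialization $x(\varphi)$ of some rigid point $P$, i.e.\ that the specialization map $\langle \calX\rangle \to |\frakX_\varphi|$ is surjective. The paper handles this via Proposition \ref{p:comppts} (reducing to $\frakm$-adic completions) and a citation to Abbes--Gros.

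Your plan instead calls for constructing a further blow-up $\psi$ and a rigid point $P$ so that $\calO_{\frakX_\psi,\,y(\psi)}$ dominates a localization of $\calO_{\frakX_\varphi,\,y}$, then running a filtered-colimit argument. This is both harder than necessary and, as written, not sound: the terms of the colimit are not related by domination of local rings — because the transition maps are already isomorphisms, there is no filtration to exploit — and a filtered colimit being an isomorphism does not, in general, let you conclude that an individual term is. You also acknowledge that step (ii), hitting non-closed points with compatible blow-ups, is ``the principal obstacle''; this is exactly the part that must be replaced. Concretely: you should appeal to Theorem \ref{t:shad} to render the colimit trivial, and then appeal to surjectivity of the specialization map (via Proposition \ref{p:comppts} and the cited result in Abbes--Gros), rather than trying to engineer domination by further blow-ups.
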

		\begin{proof}
			Let $\frakX$ be a model of $\calX$. For any sheaf $\calH$ in $\calX_\ad$ and any admissible blow-up $\varphi:\frakX_\varphi \to \frakX$, let $\calH_\varphi = (\mu_\varphi)_* \calH$. If $P$ is a rigid point of $\calX_\ad$, write $x(\varphi)$ for the specialization of $P$ in $\frakX_\varphi$, and $x=x(\id)$. There is a natural isomorphism
			\begin{equation*}
				P^{-1}\calH	\cong	\varinjlim_{\varphi \in \Bl_\frakX^\op}	(\calH_\varphi)_{x(\varphi)}.
			\end{equation*}
			Let $\hat{\calH} = \pi_* \calH$ be the sheaf associated to $\calH$ in $\frakX_\tot$. By Theorem \ref{t:shad}, for every morphism $\frakX_\varphi \to \frakX_\psi$ in $\Bl_\frakX$, there is an isomorphism
			\begin{equation*}
				f^* \calH_{\psi} \cong \calH_\varphi.
			\end{equation*}
			In particular, taking $f=\varphi$, we see that $(\calH_\varphi)_{x(\varphi)} \cong (\calH_\id)_x$.
			
			Now let $\alpha:\calF \to \calG$ be a morphism in $\calX_\ad$, and suppose that the map $P^{-1}\alpha$ is an isomorphism for all rigid points $P \in \langle \calX \rangle$. By Proposition \ref{p:shad}, $\alpha$ is an isomorphism if and only if the induced map $\alpha_\varphi:\calF_\varphi \to \calG_\varphi$ is an isomorphism for all $\varphi:\frakX_\varphi \to \frakX$. It suffices to check this isomorphism on the stalks at the Zariski points of $\frakX_\varphi$. By the preceding discussion and our assumption on $\alpha$, we know that this is true for any Zariski point which is the specialization of a rigid point of $\calX$. Thus we need only show that the specialization map is surjective, and by Proposition \ref{p:comppts} it suffices to check this on the level of $\frakm$-adic completions. Without loss of generality, we may assume that $\frakX$ is $\frakm$-torsion free. But then the statement follows from (\cite{Abbes}, 3.3.10).
		\end{proof}
		
		Let $\calF$ be an $\calO_\calX$-module, and $P$ a point of $\calX_\ad$. In accordance with the notation of Proposition \ref{p:wbchange}, we will write $\calF_P = P^* \calF$ for the fiber of $\calF$ at $P$, while retaining the notation $P^{-1} \calF$ for the stalk. For a global section $s \in \Gamma(\calX,\calF)$, we write $s_P$ for the image of $s$ in $\calF_P$. One useful consequence of Theorem \ref{t:conserv} is the following:
		
		\begin{corollary}\label{c:conserv}
			Let $\calF$ be an $\calO_\calX$-module and $s \in \Gamma(\calX,\calF)$. Then $s_P = 0$ if and only if $s_P = 0$ for every rigid point $P \in \langle \calX \rangle$.
		\end{corollary}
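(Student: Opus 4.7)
The forward direction is immediate: if $s = 0$ in $\Gamma(\calX,\calF)$, then applying the functor $P^\ast$ gives $s_P = 0$ in $\calF_P$ for every rigid point $P$. The content of the corollary lies in the converse, and the plan is to repackage $s$ as a morphism in $\calX_\ad$ so that Theorem \ref{t:conserv} applies. Specifically, consider the $\calO_\calX$-linear map $\alpha_s \colon \calO_\calX \to \calF$ sending $1 \mapsto s$, and let $\calI = \mathrm{im}(\alpha_s) \subseteq \calF$. Then $s = 0$ if and only if $\calI = 0$, and by Theorem \ref{t:conserv} applied to the inclusion $0 \hookrightarrow \calI$, this is in turn equivalent to the vanishing of the \emph{stalk} $P^{-1}\calI$ at every rigid point $P$. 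The task is therefore reduced to showing that $s_P = 0$ in the fiber forces $P^{-1}\calI = 0$ in the stalk.

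The bridge between fiber and stalk is provided by Nakayama's lemma. The proof that $(\calX_\ad,\calO_\calX)$ is locally ringed shows that $P^{-1}\calO_\calX$ is a local ring. Because $P$ is a closed immersion (every rigid point is such), the natural map $P^{-1}\calO_\calX \to \calO_x$ is surjective; moreover, $\calO_x = \calH^0(\Omega) = \Omega[\pi^{-1}] = Q(\Omega) = k(P)$ is a field, using that $\Omega$ is a one-dimensional local domain. Consequently, the kernel of $P^{-1}\calO_\calX \to \calO_x$ is a maximal ideal, which must coincide with the unique maximal ideal of the local ring $P^{-1}\calO_\calX$, and the surjection identifies $\calO_x$ with its residue field. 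This yields $\calF_P = P^{-1}\calF \otimes_{P^{-1}\calO_\calX} k(P)$, i.e. the fiber is the residue-field reduction of the stalk.

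With this identification in hand, the hypothesis $s_P = 0$ says exactly that $\calI_P = 0$, since $\calI_P$ is generated over $k(P)$ by $s_P$. Because $\calI$ is cyclic (generated by the single section $s$), the stalk $P^{-1}\calI$ is a cyclic, and in particular finitely generated, module over the local ring $P^{-1}\calO_\calX$. Nakayama's lemma then forces $P^{-1}\calI = 0$. Combining with the reduction of the first paragraph, $\calI$ vanishes by Theorem \ref{t:conserv}, and hence $s = 0$.

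The main obstacle is the identification $\calO_x = k(P)$ together with its role as the residue field of $P^{-1}\calO_\calX$: this is a matter of tracing through the explicit description of the stalk given in the proof of the locally ringed topos theorem and invoking the closed-immersion property of rigid points. Once that identification is secured, the argument is a formal combination of Theorem \ref{t:conserv} with Nakayama's lemma applied to the cyclic image sheaf.
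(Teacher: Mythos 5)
Your argument takes exactly the same route as the paper: reduce to the vanishing of $\calI = s\calO_\calX$, invoke Theorem~\ref{t:conserv} to pass to stalks at rigid points, and use Nakayama's lemma on the cyclic $P^{-1}\calO_\calX$-module $P^{-1}\calI$. The paper's own proof is the one-line ``Apply Theorem~\ref{t:conserv} to the module $s\calO_\calX$; the result follows by Nakayama's lemma,'' so what you have done is flesh that out, and in particular your identification $\calO_x = \calH^0(\Omega) = Q(\Omega) = k(P)$ (using that an $R$-point $\Omega$ is a one-dimensional local domain, so $\calH^0(\Omega)$ is its fraction field) and the observation that this is the residue field of the local ring $P^{-1}\calO_\calX$ via the surjection coming from the closed immersion are exactly the missing details in the paper's one-liner.

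One step worth flagging, although it is implicit in the paper as well: you write ``$\calI_P$ is generated over $k(P)$ by $s_P$,'' and from $s_P = 0$ conclude $\calI_P = 0$. But $s_P$ lives in $\calF_P$, while the generator of $\calI_P$ is the image of $s$ in $\calI_P$; these are related by the natural map $\calI_P \to \calF_P$, and since $P^*$ is only right exact this map need not be injective. What the hypothesis gives directly is $P^{-1}s \in \frakm_P\, P^{-1}\calF$, whereas the Nakayama argument on $P^{-1}\calI = \calO_P \cdot P^{-1}s$ wants $P^{-1}s \in \frakm_P \cdot \calO_P \cdot P^{-1}s$, which is a strictly stronger statement. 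In the situations the paper actually uses the corollary ($\calF = \calO_\calX$ or a locally free module, i.e.\ the Fredholm-determinant arguments of Sections~\ref{s:eig}--\ref{s:trace}) this is harmless, but for a general $\calO_\calX$-module the implication can fail, so the hypothesis on $\calF$ is doing more work than either your write-up or the paper's proof acknowledges.
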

		\begin{proof}
			Apply Theorem \ref{t:conserv} to the module $s \calO_\calX$. The result follows by Nakayama's lemma.
		\end{proof}
		
\section{Arithmetic Applications}

	\subsection{\texorpdfstring{$\sigma$}{Sigma}-Modules}\label{s:smod}
	
		We now turn to our main question of of understanding the variation of $p$-adic Galois representations in a suitable family. Let $X=\Spec(A_0)$ be a smooth affine variety over a finite field $k$ of characteristic $p$. We assume henceforth that $R$ has characteristic $0$, and that the residue field $R/\frakm = \F_q$ is contained in $k$. Let $A$ be a smooth $R$-algebra such that $A/\frakm A = A_0$. Since $A$ is smooth over $R$, such an $R$-algebra always exists \cite{Elkik}. A \emph{lifting of Frobenius} for $A$ is an $R$-endomorphism of $A$ reducing to the $q$-Frobenius endomorphism of $A_0$. Such a lifting always exists e.g. if $A$ is a w.c.f.g. algebra over $R$. We assume in this section that we are given a fixed lifting of Frobenius $\sigma$.
		
		\begin{definition}
			A \emph{$\sigma$-module} over $A$ is defined to be a pair $(M,\phi)$, where $M$ is a finite projective $A$-module, and
			\begin{equation*}
				\phi:\sigma^*M \to	M
			\end{equation*}
			is an $R$-linear map. We say that $(M,\phi)$ is a \emph{unit-root} $\sigma$-module if $\phi$ is an isomorphism.
		\end{definition}
		
		A morphism $(M,\phi) \to (M',\phi')$ of $\sigma$-modules over $A$ is simply an $A$-module homomorphism $M \to M'$ making the square
		\begin{equation*}
			\begin{tikzcd}
				\sigma^*M	\arrow[d]	\arrow[r]	&	M	\arrow[d]	\\
				\sigma^*M'	\arrow[r]	&	M'
			\end{tikzcd}
		\end{equation*}
		commute. Let $\Mod(\sigma,A)$ denote the category of $\sigma$-modules over $A$, and $\Mod_0(\sigma,A)$ the full subcategory of unit-root $\sigma$-modules. If $B$ is a second $R$-algebra equipped with a lifting of Frobenius $\tau$, and $f:A \to B$ is an $R$-algebra map satisfying $f \circ \sigma = \tau \circ f$, then we have a base-change functor
		\begin{equation*}
			\Mod(\sigma,A) \to \Mod(\tau,B)
		\end{equation*}
		sending $(M,\phi) \mapsto (M \otimes_A B,\phi \otimes \tau)$. For example, if $(M,\phi)$ is a $\sigma$-module over $A$, then base change along the map $A \to A_n = A/\frakm^{n+1}$ yields a $\sigma$-module $(M_n,\phi_n)$ over $A_n$. If $A_\infty$ denotes the $\frakm$-adic completion of $A$, then $(M,\phi)$ prolongs uniquely to a $\sigma$-module over $A_\infty$.
		
		Let $\bar{x}$ be a fixed geometric point of $X$. Our primary interest in $\sigma$-modules is their connection to $p$-adic representations of the fundamental group $\pi_1(X,\bar{x})$. The following theorem is originally due to Katz, in the special case $R=\Z_q$ \cite{Katz}:
		
		\begin{theorem}\label{t:equiv}
			The category $\Mod_0(\sigma,A_\infty)$ is equivalent to the category of finite-rank $R$-representations of $\pi_1(X,\bar{x})$.
		\end{theorem}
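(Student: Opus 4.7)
The plan is a devissage modulo powers of $\frakm$ combined with Katz's classical equivalence at each finite level. For each $n \geq 0$, write $R_n = R/\frakm^{n+1}$ and $A_n = A/\frakm^{n+1}A$. I would first argue that both sides of the desired equivalence are $2$-limits of their mod-$\frakm^{n+1}$ analogues. On the Galois side, since $\pi_1(X,\bar{x})$ is profinite and $\GL_r(R) = \varprojlim_n \GL_r(R_n)$ in the $\frakm$-adic topology, the datum of a continuous $R$-representation is exactly a compatible family of continuous $R_n$-representations. On the $\sigma$-module side, $A_\infty = \varprojlim_n A_n$ and finite projective modules lift uniquely along nilpotent thickenings; hence giving $(M,\phi) \in \Mod_0(\sigma, A_\infty)$ is equivalent to giving a compatible family $(M_n, \phi_n) \in \Mod_0(\sigma, A_n)$ with the obvious transition data.

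The heart of the argument is Katz's equivalence at a fixed level $n$: $\Mod_0(\sigma, A_n)$ is equivalent to the category of continuous representations of $\pi_1(X,\bar{x})$ on free $R_n$-modules of finite rank. In one direction, given $(M_n, \phi_n)$ with $\phi_n$ an isomorphism, I would form the presheaf on the small étale site of $X$ whose value on an étale $U = \Spec(B_0) \to X$ is the $R_n$-module $(M_n \otimes_{A_n} B)^{\phi_n = 1}$, where $B$ is the unique étale $A_n$-algebra lifting $B_0$ (existence and uniqueness of this lift comes from the invariance of the étale site under nilpotent thickenings). One then shows that the resulting sheaf is locally constant of the same rank as $M_n$, so that its stalk at $\bar{x}$ is a free $R_n$-module of rank $r$ carrying a continuous $\pi_1$-action. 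Conversely, from $\rho : \pi_1(X,\bar{x}) \to \GL_r(R_n)$ I would extract the corresponding locally constant étale sheaf $\mathcal{L}_\rho$ on $X$ and recover $M_n$ as its global sections after tensoring with the étale structure sheaf, with $\phi_n$ induced by $\sigma$.

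The main obstacle is verifying the locally-constant-étale claim, which is an Artin--Schreier--Lang style argument. Locally on $X$ one trivializes $M_n$ and writes $\phi_n$ as a matrix $U \in \GL_r(A_n)$; the universal solution scheme is then $\Spec(B_n) \to \Spec(A_n)$ with $B_n = A_n[v_1, \dots, v_r]/(U \sigma(v) - v)$. Since $\sigma(v_i) = v_i^q$ has partial derivatives divisible by $q \in \frakm$, the Jacobian of the defining equations reduces modulo $\frakm$ to $-I$, so $B_n$ is étale over $A_n$; and a straightforward $\F_q$-linearity argument (the solution set is stable under $\F_q$-linear combinations because $\sigma$ fixes $\F_q$ and $U$ is $A_n$-linear) shows the geometric fibers are free $R_n$-modules of rank $r$. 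Functoriality and construction of the quasi-inverse are then routine, and combining the levelwise equivalence with the devissage yields the theorem.
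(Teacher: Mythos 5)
Your proposal follows essentially the same two-step structure as the paper's proof: devissage modulo powers of $\frakm$, followed by Katz's level-$n$ equivalence. The one real difference is in how the level-$n$ equivalence is handled. The paper, in the direction $\rho \mapsto (M_n,\phi_n)$, takes the finite \'{e}tale cover $A_n \to B_n'$ classified by $\rho_n$, forms the equivariant module $V_n \otimes B_n'$, and descends; in the other direction it simply asserts that the sheaf $B \mapsto (M \otimes_{A_\infty} B)^{\phi\otimes\sigma=1}$ is (locally) constant and passes to the frame torsor. You instead construct the trivializing cover explicitly as the Lang scheme $\Spec A_n[v]/(U\sigma(v)-v)$ and verify \'{e}taleness via the Jacobian criterion, which makes concrete precisely the step the paper leaves implicit. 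Both are standard incarnations of Katz's argument, and there is no substantive conflict with the paper.

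One point to tighten: you claim the geometric fibers ``are free $R_n$-modules of rank $r$'' by an $\F_q$-linearity argument. That argument only establishes the $n=0$ case (the solution set over $\bar{k}$ is an $\F_q$-vector space of dimension $r$, i.e. the classical Lang theorem); it does not by itself give an $R_n$-module structure, let alone freeness of rank $r$, for $n>0$. In this paper's conventions $\sigma$ is an \emph{$R$-algebra} endomorphism (unlike Katz's $R=\Z_q$ case, where $\sigma$ is only $\Z_p$-linear), so the invariant subspace $(M_n \otimes_{A_n} B)^{\phi=1}$ is automatically an $R_n$-module; but freeness of rank $r$ requires the usual Hensel/Nakayama lifting from level $n-1$ to level $n$, using the \'{e}taleness you established. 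This is a standard piece of the Katz machinery, and the paper's own proof tacitly relies on it too, so it is a gap in exposition rather than structure — but in a complete write-up you should state and prove it (or cite it), rather than folding it into ``a straightforward $\F_q$-linearity argument.''
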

		\begin{proof}
			The proof is identical to Katz' theorem, we provide a sketch here. Suppose that $\pi_1(X,\bar{x}) \to \GL(V)$ is a finite-rank $R$-representation. Let $V_n = V/\frakm^{n+1}V$. Then the composition
			\begin{equation*}
				\pi_1(X,\bar{x})	\to	\GL(V)	\to	\GL(V_n)
			\end{equation*}
			classifies a finite \'{e}tale map $A_0 \to B_n$ which lifts to an \'{e}tale map $A_n \to B_n'$. Let
			\begin{equation*}
				M_n'	=	V_n \otimes_{R/\frakm^{n+1}} B_n',
			\end{equation*}
			which is a free $B_n'$-module of finite rank. Equip $M_n'$ with the diagonal $\GL(V_n)$-action. Then $M_n'$ is a $\GL(V_n)$-equivariant object in the stack of projective $B_n$-modules. By uniqueness in the lifting of $\sigma$ to $B_n'$, it follows that $\phi_n'=1 \otimes \sigma:M_n' \to M_n'$ is also $\GL(V_n)$-equivariant. By descent, the we obtain a pair $(M_n,\phi_n)$, where $M_n$ is a projective $A_n$-module of finite rank and $\phi_n:\sigma^*M_n \to M_n$ is an isomorphism. These pairs are compatible with base change along $A_{n+1} \to A_n$, and we may define the $\sigma$-module $(M,\phi)$ to be their inverse limit.
			
			Conversely, suppose that $(M,\phi)$ is a $\sigma$-module over $A_\infty$. For every \'{e}tale map $A_\infty \to B$, let $M^\phi(B)$ be the $\Z_p$-module consisting of the $\phi \otimes \sigma$-invariants in $M \otimes_{A_\infty} B$. This defines a sheaf of $\Z_p$-modules on the \'{e}tale site of $A_\infty$ which is isomorphic to a constant sheaf $\underline{V}$, where $V$ is a free $R$-module of finite rank. The reduction $\underline{V_n}$ is a constant sheaf of free $R/\frakm^{n+1}$-modules on $A_n$, and therefore corresponds an \'{e}tale torsor for the finite (hence affine) group-scheme $\GL(V_n)$. This torsor is representable by an \'{e}tale map $A_n \to B_n'$, and the reduction $A_0 \to B_n$ gives a map $\pi_1(X,\bar{x}) \to \GL(V_n)$. These maps are compatible and therefore yield the desired representation of $\pi_1(X,\bar{x})$.
		\end{proof}
		
		\begin{remark}\label{r:funct}
			The preceding construction is functorial in the following sense: Let $\frakY=\Spf(B_\infty)$ be a smooth affine formal $R$-scheme with special fiber $Y=\Spec(B_0)$. Suppose that $\tau$ is lifting of Frobenius for $B_\infty$, and $f:\frakY \to \frakX$ is compatible with $\sigma$ and $\tau$. Let $\bar{y}$ be a geometric point of $Y$ and $\bar{x}$ its image in $X$. Then we have a map
			\begin{equation*}
				\pi_1(Y,\bar{y})	\to	\pi_1(X,\bar{x}).
			\end{equation*}
			Given a representation $\rho:\pi_1(X,\bar{x}) \to \GL(V)$ as in Theorem \ref{t:equiv}, we have a corresponding $\sigma$-module $(M,\phi)$ over $A_\infty$. The representation $\rho$ pulls back to a representation of $\pi_1(Y,\bar{y})$, whose corresponding $\tau$-module is the base change of $(M,\phi)$ along $A_\infty \to B_\infty$.
		\end{remark}
		
		Let $(M,\phi)$ be a $\sigma$-module over $A_\infty$. For each closed point $x \in |X|$, we let $k(x)$ denote the residue field of $x$ and $\deg(x)=[k(x):\F_q]$ the degree of $x$ over $\F_q$.  Define
		\begin{equation*}
			R(x)	=	\varprojlim_n	W(k(x)) \otimes R/\frakm^n,
		\end{equation*}
		so that $R(x)$ is an $\frakm$-adically complete $R$-algebra with special fiber $k(x)$. Note that $R(x)$ inherits a lifting of Frobenius $F$ from the canonical lifting on $W(k(x))$. By a result of Monsky \cite{Monsky}, the point $x:A_0 \to k(x)$ lifts uniquely to an $R$-algebra map
		\begin{equation*}
			\hat{x}:A_\infty	\to	R(x)
		\end{equation*}
		which is compatible with the given liftings of Frobenius. We refer to $\hat{x}$ as the \emph{Teichm\"{u}ller point} above $x$. The \emph{fiber} of $(M,\phi)$ is defined to be the $F$-module $(M_x,\phi_x)$ over $W(k(x))$ obtained by base change along $\hat{x}$. The module $M_x$ is free of finite rank. Let $E(x)$ denote the matrix of $\phi_x$ with respect to some basis. The map $\phi_x$ is only $F$-linear, but the iterate $\phi_x^{\deg(x)}$ is $R$-linear, with matrix
		\begin{equation}\label{eq:nmatrix}
			\N_{k(x)/\F_q}	E(x)	=	E(x) E(x)^F \cdots E(x)^{F^{\deg(x)}}
		\end{equation}
		In particular, the characteristic polynomial of $\phi_x^{\deg(x)}$ is $F$-invariant, and therefore lies in $R[s]$.
		
		\begin{definition}
			The $L$-function of the $\sigma$-module $(M,\phi)$ is defined to be
			\begin{equation*}
				L(\phi,s)	=	\prod_{x \in |X|}	\frac{1}{\det\left(	I-\phi_x^{\deg(x)}s^{\deg(x)}	\right)}	\in	R[[s]]
			\end{equation*}
		\end{definition}
		
		With $E(x)$ defined as above, a routine computation shows that we may also write
		\begin{equation*}
			L(\phi,s)	=	\exp\left(	-\sum_{\ell=1}^\infty	\sum_{x \in X(\F_{q^\ell})}	\frac{\tr \N_{\F_{q^\ell}/\F_q} E(x)}{\ell} s^\ell	\right).
		\end{equation*}
		
		Suppose that $(M,\phi)$ is a unit-root $\sigma$-module. Let $x$ be a closed point and $\bar{x} \to x$ a geometric point above $x$. Then $(M,\phi)$ corresponds to a representation $\rho:\pi_1(X,\bar{x}) \to \GL(V)$, and the fiber $(M_x,\phi_x)$ corresponds to the pullback
		\begin{equation*}
			\rho_x:	\pi_1(x,\bar{x})	\to	\pi_1(X,\bar{x})	\to	\GL(V).
		\end{equation*}
		Note that $\rho_x$ sends the canonical generator $F^{\deg(x)}$ to the action of $\Frob_x$ on $V$. It follows that $\rho(\Frob_x) = \phi_x^{\deg(x)}$, and so $L(\phi,s)$ agrees with the $L$-function of the representation $\rho$ over $X$.
		
		\begin{definition}
			Two $\sigma$-modules over $A_\infty$ are \emph{equivalent} if they have the same $L$-function. If $A$ is a w.c.f.g. algebra, an \emph{overconvergent} $\sigma$-module is defined to be any $\sigma$-module over $A_\infty$ which is equivalent to the $\frakm$-adic completion of a $\sigma$-module over $A$.
		\end{definition}
		
		Note that equivalence of $\sigma$-modules may be checked on the fibers at the closed points in $|X|$. In fact, it suffices to check that the matrices (\ref{eq:nmatrix}) have the same characteristic polynomial.
	
	\subsection{Eigenvarieties}\label{s:eig}
	
		Let $K$ be a discrete valuation field of characteristic $0$. In \cite{Monsky}, Monsky develops a spectral theory for certain \emph{nuclear operators} on a $K$-vector space $V$. Our goal in this section is to globalize this theory, replacing $K$ by our rigid analytic weight space $\calW$ and $V$ by an $\calO_\calW$-module $\calV$. Given a rigid point $P \in \langle \calW \rangle$, the fiber $\calV_P$ is a vector space over the residue field $k(P)$. A nuclear operator in our sense induces a nuclear operator $\psi_P:\calV_P \to \calV_P$ for each $P$. For each such operator, there is a well defined \emph{Fredholm determinant} $C(\psi_P,s)\in 1+k(P)[[s]]$, which is a $p$-adically entire power series whose vanishing locus in an algebraic closure $k(P)^s$ of $k(P)$ is precisely the set of non-zero eigenvalues of $\psi_P$ acting on $\calV_P$. In our relative theory, the spectral theory of the operator $\psi$ is described by an \emph{eigenvariety} $\calE(\psi)$, which is a rigid analytic space over $\calW$ defined as the vanishing locus of a similar Fredholm determinant.
		
		Our analogue of the power series $C(\psi_P,s)$ will be an analytic function on the relative multiplicative group $\G_{m,\calW}$ over $\calW$. As this is our first exacmple of a rigid analytic space which is not quasi-compact, we begin by outlining its construction. Choose a set of generators $\frakm = (\pi_1,...,\pi_d)$, and let $\frakW = \Spf(R)$. Recall that the admissible formal blow-up $\frakW_\frakm$ admits a covering by affine formal $R$-schemes $\frakW_i = \Spf(R_i)$ such that $\frakm R_i = \pi_i R_i$. We will refer to the admissible covering $\{\calW_i \to \calW\}_i$ as the \emph{standard covering} of $\calW$ associated to $\pi_1,...,\pi_d$. For each $n > 0$, define the formal $R$-scheme $\frakA_{i,n}	=	\Spf(A_{i,n})$, where
		\begin{equation*}
			A_{i,n}	=	R_i\langle \pi_i^n s, \pi_i^n s^{-1}\rangle.
		\end{equation*}
		For a fixed $n$, the individual $\frakA_{i,n} \to \frakW_i$ glue to give a morphism $\frakA_n \to \frakW_\frakm$ in $\FS_R$. Write $\calA_n \to \calW$ for the image of this morphism in $\Rig_R$. The natural maps $A_{i,n+1} \to A_{i,n}$ define open immersions $\calA_n \to \calA_{n+1}$. We define the \emph{multiplicative group} over $\calW$ to be the ind-object
		\begin{equation}\label{eq:gmpres}
			\G_{m,\calW}:	\calA_1	\to	\calA_2	\to	\cdots.
		\end{equation}
		The multiplicative group may be regarded as group object in the big category $\underline{\Rig}_R$. For example, the inversion map $i:\G_{m,\calW} \to \G_{m,\calW}$ is induced by the maps $\pi_i s \mapsto \pi_i s^{-1}$ on each $A_{i,n}$. The presentation (\ref{eq:gmpres}) allows us to equip $\G_{m,\calW}$ with the structure of a ringed topos. In particular, an analytic function on $\G_{m,\calW}$ is nothing more than a compatible collection of analytic functions on $\calA_n$. If $\calU \to \calW$ is an open immersion, we define $\G_{m,\calU} = \calU \times_{\calW} \G_{m,\calW}$.
		
		\begin{definition}\label{d:cc}
			Let $\calU \to \calW$ be an open immersion, and $\psi:\calV \to \calV$ a linear operator. We say that $\psi$ is \emph{nuclear} over $\calU$ if
			\begin{enumerate}
				\item	For every rigid point $P \in \langle \calU \rangle$, the operator $\psi_P$ is nuclear.
				\item	There is an analytic function $C(\psi|\calU,s)$ on $\G_{m,\calU}$ such that for every rigid point $P \in \langle \calU \rangle$,
					\begin{equation*}
						C(\psi|\calU,s)_P	=	C(\psi_P,s).
					\end{equation*}
			\end{enumerate}
		\end{definition}
		
		By Corollary \ref{c:conserv}, the analytic function $C(\psi|\calU,s)$ is necessarily unique; we will refer to this function as the \emph{Fredholm determinant} of $\psi$ over $\calU$. We will say that $\psi$ is \emph{nuclear} if it is nuclear over $\calW$, and write $C(\psi,s)$ for its Fredholm determinant. The following lemma indicates the property of being nuclear is a local one:
		
		\begin{lemma}\label{l:nucloc}
			Let $\{\calU_i \to \calU\}$ be an admissible covering. Then $\psi$ is nuclear over $\calU$ if and only if $\psi$ is nuclear over $\calU_i$ for each $i$.
		\end{lemma}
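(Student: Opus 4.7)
The forward direction is essentially formal: if $\psi$ is nuclear over $\calU$, then restricting the Fredholm determinant $C(\psi|\calU,s)$ to each $\G_{m,\calU_i}$ (via base change along $\calU_i \to \calU$) yields an analytic function which, at every rigid point $P \in \langle \calU_i \rangle \subseteq \langle \calU \rangle$, specializes to $C(\psi_P,s)$. This gives $C(\psi|\calU_i,s) = C(\psi|\calU,s)|_{\G_{m,\calU_i}}$.

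For the reverse direction, condition (i) of Definition \ref{d:cc} is automatic: every rigid point $P:\Spwf(\Omega) \to \calU$ factors through some $\calU_i$. To see this, fix a model $\frakU$ of $\calU$ so that (after refining) the covering is represented by a Zariski cover $\{\frakU_i \to \frakU\}$. By Proposition \ref{p:blpoints} we may represent $P$ by a closed immersion $\Spwf(\Omega) \to \frakU$, whose image is a single closed point, lying in some $\frakU_i$. Thus $\psi_P$ is nuclear because $\psi$ is nuclear over $\calU_i$.

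The substantive step is gluing the local Fredholm determinants $C(\psi|\calU_i,s)$ to a global analytic function on $\G_{m,\calU}$. The plan is to unwind the presentation $\G_{m,\calU}: \calA_{\calU,1} \to \calA_{\calU,2} \to \cdots$, where $\calA_{\calU,n} = \calU \times_\calW \calA_n$, and to glue at each level $n$ separately. By Proposition \ref{p:admor}, base change along the projection $\calA_{\calU,n} \to \calU$ carries the admissible cover $\{\calU_i \to \calU\}$ to an admissible cover $\{\calA_{\calU_i,n} \to \calA_{\calU,n}\}$. On each double overlap $\calA_{\calU_i \cap \calU_j,n}$, the two restrictions of $C(\psi|\calU_i,s)$ and $C(\psi|\calU_j,s)$ are analytic functions which agree at every rigid point (both equal $C(\psi_P,s)$), so by Corollary \ref{c:conserv} applied to their difference, they coincide. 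Since $\calO_{\calA_{\calU,n}}$ is a sheaf in the admissible topology, the local sections glue to a section $C(\psi|\calU,s)_n \in \Gamma(\calA_{\calU,n},\calO)$.

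Finally, to assemble these into an analytic function on the ind-object $\G_{m,\calU}$, the plan is to verify that the restriction of $C(\psi|\calU,s)_{n+1}$ to $\calA_{\calU,n}$ equals $C(\psi|\calU,s)_n$; this follows either by uniqueness of gluing (both locally restrict to the same $C(\psi|\calU_i,s)$ data, which is already compatible across $n$ by hypothesis) or again by Corollary \ref{c:conserv} applied at all rigid points. The resulting compatible family defines the desired Fredholm determinant $C(\psi|\calU,s)$ on $\G_{m,\calU}$, and checking its defining property at rigid points is immediate. The main obstacle is purely bookkeeping: making sure the coverings and the ind-structure on $\G_{m,\calU}$ interact correctly with base change, which is ensured by Proposition \ref{p:admor} and the construction of $\G_{m,\calW}$ in Section \ref{s:eig}.
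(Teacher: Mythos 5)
Your proposal is correct and follows essentially the same argument as the paper: both reduce condition (ii) to the observation that the local Fredholm determinants agree on double overlaps because they agree at every rigid point, which is exactly the content of Corollary \ref{c:conserv}. The only difference is that you spell out the bookkeeping the paper leaves implicit — the factorization of rigid points through some $\calU_i$ via a Zariski model of the covering, and the level-by-level gluing along the ind-structure of $\G_{m,\calU}$ — but these are elaborations, not a different route.
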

		\begin{proof}
			The forward direction is clear, so we will prove the converse. If $P$ is a rigid point of $\calU$, then $P$ is necessarily factors through a rigid point of $\calU_i$ for some $i$. It follows that condition (i) of Definition \ref{d:cc} is satisfied. To show condition (ii), we need only show that the Fredhom determinants $C(\psi|\calU_i,s)$ agree on double intersections. By Corollary \ref{c:conserv}, it suffices to check that their values agree at every $P \in \langle \calU_i \cap \calU_j \rangle$. But for any such $P$, we have
			\begin{equation*}
				C(\psi|\calU_i,s)_P	=	C(\psi_P,s)	=	C(\psi|\calU_j,s)_P.
			\end{equation*}
		\end{proof}
		
		\begin{definition}
			Let $\psi:\calV \to \calV$ be a nuclear operator over $\calU$. The \emph{eigenvariety} of $\psi$ over $\calU$ is the hypersurface $\calE(\psi|\calU)$ in $\A^1_\calW$ cut out by $C(\psi|\calU,s)$.
		\end{definition}
		
		We defer to (\cite{Abbes}, 4.8.29) the construction of a hypersurface as a rigid analytic space over $\calW$. The essential property of $\calE(\psi|\calU)$ is that for every rigid point $P \in \langle \calU \rangle$, the set of rigid points of the fiber $\calE(\psi|\calU)_P$ is the set of $\Gal(k(P)^s/k(P))$-orbits of the non-zero eigenvalues of $\psi_P$. By its construction, the eigenvariety fits into a diagram
		\begin{equation*}
			\begin{tikzcd}
				\calE(\psi|\calU,s)	\arrow[d,"\wt"']	\arrow[r]	&	\G_{m,\calW}	\\
				\calU	&	
			\end{tikzcd}
		\end{equation*}
		We refer to the vertical arrow as the \emph{weight map}. Define $\alpha$ to be the composition
		\begin{equation*}
			\calE(\psi|\calU)	\to	\G_{m,\calU}	\xrightarrow{i} \G_{m,\calU}.
		\end{equation*}
		Let $x$ be a rigid point of $\calE(\psi|\calU)$, and $P=\wt(x)$. Then $\alpha(x)$ may be regarded as a point in the multiplicative group $\G_{m,k(P)}$. The \emph{slope} of $x$ is defined to be the valuation of $\alpha(x)$ in $k(P)$, normalized so that the value group is $\Z$. In \cite{Davis}, the authors have shown a remarkable ``spectral halo'' property describing the relationship between the weight parameter and the slope map near the boundary of their weight space. We now consider some possible analogues of this property in our more general setup.
		
		Define $\frakW_0 = \Spf(\Z_p[[T]])$. Then $\frakW_0$ is an object in $\FS$, and we define $\calW_0 = \frakW_0^\rig$. Note that the set of continuous maps
		\begin{equation*}
			\Z_p[[T]]	\to	R
		\end{equation*}
		is naturally identified with the ideal $\frakm$ in $R$. If $t \in \frakm$, the corresponding map $\frakW \to \frakW_0$ in $\FS^+$ is usually not a morphism in $\FS$, and so does not induce a map $\calW \to \calW_0$ in $\Rig$. Given an open immersion $\calU \to \calW$ in $\Rig_R$, we will say that $t$ is \emph{admissible} in $\calU$ if $\calU \to \calW$ admits a model $\frakU \to \frakW$ such that the composition
		\begin{equation*}
			\frakU \to \frakW	\xrightarrow{t}	\frakW_0
		\end{equation*}
		is a morphism in $\FS$.
		
		\begin{proposition}
			Let $t \in \frakm$. There is a unique maximal open immersion $\calW_t \to \calW$ in $\Rig_R$ such that $s$ is admissible in $\calW_t$.
		\end{proposition}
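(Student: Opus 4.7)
The plan is to translate the admissibility condition into an ideal-theoretic condition on a model, and then construct $\calW_t$ explicitly as a (direct limit of) admissible blow-up(s) of $\frakW$.

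First, I would unpack the definition. The ring homomorphism $\Z_p[[T]] \to R$ sending $T \mapsto t$ always induces a continuous map of ringed spaces $\frakW \to \frakW_0$ in $\FS^+$. For a model $\frakU \to \frakW$, the composition $\frakU \to \frakW \to \frakW_0$ is a morphism in $\FS$ if and only if it is adic of finite presentation. Since $\frakU$ carries the $\frakm$-adic topology, the ideal $(p,t)\calO_\frakU$ (the pullback of the ideal of definition $(p,T)$ of $\frakW_0$) is automatically contained in $\frakm \calO_\frakU$; adicness therefore amounts to the condition that there exists $n \geq 1$ with $\frakm^n \calO_\frakU \subseteq (p,t)\calO_\frakU$. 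The finite presentation requirement reduces to finite presentation of the induced map on special fibers, which is automatic since everything in sight is locally of finite type over $\F_q$.

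Next I would construct candidate opens as admissible blow-ups. For each $n \geq 1$, set $\calJ_n = (p,t)\calO_\frakW + \frakm^n \calO_\frakW$, an open ideal of $\calO_\frakW$, and let $\pi_n : \frakW_n \to \frakW$ denote its admissible formal blow-up. By the local description of blow-ups analogous to Proposition~\ref{p:coords}, $\frakW_n$ is covered by affine charts indexed by a set of generators of $\calJ_n$ (for example $p$, $t$, and $\pi_i^n$ for generators $\pi_i$ of $\frakm$). Let $\frakV_n \subseteq \frakW_n$ be the open subscheme obtained as the union of those charts on which $\calJ_n\calO_{\frakW_n}$ is generated by (the image of) an element of $(p,t)$. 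On $\frakV_n$ the invertible ideal $\calJ_n \calO_{\frakV_n}$ coincides with $(p,t)\calO_{\frakV_n}$, so
\begin{equation*}
	(p,t)\calO_{\frakV_n} \;=\; \calJ_n\calO_{\frakV_n} \;\supseteq\; \frakm^n \calO_{\frakV_n}
\end{equation*}
is an ideal of definition. Thus $\frakV_n \to \frakW \to \frakW_0$ is adic of finite presentation, and $t$ is admissible on $\calV_n := \frakV_n^\rig$.

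For the universal property, suppose $t$ is admissible in some $\calU$, witnessed by a model $\frakU$ with $\frakm^n \calO_\frakU \subseteq (p,t)\calO_\frakU$ for some $n$; then $(p,t)\calO_\frakU = \calJ_n\calO_\frakU$ is open. Replacing $\frakU$ by its admissible blow-up at $(p,t)\calO_\frakU$ (an admissible operation, since it is the blow-up of an open ideal and does not change $\frakU^\rig = \calU$), we may assume $(p,t)\calO_\frakU$ is invertible, locally generated by an element of $(p,t)$. By Proposition~\ref{p:univ}, $\frakU \to \frakW$ factors uniquely through $\pi_n : \frakW_n \to \frakW$, and the local-generator condition forces the image to land in $\frakV_n$. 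Passing to $\Rig_R$, $\calU$ factors through $\calV_n$. Moreover, for $m \leq n$ the same argument shows $\frakV_m$ factors through $\frakV_n$, so the $\calV_n$ form an ascending chain of opens in $\calW$; the space $\calW_t$ is then the (eventual) union, well-defined by maximality and the factorization property just established, with uniqueness following from the universal property of admissible blow-ups.

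The main obstacle I expect is the passage in the universal property from the condition ``$(p,t)\calO_\frakU$ is open'' to ``$(p,t)\calO_\frakU$ is invertible,'' which is needed in order to apply Proposition~\ref{p:univ}. Resolving this requires performing an auxiliary admissible blow-up on the witnessing model $\frakU$ itself (rather than on $\frakW$), and then verifying this does not alter the generic fiber $\calU$ nor spoil admissibility. A second subtlety is verifying that the chain $\calV_1 \subseteq \calV_2 \subseteq \cdots$ either stabilizes (so that $\calW_t$ is quasi-compact and genuinely lies in $\Rig_R$) or should be interpreted as an ind-object in $\underline{\Rig}_R$; the form of the statement suggests the former, which must be justified by a Noetherian finiteness argument — concretely, by showing that on any fixed admissible blow-up of $\frakW$ trivializing $\frakm$, the condition $\frakm^n \calO \subseteq (p,t)\calO$ is verified uniformly in $n$ on each quasi-compact admissible piece.
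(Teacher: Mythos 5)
Your approach differs substantially from the paper's, and the difference is exactly what closes (or in your case leaves open) the quasi-compactness gap you identify at the end. The paper does not blow up an $n$-indexed family of ideals; it blows up $\frakm$ itself once, obtaining $\frakW_\frakm$ covered by charts $\frakW_i = \Spf(R_i)$ on which $\frakm R_i = \pi_i R_i$ is principal. On such a chart one writes $t = \pi_i^{m_i} r_i$ with $r_i \in R_i$, and the paper declares the admissible locus on that chart to be the Zariski open $\Spf(R_i\langle r_i^{-1}\rangle)$ where $r_i$ is a unit. The universal property then follows from Proposition \ref{p:univ}: a model $\frakU = \Spf(A)$ with $\frakm A = \pi_i A$ invertible factors through $\frakW_i$ by the universal property of $\frakW_\frakm$, and the admissibility assumption forces $r_i$ to be a unit in $A$, so that $R_i \to A$ factors through $S_i = R_i\langle r_i^{-1}\rangle$. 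Since $\calW_t$ is assembled from the finitely many affines $\Spf(S_i)$, it is manifestly quasi-compact and hence genuinely an object of $\Rig_R$. Your construction, by contrast, produces an ascending chain $\calV_1 \subseteq \calV_2 \subseteq \cdots$ and must show it stabilizes; if it did not, the ``unique maximal open immersion'' of the statement would fail to exist, since $\Rig_R$ only contains quasi-compact spaces. You flag this yourself in your final paragraph and even sketch the right idea (pass to a blow-up trivializing $\frakm$), but you do not carry it out. The paper's proof takes that blow-up as its starting point, replacing the infinitely many conditions $\frakm^n\calO \subseteq (p,t)\calO$ by a single Zariski-open condition per chart, and never encounters the chain at all.

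There is also a concrete discrepancy in the ideal used. You take the pullback ideal of definition to be $(p,t)\calO_\frakU$, reading $\frakW_0 = \Spf(\Z_p[[T]])$ as carrying the $(p,T)$-adic topology. The paper's own proof works only with $(t)\calO_\frakU$ (it writes ``by assumption, $tA$ is an ideal of definition''), which corresponds to the $T$-adic topology on $\Z_p[[T]]$. This is not cosmetic: it changes which ideal to blow up, which charts to keep, and which models count as admissible, and it is what makes the paper's reduction to ``$r_i$ is a unit'' go through on a single chart. To match the paper's argument you should drop $p$ from your $\calJ_n$, and more importantly replace the $n$-indexed family of blow-ups by the single blow-up of $\frakm$ followed by inverting $r_i = t/\pi_i^{m_i}$ on each chart.
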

		\begin{proof}
			Choose a set of generators $\frakm=(\pi_1,...,\pi_d)$, and let $\calW_i = \frakW_i^\rig$ denote the standard coverings associated to $\pi_1,...,\pi_d$. Since $t \in \frakm$, for each $i$ there exists $m_i$ such that $t = \pi_i^{m_i} r_i$, with $r_i \in R_i$. Define
			\begin{equation*}
				S_i = R_i\langle r_i^{-1} \rangle,
			\end{equation*}
			and let $\frakW_{t,i} = \Spf(S_i)$. The maps $\frakW_{t,i} \to \frakW_i$ glue in the evident manner to a map $\frakW_t \to \frakW_\frakm$, and the corresponding map $\calW_t \to \calW$ in $\Rig_R$ is an open immersion. To complete the proof, it suffices to show that any open immersion $\calU \to \calW$ in $\Rig_R$ for which $t$ is admissible in $\calU$ factors through $\calW_t \to \calW$. This problem is local, so we may assume that $\calU = \frakU^\rig$, where $\frakU = \Spf(A)$ and $\frakm A = \pi_i A$ is invertible. Then by the universal property of admissible blowing up, there is a unique morphism $\frakU \to \frakW_\frakm$ whose image lies in $\frakW_i$. Thus we have a corresponding map of $R$-algebras $R_i \to A$. But by assumption, $tA$ is an ideal of definition and therefore $r_i$ must be a unit in $A$. It follows that we get a map $S_i \to A$, and the desired map $\calU \to \calW_t$ is given by the composition
			\begin{equation*}
				\frakU	\to	\frakW_{t,i}	\to	\frakW_t.
			\end{equation*}
		\end{proof}
		
		We will refer to $\calW_t$ as the \emph{admissible locus} of $t$ in $\calW$. By definition, $t$ uniquely defines a map
		\begin{equation*}
			\calW_t	\to	\calW_0.
		\end{equation*}
		If $P$ is a rigid point in $\calW_t$, then the rigid point $t(P)$ corresponds to a map $\Z_q[[T]] \to k(P)$. We write $v_t(P)$ for the valuation of $T$ in $k(P)$. Whenever $\calU \to \calW_0$ is an open immersion, we let $\calU_t = \calW_t \times_{\calW_0} \calU$ denote its pullback to $\calW$.
		
		\begin{definition}
			Let $\psi:\calV \to \calV$ be a nuclear operator, and $\calU \to \calW_0$ an open immersion. We say that $\psi$ is \emph{slope-uniform} over $\calU$ if there exists $t \in \frakm$, $d > 0$, and a rational number $r > 0$ such that:
			\begin{enumerate}
				\item	There is a decomposition
					\begin{equation*}
						\calE(\psi,s) \times_\calW \calU_t	=	\calE_{[0,1)}	\coprod \calE_{[1,2)} \coprod \cdots
					\end{equation*}
					where each $\calE_{[n,n+1)}$ is finite and flat over $\calU_t$ of degree $d$.
				\item	If $x \in \langle \calE_{[n,n+1)} \rangle$ is a point of weight $P$, then
					\begin{equation*}
						v(x)	\in	r v_t(P) \cdot [n,n+1).
					\end{equation*}
			\end{enumerate}
			We say that $\psi$ is \emph{slope-stable} over $\calU$ if $\psi$ is slope-uniform over $\calU$, and there exist rational $\beta_1,...,\beta_d \in [0,1)$ such that
			\begin{enumerate}
				\item	Each $\calE_{[n,n+1)}$ admits a decomposition
					\begin{equation*}
						\calE_{[n,n+1)}	=	\coprod_{j=1}^d	\calE_{[n,n+1),j}.
					\end{equation*}
				\item	If $x \in \langle \calE_{[n,n+1)} \rangle$ is a point of weight $P$, then
					\begin{equation*}
						v(x)	=	r v_t(P)(n+\beta_j).
					\end{equation*}
			\end{enumerate}
		\end{definition}
		
		In brief, the operator $\psi$ is slope-stable over $\calU$ if for every $P \in \langle \calU_t \rangle$, the slopes of the eigenvalues of $\psi_P$ form a fixed finite number of arithmetic progressions, each of whose increment depends only on $t$ and the weight parameter. The weaker property of slope uniformity only requires that these slopes form an approximate union of arithmetic progressions. Either property is only known in a limited number of cases: The main result of \cite{Davis} states that their eigencurve is slope-uniform over $\calW_0\backslash \{ 0 \}$. In \cite{Ren}, the authors prove that their eigenvariety arising from $\Z_p^d$-coverings of $\A^1_k$ is slope-uniform over the same region. It would be interesting to know the precise relationship between these two properties, in particular, does slope uniformity imply slope stability?
		
	\subsection{Dwork Operators}\label{s:dwork}
	
		We now introduce our main examples of nuclear operators. Let $A$ be a w.c.f.g. algebra over $R$, and $\sigma$ a lifting of Frobenius for $A$. If $M$ is a finite $A$-module, we define a \emph{Dwork operator} on $M$ to be an $R$-linear map
		\begin{equation*}
			\Theta:\sigma_*M	\to	M.
		\end{equation*}
		Alternatively, we may regard a Dwork operator as an $R$-endomorphism $\Theta:M \to M$ satisfying the relation $\Theta(\sigma(a)m) = a \Theta(m)$ for all $a \in A$ and $m \in M$. Let $\frakX = \Spwf(A)$, $\calX=\frakX^\rig$, and $\calM$ the coherent $\calO_\calX$-module associated to $M$. If $\pi:\calX \to \calW$ denotes the structure morphism, then we define the $\calO_\calW$-module $\calV = \pi_* \calM$. Our aim in this section is to prove the following:
		
		\begin{theorem}\label{t:nuc}
			The action of $\Theta$ on $\calV$ is nuclear. Moreover, the Fredholm determinant $C(\Theta,s)$ lies in $R[[s]]$.
		\end{theorem}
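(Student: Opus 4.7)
The plan is to realize $\Theta$ as an infinite matrix over $R$ with respect to a pseudo-basis of $M$, and to define $C(\Theta,s)$ directly via the formal Fredholm determinant. First, choose a presentation $A = R[X_1,\ldots,X_n]^\dagger/\fraka$ and a surjection from a free $A$-module $A^r \twoheadrightarrow M$, so that every element of $M$ admits a (non-unique) expansion $\sum_{\alpha,i} f_{\alpha,i} X^\alpha m_i$ with $f_{\alpha,i} \in R$ and $\liminf_{|\alpha|\to\infty} v_\frakm(f_{\alpha,i})/|\alpha| > 0$. Writing
\[
\Theta(X^\alpha m_i) \;=\; \sum_{\beta,j} c_{(\alpha,i),(\beta,j)} X^\beta m_j, \qquad c_{(\alpha,i),(\beta,j)} \in R,
\]
then expresses the operator as an infinite matrix indexed by $\Z_{\geq 0}^n \times \{1,\ldots,r\}$.

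The key step is a Dwork estimate for these matrix entries. Decompose $\alpha = q\beta + r$ with $r \in \{0,\ldots,q-1\}^n$, and apply the Dwork relation together with $\sigma(X^\beta) \equiv X^{q\beta} \pmod{\frakm}$ to obtain
\[
\Theta\bigl(X^{q\beta+r}\, m_i\bigr) \;=\; X^\beta \cdot \Theta(X^r m_i) \;+\; \bigl(\text{$\frakm$-small overconvergent correction}\bigr).
\]
Since $\Theta(X^r m_i) \in M$ is overconvergent and the correction becomes small at a rate controlled by the overconvergence of $\sigma$, I expect a bound of the form
\[
v_\frakm\bigl(c_{(\alpha,i),(\beta,j)}\bigr) \;\geq\; c\bigl(|\alpha|+|\beta|\bigr) \;-\; c'
\]
for some $c > 0$ and $c' \geq 0$. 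This is the relative analogue of the Serre--Dwork complete-continuity estimate, and it is the principal obstacle: over a discrete valuation ring a single uniformizer suffices, whereas here one must control the $\frakm$-adic filtration uniformly and track carefully the interaction between overconvergence (a statement about the ratio $v_\frakm(f_\alpha)/|\alpha|$) and iterates of $\sigma$ and $\Theta$.

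With the estimate in hand, set
\[
C(\Theta,s) \;=\; \sum_{k=0}^\infty (-s)^k \sum_{|S| = k} \det\bigl(c_{u,v}\bigr)_{u,v\in S} \;\in\; R[[s]],
\]
where $S$ ranges over $k$-element subsets of the index set. The estimate guarantees that each inner sum converges $\frakm$-adically in $R$ and that the $k$-th coefficient has $\frakm$-valuation growing super-linearly in $k$, so the series defines an entire analytic function on $\A^{1,\rig}_\calW$ and a fortiori on $\G_{m,\calW}$. Finally, for every rigid point $P \in \langle \calW \rangle$ the specialized matrix $(c_{u,v} \bmod P)$ is the matrix of $\Theta_P$ acting on $\calV_P$ with respect to the induced pseudo-basis, and the formal Fredholm determinant specializes termwise, so $C(\Theta,s)_P = C(\Theta_P,s)$, the classical Serre--Monsky Fredholm determinant of a nuclear operator on a $k(P)$-Banach space. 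By Corollary \ref{c:conserv} this specialization property determines the global analytic function uniquely, so $C(\Theta,s)$ is the Fredholm determinant of $\Theta$ in the sense of Definition \ref{d:cc}; its membership in $R[[s]]$ is built into the construction, and independence of the choice of pseudo-basis is automatic from the invariance of the fibers $C(\Theta_P,s)$.
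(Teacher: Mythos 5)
Your overall plan---represent $\Theta$ by an infinite matrix, define $C(\Theta,s)$ as a formal Fredholm determinant, and pin it down by its specializations at rigid points---is recognizably in the spirit of the paper's proof, but there are two substantive gaps.

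First, the proposed matrix estimate
\begin{equation*}
v_\frakm\bigl(c_{(\alpha,i),(\beta,j)}\bigr) \;\geq\; c\bigl(|\alpha|+|\beta|\bigr) \;-\; c'
\end{equation*}
is not a correct property of Dwork operators. A Dwork operator roughly ``divides exponents by $q$,'' so its significant matrix entries lie near $|\alpha|\approx q|\beta|$, where there is no smallness at all. Concretely, on $R[X]^\dagger$ with $\sigma(X)=X^q$, the $R$-linear map sending $X^{q\beta}\mapsto X^\beta$ and $X^{q\beta+r}\mapsto 0$ for $0<r<q$ satisfies $\Theta(\sigma(a)m)=a\Theta(m)$ and so is a Dwork operator, yet $c_{q\beta,\beta}=1$ for all $\beta$, while your lower bound $c(q+1)|\beta|-c'$ tends to $\infty$. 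Entireness of the Fredholm series is a more delicate combinatorial fact: in a $k\times k$ minor no permutation can place every factor on the ``diagonal'' $|\alpha|\approx q|\beta|$ (that would force $|u_i|\approx q|u_{\tau(i)}|$ for all $i$, impossible for a permutation), so most factors are small. This is exactly what Monsky's compactness lemma (\cite{Monsky}, 2.4) encodes via the weighted bases $e^{(c)}_{i,j,u}=\pi_i^{\lfloor|u|/c\rfloor}q_j(X^u)$, and the paper invokes it rather than reproving it; your symmetric estimate cannot substitute for it.

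Second, passing to a surjection $A^r\twoheadrightarrow M$ with a non-unique ``pseudo-basis'' expansion does not produce a well-defined infinite matrix, since the coefficients $c_{u,v}$ depend on the chosen expansion of $\Theta(X^\alpha m_i)$ in the non-free module $M$. The remark that ``independence of the choice of pseudo-basis is automatic from the invariance of the fibers'' is circular: Corollary \ref{c:conserv} is only applicable once $C(\Theta,s)$ is already known to be a well-defined analytic function, which is exactly what is in question. The paper avoids this by regarding $M$ as a module over $B=R[X_1,\ldots,X_n]^\dagger$, taking a finite free resolution $F_\bullet\to M$, lifting $\Theta$ to Dwork operators $\Theta_k$ on the free $B$-modules $F_k$, and invoking Monsky's alternating-product formula $C(\Theta,s)=\prod_k C(\Theta_k,s)^{(-1)^{k-1}}$ (\cite{Monsky}, 1.4), so that the matrix computation is carried out only on genuinely free modules. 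The remaining structural point you omit is the localization: rather than proving a $v_\frakm$-estimate directly over $R$ (where $\frakm$ need not be principal), the paper restricts to the standard cover $\{\calW_i\}$ of $\Spf(R)_\frakm^\rig$ where $\frakm R_i=\pi_i R_i$ is principal, shows $C(\Theta_i|\calW_i,s)\in R_i[[s]]$ and is $\pi_i$-adically entire, and then recovers $C(\Theta,s)\in R[[s]]$ from the fact that the coefficients glue to global sections of the admissible blow-up, which equal $R$. That localization is precisely what makes the reduction to the classical discretely-valued setting possible and is not a dispensable convenience.
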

		
		Let $P:R \to \Omega$ be a weight. The fiber $\calX_P$ is the object of $\Rig_\Omega^\dagger$ associated to the weak formal scheme $\frakX_P = \Spwf(A_P)$, where $A_P = A \otimes_R \Omega$. The action of $\sigma$ extends to $A_P$ in the obvious way. Writing $M_P = M \otimes_A A_P$, the induced map $\Theta_P$ is an $\Omega$-linear Dwork operator on $M_P$. Monsky \cite{Monsky} has proven that $\Omega_P$ induces a nuclear operator on the $k(P)$-vector space
		\begin{equation*}
			\calV_P	=	P^* \calV	=	M_P	\otimes_\Omega	k(P).
		\end{equation*}
		Thus $\Theta$ satisfies condition (i) of Definition \ref{d:cc}. The main difficulty will be the construction of the Fredholm determinant $C(\Theta,s)$. We begin with a reduction process: Choose a presentation
		\begin{equation*}
			R[X_1,...,X_n]^\dagger \twoheadrightarrow A.
		\end{equation*}
		We may regard $M$ as a module over the ring $B=R[X_1,...,X_n]^\dagger$. Choose a resolution
		\begin{equation*}
			F_\bullet	\to	M
		\end{equation*}
		of $M$ by free $B$-modules. The functor $\sigma_*$ is exact, and sends free modules to free modules. It follows that $\sigma_* F_\bullet \to \sigma_* M$ is a free resolution, and therefore we have induced Dwork operators $\Theta_k:\sigma_* F_k \to F_k$ for each $k$.
		
		\begin{lemma}
			If $\Theta_k$ is nuclear for each $k$, then $\Theta$ is nuclear and
			\begin{equation}\label{eq:section}
				C(\Theta,s)	=	\prod_k	C(\Theta_k,s)^{(-1)^{k-1}}.
			\end{equation}
		\end{lemma}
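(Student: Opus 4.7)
The plan is to establish the identity fiberwise via Monsky's classical multiplicativity theorem for Fredholm determinants of nuclear operators on bounded complexes, and then to bootstrap this pointwise identity to a global identity of analytic functions on $\G_{m,\calW}$ using the conservativity result for rigid points.

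First I would analytify the resolution $F_\bullet \to M$ and apply $\pi_*$, producing a bounded-above complex of $\calO_\calW$-modules $\calV_\bullet \to \calV$ on which the Dwork operators $\Theta_k$ and $\Theta$ act compatibly. For each rigid point $P \in \langle \calW \rangle$ of residue field $k(P)$, pulling back along $P$ yields a complex over $k(P)$; because base change along $R \to \Omega_P \to k(P)$ preserves the exactness of the free resolution $F_\bullet \to M$ (and $\sigma_*$ is exact on free modules), the fiberwise complex computes $\calV_P$ as an $H^0$ with vanishing higher cohomology. Each operator $(\Theta_k)_P$ is nuclear on $(\calV_k)_P$ in Monsky's classical sense by hypothesis, and the standard multiplicativity theorem for Fredholm determinants along a bounded exact complex of nuclear operators (\cite{Monsky}) then gives
\begin{equation*}
    C(\Theta_P,s) = \prod_k C((\Theta_k)_P,s)^{(-1)^{k-1}}
\end{equation*}
as an identity of entire power series in $k(P)[[s]]$ for every rigid point $P$.

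The right-hand side of (\ref{eq:section}), interpreted globally, is \emph{a priori} only a meromorphic function on $\G_{m,\calW}$, being an alternating product of analytic functions. To promote it to an analytic function and thereby verify the two conditions of Definition~\ref{d:cc} for $\Theta$, I would combine the fiberwise identity with Monsky's concrete description of the Fredholm determinant as a limit of characteristic polynomials of finite-rank approximations $\Theta^{(N)}$ to $\Theta$. The key observation is that these approximations can be constructed uniformly in the weight parameter: at the level of the w.c.f.g.\ algebra $A$, the operator $\Theta$ admits a nested sequence of finite-rank $R$-linear truncations whose characteristic polynomials converge in $R\{s\}$ and whose fibers at every rigid point recover Monsky's classical approximations of $\Theta_P$. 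This directly produces an entire function $C(\Theta,s)$ as a limit in $R[[s]]$, which matches the alternating product of the $C(\Theta_k,s)$ fiberwise.

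The main obstacle is precisely this globalization: turning the pointwise identity of entire power series into an identity of analytic functions on $\G_{m,\calW}$, rather than leaving it as a meromorphic identity. Once the entire function $C(\Theta,s)$ is constructed and its fibers are identified with the classical Fredholm determinants $C(\Theta_P,s)$, the equality of the two analytic functions in (\ref{eq:section}) follows from Corollary~\ref{c:conserv} applied to the difference in the coefficients of the entire power series: vanishing at every rigid point forces vanishing globally. This simultaneously verifies that $\Theta$ is nuclear over $\calW$ (with Fredholm determinant lying in $R[[s]]$, hence meeting the refinement in the assertion surrounding Theorem~\ref{t:nuc}) and establishes the desired identity of Fredholm determinants.
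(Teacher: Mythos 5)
Your proposal follows the same basic strategy as the paper: specialize at each rigid point $P$, invoke Monsky's classical multiplicativity result to get the identity of Fredholm determinants in $k(P)[[s]]$, and then use Corollary~\ref{c:conserv} to globalize. The paper's proof is just those two sentences. So the logical skeleton matches.

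Where you go further is in flagging, correctly, that the right-hand side of (\ref{eq:section}) is \emph{a priori} only a meromorphic function on $\G_{m,\calW}$ --- it is an alternating product in which the terms with $(-1)^{k-1}=-1$ contribute denominators --- and that Definition~\ref{d:cc}(ii) demands an \emph{analytic} function. The paper's proof is silent on this point; Corollary~\ref{c:conserv} lets you check a candidate section against the fibers, but it does not by itself produce the section as an element of $\Gamma(\G_{m,\calW},\calO)$. So the concern you raise is a real one about the completeness of the argument.

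However, the fix you propose does not obviously work. You suggest constructing $C(\Theta,s)$ directly as a limit of characteristic polynomials of ``finite-rank $R$-linear truncations'' of $\Theta$ on $M$. This is exactly what is done in the remainder of the proof of Theorem~\ref{t:nuc}, \emph{for free $B$-modules}: the truncations there are taken with respect to the monomial basis $\{q_j(X^u)\}$, and it is precisely the existence of such a basis that makes the coefficients land in $R_i$ and lets one run Serre's entirety argument (\cite{Serre}, 5.7). For a general finite $A$-module $M$ there is no such orthonormal family: $M$ is a quotient of a free module, the kernel of the presentation need not be stable under the natural truncation maps, and the Dwork operator $\Theta$ on $M$ does not automatically lift to a Dwork operator compatible with truncations on the presenting free module. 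This is the whole reason for passing to the free resolution $F_\bullet \to M$ in the first place; if you could truncate $\Theta$ on $M$ directly, the lemma would be superfluous. As written, your globalization step thus covertly re-uses the free-module machinery you are trying to sidestep, or else requires a genuinely new construction you have not supplied.

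Two further cautions. First, your fiberwise step asserts that base change along $R \to \Omega_P \to k(P)$ ``preserves the exactness of the free resolution $F_\bullet \to M$.'' This needs care: the $F_k$ are free over $B$, hence flat over $R$, but $M$ itself need not be $R$-flat, so $F_\bullet \otimes_R k(P)$ computes $\mathrm{Tor}^R_\bullet(M,k(P))$ rather than just $M\otimes_R k(P)$ in degree zero. Monsky's multiplicativity lemma is formulated to accommodate such complexes (via an Euler-characteristic/cohomology statement), which is presumably what the citation to (\cite{Monsky}, 1.4) is meant to supply, but the claim of vanishing higher cohomology should not be taken for granted. Second, the cleaner route to the analyticity you want is not a truncation argument at all: since each $C(\Theta_k,s)$ is entire (free case) and the formal quotient $h=\prod_k C(\Theta_k,s)^{(-1)^{k-1}}$ has entire specialization at every rigid point of every $\calW_i$, one should argue that entirety of the quotient can be detected at rigid points --- a divisibility statement in the ring of entire functions over $R_i$, provable by a Weierstrass-preparation/Newton-polygon argument. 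This is the missing ingredient, and it is of a different nature from finite-rank truncations.
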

		\begin{proof}
			By Corollary \ref{c:conserv}, it suffices to check that for every $P \in \langle R \rangle$, the image of (\ref{eq:section}) in $k(P)$ is equal to $C(\Theta_P,s)$. But this follows from (\cite{Monsky}, 1.4).
		\end{proof}
		
		In light of the Lemma, we may assume that $B=A$ and that $M$ is a free $A$-module of finite rank, say $M = \bigoplus_j A$. Let $q_j:A \to M$ denote the $j$th canonical injection. Choose a set of generators $\frakm = (\pi_1,...,\pi_d)$, and let $\calW_i = \Spf(R_i)^\rig$ be the standard covering associated to $\pi_1,...,\pi_d$. By Lemma \ref{l:nucloc}, it suffices to prove that $\Theta$ is nuclear over $\calW_i$ for each $i$. Let $M_i = M \otimes_A A_i$, and $\Theta_i$ the $R_i$-linear Dwork operator on $M_i$ induced by $\Theta$. Then $\Theta$ and $\Theta_i$ induce the same operator on the restriction $\calV|_{\calW_i}$. We equip the field $Q_i = Q(R_i)$ with the $\pi_i$-adic valuation. By Monsky, $\Theta_i$ induces a nuclear operator on the vector space
		\begin{equation*}
			\calV_i	=	M_i	\otimes_{R_i}	Q_i.
		\end{equation*}
		At this point, we may define $C(\Theta_i|\calW_i,s) \in Q_i[[s]]$ to be the Fredholm determinant of this operator, but it is not at all obvious that this defines an analytic function on $\G_{m,\calW}$. To proceed, consider the family of subspaces indexed by $c>0$:
		\begin{equation*}
			M_i^{(c)}	=	\left\{	\sum_j	r_{u,j}\pi_i^{\lfloor |u|/c \rfloor} q_j(X^u):r_{u,j} \in R_i	\right\}.
		\end{equation*}
		Then $M_i$ is the union of all $M_i^{(c)}$, and each $M_i^{(c)}$ is a free $R_i$-module, with basis consisting of $e_{i,j,u}^{(c)} = \pi_i^{\lfloor |u|/c \rfloor} q_j(X^u)$. Let $\langle \cdot,\cdot \rangle$ denote the bilinear form on $M_i^{(c)}$ for which this basis is orthonormal. Define $\calV_i^{(c)} = M_i^{(c)} \otimes_{R_i} Q_i$.
		
		\begin{lemma}\label{l:monsky}
			There exists $c_0 > 0$ such that for all $c \geq c_0$ $\calV_i^{(c)}$ is stable under the action of $\Theta_i$, and the restriction of $\Theta_i$ to $\calV_i^{(c)}$ is a compact operator.
		\end{lemma}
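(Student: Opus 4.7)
My plan is to follow Monsky's classical argument \cite{Monsky}, adapted from the discrete-valuation setting to our relative setup over $R_i$. Because $\frakm R_i = \pi_i R_i$, the ring $A_i$ carries a single uniformizer $\pi_i$, so essentially all the estimates in Monsky reduce to $\pi_i$-adic ones and the classical proof goes through once we account for how the overconvergence of $\sigma$ and $\Theta$ interact with the $\pi_i$-adic topology.

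The central computation unfolds $\Theta_i$ on monomials using the Dwork identity. Writing $\sigma(X_k) = X_k^q + \pi_i \tau_k$ for some $\tau_k \in A_i$, I would expand $\sigma(X^v) = X^{qv} + \pi_i \delta_v$ for an explicit $\delta_v \in A_i$ collecting the non-leading terms of $\prod_k(X_k^q + \pi_i\tau_k)^{v_k}$, decompose $u = qv + w$ with $0 \le w_s < q$, and obtain
\[
X^u = X^w \sigma(X^v) - \pi_i X^w \delta_v.
\]
Applying $\Theta_i$ and using the Dwork relation $\Theta_i(\sigma(a) m) = a\Theta_i(m)$ yields the key recursion
\[
\Theta_i(q_j(X^u)) = X^v \Theta_i(q_j(X^w)) - \pi_i \Theta_i(q_j(X^w \delta_v)).
\]
I would iterate on the second term: each pass produces an additional factor of $\pi_i$ and replaces the indexing monomial by one whose degree, after extracting the next $X^{v'}$ factor, is contracted by a factor of roughly $q$.

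Next, I would track the overconvergence bounds carefully. The w.c.f.g. hypothesis on $A$ gives a uniform overconvergence rate $r_\sigma > 0$ controlling the $\pi_i$-adic decay of coefficients of $\tau_k$ (and hence of $\delta_v$), while $\Theta$ being a Dwork operator over a w.c.f.g. algebra furnishes an analogous rate $r_\Theta$ for the ``seed data'' $\{\Theta_i(q_j(X^w)) : |w| < q,\, 1 \le j \le r\}$. Choosing $c_0$ large enough in terms of $r_\sigma$, $r_\Theta$, $q$, and $n$, I would verify that the iterated expansion of $\Theta_i(e^{(c)}_{i,j,u})$ converges in the $M_i^{(c)}$-norm and that each summand lies in $M_i^{(c)}$. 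This establishes the stability $\Theta_i(M_i^{(c)}) \subseteq M_i^{(c)}$ for every $c \ge c_0$.

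For compactness of $\Theta_i$ on $\calV_i^{(c)} = M_i^{(c)} \otimes_{R_i} Q_i$, I would bound the matrix entries $\lambda_{(u',j'),(u,j)} = \langle \Theta_i(e^{(c)}_{i,j,u}), e^{(c)}_{i,j',u'}\rangle$ using the same expansion. The $X^v$ factor in the leading term of each iteration forces contributions at fixed $(u',j')$ to come from indices $u$ with $|u|$ bounded in terms of $|u'|$ and the depth of iteration, while the accumulated powers of $\pi_i$ drive the remaining coefficients to zero $\pi_i$-adically. This writes $\Theta_i|_{\calV_i^{(c)}}$ as a norm limit of finite-rank operators, hence compact. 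The main obstacle I anticipate is entirely a matter of bookkeeping: choosing $c_0$ so that the $q$-fold degree contraction of the Dwork transfer dominates both the overconvergence loss inherited from $\sigma$ and $\Theta$ and the passage from the $\frakm$-adic to the $\pi_i$-adic topology on $A_i$. Once $c_0$ is fixed, both stability and compactness follow from standard Banach-space estimates.
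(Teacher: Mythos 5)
The paper's own proof is a one-line citation of (\cite{Monsky}, 2.4): once the standard covering has been invoked so that $\frakm R_i = \pi_i R_i$ and one passes to $Q_i$ with the $\pi_i$-adic valuation, the situation is precisely Monsky's classical one, and the lemma is his result. Your proposal is a correct reconstruction of that argument along the standard Dwork-recursion lines (writing $\sigma(X_k) = X_k^q + \pi_i\tau_k$, iterating the transfer identity, and balancing the $q$-fold degree contraction against the overconvergence rates of $\sigma$ and $\Theta$), so it takes essentially the same route, just spelled out rather than cited.
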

		\begin{proof}
			This is (\cite{Monsky}, 2.4).
		\end{proof}
		
		To complete the proof of Theorem \ref{t:nuc}, consider the basis of $\calV_i$ defined by
		\begin{equation*}
			e_{i,j,u} = q_j(X^u).
		\end{equation*}
		Since $M_i$ is stable under $\Theta_i$, the matrix $C$ of $\Theta_i$ with respect to this basis has coefficients in $R_i$. For each $c > c_0$, let $C^{(c)}$ denote the matrix of $\Theta_i$ with respect to the basis $\{e_{i,j,u}^{(c)}\}$. The element $C(\Theta_i|\calW_i,s)$ is defined as the limit of Fredholm determinants of certain finite submatrices of $C^{(c)}$. Since the corresponding finite submatrices of $C$ are similar, it follows that these power series are equal and therefore are elements of $R_i[[s]]$. As $R_i$ is complete, we see that $C(\Theta_i|\calW_i,s) \in R_i[[s]]$. Now if we expand
		\begin{equation*}
			C(\Theta_i|\calW_i,s)	=	r_{i,0}	+ r_{i,1}s	+\cdots,
		\end{equation*}
		then by (\cite{Serre}, 5.7) we have that
		\begin{equation*}
			\lim_{j \to \infty}	\frac{v_{\pi_i}(r_{i,j})}{j}	=	\infty.
		\end{equation*}
		In other words, the series $C(\Theta_i|\calW_i,s)$ is entire with respect to the $\pi_i$-adic valuation. In particular, it defines an analytic function on $\G_{m,\calW_i}$. This completes the proof that $\Theta$ is nuclear. As the power series $C(\Theta_i|\calW_i,s)$ agree on double intersections, we see that for each $j$, the $r_{i,j}$ glue to give a global section $r_j$ of the admissible formal blow-up $\Spf(R)_\frakm$. But from the definition of admissible formal blow-up, we see that the ring of global sections is precisely $R$. Thus $C(\Theta|\calW,s) \in R[[s]]$ as desired.
		
		\begin{definition}
			For each $d > 0$, the \emph{trace} of the operator $\Theta^d$ is the element of $R$ defined by the relation
			\begin{equation*}
				C(\Theta,s)	=	\exp\left(	-\sum_{d=1}^\infty	\Tr(\Theta^d) \frac{s^d}{d}	\right).
			\end{equation*}
		\end{definition}
		
	\subsection{The Trace Formula}\label{s:trace}
	
		Again, let $X=\Spec(A_0)$ be a smooth affine variety over $k$, and $A$ a smooth w.c.f.g. algebra over $R$ lifting $A_0$. Fix a lifting of Frobenius $\sigma:A \to A$, and let $(M,\phi)$ be a $\sigma$-module over $A$. We are now ready to prove our main theorem regarding the meromorphic continuation of the $L$-function $L(\phi,s)$. Following Monsky and Washnitzer \cite{MW}, we will construct Dwork operators on the de Rahm complex of the dual module $M^\vee$, and prove that $L(\phi,s)$ can be written as an alternating product of their Fredholm determinants. At each rigid point $P \in \langle \calW \rangle$, our result reduces to Monsky's trace formula \cite{Monsky}.
		
		Write $\Omega^\bullet A$ for the complex of \emph{continuous} differentials of $A$. By (\cite{MW}, 4.6), each $\Omega^i A$ is a finite projective $A$-module. For brevity, let $B=\sigma(A)$. By Nakayama's lemma and smoothness of $A$, we see that $A$ is a finite projective $B$-module of rank $q^{\dim(X)}$. In particular, the localization $A_\frakm$ is a finite free $B_\frakm$-module, and there is a well defined $B_\frakm$-linear trace map
		\begin{equation*}
			\Tr:A_\frakm	\to	B_\frakm.
		\end{equation*}
		Our first goal is to show that the restriction of $\Tr$ to $A$ prolongs to a map of complexes $\Tr_\bullet:\Omega^\bullet A \to \Omega^\bullet B$. First, we need:
		
		\begin{lemma}
			The canonical map $\Omega^\bullet B \to \Omega^\bullet A$ induces an isomorphism
			\begin{equation}\label{eq:isom}
				\Omega^\bullet	B	\otimes_B	A_\frakm	\xrightarrow{\sim}	\Omega^\bullet	A	\otimes_A	A_\frakm.
			\end{equation}
		\end{lemma}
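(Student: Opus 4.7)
The plan is to reduce to the degree-one case via exterior powers, then apply the cotangent sequence. Since $A$ and $B$ are both smooth over $R$ (the latter via the isomorphism $\sigma : A \xrightarrow{\sim} B$), one has the identifications $\Omega^i A = \bigwedge^i_A \Omega^1 A$ and $\Omega^i B \otimes_B A = \bigwedge^i_A (\Omega^1 B \otimes_B A)$ (the latter because exterior powers commute with flat base change and $A$ is finite projective over $B$). Consequently, the degree-one isomorphism propagates to all higher degrees by taking exterior powers over $A_\frakm$.

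In degree one, the cotangent sequence for $R \to B \to A$ reads
$$\Omega^1 B \otimes_B A \xrightarrow{\alpha} \Omega^1 A \to \Omega^1_{A/B} \to 0,$$
with both source and target finite projective $A$-modules of rank $\dim X$. A surjection between finite projective modules of the same rank is automatically an isomorphism (the kernel has rank zero, hence vanishes), so the assertion reduces to showing $\Omega^1_{A/B} \otimes_A A_\frakm = 0$. Using the ring isomorphism $\sigma : A \xrightarrow{\sim} B$, one identifies $\alpha$ with the map $d\sigma$ induced by the Jacobian of $\sigma$, and this map between rank-$\dim X$ projective modules is controlled by the single invariant $\det(d\sigma) \in A$. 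The cokernel $\Omega^1_{A/B}$ is annihilated by $\det(d\sigma)$, so if $\det(d\sigma)$ is a unit in $A_\frakm$, the vanishing follows.

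The crucial input is that $\sigma$ is a lift of the $q$-th power Frobenius $F$ on $A_0$. Reduction modulo $\frakm$ sends $d\sigma$ to $dF$, which is zero in characteristic $p$ because $d(a^q) = q a^{q-1} da = 0$ in $\Omega^1 A_0$. Therefore $\det(d\sigma) \in \frakm A$. The localization $A_\frakm$ is constructed in the paper precisely to make such elements of $\frakm A$ invertible (after passage to the rigid-analytic or weakly-complete generic fibre), which is what forces $\det(d\sigma)$ to become a unit and hence $d\sigma \otimes A_\frakm$ to be an isomorphism.

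The main obstacle is verifying that $\det(d\sigma)$ is indeed invertible in $A_\frakm$. A priori one only knows $\det(d\sigma) \in \frakm A$, and ensuring this element becomes a unit after the specific localization used by the paper requires combining the smoothness of $A$ over $R$ with the explicit structure of $\sigma \bmod \frakm$ (whose Jacobian locus lives in the special fibre); once this technical point is established, the rest of the argument is formal.
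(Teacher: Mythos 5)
Your reduction to degree one via the cotangent sequence and the control of the cokernel $\Omega^1_{A/B}$ by $\det(d\sigma)$ is a sensible strategy, and you correctly deduce that $\det(d\sigma) \in \frakm A$ from the vanishing $d(a^q) = q\,a^{q-1}\,da = 0$ in characteristic $p$. But the very next claim is a fatal error: ``the localization $A_\frakm$ is constructed in the paper precisely to make such elements of $\frakm A$ invertible.'' This is exactly backwards. Localizing $A$ at the prime ideal $\frakm A$ inverts only elements \emph{outside} $\frakm A$; elements of $\frakm A$ become the maximal ideal $\frakm A_\frakm$ of the local ring $A_\frakm$ and so are \emph{never} units there. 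Consequently your own observation that $\det(d\sigma) \in \frakm A$ shows that the cokernel $\Omega^1_{A/B} \otimes_A A_\frakm$ does \emph{not} vanish, and the map (\ref{eq:isom}) is injective but not surjective over $A_\frakm$. Concretely, for $R=\Z_p$, $A=\Z_p\langle x\rangle^\dagger$, $\sigma(x)=x^p$, the canonical map sends $d(x^p)\mapsto p\,x^{p-1}\,dx$, so the degree-one cokernel over $A_\frakm = A_{(p)}$ is $A_\frakm/p\,x^{p-1}A_\frakm \cong Q(A_0) \ne 0$, since $x^{p-1}$ is a unit in $A_\frakm$ but $p$ is not.

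Note also that this difficulty is not unique to your argument; the paper's own proof deduces injectivity from the isomorphism over $Q(A)$ furnished by Monsky--Washnitzer 8.1, and then asserts surjectivity by claiming ``its reduction mod $\frakm$ is an isomorphism'' and invoking Nakayama. But the reduction of the canonical map $\Omega^1 B_0 \to \Omega^1 A_0$ modulo $\frakm$ is the \emph{zero} map, by exactly the vanishing $d(a^q)=0$ that you noted, so that step is also problematic. What Monsky--Washnitzer 8.1 does give is the isomorphism over the fraction field $Q(A)$, which is where the trace $\Tr_\bullet$ is initially defined; the real content of the theorem that follows is that this fraction-field trace preserves the integral lattices $\Omega^\bullet A$ and $\Omega^\bullet B$. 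Your instinct that $\det(d\sigma)$ must become a unit somewhere is sound, but the ring accomplishing that is $Q(A)$, not $A_\frakm$, and not even $A[\frakm^{-1}]$ (where $p\,x^{p-1}$ is still not a unit since $x$ is not).
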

		\begin{proof}
			Both modules are free $A_\frakm$-modules of the same (finite) rank. By (\cite{MW}, 8.1), the induced map
			\begin{equation*}
				\Omega^\bullet	B	\otimes_B	Q(A)	\to	\Omega^\bullet	A	\otimes_A	Q(A).
			\end{equation*}
			is an isomorphism. In particular, (\ref{eq:isom}) is necessarily injective. To see that this map is surjective, note that its reduction mod $\frakm$ is an isomorphism. The result follows by Nakayama's lemma.
		\end{proof}
		
		Let $\alpha_\bullet$ denote the inverse of the isomorphism (\ref{eq:isom}). Then $\Tr_\bullet$ is defined to be the composition
		\begin{equation}\label{eq:trdef}
			\Omega^\bullet A \otimes_A A_\frakm	\xrightarrow{\alpha_\bullet}	\Omega^\bullet	B	\otimes_B	A_\frakm	\xrightarrow{1 \otimes \Tr}	\Omega^\bullet B \otimes_B B_\frakm.
		\end{equation}
		
		\begin{theorem}
			The map $\Tr_\bullet$ maps $\Omega^\bullet A$ into $\Omega^\bullet B$.
		\end{theorem}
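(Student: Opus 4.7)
The plan is to show the theorem by factoring $\Tr_\bullet$ through the natural map $\beta_\bullet : \Omega^\bullet B \otimes_B A \to \Omega^\bullet A$, which I will first prove is already an isomorphism over $A$ (before localizing at $\frakm$). Combined with the fact that the trace $\Tr : A_\frakm \to B_\frakm$ restricts to a trace $\Tr : A \to B$, this will immediately yield the result.

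First, I would prove that the canonical map $\beta_\bullet : \Omega^\bullet B \otimes_B A \to \Omega^\bullet A$ is an isomorphism. Both sides are finite $A$-modules, and by Lemma \ref{l:ff} the completion $A \to A_\frakm$ is faithfully flat. After base change along this map, $\beta_\bullet$ becomes precisely the isomorphism (\ref{eq:isom}) of the preceding lemma, so faithful flatness descends the conclusion back to $A$. Consequently the inverse $\alpha_\bullet$ carries $\Omega^\bullet A$ into $\Omega^\bullet B \otimes_B A$, not merely into $\Omega^\bullet B \otimes_B A_\frakm$.

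Next, I would establish that $A$ is a finite projective $B$-module of rank $q^{\dim X}$, yielding a well-defined trace $\Tr : A \to B$ compatible with its localized version. The special fiber $A_0$ is smooth over the perfect field $\F_q$, so the relative Frobenius realizes $A_0$ as a finite projective module over $B_0 = \sigma_0(A_0)$ of rank $q^{\dim X}$. Lifting a finite $B_0$-generating set of $A_0$ to elements of $A$, Nakayama's lemma (applied in the Zariski ring $A$, again via Lemma \ref{l:ff}) implies that these lifts generate $A$ as a $B$-module. Projectivity of the correct rank can then be verified after the faithfully flat base change $B \to B_\frakm$, where we invoke the hypothesis that $A_\frakm$ is finite free of rank $q^{\dim X}$ over $B_\frakm$.

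Combining the two steps completes the proof: for any $\omega \in \Omega^i A$ we may write $\alpha_i(\omega) = \sum_j \eta_j \otimes a_j$ with $\eta_j \in \Omega^i B$ and $a_j \in A$, whence
\begin{equation*}
    \Tr_i(\omega) \;=\; \sum_j \eta_j \otimes \Tr(a_j) \;\in\; \Omega^i B \otimes_B B \;=\; \Omega^i B.
\end{equation*}
The main obstacle is the Nakayama step in the second paragraph: the quotient $A/M$, where $M \subseteq A$ is the $B$-submodule generated by the lifts, is a priori only a $B$-module, and $B$ is not manifestly a Zariski ring. One must therefore apply Nakayama carefully — either by showing $B$ inherits weak completeness from $A$ (using that $\sigma$ is injective on the reduced special fiber and lifting), or by first working with the $A$-module $A/\sigma(A) \cdot A + \sum A \cdot e_j$ and then descending. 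This is essentially the technical heart of the Monsky–Washnitzer construction adapted to the more general base ring $R$.
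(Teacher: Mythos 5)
The first step of your proposal is false, and it is precisely the point where all the difficulty of the theorem is concentrated. Writing $B = \sigma(A)$, the reduction of $B$ modulo $\frakm$ is the subring $A_0^q \subseteq A_0$ of $q$-th powers, since $\sigma$ lifts the $q$-power Frobenius. For $b = a^q \in B_0$, the pullback of $d_{B_0}b$ along $B_0 \hookrightarrow A_0$ is $d_{A_0}(a^q) = q a^{q-1}\, da = 0$ in characteristic $p$; more generally $d(\sigma(a)) \in \frakm\, \Omega^1 A$ for every $a \in A$, as one sees in local coordinates from $d(\sigma(x_i)) = q x_i^{q-1}\,dx_i + (\text{terms in }\frakm)$. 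Thus $\beta_1 \pmod{\frakm}$ is the zero map between two nonzero projective $A_0$-modules of rank $\dim(X)$, so $\beta_1$ is not surjective, and $\alpha_\bullet$ genuinely has denominators. Your faithfully flat descent along $A \to A_\frakm$ fails because the map you are trying to descend is not an isomorphism over $A$ to begin with.

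This is not a repairable gap in the proposal; it is the heart of the Monsky--Washnitzer trace theorem. The map $\alpha_\bullet$ only exists after localizing away from $\frakm$, and the content of the theorem is that post-composing with $1 \otimes \Tr$ --- where $\Tr$ carries extra divisibility because $A$ has rank $q^{\dim(X)}$ over $B$ --- cancels those denominators exactly. This is a delicate $p$-adic cancellation, not a formal isomorphism. The paper's proof instead specializes along rigid points $P : R \to \Omega$ to reduce to the discrete valuation ring case, invokes the Monsky--Washnitzer trace theorem (\cite{MW}, 8.1--8.3) for $\overline{A}_P$ over the DVR $\overline{\Omega}$, intersects $\overline{B}_P$ with the localization $B_P'$ to land in $B_P$, and then uses the rigid-geometric machinery of the earlier sections (in particular Corollary \ref{c:conserv} together with an argument about models and admissible weak blow-ups) to conclude that a section of $\Omega^i B \otimes_B B_\frakm$ which is integral at every rigid point already lies in $\Omega^i B$. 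A useful sanity check: if your Step~1 were correct, Section~8 of \cite{MW} would be a triviality, which it certainly is not.
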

		\begin{proof}
			Let $P:R \to \Omega$ be a rigid point of $R$. Write $A_P = A \otimes_R \Omega$ and similarly for $B$. By base change along $R \to \Omega \to k(\Omega)$, we obtain a map of complexes
			\begin{equation}\label{eq:trcomp}
				\Tr_\bullet:	\Omega^\bullet A_P \otimes_{A_P} Q(A_P)	\to	\Omega^\bullet B_P	\otimes_{B_P}	Q(B_P).
			\end{equation}
			We will show first that this restricts to a map $\Omega^\bullet A_P \to \Omega^\bullet B_P$. Let $\overline{\Omega}$ be the integral closure of $\Omega$ in $Q(\Omega)$, so that $\overline{\Omega}$ is a discrete valuation ring. Write $\overline{A}_P	=	A_P	\otimes_\Omega	\overline{\Omega}$ and similarly for $B_P$. Then $\overline{A}_P$ and $\overline{B}_P$ are smooth and integrally closed w.c.f.g. algebras over $\overline{\Omega}$ (\cite{MW}, 6.3). Regarding (\ref{eq:trcomp}) as a map
			\begin{equation*}
				\Omega^\bullet \overline{A}_P \otimes_{\overline{A}_P} Q(A_P)	\to	\Omega^\bullet \overline{B}_P	\otimes_{\overline{B}_P}	Q(B_P),
			\end{equation*}
			we see that this is precisely the trace map of Monsky and Washnitzer, which maps $\Omega^\bullet \overline{A}_P \to \Omega^\bullet \overline{B}_P$ (\cite{MW}, 8.1-8.3). Thus the image under $\Tr_i$ of $\Omega^i A_P$ lies in $\Omega^i \overline{B}_P$. But by (\ref{eq:trdef}), this image also lies in $\Omega^i B_P \otimes_{B_P} B_P'$. where $B_P'$ is the localization of $B_P$ at the maximal ideal of $\Omega$. But clearly $\overline{B}_P \cap B_P' = B$ and so $\Tr_\bullet: \Omega^\bullet A_P \to \Omega^\bullet B_P$ as desired.
			
			Now $\Omega^\bullet B \otimes_B B_\frakm$ is a finite free $B_\frakm$-module, and moreover admits a basis of elements of $B$. It suffices then to prove the following statement: If $b \in B_\frakm$ is such that the image $b_P$ of $b$ in $B_P'$ lies in $B_P$ for all rigid points $P:R \to \Omega$ of $R$, then $b \in B$. For such a $b$, we want to show that inclusion $B \to B[b]$ is an equality, or equivalently that $B \to B[b]^\dagger$ is an equality, since $B$ is weakly complete. Let $f:\frakX \to \frakY$ be the corresponding map in $\FS^\dagger$. Then $f^\rig$ is an open immersion, so in particular there exists a diagram
			\begin{equation*}
				\begin{tikzcd}
					\frakX' \arrow[d]	\arrow[r,"f'"]	&	\frakY'	\arrow[d]	\\
					\frakX	\arrow[r,"f"]	&	\frakY
				\end{tikzcd}
			\end{equation*}
			where $f'$ is an open immersion and the vertical arrows are admissible weak blow-ups. We claim that $f'$ is an isomorphism, from which it will follow that $f^\rig$ is an isomorphism. Let $y$ be a rigid point of $\frakY^\rig$, and $P = \wt(y)$. Then by the assumption on $b$, $f$ induces an isomorphism on the fibers $\frakX_P^\rig	\to	\frakY_P^\rig$. It follows that $f'$ is surjective on rigid points, and passing to specializations that it is surjective on Zariski points, proving the claim. Consequently there is a diagram $\frakX \xleftarrow{\varphi} \frakZ \xrightarrow{\psi} \frakY$, where both arrows are admissible blow-ups. But $\frakX$ and $\frakY$ are affine and $\frakm$-torsion free, thus
			\begin{equation*}
				B	=	\Gamma(\frakZ,\calO_\frakZ)	=	B[b]^\dagger,
			\end{equation*}
			completing the proof.
		\end{proof}
		
		\begin{definition}
			The \emph{canonical Dwork operators} on $\Omega^\bullet A$ are defined to be the composition
			\begin{equation*}
				\theta_\bullet	=	\sigma_\bullet^{-1}	\circ	\Tr_\bullet:\Omega^\bullet A	\to	\Omega^\bullet A.
			\end{equation*}
		\end{definition}
		
		For a $\sigma$-module $(M,\phi)$ over $A$, let $\Omega^i M = \omega^i A \otimes_A M$. The exterior product defines a perfect pairing
		\begin{equation*}
			\Omega^i A	\times	\Omega^{n-i} A	\to	\Omega^n A
		\end{equation*}
		via which we identify the dual $(\Omega^i A)^\vee = \Hom_A(\Omega^i A,A)$ with $\Omega^{n-i} A$. This gives us an identification
		\begin{equation*}
			\Omega^{n-i}M^\vee	=	\Hom_A(\Omega^i M,\Omega^i A).
		\end{equation*}
		We define a Dwork operator $\theta_{n-i}(\phi)$ on $\Omega^{n-i} M^\vee$ by sending $f:\Omega^i M \to \Omega^n A$ to the $A$-linear map
		\begin{equation*}
			\Omega^i M	\xrightarrow{\sigma \otimes \phi}	\Omega^i M	\xrightarrow{f}	\Omega^n A	\xrightarrow{\theta_n}	\Omega^n A.
		\end{equation*}
		
		\begin{theorem}
			Let $(M,\phi)$ be as above. Then
			\begin{equation*}
				L(\phi,s)	=	\prod_{i=0}^n	C(\theta_{n-i}(\phi),s)^{(-1)^{i-1}}.
			\end{equation*}
		\end{theorem}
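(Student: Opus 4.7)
The strategy is to reduce to Monsky's classical trace formula by specialization at rigid points of $\calW$. Since $(M,\phi)$ is overconvergent and $L(\phi,s)$ depends only on the equivalence class, I may assume that $(M,\phi)$ is the weak completion of a $\sigma$-module over the w.c.f.g. algebra $A$, so the Dwork operators $\theta_{n-i}(\phi)$ constructed in Section \ref{s:trace} are defined.

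First I would observe that both sides lie in $R[[s]]$: the left by its Euler product definition, and the right by Theorem \ref{t:nuc}, since each $C(\theta_{n-i}(\phi),s)$ lies in $1 + sR[[s]]$ and is therefore invertible in $R[[s]]$. Writing both sides coefficient-by-coefficient as elements of $R = \Gamma(\calW,\calO_\calW)$, Corollary \ref{c:conserv} reduces the identity to showing that for every rigid point $P \in \langle \calW \rangle$ the two series have the same image in $k(P)[[s]]$, i.e.,
\begin{equation*}
L(\phi,s)_P = \prod_{i=0}^n C(\theta_{n-i}(\phi),s)_P^{(-1)^{i-1}}.
\end{equation*}

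Fix such a $P: R \to \Omega$, so that $k(P) = Q(\Omega)$ is a discrete valuation field, and let $(M_P,\phi_P)$ denote the base-changed $\sigma$-module over $A_P = A \otimes_R \Omega$. Next I would verify three specialization compatibilities: (a) $L(\phi,s)_P = L(\phi_P,s)$, because the local Euler factor at a closed point $x \in |X|$ is the characteristic polynomial of $\phi_x^{\deg(x)}$, whose formation commutes with the base change $R \to \Omega \to k(P)$; (b) the Dwork operator $\theta_{n-i}(\phi)$ base-changes to $\theta_{n-i}(\phi_P)$ on $\Omega^{n-i} M_P^\vee$, which reduces to compatibility of the trace map $\Tr_\bullet$ of Section \ref{s:trace} with $A \to A_P$; (c) the Fredholm determinant specializes, $C(\theta_{n-i}(\phi),s)_P = C(\theta_{n-i}(\phi_P),s)$, which is Definition \ref{d:cc}(ii) combined with (b). Granting (a)--(c), the specialized identity is exactly Monsky's trace formula \cite{Monsky} applied to $(M_P,\phi_P)$ over the w.c.f.g. algebra $A_P$; if needed, one further base-changes to the integral closure $\overline{\Omega}$, which is a DVR by Lemma \ref{l:rpoints}, so that Monsky's original hypotheses apply verbatim.

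The hard part will be step (b). The trace map $\Tr_\bullet: \Omega^\bullet A \to \Omega^\bullet B$ was constructed using the localization at $\frakm$, and its integrality was only established after passage through all rigid points and comparison with the Monsky--Washnitzer trace on $\overline{A}_P$. This characterizes $\Tr_\bullet$ by its specializations, so the required base-change compatibility is in fact built into its construction; I would just need to unwind this to confirm that the fiber of $\Tr_\bullet$ at $P$ agrees with the trace constructed directly over $A_P$. Once (b) is granted, (a) and (c) are routine, and the full trace formula follows by conservativity of $\calW_\ad$ on rigid points.
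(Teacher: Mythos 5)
Your proposal is correct and follows essentially the same strategy as the paper's own proof: reduce to rigid points via Corollary \ref{c:conserv}, base change to the integral closure $\overline{\Omega}$ (a DVR by Lemma \ref{l:rpoints}), and invoke Monsky's trace formula. Your explicit breakdown of the three specialization compatibilities (a)--(c) is a helpful elaboration of what the paper treats more tersely, and you correctly identify the base-change compatibility of $\Tr_\bullet$ as the point that requires the most care.
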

		\begin{proof}
			It suffices to check the equality at the rigid points of $R$. Let $P:R \to \Omega$ be a rigid point of $R$ and $M_P$ the base change of $M$ along $P$. The map $\phi$ induces a $\sigma$-module structure on $M_P \otimes_\Omega \overline{\Omega}$, and the $L$-function of this $\sigma$-module agrees with that of $(M,\phi)$. On the other hand, the Dwork operators $\theta_\bullet(\phi)$ extend to Dwork operators on $\Omega_\bullet \overline{A}_P$ and the traces of these operators depend only on their action on $\Omega_\bullet A_P \otimes_\Omega k(P)$. Consequently we may assume that $\Omega$ is a discrete valuation ring, but then the statement is Monsky's trace formula as given in \cite{Wan}.
		\end{proof}
		
	\subsection{\texorpdfstring{$T$}{T}-adic Exponential Sums}\label{s:hodge}
	
		In this final section we connect our trace formula to the theory of $T$-adic exponential sums, as introduced in \cite{Liu}. We define these sums in the following much more general setting: let $X=\Spec(A_0)$ be a smooth affine variety over $k$. Let $A$ be a w.c.f.g. algebra over $\Z_p$ with special fiber $A_0$, and let $\sigma:A \to A$ be a lifting of the absolute Frobenius. For any element $f \in A_\infty$, we define the \emph{degree-$d$ $T$-adic exponential sum}
		\begin{equation*}
			S_f(T,d)	=	\sum_{x \in X(\F_q^d)}	(1+T)^{\Tr_{\F_{q^d}/\F_p} f(\hat{x})}	\in	\Z_p[[T]]
		\end{equation*}
		Here, we identify the Galois group of $\F_{q^d}/\F_p$ with the automorphism group of $\Z_{q^d}/\Z_p$. To study these sums, we define the generating function
		\begin{equation*}
			L_f(T,s)	=	\exp\left(	-\sum_{d=1}^\infty	S_f(\chi,d)	\frac{s^d}{d}	\right).
		\end{equation*}
		Let $R=\Z_p[[T]]$. Given a rigid point $P:R \to \Omega$ of $\calW$, the specialization $S_f(P(T),d)$ is a classical ($p$-adic) exponential sum with values in $\Omega$. We regard the sum $S_f(T,d)$ as a family of classical exponential sums, parameterized by the rigid points of $\calW$. Our goal is to understand the $p$-adic variation of the $L$-functions $L_f(P(T),s)$ as $P$ varies.
		
		We begin by discussing how $T$-adic exponential sums arise naturally from $\Z_p$-towers. Fix a geometric point $\bar{x}$ of $X$. We define a \emph{$\Z_p$-tower} to be a continuous homomorphism
		\begin{equation*}
			\pi_1(X,\bar{x})	\to	\Z_p.
		\end{equation*}
		Write $A_n = A/\frakm^{n+1} A$ for all $n \geq 0$. By the lifting property of \'{e}tale morphisms, the small \'{e}tale site $A_\et$ of $A_\infty$ is equivalent to that of $A_n$ for all $n$. Consider the \'{e}tale sheaf of $\Z_p$-modules $\tilde{\G}_a$ on $A_\et$ represented by the formal group $\G_a=\Spf(\Z_p\langle t \rangle)$. We define a homomorphism $\wp:\tilde{\G}_a \to \tilde{\G}_a$ \'{e}tale-locally via
		\begin{align*}
			\G_a(B)	&\to	\G_a(B)	\\
			b	&\mapsto	\sigma(b)-b
		\end{align*}
		for any \'{e}tale $A_\infty \to B$.
		
		\begin{lemma}\label{l:exact}
			There is an exact sequence of \'{e}tale sheaves of $\Z_p$-modules
			\begin{equation}\label{eq:exact}
				0	\to	\underline{\Z_p}	\to	\tilde{\G}_a	\xrightarrow{\wp}	\tilde{\G}_a	\to	0,
			\end{equation}
			where $\underline{\Z_p}$ denotes the constant \'{e}tale sheaf associated to $\Z_p$.
		\end{lemma}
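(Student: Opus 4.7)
The plan is to verify exactness, for each $n$, of the truncated sequences
\begin{equation*}
0 \to \underline{\Z/p^{n+1}\Z} \to \tilde{\G}_a/p^{n+1}\tilde{\G}_a \xrightarrow{\wp} \tilde{\G}_a/p^{n+1}\tilde{\G}_a \to 0
\end{equation*}
on the étale site, and then to pass to the inverse limit in $n$ using the equivalence of the étale sites of $A_\infty$ and of each $A_n$.

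The base case $n=0$ is the classical Artin--Schreier sequence. Since $\sigma$ reduces mod $p$ to the absolute $p$-Frobenius on $A_0$, the map $\wp$ reduces to $F-1$ on $\G_a$; injectivity of $\F_p \hookrightarrow \G_a$ is immediate, surjectivity of $F-1$ on the étale topos holds because $T^p-T-b=0$ is everywhere étale (the derivative being $-1$), and the kernel is identified with $\F_p$ because the $p$ solutions of $T^p-T=0$ are étale-locally rational.

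For the inductive step, I would exploit the short exact sequence
\begin{equation*}
0 \to p^n\tilde{\G}_a/p^{n+1}\tilde{\G}_a \to \tilde{\G}_a/p^{n+1}\tilde{\G}_a \to \tilde{\G}_a/p^n\tilde{\G}_a \to 0,
\end{equation*}
where multiplication by $p^n$ identifies the subsheaf with $\tilde{\G}_a/p\tilde{\G}_a$ (using that the sections of $\tilde{\G}_a$ over an étale $A_\infty$-algebra $B$ are given by the underlying $\Z_p$-flat group $B$). The operator $\wp$ preserves this filtration and acts as $F-1$ on the subsheaf. Applying the snake lemma, the inductive hypothesis for $\tilde{\G}_a/p^n\tilde{\G}_a$ and the $n=0$ case for the subsheaf yield surjectivity of $\wp$ on $\tilde{\G}_a/p^{n+1}\tilde{\G}_a$ together with a short exact sequence $0 \to \underline{\Z/p\Z} \to \calK \to \underline{\Z/p^n\Z} \to 0$, where $\calK$ denotes the kernel of $\wp$ at level $n+1$. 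The constant sheaf $\underline{\Z/p^{n+1}\Z}$ injects into $\calK$ because $\sigma$ fixes $\Z_p \subset A_\infty$ and the inclusion $\Z_p \hookrightarrow B$ is injective for any étale $A_\infty$-algebra $B$; a geometric-stalk cardinality count then shows this inclusion is an equality.

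Finally, passing to the inverse limit over $n$ is justified because the transition maps in the system are surjective by construction, so the Mittag--Leffler condition holds and $\varprojlim$ is exact on short exact sequences. Identifying $\tilde{\G}_a = \varprojlim_n \tilde{\G}_a/p^{n+1}\tilde{\G}_a$ (from the description of $\G_a = \Spf(\Z_p\langle t\rangle)$ as a $p$-adic formal group, whose sections over a $p$-adically complete étale algebra are the underlying additive group) and $\underline{\Z_p} = \varprojlim_n \underline{\Z/p^{n+1}\Z}$ delivers the claimed sequence. The main obstacle is the snake-lemma step identifying $\calK$ with $\underline{\Z/p^{n+1}\Z}$ rather than with a potentially non-trivial extension such as $\underline{\Z/p\Z} \oplus \underline{\Z/p^n\Z}$; this requires both the explicit inclusion from the constant sheaf and the stalkwise count, and is the only step that uses more than routine homological reasoning.
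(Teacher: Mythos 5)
Your proof is correct and refines, rather than diverges from, the paper's own argument. Both rest on the classical mod-$p$ Artin--Schreier sequence; the paper compresses the passage from $\F_p$-coefficients to $\Z_p$-coefficients into the single assertion ``it suffices to prove the statement mod $\frakm$'' and leaves the d\'evissage implicit. You have supplied precisely what that sentence is gesturing at: induction on $n$ via the snake lemma applied to the $p$-adic filtration of $\tilde{\G}_a$, identification of the kernel at level $n+1$ with the constant sheaf $\underline{\Z/p^{n+1}\Z}$ (which you rightly flag as the one non-routine step, since the snake lemma alone only exhibits it as an extension of $\underline{\Z/p^n\Z}$ by $\underline{\Z/p\Z}$; pinning the extension down requires the explicit injection from the constant sheaf together with a stalkwise cardinality count), and a Mittag--Leffler argument to pass to the inverse limit. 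What your version makes visible, and the paper's does not, is where the hypotheses actually enter: $p$-torsion-freeness of $\tilde{\G}_a$ so that multiplication by $p^n$ identifies the graded piece with $\tilde{\G}_a/p\tilde{\G}_a$; section-wise (not merely sheaf-theoretic) surjectivity of the transition maps so that $\varprojlim$ is exact; and $\Z_p$-flatness of \'etale $A_\infty$-algebras so that $\underline{\Z/p^{n+1}\Z}$ injects into $\tilde{\G}_a/p^{n+1}\tilde{\G}_a$. The two proofs are the same in spirit; yours does the bookkeeping the paper elides.
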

		\begin{proof}
			It suffices to prove that the statement mod $\frakm$, i.e. that the sequence of \'{e}tale sheaves of $\F_p$-modules
			\begin{equation*}
				0	\to	\underline{\F_p}	\to	\tilde{\G}_a	\xrightarrow{\wp}	\tilde{\G}_a	\to	0
			\end{equation*}
			on the small \'{e}tale site of $X$ is exact. Let $A_0 \to B$ be an \'{e}tale map. Then clearly $b^p-b = 0$ if and only if $b \in \F_p$, so that $\ker(\wp) = \underline{\F_p}$, as desired. Now for any $b \in B$, we may consider the Artin-Schreier covering
			\begin{equation*}
				B_b	=	B[X]/(X^p-X-b),
			\end{equation*}
			which is \'{e}tale over $B$ precisely when $b$ is not of the form $x^p-x$ for some $x \in B$. It follows that $\wp$ is \'{e}tale-locally surjective, completing the proof.
		\end{proof}
		
		\begin{proposition}\label{p:zptow}
			There is an isomorphism of $\Z_p$-modules
			\begin{align*}
				A_\infty/\wp A_\infty	&\xrightarrow{\sim}	\Hom(\pi_1(X,\bar{x}),\Z_p).	\\
				a	&\mapsto	\alpha_a.
			\end{align*}
		\end{proposition}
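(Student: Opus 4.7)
The plan is to deduce the proposition by taking étale cohomology of the short exact sequence \eqref{eq:exact} from Lemma \ref{l:exact} and identifying each term. The long exact sequence will read
\begin{equation*}
	0 \to H^0_\et(X,\underline{\Z_p}) \to H^0_\et(X,\tilde{\G}_a) \xrightarrow{\wp} H^0_\et(X,\tilde{\G}_a) \to H^1_\et(X,\underline{\Z_p}) \to H^1_\et(X,\tilde{\G}_a) \to \cdots,
\end{equation*}
and the map $\alpha_a$ will be defined as the image of $a \in A_\infty$ under the connecting homomorphism. Concretely, given $a \in A_\infty$, one can locally solve $\wp b = a$ in the étale topology (by the surjectivity in Lemma \ref{l:exact}), and the set of such local lifts differs by the constant sheaf $\underline{\Z_p}$, yielding a $\Z_p$-torsor on $X_\et$, and hence a continuous homomorphism $\alpha_a : \pi_1(X,\bar{x}) \to \Z_p$ since $\Z_p$ is abelian.

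The three identifications required are as follows. First, $H^0_\et(X,\tilde{\G}_a) = A_\infty$ follows immediately from the definition of $\tilde{\G}_a$ as the functor $B \mapsto B$ on the small étale site of $A_\infty$. Second, $H^1_\et(X,\underline{\Z_p}) \cong \Hom_{\text{cont}}(\pi_1(X,\bar{x}),\Z_p)$ is standard: one has $H^1_\et(X,\underline{\Z/p^n}) \cong \Hom(\pi_1(X,\bar{x}),\Z/p^n)$ because $\Z/p^n$ is a finite constant sheaf on the connected scheme $X$, and taking the inverse limit in $n$ (the transition maps being surjective, so $R^1 \varprojlim$ vanishes) gives the desired identification.

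The main obstacle, and the third identification, is the vanishing $H^1_\et(X,\tilde{\G}_a)=0$. For this, I would write $\tilde{\G}_a$ as the inverse limit $\varprojlim_n \tilde{\G}_{a,n}$, where $\tilde{\G}_{a,n}$ is the sheaf on $(X_n)_\et$ represented by the additive group scheme $\G_{a,n}$ over $A_n$. Since the small étale sites of $X$ and $X_n$ are equivalent, it suffices to show $H^1_\et(X_n,\tilde{\G}_{a,n}) = 0$ for each $n$ and to control the inverse limit. For each $n$, the sheaf $\tilde{\G}_{a,n}$ is the quasi-coherent structure sheaf of $X_n$ viewed on the étale site, so the comparison between étale and Zariski cohomology for quasi-coherent sheaves, together with affineness of $X_n$, gives $H^1_\et(X_n,\tilde{\G}_{a,n}) = H^1_\Zar(X_n,\calO_{X_n}) = 0$. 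The transition maps $H^0_\et(X_{n+1},\tilde{\G}_{a,n+1}) \to H^0_\et(X_n,\tilde{\G}_{a,n})$ are the surjections $A_{n+1} \to A_n$, so the Mittag-Leffler condition holds and the inverse limit spectral sequence yields $H^1_\et(X,\tilde{\G}_a) = 0$.

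Combining these three facts, the long exact sequence collapses to
\begin{equation*}
	A_\infty \xrightarrow{\wp} A_\infty \to \Hom_{\text{cont}}(\pi_1(X,\bar{x}),\Z_p) \to 0,
\end{equation*}
which gives the asserted isomorphism $A_\infty/\wp A_\infty \xrightarrow{\sim} \Hom(\pi_1(X,\bar{x}),\Z_p)$, with the map sending $a \mapsto \alpha_a$ being the connecting homomorphism described above. The most delicate technical point is the passage from the cohomology of the reductions $X_n$ to that of the formal object $X$; the surjectivity of the transition maps is essential here.
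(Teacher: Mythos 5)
Your proposal is correct and follows essentially the same route as the paper: take the long exact sequence in étale cohomology associated to the sequence of Lemma~\ref{l:exact}, identify $H^1_\et$ of the constant sheaf $\underline{\Z_p}$ with $\Hom(\pi_1(X,\bar{x}),\Z_p)$, and kill $H^1_\et(X,\tilde{\G}_a)$ using affineness. The paper compresses the last step to the assertion $H^1(A_\et,\G_a)=H^1(X,\calO_X)=0$, whereas you spell out the inverse-limit argument over the reductions $X_n$ together with Mittag-Leffler; this is a faithful unwinding of the same idea rather than a different approach.
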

		\begin{proof}
			Taking \'{e}tale cohomology of the exact sequence (\ref{eq:exact}), we have a long exact sequence
			\begin{equation*}
				0	\to	\Z_p	\to	A_\infty	\xrightarrow{\wp}	A_\infty	\to	H^1(A_\et,\underline{\Z_p})	\to	H^1(A_\et,\G_a)	\to	\cdots.
			\end{equation*}
			Now $H^1(A_\et,\underline{\Z_p}) = \Hom(\pi_1(X,\bar{x}),\Z_p)$, and
			\begin{equation*}
				H^1(A_\et,\G_a)	=	H^1(X,\calO_X)	=	0,
			\end{equation*}
			thus giving the desired isomorphism.
		\end{proof}
		
		\begin{remark}\label{r:zptow}
			The same argument yields an isomorphism
			\begin{equation*}
				A_n/\wp A_n	\xrightarrow{\sim}	\Hom(\pi_1(X,\bar{x}),\Z/p^n\Z).
			\end{equation*}
			We can describe this map explicitly: given $a \in A_n$, Lemma \ref{l:exact} shows that there exists an \'{e}tale map $A_n \to B_n$ with Galois group $\Z/p^n\Z$ such that $\wp(b) = a$ admits a solution in $B_n$. The desired map then sends $g \mapsto (gb-b)$.
		\end{remark}
		
		Let $\alpha_f:\pi_1(X,\bar{x}) \to \Z_p$ denote the $\Z_p$-tower associated to our chosen $f \in A_\infty$. Every continuous character $\chi:\Z_p \to \Omega^\times$ taking values in a $\Z_p$-point pulls back to a character $\chi_f$ of $\pi_1(X,\bar{x})$. We can interpolate all such characters by a single transcendental character
		\begin{align*}
			\lambda:\Z_p	&\to	R^\times	\\
			a	&\mapsto	(1+T)^a.
		\end{align*}
		Then $\lambda$ has the following universal property: for every $\chi:\Z_p \to \Omega^\times$ as above, there is a unique map of $\Z_p$-algebras $P_\chi:R \to \Omega$ making the diagram
		\begin{equation*}
			\begin{tikzcd}
				\Z_p	\arrow[dr,"\chi"']	\arrow[r,"\lambda"]	&	R^\times	\arrow[d,"P_\chi"]	\\
					&	\Omega^\times
			\end{tikzcd}
		\end{equation*}
		commute: explicitly, $P_\chi$ maps $T \mapsto \chi(1)-1$. The assignment $\chi \mapsto P_\chi$ gives a bijection
		\begin{equation*}
			\Hom(\Z_p,\Omega^\times)	\xrightarrow{\sim}	\Hom_{\Z_p}(R,\Omega).
		\end{equation*}
		In other words, the set of rigid points of $\calW$ is precisely the set of continuous $p$-adic characters of $\Z_p$.
		
		\begin{lemma}
			Let $x$ be a closed point of $X$. Then
			\begin{equation*}
				\alpha_f(\Frob_x)	=	\Tr_{k(x)/\F_p}	f(\hat{x}).
			\end{equation*}
		\end{lemma}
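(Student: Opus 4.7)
The plan is to reduce the statement to the zero-dimensional case via functoriality of the Artin-Schreier-Witt correspondence in Proposition \ref{p:zptow}, and then compute directly in the Witt vectors of an algebraic closure.

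First, I would verify that the isomorphism of Proposition \ref{p:zptow} is natural in the pair $(A,\sigma)$: any morphism of smooth $\Z_p$-formal schemes intertwining the chosen Frobenius lifts induces a commutative square of Artin-Schreier-Witt correspondences. This is immediate from the naturality of the long exact cohomology sequence attached to (\ref{eq:exact}) (together with the observation, implicit in the proof of Theorem \ref{t:equiv}, that the Teichmüller lift $\hat{x}:A_\infty \to W(k(x))$ is compatible with $\sigma$ and the canonical Frobenius $F$ by Monsky's uniqueness). Applying this naturality to $\hat{x}$ one obtains
\begin{equation*}
\alpha_f(\Frob_x) = \alpha_{f(\hat{x})}(\Frob_x),
\end{equation*}
reducing the problem to showing $\alpha_g(\Frob_x) = \Tr_{W(k(x))/\Z_p}(g)$ for $g = f(\hat{x}) \in W(k(x))$, where $\Tr_{W(k(x))/\Z_p}$ is the trace denoted $\Tr_{k(x)/\F_p}$ in the statement.

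Next, let $\bar{k}$ be an algebraic closure of $k(x)$. Since any equation $Y^p - Y = a$ is solvable in $\bar{k}$, the Witt vector version of the argument in Lemma \ref{l:exact} shows that $\wp = F - \id:W(\bar{k}) \to W(\bar{k})$ is surjective with kernel $\Z_p$. Choose $b \in W(\bar{k})$ with $\wp(b) = g$. Applying Remark \ref{r:zptow} at each finite level and passing to the inverse limit, the $\Z_p$-tower on the point $x$ is
\begin{equation*}
\alpha_g:\pi_1(x,\bar{x}) = \Gal(\bar{k}/k(x)) \to \Z_p, \qquad h \mapsto h(b) - b,
\end{equation*}
where the difference lies in $\ker(\wp) = \Z_p$ because $h$ fixes $g \in W(k(x))$, and is independent of the choice of $b$ for the same reason.

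Finally, since $\Frob_x$ acts on $\bar{k}$ as $y \mapsto y^{|k(x)|} = y^{p^{\deg(x)}}$, it acts on $W(\bar{k})$ as $F^{\deg(x)}$. A telescoping identity then gives
\begin{equation*}
\alpha_g(\Frob_x) = F^{\deg(x)}(b) - b = \sum_{i=0}^{\deg(x)-1} F^i\bigl(F(b) - b\bigr) = \sum_{i=0}^{\deg(x)-1} F^i(g) = \Tr_{W(k(x))/\Z_p}(g),
\end{equation*}
as desired. The main technical point is establishing the functoriality invoked in the first step; once that is unwound from the cohomological construction, the remainder is a routine Witt vector computation.
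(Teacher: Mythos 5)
Your proof is correct and follows essentially the same strategy as the paper's: reduce to the zero-dimensional case via the Teichm\"{u}ller point $\hat{x}$ and the naturality of the Artin-Schreier-Witt correspondence, then unwind the telescoping identity for $F^{\deg(x)}(b)-b$. The only stylistic difference is that you carry out the Witt-vector computation directly in $W(\bar{k})$ and make the naturality step explicit, whereas the paper argues level-by-level in $A_n$ and passes to the inverse limit.
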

		\begin{proof}
			Let $d=[k(x):\F_q]$. Fix a separable closure $k^s/k(x)$, and let $F$ be the absolute Frobenius automorphism of $k^s$. Then $G=\Gal(k^s/k(x))$ is generated by $F^d$. By Proposition \ref{p:zptow}, the element $f(\hat{x}) \in W(k(x))$ classifies a $\Z_p$-extension $k'/k(x)$. Then explicitly,
			\begin{equation*}
				\alpha_f(\Frob_x)	=	\alpha_{f(\hat{x})}(F^d).
			\end{equation*}
			For each $n$, let $k_n$ be the unique intermediate extension of degree $p^n$. Write $f_n$ for the image of $f$ in $A_n$. Then by Remark \ref{r:zptow}, there exists $b_n \in B_n$ such that $\wp(b_n) = f_n(\hat{x})$. The image of $g=F^d$ in $\Z/p^n\Z$ is then
			\begin{equation*}
				F^d b_n - b_n	=	\sum_{j=0}^{d-1} F^j(Fb-b)	=	\Tr_{k(x)/\F_p} f_n(\hat{x}).
			\end{equation*}
			Taking inverse limits, we get the desired result.
		\end{proof}
		
		As a consequence of the lemma, we see that $L_f(T,s)$ is equal to the $L$-function of the universal character $\lambda_f$. Let $A_\infty(R)$ denote the completed base change of $A_\infty$ along $\Z_p \to R$. By Theorem \ref{t:equiv}, $\lambda_f$ corresponds to a rank-$1$ $\sigma$-module $(M_f,\phi_f)$ over $A_\infty(R)$. Our next goal is to determine the structure of this $\sigma$-module. We will construct an explicit map of $\Z_p$-modules
		\begin{align*}
			A_\infty	&\to	1+TA_\infty[[T]]	\\
			a	&\mapsto	E_a
		\end{align*}
		such that $(M_f,\phi_f)$ is equivalent to the free rank-$1$ $\sigma$-module $(A_\infty,E_f \circ \sigma)$. Before giving this construction, we require an easy Witt vector lemma:
		
		\begin{lemma}
			There is a unique map of $\Z_p$-modules $\Delta_\sigma:A_\infty \to W(A_\infty)$ compatible with $\sigma$ and the canonical Frobenius endomorphism of $W(A_\infty)$.
		\end{lemma}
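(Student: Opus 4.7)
The strategy is to construct $\Delta_\sigma$ via its ghost components, using Dwork's lemma to verify that the natural candidate sequence lies in the image of the ghost map. The first step is to observe that $A_\infty$ is $p$-torsion free: since $A$ is smooth, it is flat over $\Z_p$, and $p$-adic completion preserves flatness. Consequently the ghost map
\begin{equation*}
	w = (w_0, w_1, \ldots): W(A_\infty) \to \prod_{n \geq 0} A_\infty, \qquad w_n(\underline{x}) = \sum_{i=0}^n p^i x_i^{p^{n-i}},
\end{equation*}
is injective, and Dwork's lemma characterizes its image as the set of sequences $(b_n)$ satisfying $b_n \equiv \sigma(b_{n-1}) \pmod{p^n A_\infty}$ for all $n \geq 1$.

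For each $a \in A_\infty$, I would apply Dwork's lemma to the candidate sequence $(\sigma^n(a))_{n \geq 0}$: the required congruence holds trivially (as an equality, since $\sigma^n(a) = \sigma(\sigma^{n-1}(a))$), so there exists a unique Witt vector $\Delta_\sigma(a) \in W(A_\infty)$ with ghost sequence $(a, \sigma(a), \sigma^2(a), \ldots)$. The resulting map $\Delta_\sigma: A_\infty \to W(A_\infty)$ is in fact a ring homomorphism, because $a \mapsto (\sigma^n(a))_n$ is a ring homomorphism into $\prod_n A_\infty$, the ring operations on $W(A_\infty)$ correspond to componentwise operations on ghosts, and $w$ is injective; in particular $\Delta_\sigma$ is $\Z_p$-linear.

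Compatibility with Frobenius follows from the fact that the Witt Frobenius $F$ acts on ghost sequences by the shift $(b_n)_n \mapsto (b_{n+1})_n$: thus $F(\Delta_\sigma(a))$ and $\Delta_\sigma(\sigma(a))$ have the same ghost sequence $(\sigma(a), \sigma^2(a), \ldots)$, and injectivity of $w$ forces $F \circ \Delta_\sigma = \Delta_\sigma \circ \sigma$. For uniqueness, any other $\Z_p$-module map $\Delta'$ that is a section of $w_0$ and satisfies $F \circ \Delta' = \Delta' \circ \sigma$ must have $w_n(\Delta'(a)) = w_0(F^n \Delta'(a)) = w_0(\Delta'(\sigma^n(a))) = \sigma^n(a)$, so $\Delta'(a)$ has the same ghost sequence as $\Delta_\sigma(a)$, and again injectivity of $w$ gives $\Delta' = \Delta_\sigma$. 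The only step requiring real care is the verification of $p$-torsion freeness of $A_\infty$ and the correct invocation of Dwork's lemma; once these are in hand, the remaining verifications are mechanical.
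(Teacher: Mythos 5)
Your proof is correct and rests on the same underlying fact the paper invokes: the characterization of the image of the ghost map, i.e.\ Dwork's lemma. Where the paper simply cites Hazewinkel 17.6.8 (which, in its big-Witt-vector form, produces a ring map $A \to W(A)$ from a compatible family of Frobenius lifts $(\phi_n)$, here taken to be $\phi_{p^i}=\sigma^i$ and $\phi_n=0$ for $n$ not a $p$-power), you unpack the construction directly in the $p$-typical setting. The two are equivalent: over a $\Z_p$-algebra, $W_p(A_\infty)$ is the factor of the big Witt vectors supported at $p$-power ghost indices, and the element you produce with $p$-typical ghost sequence $(a,\sigma(a),\sigma^2(a),\ldots)$ matches the paper's explicit $\Delta_\sigma(a)=\exp\bigl(\sum_{i\ge 0}\sigma^i(a)t^{p^i}/p^i\bigr)$ inside $\Lambda(A_\infty)=1+tA_\infty[[t]]$; if you had worked directly with big Witt vectors via the Hazewinkel formulation you would additionally need to note that for primes $q\ne p$ the required congruence $\phi_q(a)\equiv a^q\pmod q$ is vacuous because $q$ is a unit in $A_\infty$, which is exactly why taking $\phi_n=0$ off the $p$-powers is permissible there. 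One genuinely good feature of your write-up is that you spell out the implicit normalization: the paper's phrase ``compatible with $\sigma$ and the canonical Frobenius'' only yields $F\circ\Delta_\sigma=\Delta_\sigma\circ\sigma$, which by itself does not force uniqueness (e.g.\ $\Delta'=0$ or $F^k\circ\Delta_\sigma$ would also satisfy it); you correctly identify that $\Delta_\sigma$ must additionally be a section of $w_0$, and that together with Frobenius-equivariance this pins down all ghost components via $w_n=w_0\circ F^n$. The $p$-torsion-freeness of $A_\infty$, needed for injectivity of the ghost map, does indeed follow from $\Z_p$-flatness of $A$ and flatness of $\frakm$-adic completion for Noetherian rings, as you claim.
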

		\begin{proof}
			This follows from (\cite{Haze}, 17.6.8), taking $\phi_{p^i}(a)=\sigma^i(a)$ and $\phi_n(a)=0$ when $n$ is not a power of $p$.
		\end{proof}
		
		We identify the underlying additive group of $W(A_\infty)$ with the $\Z_p$-module $\Lambda(A_\infty) = 1+tA_\infty[[t]]$ in the standard way. Using (\cite{Haze}, 17.6.1), is not difficult to work out the map $\Delta_\sigma:A_\infty \to \Lambda(A_\infty)$ explicitly. Namely for $a \in A_\infty$, we have
		\begin{equation*}
			\Delta_\sigma(a)	=	\exp\left(	\sum_{i=0}^\infty	\frac{\sigma^i(a) t^{p^i}}{p^i}	\right).
		\end{equation*}
		Recall that the \emph{Artin-Hasse exponential series} is the power series
		\begin{equation*}
			E(t)	=	\exp\left(	\sum_{i=0}^\infty	\frac{t^{p^i}}{p^i}	\right)	 \in 1+t+t^2\Z_p[[t]].
		\end{equation*}
		For any $r \in \frakm$, there is a well defined element $E(r) \in R$. By a standard argument, this series defines a bijection
		\begin{equation*}
			\frakm^i	\to	1+\frakm^i.
		\end{equation*}
		In particular, there is a unique $\pi \in \frakm$ such that $E(\pi) = 1+T$. The composition
		\begin{equation*}
			A_\infty	\xrightarrow{\Delta_\sigma}	\Lambda(A_\infty)	\xrightarrow{t \mapsto \pi} 1+\pi A_\infty[[\pi]]	\subseteq A_\infty(R)
		\end{equation*}
		is a $\Z_p$-module homomoprhism which we denote by $a \mapsto E_a$.
		
		\begin{proposition}\label{p:split}
			(Dwork's Splitting Lemma). $(M_f,\phi_f)$ is equivalent to the free rank-$1$ $\sigma$-module $(A_\infty, E_f \circ \sigma)$.
		\end{proposition}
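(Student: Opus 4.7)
Since equivalence of $\sigma$-modules is detected by $L$-functions, and the $L$-function is a product of local factors indexed by closed points $x \in |X|$, my plan is to compute the local factor of $(A_\infty(R), E_f \circ \sigma)$ at each closed point $x$ and match it to the local factor of $\lambda_f$ provided by the preceding lemma.

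First I would compute iterates of the operator $\phi = E_f \cdot \sigma$ on $A_\infty(R)$. A direct induction gives
\begin{equation*}
    \phi^d(a) = \left(\prod_{i=0}^{d-1} \sigma^i(E_f)\right) \cdot \sigma^d(a).
\end{equation*}
Because $a \mapsto E_a$ is a $\Z_p$-module homomorphism and $\sigma^i(E_f) = E_{\sigma^i f}$ (which follows from the functoriality of $\Delta_\sigma$ and the fact that $\pi \in R$ is fixed by $\sigma$), the product telescopes:
\begin{equation*}
    \prod_{i=0}^{d-1} \sigma^i(E_f) = E_{f + \sigma f + \cdots + \sigma^{d-1} f}.
\end{equation*}
Next I would specialize at the Teichm\"uller lift $\hat{x}: A_\infty \to R(x)$ of a closed point $x$ of degree $d$. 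Since $\hat{x}$ intertwines $\sigma$ with the canonical Frobenius $F$ on $W(k(x))$, we get
\begin{equation*}
    (f + \sigma f + \cdots + \sigma^{d-1} f)(\hat{x}) = \sum_{i=0}^{d-1} F^i f(\hat{x}) = \Tr_{W(k(x))/\Z_p} f(\hat{x}) \in \Z_p,
\end{equation*}
and $F^d$ acts trivially on $R(x)$. Thus the fiber of $\phi^d$ at $\hat{x}$ is multiplication by $E_a$ with $a = \Tr_{W(k(x))/\Z_p} f(\hat{x})$.

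The key step is then to show that $E_a = (1+T)^a$ for $a \in \Z_p$. This follows from the explicit formula for $\Delta_\sigma$: when $a \in \Z_p$ we have $\sigma^i(a) = a$, so
\begin{equation*}
    E_a = \exp\!\left(\sum_{i=0}^\infty \frac{a\,\pi^{p^i}}{p^i}\right) = \exp\!\left(a \log E(\pi)\right) = (1+T)^a,
\end{equation*}
using the relation $E(\pi) = 1+T$ established just before the statement. Comparing with the lemma that $\alpha_f(\Frob_x) = \Tr_{k(x)/\F_p} f(\hat{x})$, and hence $\lambda_f(\Frob_x) = (1+T)^{\Tr f(\hat{x})}$, shows that both $\sigma$-modules produce the same local factor $(1 - (1+T)^{\Tr f(\hat{x})} s^d)^{-1}$ at every closed point. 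Therefore their $L$-functions agree, which by definition means they are equivalent.

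The main obstacle is really just the bookkeeping around the Witt-vector identity $E_a = (1+T)^a$ for $a \in \Z_p$ and its compatibility with $\sigma$ under Teichm\"uller specialization; once those explicit identifications are in hand, the remainder is a formal product computation using the homomorphism property of $a \mapsto E_a$.
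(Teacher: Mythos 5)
Your proof is correct, and it takes a slightly different (and arguably cleaner) route than the paper's. The paper also reduces to comparing local factors at closed points, but its reduction goes through the Teichm\"uller expansion $f(\hat{x}) = [c_0] + p[c_1] + \cdots$: using $\Z_p$-linearity of $a \mapsto E_a$, it suffices to verify the identity when $f(\hat{x}) = [c]$ is a Teichm\"uller lift, and then the paper invokes the classical Dwork splitting identity $E(\pi)^{\Tr_{k(x)/\F_p}[c]} = \N_{k(x)/\F_p} E(\pi[c])$ together with $E(\pi[c]) = E_{[c]}$. You avoid the Teichm\"uller decomposition entirely: you telescope the Frobenius twists of $E_f$ via the same $\Z_p$-linearity to get $\prod_{i=0}^{d-1}\sigma^i(E_f) = E_{\sum_i \sigma^i f}$, observe that specializing at $\hat{x}$ makes the index land in $\Z_p$ (as the Witt-vector trace $\Tr_{W(k(x))/\Z_p} f(\hat{x})$), and then use the elementary identity $E_a = E(\pi)^a = (1+T)^a$ for $a \in \Z_p$ which follows from $\sigma$ fixing $\Z_p$. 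Both approaches package the same splitting-lemma computation, but yours handles the trace first and the paper handles the norm first; the payoff of your version is that the only input beyond $\Z_p$-linearity of $a \mapsto E_a$ is the one-line identity for $\Z_p$-scalars, whereas the paper's version leans on the Witt-vector expansion and the Teichm\"uller-specific form of the splitting lemma. One small bookkeeping point worth making explicit in a final write-up: in this section the base is $\Z_p$ (so $\F_q = \F_p$), which is what makes $\deg(x) = [k(x):\F_p]$ and $\sigma^{\deg(x)}$ act trivially on the fiber $W(k(x))$; without that observation the telescoping step would not terminate correctly.
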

		\begin{proof}
			We need only check that for every closed point $x \in |X|$, the fibers of these $\sigma$-modules at $X$ are equivalent. This amounts to showing the equality
			\begin{equation*}
				\lambda_f(\Frob_x)	=	N_{k(x)/\F_p}	E_f(\hat{x})	=	N_{k(x)/\F_p} E_{f(\hat{x})}.
			\end{equation*}
			Note that we have an expansion
			\begin{equation*}
				f(\hat{x})	=	[c_0]+p[c_1]+\cdots	\in W(k(x)).
			\end{equation*}
			By $\Z_p$-linearity, it suffices to prove the statement when $f(\hat{x}) = [c]$ is the Teichm\"{u}ller lift of an element of $k(x)$. But then
			\begin{equation*}
				\lambda_f(\Tr_{k(x)/\F_p} [c])	=	E(\pi)^{\Tr_{k(x)/\F_p} f(\hat{x})}	=	N_{k(x)/\F_p} E(\pi[c])	=		N_{k(x)/\F_p} E_{[c]}.
			\end{equation*}
		\end{proof}
		
		Now let $A(R)$ be the weak base change of $A$ to a w.c.f.g. algebra over $R$. It follows from the above that the $\sigma$-module $(M_f,\phi_f)$ is overconvergent if and only if the representative $f$ can be chosen such that $E_f \in A(R)$. In this case we can apply Theorem \ref{t:merom} to get an explicit formula for the exponential sums $S_f(T,d)$:
		
		\begin{theorem}
			Suppose that $f \in A_\infty$ is such that $E_f \in A(R)$. Define the Dwork operator $\psi_i = \theta_i \circ E_f$ on $\Omega^i A$. Then
			\begin{equation*}
				S_f(T,d)	=	\sum_{i=0}^n	(-1)^i	\Tr(\psi_i^d).
			\end{equation*}
		\end{theorem}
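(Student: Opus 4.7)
The plan is to recognize $L_f(T,s)$ as the $L$-function of an overconvergent rank-one $\sigma$-module and then apply Theorem \ref{t:merom}. By the paragraph preceding Proposition \ref{p:split}, the generating function $L_f(T,s)$ coincides with the $L$-function $L(\lambda_f,s) = L(\phi_f,s)$ of the $\sigma$-module $(M_f,\phi_f)$ over $A_\infty(R)$ associated to the character $\lambda_f$ via Theorem \ref{t:equiv}. By Dwork's Splitting Lemma (Proposition \ref{p:split}), this $\sigma$-module is equivalent to the free rank-one module with semi-linear operator $E_f \circ \sigma$. The hypothesis $E_f \in A(R)$ means that $E_f \circ \sigma$ descends to a $\sigma$-module $(A(R), E_f \circ \sigma)$ over the w.c.f.g. algebra $A(R)$, so $(M_f,\phi_f)$ is overconvergent in the sense of Section \ref{s:smod}.

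Applying Theorem \ref{t:merom} to this overconvergent representative gives
\[ L_f(T,s) = \prod_{i=0}^n C(\theta_{n-i}(E_f \circ \sigma), s)^{(-1)^{i-1}}. \]
The key technical step is to identify each Dwork operator $\theta_{n-i}(E_f \circ \sigma)$ on $\Omega^{n-i}(A(R))^\vee$ with the operator $\psi_{n-i} = \theta_{n-i} \circ E_f$ on $\Omega^{n-i} A(R)$. For the rank-one module $M = A(R)$, the exterior product pairing identifies $\Omega^{n-i} M^\vee$ with $\Omega^{n-i} A(R)$ via $\omega \mapsto (\eta \mapsto \omega \wedge \eta)$. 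Unwinding the definition of $\theta_{n-i}(\phi)$ from Section \ref{s:trace}, the image of $\omega$ under $\theta_{n-i}(E_f \circ \sigma)$ corresponds to the morphism $\eta \mapsto \theta_n(\omega \wedge E_f \sigma(\eta))$. Writing $B = \sigma(A)$ and exploiting the $\Omega^\bullet B$-linearity of $\Tr_\bullet$, one computes $\theta_n(E_f\omega \wedge \sigma(\eta)) = \sigma^{-1}(\Tr_{n-i}(E_f \omega)) \wedge \eta = \theta_{n-i}(E_f \omega) \wedge \eta$, which shows $\theta_{n-i}(E_f \circ \sigma) = \psi_{n-i}$.

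With this identification the trace formula becomes
\[ L_f(T,s) = \prod_{i=0}^n C(\psi_{n-i}, s)^{(-1)^{i-1}}. \]
Taking logarithms of both sides, using the defining expansions $\log L_f(T,s) = -\sum_d S_f(T,d) s^d/d$ and $\log C(\psi_j, s) = -\sum_d \Tr(\psi_j^d) s^d/d$, and reindexing the product via $j = n - i$, one extracts the coefficient of $s^d/d$ to obtain the desired identity.

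The main obstacle is the explicit identification $\theta_{n-i}(E_f \circ \sigma) = \psi_{n-i}$, which requires careful bookkeeping of the twists and duals involved in the construction of the Dwork operators $\theta_\bullet(\phi)$ on the de Rham complex of $M^\vee$. The crucial input is the $\Omega^\bullet B$-linearity of the trace map established in Section \ref{s:trace}. Once this identification is secured, the theorem follows from Theorem \ref{t:merom} together with routine manipulation of exponential generating functions.
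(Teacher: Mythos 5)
The paper does not give a proof of this theorem; it is presented as an immediate consequence of the trace formula (Theorem \ref{t:merom}). Your approach is the natural one and reconstructs the intended argument: reduce to the $\sigma$-module $(A(R),E_f\circ\sigma)$ via Proposition \ref{p:split}, note that overconvergence is exactly the hypothesis $E_f\in A(R)$, apply the trace formula, and identify the resulting Dwork operators with the $\psi_i$.

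Your key technical claim, $\theta_{n-i}(E_f\circ\sigma)=\psi_{n-i}$, is correct, and the argument you sketch is sound. Spelling it out: under the identification $\Omega^{n-i}M^\vee\cong\Omega^{n-i}A(R)$ given by $\omega\mapsto(\eta\mapsto\omega\wedge\eta)$ for $\eta\in\Omega^i A(R)$, the operator $\theta_{n-i}(\phi)$ sends $\omega$ to the unique form $\omega'$ with $\omega'\wedge\eta=\theta_n(\omega\wedge E_f\,\sigma_i(\eta))$. Writing $B=\sigma(A(R))$ and using that $\Tr_\bullet$ is $\Omega^\bullet B$-linear,
\[
\theta_n(E_f\omega\wedge\sigma_i(\eta))=\sigma_n^{-1}\bigl(\Tr_n(E_f\omega\wedge\sigma_i(\eta))\bigr)=\sigma_n^{-1}\bigl(\Tr_{n-i}(E_f\omega)\wedge\sigma_i(\eta)\bigr)=\theta_{n-i}(E_f\omega)\wedge\eta,
\]
so indeed $\theta_{n-i}(\phi)=\theta_{n-i}\circ E_f$. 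This matches the definition $\psi_i=\theta_i\circ E_f$.

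However, your final step glosses over a sign that does not close cleanly. Taking logarithms of
\[
L_f(T,s)=\prod_{i=0}^{n}C(\theta_{n-i}(\phi),s)^{(-1)^{i-1}}
\]
and using $\log L_f=-\sum_d S_f(T,d)s^d/d$ and $\log C(\psi_j,s)=-\sum_d\Tr(\psi_j^d)s^d/d$ yields $S_f(T,d)=\sum_{i=0}^n(-1)^{i-1}\Tr(\psi_{n-i}^d)$, and after the substitution $j=n-i$ this is $(-1)^{n+1}\sum_{j=0}^n(-1)^j\Tr(\psi_j^d)$. That agrees with the stated identity only when $n$ is odd. The discrepancy appears to trace back to sign conventions within the paper itself (for instance, the exponential expression given for $L(\phi,s)$ in Section \ref{s:smod} carries a minus sign incompatible with the Euler product $\prod_x\det(1-\phi_x^{\deg x}s^{\deg x})^{-1}$), so the issue is not uniquely your fault. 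Still, you should not claim that ``one extracts the coefficient of $s^d/d$ to obtain the desired identity'' without flagging that the exponents and indices as literally written produce an extra overall sign $(-1)^{n+1}$; either explain which of the paper's conventions you are adjusting, or note that the statements are consistent only up to such a sign.
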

		
	\bibliographystyle{plainnat}
	\bibliography{WeakBib}

\begin{thebibliography}{20}
\providecommand{\natexlab}[1]{#1}
\providecommand{\url}[1]{\texttt{#1}}
\expandafter\ifx\csname urlstyle\endcsname\relax
  \providecommand{\doi}[1]{doi: #1}\else
  \providecommand{\doi}{doi: \begingroup \urlstyle{rm}\Url}\fi

\bibitem[Abbes(2010)]{Abbes}
Ahmed Abbes.
\newblock \emph{\'{E}l\'{e}ments de G\'{e}om\'{e}trie Rigide}.
\newblock Springer Science \& Business Media, 2010.

\bibitem[Artin et~al.(1971)Artin, Grothendieck, and Verdier]{SGA4}
Michael Artin, Alexander Grothendieck, and Jean-Louis Verdier.
\newblock \emph{Theorie de Topos et Cohomologie \'{E}tale des Schemas {I},
  {II}, {III}}, volume 269, 270, 305 of \emph{Lecture Notes in Mathematics}.
\newblock Springer, 1971.

\bibitem[Bosch and L{\"{u}}tkebohmert(1993)]{Bosch}
Siegfried Bosch and Werner L{\"{u}}tkebohmert.
\newblock Formal and rigid geometry.
\newblock \emph{Mathematische Annalen}, 296\penalty0 (1):\penalty0 291--317,
  1993.

\bibitem[Coleman and Mazur(1998)]{Coleman}
Robert Coleman and Barry Mazur.
\newblock The eigencurve.
\newblock In Anthony~James Scholl, editor, \emph{Galois Representations in
  Arithmetic Algebraic Geometry}, pages 1--113. Cambridge University Press,
  1998.

\bibitem[Davis et~al.(2016)Davis, Wan, and Xiao]{Davis}
Christopher Davis, Daqing Wan, and Liang Xiao.
\newblock Newton slopes for {Artin}-{Schreier}-{Witt} towers.
\newblock \emph{Mathematische Annalen}, 364\penalty0 (3-4):\penalty0
  1451--1468, 2016.

\bibitem[Elkik(1973)]{Elkik}
Ren\'{e}e Elkik.
\newblock Solutions d'\'{E}quations \`{a} coefficients dans un anneau
  hens\'{e}lien.
\newblock \emph{Annales Scientifiques de l'\'{E}cole Normale Sup\'{e}rieure},
  6\penalty0 (4):\penalty0 553--603, 1973.

\bibitem[Fulton(1969)]{Fulton}
William Fulton.
\newblock A note on weakly complete algebras.
\newblock \emph{Bulletin of the American Mathematical Society}, 75\penalty0
  (3):\penalty0 591--593, 1969.

\bibitem[Gro{\ss}e-Kl{\"{o}}nne(2000)]{Grosse}
Elmar Gro{\ss}e-Kl{\"{o}}nne.
\newblock Rigid analytic spaces with overconvergent structure sheaf.
\newblock \emph{Jounral f{\"{u}}r die Reine und Angewandte Mathematik},
  519:\penalty0 73--95, 2000.

\bibitem[Hazewinkel(1978)]{Haze}
Michiel Hazewinkel.
\newblock \emph{Formal Groups and Applications}.
\newblock AMS Chelsea Publishing, 1978.

\bibitem[Jean-Pierre(1962)]{Serre}
Serre Jean-Pierre.
\newblock Completely continuous endomorphisms of $p$-adic banach spaces.
\newblock \emph{Publ. Math. I.H.E.S}, 12:\penalty0 69--85, 1962.

\bibitem[Katz(1973)]{Katz}
Nicholas~M. Katz.
\newblock $p$-adic properties of modular schemes and modular forms.
\newblock In Willem Kuijk and Jean-Pierre Serre, editors, \emph{Modular
  Functions in One Variable {III}}, volume 350. Springer, Berlin, Heidelberg,
  1973.

\bibitem[Langer and Muralidharan(2013)]{Langer}
Andreas Langer and Amrita Muralidharan.
\newblock An analogue of {Raynaud’s} theorem: Weak formal schemes and dagger
  spaces.
\newblock \emph{M{\"{u}}nster Journal of Mathematics}, 6:\penalty0 271--294,
  2013.

\bibitem[Liu and Wan(2009)]{Liu}
Chunlei Liu and Daqing Wan.
\newblock {$T$}-adic exponential sums over finite fields.
\newblock \emph{Algebra \& Number Theory}, 3\penalty0 (5):\penalty0 489--509,
  2009.

\bibitem[Liu et~al.(2017)Liu, Wan, and Xiao]{LWX}
Ruochuan Liu, Daqing Wan, and Liang Xiao.
\newblock The eigencurve over the boundary of weight space.
\newblock \emph{Duke Mathematical Journal}, 166\penalty0 (9):\penalty0
  1739--1787, 2017.

\bibitem[Meredith(1972)]{Meredith}
David Meredith.
\newblock Weak formal schemes.
\newblock \emph{Magoya Mathematical Journal}, 45:\penalty0 1--38, 1972.

\bibitem[Monsky(1968)]{Monsky}
Paul Monsky.
\newblock Formal cohomology {III}.
\newblock \emph{Annals of Mathematics}, 88\penalty0 (2):\penalty0 181--217,
  1968.

\bibitem[Monsky and Washnitzer(1971)]{MW}
Paul Monsky and Gerard Washnitzer.
\newblock Formal cohomology.
\newblock \emph{Annals of Mathematics}, 93\penalty0 (2):\penalty0 315--343,
  1971.

\bibitem[Raynaud(1974)]{Raynaud}
Michel Raynaud.
\newblock G\'{e}om\'{e}trie analytique rigide d'apr\`{e}s {Tate}, {Kiehl}...
\newblock \emph{M\'{e}moires de la Soci\'{e}t\'{e} Math\'{e}matique de
  {France}}, 39-40:\penalty0 319--327, 1974.

\bibitem[Ren et~al.(2018)Ren, Wan, Xiao, and Yu]{Ren}
Rufei Ren, Daqing Wan, Liang Xiao, and Myungjun Yu.
\newblock Slopes for higher-rank {Artin-Schreier-Witt} towers.
\newblock \emph{Trans. Amer. Math. Soc.}, 370\penalty0 (9):\penalty0
  6411--6432, 2018.

\bibitem[Wan(2000)]{Wan}
Daqing Wan.
\newblock Higher-rank case of {Dwork's} conjecture.
\newblock \emph{Journal of the American Mathematical Society}, 13\penalty0
  (4):\penalty0 807--852, 2000.

\end{thebibliography}
\end{document}